\newcommand{\excise}[1]{}%{$\star$\textsc{#1}$\star$}
\newtheorem{Theorem}{Theorem}[section]
\newtheorem{Proposition}[Theorem]{Proposition}
\newtheorem{Lemma}[Theorem]{Lemma}
\newtheorem{Corollary}[Theorem]{Corollary}
\newtheorem{Conjecture}[Theorem]{Conjecture}
\theoremstyle{definition}
\newtheorem{Definition}[Theorem]{Definition}
\newtheorem{Example}[Theorem]{Example}
\newtheorem*{Definition-nonum}{Definition}
\newtheorem*{Example-nonum}{Example}
\theoremstyle{remark}
\newtheorem{Remark}[Theorem]{Remark}
\newcommand\ZZ{{\mathbb{Z}}}
\newcommand\NN{{\mathbb{N}}}
\newcommand\QQ{{\mathbb{Q}}}
\newcommand\RR{{\mathbb{R}}}
\newcommand\EE{{\mathbb{E}}}
\newcommand\Omegaunif{\Omega^{\operatorname{unif}}}
\newcommand\Omegachain{\Omega^{\operatorname{chain}}}
\newcommand\Xdd{X}
\newcommand\Ydd{Y}
\newcommand\Prob{{\mathbf{Prob}}}
\newcommand\CDE{{CDE}\xspace}
\newcommand\mCDE{{mCDE}\xspace}
\newcommand\tCDE{{tCDE}\xspace}
\newcommand{\shifted}{{\operatorname{shifted}}}
\newcommand{\Des}{\operatorname{Des}}
\newcommand\noninv{{\operatorname{ninv}}}
\newcommand\rank{{\operatorname{rank}}}
\newcommand{\Red}{\operatorname{Red}}
\newcommand{\MaxChains}{{\mathcal{M}}}
\newcommand\opp{{*}}
\newcommand\sss{{\mathbf{s}}}
\newcommand\ttt{{\mathbf{t}}}
\newcommand\xxx{{\mathbf{x}}}
\newcommand\symm{{\mathfrak{S}}}
\newcommand\HHH{{\mathcal{H}}}
\newcommand\LLL{{\mathcal{L}}}
\newcommand{\lambdanab}[3]{ \delta_{#1}({#3}^{#2}) }
\newcommand{\CST}[2]{{\tt CST}_{\leq{#2}}({#1})}
\newcommand{\bm}[1]{\mbox{\boldmath $#1$}}
\newcommand{\cellsize}{13}
\newlength{\cellsz} \setlength{\cellsz}{\cellsize\unitlength}
\newsavebox{\cell}
\sbox{\cell}{\begin{picture}(\cellsize,\cellsize)
\put(0,0){\line(1,0){\cellsize}}
\put(0,0){\line(0,1){\cellsize}}
\put(\cellsize,0){\line(0,1){\cellsize}}
\put(0,\cellsize){\line(1,0){\cellsize}}
\end{picture}}
\newcommand\cellify[1]{\def\thearg{#1}\def\nothing{}%
\ifx\thearg\nothing
\vrule width0pt height\cellsz depth0pt\else
\hbox to 0pt{\usebox{\cell} \hss}\fi%
\vbox to \cellsz{
\vss
\hbox to \cellsz{\hss$#1$\hss}
\vss}}
\newcommand\tableau[1]{\vtop{\let\\\cr
\baselineskip -16000pt \lineskiplimit 16000pt \lineskip 0pt
\ialign{&\cellify{##}\cr#1\crcr}}}
\newcommand{\kellsize}{20}
\newlength{\kellsz} \setlength{\kellsz}{\kellsize\unitlength}
\newsavebox{\kell}
\sbox{\kell}{\begin{picture}(\kellsize,\kellsize)
\put(0,0){\line(1,0){\kellsize}}
\put(0,0){\line(0,1){\kellsize}}
\put(\kellsize,0){\line(0,1){\kellsize}}
\put(0,\kellsize){\line(1,0){\kellsize}}
\end{picture}}
\newcommand\kellify[1]{\def\thearg{#1}\def\nothing{}%
\ifx\thearg\nothing
\vrule width0pt height\kellsz depth0pt\else
\hbox to 0pt{\usebox{\kell} \hss}\fi%
\vbox to \kellsz{
\vss
\hbox to \kellsz{\hss$#1$\hss}
\vss}}
\newcommand\ktableau[1]{\vtop{\let\\\cr
\baselineskip -16000pt \lineskiplimit 16000pt \lineskip 0pt
\ialign{&\kellify{##}\cr#1\crcr}}}
\newcommand{\tellsize}{6}
\newlength{\tellsz} \setlength{\tellsz}{\tellsize\unitlength}
\newsavebox{\tell}
\sbox{\tell}{\begin{picture}(\tellsize,\tellsize)
\put(0,0){\line(1,0){\tellsize}}
\put(0,0){\line(0,1){\tellsize}}
\put(\tellsize,0){\line(0,1){\tellsize}}
\put(0,\tellsize){\line(1,0){\tellsize}}
\end{picture}}
\newcommand\tellify[1]{\def\thearg{#1}\def\nothing{}%
\ifx\thearg\nothing
\vrule width0pt height\tellsz depth0pt\else
\hbox to 0pt{\usebox{\tell} \hss}\fi%
\vbox to \tellsz{
\vss
\hbox to \tellsz{\hss$#1$\hss}
\vss}}
\newcommand\ttableau[1]{\vtop{\let\\\cr
\baselineskip -16000pt \lineskiplimit 16000pt \lineskip 0pt
\ialign{&\tellify{##}\cr#1\crcr}}}
\begin{document} 
\title[Poset edge densities, reduced words, tableaux]
{Poset edge densities,
nearly reduced words, 
and barely set-valued tableaux}

\author{Victor Reiner}
\address{School of Mathematics, University of Minnesota, Minneapolis, MN 55455, USA}
\author[Bridget Eileen Tenner]{Bridget Eileen Tenner}
\address{Department of Mathematical Sciences, DePaul University, Chicago, IL 60614, USA}
\author{Alexander Yong}
\address{Department of Mathematics, University Illinois at Urbana-Champaign, Urbana, IL 61801, USA}
\keywords{reduced word, 0-Hecke, nilHecke, monoid, Grothendieck polynomial, tableau, dominant, rectangular shape, staircase shape, set-valued}
\subjclass[2010]{
05A99%Enumerative combinatorics
, 
05E10%Combinatorial aspects of representation theory 
}

\begin{abstract}
In certain finite posets, the expected down-degree of
their elements is the same whether computed with respect to either
the uniform distribution or the
distribution weighting an element by the number of maximal chains 
passing through it.
We show that this coincidence of expectations holds for Cartesian products of chains, connected
minuscule posets, weak Bruhat orders on finite Coxeter groups, 
certain lower intervals in Young's lattice, and certain lower intervals
in the weak Bruhat order below dominant permutations.
Our tools involve formulas for counting nearly reduced factorizations in $0$-Hecke algebras; that is, factorizations that are one letter longer than the Coxeter group length.
\end{abstract}

\maketitle

%%%%%%%%%%%%%%%%%%%%%%%%%%%%%%%%%%%%%%%%
\section{Introduction}
\label{section:intro}
%%%%%%%%%%%%%%%%%%%%%%%%%%%%%%%%%%%%%%%%

The \emph{edge density} of a finite poset $P$ 
is the ratio of the number of its covering relations $q \lessdot p$ 
to its cardinality $\#P$. One can also interpret this ratio as the {expectation} $\EE(\Xdd)$ of a 
random variable $\Xdd$ on $P$, counting the elements covered by $p \in P$.
That is, the random variable $\Xdd$ computes the down-degree of $p$ in the Hasse diagram of $P$,
with respect to the uniform distribution.

If, instead, one changes this distribution by assigning to each $p \in P$
a probability proportional to the number of {maximal chains through $p$} in $P$, then one can define a random variable $\Ydd$ on $P$ whose value is again the down-degree of $p$ in the Hasse diagram, but now weighted by that probability. 

Given the different distributions in play, one would generally 
not expect the expectations for $\Xdd$ and $\Ydd$ to be equal. However, we prove that, in a variety of interesting settings, one does indeed find equality, and we conjecture that equality holds in several additional settings, as well.

\begin{Definition-nonum}
A finite poset $P$ has \emph{coincidental down-degree expectations (\CDE)} if 
$
\EE(\Xdd) =
\EE(\Ydd)$. 
\end{Definition-nonum}

We may also refer to $P$ as \emph{being} \CDE.
This terminology will be made more precise in Definition~\ref{defn:cde}. To motivate our study, consider the following examples of \CDE posets.

\begin{itemize}
\item
\emph{Disjoint unions of chains} are \CDE because
the two probability distributions are the same in this setting.

\item
\emph{Cartesian products of chains} are \CDE because Proposition~\ref{products-preserve-EEE}
will show that \CDE is preserved under Cartesian products of
graded posets.  

\item
\emph{Weak Bruhat order} on a finite Coxeter group is \CDE. In fact, any weak order on the chambers of a (central, essential) \emph{simplicial hyperplane arrangement} in $\mathbb{R}^r$ (or, more generally, the topes of an \emph{oriented matroid} of rank $r$) is CDE, as will be shown in Corollary~\ref{simplicial-arrangement-corollary}.

\item
\emph{Tamari lattices} on polygon triangulations are \CDE,
as will be shown in Corollary~\ref{Tamari-corollary}.

\item
\emph{Connected minuscule posets} are \CDE, as will be shown in
Theorem~\ref{minuscule-theorem}.  Also the distributive lattices
$J(P)$ associated to arbitrary
minuscule posets $P$ are \CDE, as will be shown in 
Theorem~\ref{Gaussian-poset-CDE-theorem}. 

\item 
Our main result, Theorem~\ref{main-theorem}, 
exhibits a rich class of lower intervals in \emph{Young's lattice}
and in weak Bruhat orders on permutations, all of which are \CDE. (In fact, this paper 
grew from an attempt to understand 
Corollary~\ref{motivating-corollary} of Theorem~\ref{main-theorem}
in two different ways.)
\end{itemize}

Before stating our main result, we recall a few definitions.  
\emph{Young's lattice} is the partial order on 
integer partitions $\lambda$ according to containment of
their \emph{Ferrers diagrams} $\mu \subset \lambda$.
The \emph{(right) weak Bruhat order} on permutations 
in the symmetric group $\symm_n$ is the transitive
closure of the relation $u \lessdot w$ if $w=us$
for some adjacent transposition $s = \sigma_i=(i,i+1)$ with $u(i) < u(i+1)$.
A permutation $w=w(1)\cdots w(n) \in \symm_n$ is 
\emph{vexillary} if it is $2143$-avoiding; that is, if
there are no quadruples $i_1 < i_2 < i_3 < i_4$
with $w(i_2) < w(i_1) < w(i_4)< w(i_3)$.  Such a vexillary permutation has \emph{shape $\lambda$} if
$\lambda$ is the weakly decreasing rearrangement of
its \emph{Lehmer code} $c(w):=(c_1(w),c_2(w),\ldots)$, where
$c_i(w):=\#\{j > i: w(i) \geq w(j)\}$.
Within the class of vexillary permutations we will consider three subclasses.
\begin{itemize}
\item
A permutation $w$ is \emph{dominant} if it is $132$-avoiding; that is, if there are no triples $i_1 < i_2 < i_3$
with $w(i_1) < w(i_3) < w(i_2)$.
If we regard the symmetric group $\symm_n$ as a subset of $\symm_{n+1}$
via the embedding $w \mapsto w'$, where $w'(i) = w(i)$ for $1 \le i \le n$ and $w(n+1) = n+1$,
then there is a unique dominant 
permutation in $\bigcup_{n \geq 0} \symm_n$ of shape $\lambda$, characterized 
by $c(w)=\lambda$ (without rearrangement).
\item 
A permutation $w$ is \emph{Grassmannian} if it has at most
one \emph{descent}; that is, if $w(i) > w(i+1)$ for at most one value of $i$.
\item
A permutation $w$ is \emph{inverse Grassmannian} if
$w^{-1}$ is Grassmannian; that is, if $w^{-1}(i) > w^{-1}(i+1)$
for at most one value of $i$.
\end{itemize} 

We will also want to consider a family of partitions generalizing both
\begin{itemize}
\item $a \times b$ \emph{rectangles} $b^a=(b,b,\ldots,b)$, and
\item \emph{staircases} $\delta_d=(d-1,d-2,\ldots,2,1)$. 
\end{itemize}
As such, we study the \emph{rectangular staircase partitions}
$\lambdanab{d}{a}{b}$, 
whose Ferrers diagrams are staircases $\delta_d$ 
in which each square is replaced by an $a \times b$ block. One such partition appears in Figure~\ref{figure:rectangular staircase example}.

\begin{figure}[htbp]
$$
\ttableau{
{}&{}&{}&{}&{}&{}&{}&{}&{}&{}&{}&{}&\\
{}&{}&{}&{}&{}&{}&{}&{}&{}&{}&{}&{}\\
{}&{}&{}&{}&{}&{}&{}&{}&\\
{}&{}&{}&{}&{}&{}&{}&{}\\
{}&{}&{}&{}&\\
{}&{}&{}&{}}
$$
\caption{The rectangular staircase partition $\lambdanab{4}{2}{4}=(12,12,8,8,4,4)$. The dominant permutation with Lehmer code $c(w) = \lambdanab{4}{2}{4}$ is
$w = 13 \ 14 \ 9 \ 10 \ 5 \ 6 \ 1 \ 2 \ 3 \ 4 \ 7 \ 8 \ 11 \ 12$ in  $\symm_{14}$.}\label{figure:rectangular staircase example}
\end{figure}

\begin{Theorem}
\label{main-theorem}
Let $\lambda$ be a partition, and $w$ a vexillary permutation 
of shape $\lambda$.

\begin{enumerate}\renewcommand{\labelenumi}{(\alph{enumi})}
\item
The lower intervals $[\varnothing,\lambda]$
in Young's lattice and $[e,w]$ in weak Bruhat order
have the same $\Ydd$ expectations; that is,
$$
\EE(\Ydd_{[\varnothing,\lambda]}) = \EE(\Ydd_{[e,w]}).
$$
\item
If $w$ is either Grassmannian or inverse Grassmannian, then these intervals
also have the same $\Xdd$ expectations; that is, 
$$
\EE(\Xdd_{[\varnothing,\lambda]}) = \EE(\Xdd_{[e,w]}).
$$
\item
If $\lambda=\lambdanab{d}{a}{b}$ 
is a rectangular 
staircase and $w$ is either 
\begin{itemize}
\item dominant, 
\item Grassmannian, or 
\item inverse Grassmannian,
\end{itemize}
then the posets $[\varnothing, \lambda]$ and $[e,w]$, and 
their duals $[\varnothing, \lambda]^*$ and $[e,w]^*$
are all \CDE. Moreover,
$$
\EE(\Xdd) 
=\EE(\Ydd)
= \frac{(d-1)ab}{a+b} 
$$
in each case.
\end{enumerate}
\end{Theorem}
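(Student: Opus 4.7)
The plan is to prove the three parts in order. Part (a) is the combinatorial heart; parts (b) and (c) then follow by combining it with classical interval identifications and shape-specific product-formula computations.

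For part (a), double-counting triples (maximal chain $c$, element $p \in c$, cover $q \lessdot p$) gives
\[
\EE(\Ydd_P) = \frac{1}{(n+1)\,e(P)} \sum_c \sum_{p \in c} \Xdd(p)
\]
for any graded poset $P$ of rank $n$ with $e(P)$ maximal chains. Specializing $P = [e, w]$, whose maximal chains are $\Red(w)$, the inner double-sum equals the number of \emph{nearly reduced factorizations} of $w$ (words of length $\ell(w)+1$ that evaluate to $w$ in the $0$-Hecke algebra): each such word arises uniquely from a reduced word by inserting an extra letter at the earliest position where it is already a right descent of the running partial product. Specializing $P = [\varnothing, \lambda]$, whose maximal chains are SYT of shape $\lambda$, the inner double-sum equals the number of \emph{barely set-valued tableaux} of shape $\lambda$. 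So part (a) reduces to
\[
\frac{\#\{\text{nearly reduced factorizations of } w\}}{\#\Red(w)} = \frac{\#\{\text{barely set-valued tableaux of } \lambda\}}{f^\lambda}
\]
for vexillary $w$ of shape $\lambda$. The denominators agree via the classical identity $\#\Red(w) = f^\lambda$ (from $F_w = s_\lambda$ for vexillary $w$, by Edelman--Greene). The numerator equality should follow from the stable-Grothendieck analog $G_w = G_\lambda$ by comparing the degree-$(|\lambda|+1)$ components: on the permutation side these are Hecke factorizations of length $\ell(w)+1$, and on the partition side these are barely set-valued tableaux.

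For part (b), when $w$ is Grassmannian of shape $\lambda$, I would show that the right weak-order interval $[e,w]$ is poset-isomorphic to either $[\varnothing,\lambda]$ or its dual $[\varnothing,\lambda]^*$ (small-case inspection suggests both phenomena occur, depending on the descent position and shape). Since $\EE(\Xdd_P) = \EE(\Xdd_{P^*})$ always (edge density is self-dual), either relationship yields $\EE(\Xdd_{[e,w]}) = \EE(\Xdd_{[\varnothing,\lambda]})$. For inverse Grassmannian $w$, the involution $u \mapsto u^{-1}$ on $\symm_n$ sends $[e,w]$ in right weak order to $[e, w^{-1}]$ in left weak order, preserving down-degree in the swapped sense; combining this with the partition-conjugation isomorphism $[\varnothing,\lambda] \cong [\varnothing,\lambda']$ (using that vexillary $w$ and vexillary $w^{-1}$ have conjugate shapes) reduces the claim to the Grassmannian case applied to $w^{-1}$.

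For part (c), on rectangular staircase $\lambda = \lambdanab{d}{a}{b}$ I would first establish that $[\varnothing,\lambda]$ and $[\varnothing,\lambda]^*$ are both \CDE with common expectation $\tfrac{(d-1)ab}{a+b}$. The $\EE(\Xdd)$ side is a product-formula computation of $\#\{\mu \subseteq \lambda\}$ and $\sum_{\mu \subseteq \lambda} d(\mu)$ exploiting the block structure (each $\mu$ is parameterized by nested partition choices within the $a\times b$ blocks of the staircase of size $d-1$). The $\EE(\Ydd)$ side combines the barely-set-valued interpretation from part (a) with a hook-product formula for $f^{\lambdanab{d}{a}{b}}$. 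The dual $[\varnothing,\lambda]^*$ is handled by the parallel argument with outer corners replacing inner corners, using the natural rotational symmetry of rectangular staircase shapes. The Grassmannian and inverse Grassmannian cases for $[e,w]$ and $[e,w]^*$ then reduce to Young's lattice via part (b). The dominant case is not subsumed by (b), since for dominant $w$ the interval $[e,w]$ can be strictly larger than $[\varnothing,\lambda]$; it requires a separate direct argument showing $\EE(\Xdd_{[e,w]}) = \tfrac{(d-1)ab}{a+b}$, exploiting the structure of dominant permutations whose Lehmer code is a rectangular staircase. The main obstacle is the nearly-reduced/barely-set-valued identity in part (a), which demands a $0$-Hecke/Grothendieck analog of Stanley--Edelman--Greene one degree above the reduced-word level; the explicit rectangular-staircase computations in (c), especially the direct dominant-case analysis, are technically intricate but reduce to matching product formulas once the correct framework is in place.
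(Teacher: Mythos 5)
Your proposal follows essentially the same route as the paper's proof. Part (a) is reduced exactly as the paper does: to counting nearly reduced words and barely set-valued tableaux (via the insertion bijection recorded in Proposition~\ref{Y-expectation-as-word-counts} here), then matching numerators and denominators through $F_w=s_\lambda$ and $G_w=G_\lambda$ for vexillary $w$, the latter being precisely the paper's Lemma~\ref{vexillary-lemma} (proved via the Knutson--Miller--Yong flagged set-valued tableau formula for $\groth^{(-1)}_w$). Part (b) is also the paper's argument: for Grassmannian $w$ the paper pins down the isomorphism precisely as $[e,w]\cong[\varnothing,\lambda]^*$ (Proposition~\ref{Grassmannian-poset-isomorphism}), not ``either the interval or its dual,'' though as you note the self-duality of $\EE(\Xdd)$ makes this distinction harmless; the inverse Grassmannian case goes via $[e,w]^*\cong[e,w^{-1}]$ and conjugation of shapes, as you say. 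Part (c) decomposes the same way: direct Young's-lattice calculations of $\EE(\Xdd)$ and $\EE(\Ydd)$ for rectangular staircases (Propositions~\ref{X-expectation-for-Young} and~\ref{Y-expectation-for-Young}, via the crowding/uncrowding recurrence, the $q$-Pascal--Catalan identity, and a Chu--Vandermonde summation), then reduction of the Grassmannian/inverse Grassmannian weak-order intervals to those via (b), and a separate direct argument for the dominant case (the paper's Section~\ref{clever-section}, exploiting the forest structure of the noninversion poset and the hook-length formula for forests).

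One small error worth flagging: you invoke ``the natural rotational symmetry of rectangular staircase shapes'' to pass to the dual $[\varnothing,\lambda]^*$, but rectangular staircases are \emph{not} $180^\circ$-rotation symmetric in general (e.g., $\lambdanab{4}{2}{4}$ is not). This claim is not needed, however: the equalities $\EE(\Xdd_{[\varnothing,\lambda]})=\EE(\Xdd_{[\varnothing,\lambda]^*})$ and $\EE(\Ydd_{[\varnothing,\lambda]})=\EE(\Ydd_{[\varnothing,\lambda]^*})$ hold for \emph{every} partition $\lambda$, the first trivially and the second because barely set-valued tableaux biject with both kinds of triples in Proposition~\ref{tableau-bijections}(b),(c). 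So the conclusion survives, but the justification should be replaced.
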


Note that one always has 
$\EE(\Xdd_{P})=\EE(\Xdd_{P^*})$ because
the Hasse diagrams of $P$ and $P^*$ have the same edge densities,
but $\EE(\Ydd_{P})$ and $\EE(\Ydd_{P^*})$ need not be equal. Indeed, as will be described in Example~\ref{duality-does-not-preserve-CDE}, there are \CDE posets $P$ whose
dual posets $P^\opp$ are not \CDE.

\begin{Conjecture}
When $\lambda=\lambdanab{d}{a}{b}$, 
the conclusion in Theorem~\ref{main-theorem}(c) holds
for all vexillary $w$ of shape $\lambda$.
\end{Conjecture}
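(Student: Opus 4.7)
The plan is to use Theorem~\ref{main-theorem}(a,c) to reduce the conjecture to two remaining identities. By Theorem~\ref{main-theorem}(c), the shape-side poset $[\varnothing,\lambdanab{d}{a}{b}]$ is already \CDE with common expectation $\tfrac{(d-1)ab}{a+b}$; Theorem~\ref{main-theorem}(a) then yields
\[
\EE(\Ydd_{[e,w]})=\EE(\Ydd_{[\varnothing,\lambdanab{d}{a}{b}]})=\frac{(d-1)ab}{a+b}
\]
for every vexillary $w$ of shape $\lambdanab{d}{a}{b}$. Hence the conjecture reduces to the two identities
\[
\EE(\Xdd_{[e,w]})=\frac{(d-1)ab}{a+b} \qquad\text{and}\qquad \EE(\Ydd_{[e,w]^{\opp}})=\frac{(d-1)ab}{a+b},
\]
with $\EE(\Xdd_{[e,w]^{\opp}})=\EE(\Xdd_{[e,w]})$ automatic since edge density is invariant under poset duality.

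Set $N(w):=\#[e,w]$ and $E(w):=\sum_{u\le w}\Des(u)$, so that the first identity reads $E(w)/N(w)=\tfrac{(d-1)ab}{a+b}$. The goal is to show that the ratio $E(w)/N(w)$ depends on $w$ only through its shape, since Theorem~\ref{main-theorem}(c) already realizes the target value on three specific vexillary representatives of $\lambdanab{d}{a}{b}$ (dominant, Grassmannian, inverse Grassmannian). I would first test this invariance on small shapes $\lambdanab{d}{a}{b}$ with many vexillary representatives, and then attempt to construct explicit local moves $w\rightsquigarrow w'$ on the vexillary class of a fixed shape---natural candidates being the ``flag-shifting'' moves underlying the equality of Stanley symmetric functions $F_w=F_{w'}$ across the vexillary class---together with bijections $[e,w]\leftrightarrow[e,w']$ preserving $\Des$. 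An alternative, more direct line of attack is to interpret both $E(w)$ and $N(w)$ as weighted sums of suitably flagged fillings of the Rothe diagram of $w$ (or of the Young diagram $\lambda$), in the spirit of the paper's interpretation of $\EE(\Ydd)$ via nearly reduced factorizations and barely set-valued tableaux, and then to show that after specialization to $\lambda=\lambdanab{d}{a}{b}$ both sums become shape-dependent only.

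The dual identity $\EE(\Ydd_{[e,w]^{\opp}})=\tfrac{(d-1)ab}{a+b}$ should be amenable to the $0$-Hecke machinery in its upward form: in $[e,w]^{\opp}$ maximal chains are read by reversing reduced factorizations from the top element down to $e$, and one counts nearly reduced analogs governing the up-degree, while the $(a,b)$-symmetry $\lambdanab{d}{a}{b}=\lambdanab{d}{b}{a}$ under shape transposition should provide the required reciprocity. The main obstacle, and what I expect to be the hardest step, is the first identity: unlike $\EE(\Ydd)$, which the paper controls via ratios of nearly-reduced to reduced word counts and is hence shape-dependent through Edelman--Greene, the quantity $\EE(\Xdd_{[e,w]})=E(w)/N(w)$ depends on the intrinsic cardinality of $[e,w]$, for which no closed formula is known on the vexillary class. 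Consequently the crux will be either to produce a new closed formula for $N(w)$ on the vexillary class of rectangular staircase shape, or to construct a sufficiently strong bijective transfer between different vexillary intervals; either path appears to require combinatorial ideas beyond those developed in the body of the paper.
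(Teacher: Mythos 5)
This statement is an \emph{open conjecture} in the paper; the authors provide no proof of it, so there is no ``paper's own proof'' to compare against. Your submission is also not a proof: it correctly reduces the conjecture to what remains, but the heart of the matter is left unresolved, as you yourself acknowledge (``the main obstacle,'' ``I would first test,'' ``I expect to be the hardest step''). Those are honest observations, but they do not close the conjecture.

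Two points on the reduction itself. First, you list two residual identities, but the second one is automatic and does not need a separate argument: the unnumbered Corollary following Corollary~\ref{Y-expectation-as-word-counts} gives $\EE(\Ydd_{[e,w]^*}) = \EE(\Ydd_{[e,w]})$ for every Coxeter group element $w$ (reverse each reduced and nearly reduced word). So once $\EE(\Ydd_{[e,w]}) = \tfrac{(d-1)ab}{a+b}$ is established via Theorem~\ref{main-theorem}(a,c), the dual $Y$-identity follows for free, and the entire conjecture collapses to the single claim $\EE(\Xdd_{[e,w]}) = \tfrac{(d-1)ab}{a+b}$ for all vexillary $w$ of shape $\lambdanab{d}{a}{b}$. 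Second, and more substantively: your proposed line of attack --- ``show that $E(w)/N(w)$ depends only on the shape of $w$'' --- cannot be a shape-agnostic phenomenon. Example~\ref{ex:grassmannian example} exhibits three vexillary permutations of the same shape $(3,1,1)$ with $\EE(\Xdd)$ equal to $5/4$, $13/10$, and $14/11$ respectively. Any successful proof must therefore exploit the rectangular staircase structure in an essential way, and neither the ``flag-shifting moves'' nor the ``weighted sums of flagged fillings'' sketch engages with what is special about $\lambdanab{d}{a}{b}$ that makes the constancy hold there but fail for $(3,1,1)$. That gap is exactly why this remains a conjecture, and your proposal identifies it correctly without supplying the missing idea.
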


There is a close connection relating the
      instance of CDE given by Theorem~\ref{main-theorem}(c) to recent work of 
Chan, Mart\'{i}n, Pflueger, and Teixidor i Bigas \cite{Chan1} and of
      Chan,
      Haddadan, Hopkins, and Moci \cite{Chan2}. The result \cite[Corollary~2.15]{Chan1} (recapitulated as \cite[Theorem 1.1]{Chan2}) calculates
      the expected ``jaggedness'' of a lattice path in an $a\times b$
      grid under a certain probability distribution on paths. This is
      the central combinatorial fact used in \cite{Chan1} to 
reprove a formula of
      Eisenbud-Harris and of Pirola for the genera of Brill-Noether
      curves. Theorem~1.2 of \cite{Chan2} is a
      generalization of this jaggedness theorem to
      lattice paths in a general connected skew shape with respect to
      any ``toggle symmetric'' distribution. As is detailed further in
      Remark~\ref{remark:Chan}, Theorem~\ref{main-theorem}(c) 
provides a different proof of
      \cite[Corollary 2.15]{Chan1}, whereas \cite[Theorem 1.2]{Chan2} may
      be used to give a different proof of the assertion on 
      $[\varnothing, \lambda]$ and $[\varnothing,\lambda]^*$ 
      for $\lambda=\lambdanab{d}{a}{b}$
      in Theorem~\ref{main-theorem}(c).

After covering the groundwork for \CDE posets in Section~\ref{section:EDDE}, most of the paper is aimed toward proving
the assertions of Theorem~\ref{main-theorem}.  
We build up general techniques to compute 
$\EE(\Xdd)$ and $\EE(\Ydd)$ for 
$[\varnothing,\lambda]$ in Young's lattice 
using Young tableaux and set-valued tableaux (Section~\ref{tableaux-section}),
and for $[e,w]$ in Coxeter groups 
involving reduced words and $0$-Hecke words (Section~\ref{Coxeter-section}).
Tableaux reenter the discussion when we specialize to the symmetric
group in Section~\ref{type-A-section}, for reasons that we highlight now.

For a permutation $w$, maximal chains in the 
lower interval $[e,w]$ of weak Bruhat
order correspond to 
\emph{reduced words} for $w$. In particular, they describe factorizations
$w=\sigma_{i_1} \sigma_{i_2} \cdots \sigma_{i_\ell}$ into adjacent transpositions $\sigma_i$ 
having the minimum possible length $\ell$, called $\ell(w)$.
Stanley \cite{Stanley84} proved that the number of reduced words
for any vexillary permutation $w$ of shape $\lambda$
is $f^\lambda$, the number of \emph{standard Young tableaux} of shape $\lambda$,
which has a simple product expression known as the
Frame-Robinson-Thrall \emph{hook-length formula} 
\cite[Corollary 7.21.6]{StanleyEC2}.
More generally, one can consider factorizations 
$T_w=T_{i_1} T_{i_2} \cdots T_{i_L}$ in
the \emph{$0$-Hecke monoid} for permutations, with generators
$T_1,\ldots,T_{n-1}$ satisfying the usual braid relations 
together with the quadratic relation $T_i^2=T_i$.
The $0$-Hecke factorizations for $w$ having the minimum
length $L=\ell(w)$ correspond
to reduced words as before. 
 Using results from the
theory of \emph{Schubert} and \emph{Grothendieck polynomials} (see Section~\ref{vexillary-section}), one can
generalize Stanley's result to assert that the number of $0$-Hecke words 
having length $L$ for a vexillary permutation $w$ of shape $\lambda$
is the number of \emph{standard set-valued tableaux}
of shape $\lambda$ having entries $1,2,\ldots,L$, each appearing
exactly once.  Here a set-valued tableau has a subset of entries
filling each square, but entries still increase from left-to-right in a row, and from top-to-bottom in a column.
When $L=\ell(w)+1$, we call the corresponding
$0$-Hecke words \emph{nearly reduced} and the set-valued tableaux
\emph{barely set-valued}.  This terminology will be made precise in Definition~\ref{defn:set-valued tableaux}.

One no longer has a hook-length-style product formula 
for counting set-valued tableaux of any shape $\lambda$.
However, we derive a general recurrence for counting these objects
(Corollary~\ref{barely-set-valued-tableau-recurrence}),
and use this to show that for dominant $w$ whose
shape is a rectangular staircase $\lambdanab{d}{a}{b}$, 
one has this rephrasing of 
$\EE(\Ydd_{[e,w]})=\frac{(d-1)ab}{a+b}$.

\begin{Corollary}
\label{motivating-corollary}
Let $w$ be a dominant permutation of rectangular staircase shape
$\lambda=\lambdanab{d}{a}{b}$.  
Then the number of barely set-valued tableaux of shape $\lambda$ (equivalently, the number of nearly reduced words for $w$) is 
$$
\left(|\lambda|+1\right) \frac{(d-1)ab}{a+b} f^{\lambda}.
$$
\end{Corollary}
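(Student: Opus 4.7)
The plan is to reduce the corollary to Theorem~\ref{main-theorem}(c) by rewriting $\EE(\Ydd_{[e,w]})$ as a ratio whose numerator counts nearly reduced words. Since a dominant permutation is vexillary, the correspondence recalled in the introduction between $0$-Hecke factorizations of $w$ of length $L$ and standard set-valued tableaux of shape $\lambda$ with entries $1,\ldots,L$ each used once identifies nearly reduced words for $w$ (the case $L = \ell(w)+1 = |\lambda|+1$) with barely set-valued tableaux of shape $\lambda$. So it suffices to count the former.

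The key combinatorial step I would carry out is a \emph{stay-step decomposition} of nearly reduced words. A nearly reduced word $(i_1,\ldots,i_L)$ for $w$ has length $L = |\lambda|+1$, and reading the product $T_{i_1}\cdots T_{i_L}=T_w$ left-to-right and using $T_j^2=T_j$, each letter either increases the length of the running partial product by one or leaves it unchanged. Since we start at $e$ and end at $w$, there must be exactly $|\lambda|$ length-increasing steps and exactly one stay step. If the stay step occurs at position $m$, then after $m-1$ letters the partial product is some $u\in[e,w]$ with $\ell(u)=m-1$, the $m$th letter $T_{i_m}$ corresponds to a right descent of $u$, and the final $L-m$ letters form a reduced word from $u$ to $w$. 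Summing over $u$ and $m$ yields
\[
\#\{\text{nearly reduced words for }w\}
\;=\;\sum_{u\in[e,w]} e(u)\,\Xdd(u)\,f(u),
\]
where $e(u)$ and $f(u)$ count reduced words from $e$ to $u$ and from $u$ to $w$ respectively, and $\Xdd(u)$ is the down-degree of $u$ in $[e,w]$ (equivalently, its number of right descents).

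To close, observe that $e(u)f(u)$ is the number of maximal chains of $[e,w]$ passing through $u$, and each of the $f^{\lambda}$ maximal chains (Stanley's count for vexillary $w$) meets exactly one element at each of the $|\lambda|+1$ ranks, so $\sum_{u} e(u)f(u)=(|\lambda|+1)f^{\lambda}$. Hence
\[
\#\{\text{nearly reduced words for }w\}
\;=\;(|\lambda|+1)\,f^{\lambda}\cdot\EE(\Ydd_{[e,w]}),
\]
and Theorem~\ref{main-theorem}(c) supplies $\EE(\Ydd_{[e,w]})=\frac{(d-1)ab}{a+b}$ for $\lambda=\lambdanab{d}{a}{b}$ and $w$ dominant. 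Granting Theorem~\ref{main-theorem}(c), no serious obstacle remains; the only substantive ingredient is the stay-step decomposition, which is a direct unwinding of the $0$-Hecke quadratic relation $T_i^2=T_i$.
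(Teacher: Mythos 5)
Your proposal is correct, and it takes essentially the same route as the paper. Your ``stay-step decomposition'' is exactly the bijection the paper proves in the (unnamed) Proposition leading to Corollary~\ref{Y-expectation-as-word-counts}: a nearly reduced word is split at the unique position where the $0$-Hecke partial product stalls, yielding a triple (reduced word for $u$, descent of $u$, reduced word from $u$ to $w$). From that, your identity $\#\Red^{(+1)}(w) = (|\lambda|+1)f^\lambda\cdot\EE(\Ydd_{[e,w]})$ is just Corollary~\ref{Y-expectation-as-word-counts} rewritten using Stanley's count $\#\Red(w)=f^\lambda$ for vexillary $w$ (which the paper obtains from Lemma~\ref{vexillary-lemma}, and which also underlies the ``nearly reduced words $\leftrightarrow$ barely set-valued tableaux'' equivalence you cite). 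Combining with Theorem~\ref{main-theorem}(c) is precisely how the paper obtains the corollary, so there is no gap and no circularity --- Theorem~\ref{main-theorem}(c) is proved independently of Corollary~\ref{motivating-corollary}.
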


\begin{Example}\label{ex:long element in s3}
Taking $d=3$ 
and $a=b=1$, one has $\lambda=\lambdanab{3}{1}{1}=(2,1)$ and
$w=321$, with two reduced words: $(\sigma_1,\sigma_2,\sigma_1)$ and 
$(\sigma_2,\sigma_1,\sigma_2)$. 
Correspondingly, there are $f^{\lambda}=2$ standard Young tableaux of
shape $\lambda$:
$$
\tableau{1&2\\3}
\hspace{.5in} \text{and} \hspace{.5in}
\tableau{1&3\\2} \ .
$$
Meanwhile, there are eight $0$-Hecke words of length $4$ for $w$:
$$
\begin{aligned}
&\left\{ (T_1,T_1,T_2,T_1), (T_1,T_2,T_1,T_1), (T_1,T_2,T_1,T_2), (T_1,T_2,T_2,T_1),\right.\\ 
&\quad 
\left. (T_2,T_1,T_1,T_2), (T_2,T_1,T_2,T_1),(T_2,T_1,T_2,T_2),(T_2,T_2,T_1,T_2) \right\}.
\end{aligned}
$$
These correspond to the eight barely set-valued tableaux of
shape $\lambda=(2,1)$:
$$
\tableau{12&3\\4} \ , \ \
\tableau{12&4\\3} \ , \ \
\tableau{1&23\\4} \ , \ \
\tableau{1&4\\23} \ , \ \
\tableau{1&24\\3} \ , \ \
\tableau{1&3\\24} \ , \ \
\tableau{1&2\\34} \ , \text{ and }
\tableau{1&34\\2} \ .
$$
This agrees with Corollary~\ref{motivating-corollary}, which would have
predicted this number to be
$$
\left(|\lambda|+1\right) \frac{(d-1)ab}{a+b} f^{\lambda}
=(3+1)\frac{(3-1)\cdot1\cdot1}{1+1} \cdot 2
=8.
$$
\end{Example}

A remarkable formula of Macdonald \cite[page 91]{Macdonald91} states that 
\[\sum_{\sigma_{i_1},\sigma_{i_2},\ldots, \sigma_{i_N}} i_1 i_2\cdots i_{N}=N!,\]
where the sum is over reduced words of $w_0:=n(n-1) \cdots 21$, the unique longest length permutation of ${\mathfrak S}_n$ (of length $N={n\choose 2}$).
A generalization, due to Fomin and Kirillov \cite{Fomin.Kirillov97} (recapitulated here as
Theorem~\ref{Fomin-Kirillov-theorem}) connects this formula to enumerations of certain 
\emph{plane partitions}. Section~\ref{FK-conjecture-section}
contains a conjecture (Conjecture~\ref{conj:mainone})
which is inspired both by Corollary~\ref{motivating-corollary} 
and by the Macdonald and Fomin-Kirillov formulas. This conjecture concerns nearly reduced words of dominant permutations of rectangular staircase shape $\delta_d(b^a)$.  We show that this conjecture is equivalent to another conjecture, phrased in terms of barely set-valued tableaux. 
Thereby, we verify these conjectures for the case $d=2$ (i.e., rectangles). The proof the equivalence is based on another extension of the 
Fomin-Kirillov formula (Theorem~\ref{thm:anFKformula}), valid for any vexillary permutation. This latter extension is proved in Section~\ref{FKformula-proof-section}, by combining algebraic ideas of Fomin-Stanley
\cite{Fomin.Stanley} and Fomin-Kirillov \cite{Fomin.Kirillov:YB, Fomin.Kirillov, Fomin.Kirillov97}.

%Section~\ref{FK-conjecture-section}
%contains a conjecture (Conjecture~\ref{conj:mainone})
%which is inspired both by Corollary~\ref{motivating-corollary} 
%and by a formula of Fomin and Kirillov \cite{Fomin.Kirillov97} (recapitulated here as
%Theorem~\ref{Fomin-Kirillov-theorem}). In fact, some 
%support for our conjecture is derived from an extension
%(Theorem~\ref{thm:anFKformula}) of this 
%Fomin-Kirillov formula, which we prove in Section~\ref{FKformula-proof-section}.

We wish to highlight here
one byproduct of our analysis.
The calculation of $\EE(\Xdd_{[\varnothing,\lambda]})$
for $\lambda$ a rectangular staircase 
(Proposition~\ref{X-expectation-for-Young})
uses the $q=1$ specialization of an easy
recurrence for the \emph{rank-generating function}
$R(\lambda,q):=\sum_{\mu \subset \lambda} q^{|\mu|}$
of the interval $[\varnothing,\lambda]$ in Young's lattice.
This recurrence encompasses 
\begin{itemize}
\item the \emph{$q$-Pascal recurrence} for \emph{$q$-binomials}
\cite[Equation~(1.67)]{StanleyEC1}
when $\lambda$ is a rectangle, and
\item the recurrence for the 
\emph{Carlitz-Riordan $q$-Catalan polynomial} which counts all
Dyck paths by their enclosed {area} \cite[Proposition 1.6.1]{Haglund08} when $\lambda$ is a staircase,
\end{itemize}
but we were unable to find it in the literature.

\begin{Proposition}
\label{q-Pascal-Catalan-generalization}
For any partition $\lambda$,
$$
R(\lambda,q)
 =\sum_{x=(i,j)} q^{i(j-1)} 
     \cdot R(\lambda_{(x)},q) 
     \cdot R(\lambda^{(x)},q),
$$
where $x$ runs over all outside corner cells of $\lambda$, lying in
row $i$ and column $j$, and where
$$
\lambda_{(x)}=(\lambda_{i+1},\lambda_{i+2},\ldots) \text{ \ \ and \ \ } \lambda^{(x)}=(\lambda_1 - j,\lambda_2 - j,\ldots,\lambda_{i-1} - j)
$$
are the subshapes of $\lambda$ in the rows strictly below
$x$ and the columns strictly to the right of $x$, respectively.
\end{Proposition}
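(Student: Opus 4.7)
The plan is to prove the proposition by a size-preserving bijection between subpartitions $\mu \subseteq \lambda$ and triples $(x, \nu, \rho)$, where $x = (i,j)$ is an outside corner of $\lambda$, $\nu \subseteq \lambda^{(x)}$, and $\rho \subseteq \lambda_{(x)}$. Given $\mu$, I would let $i = i(\mu)$ be the smallest index at which $\mu_i = \lambda_i$, both partitions padded with trailing zeros, and set $j = \lambda_i + 1$. Such an $i$ exists since eventually both $\mu_i$ and $\lambda_i$ equal zero. The position $(i,j)$ is then a genuine outside corner of $\lambda$, because $\mu_r < \lambda_r$ for $r < i$ by the minimality of $i$, while $\mu_{i-1} \geq \mu_i = \lambda_i$ (since $\mu$ is a partition) forces $\lambda_{i-1} \geq \mu_{i-1} + 1 \geq \lambda_i + 1 = j$, as required.

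I would then define $\nu := (\mu_1 - \lambda_i, \ldots, \mu_{i-1} - \lambda_i)$ and $\rho := (\mu_{i+1}, \mu_{i+2}, \ldots)$. The inclusion $\nu \subseteq \lambda^{(x)} = (\lambda_1 - j, \ldots, \lambda_{i-1} - j)$ is the translation of the inequality $\mu_r \leq \lambda_r - 1$ for $r < i$, which again reduces to the minimality of $i$; and $\rho \subseteq \lambda_{(x)}$ is immediate from $\mu \subseteq \lambda$. Partitioning the cells of $\mu$ into the fully occupied $i \times (j-1)$ northwest rectangle, the cells recorded by $\nu$ (in rows above and columns right of $x$), and the cells recorded by $\rho$ (in rows below $x$) gives the size identity $|\mu| = i(j-1) + |\nu| + |\rho|$. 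For the inverse, given $(x, \nu, \rho)$, set $\mu_r := \nu_r + (j-1)$ for $r < i$, $\mu_i := j-1$, and $\mu_{i+s} := \rho_s$; here the chain $\rho_1 \leq \lambda_{i+1} \leq \lambda_i = j-1$, valid because $x$ is an outside corner, ensures that $\mu$ is weakly decreasing across row $i$, and the defining bound $\nu_r \leq \lambda_r - j$ translates into $\mu_r < \lambda_r$ for $r < i$, confirming that $i(\mu) = i$ in the forward rule.

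I do not anticipate a serious conceptual obstacle here; the main delicacy is the off-by-one built into the proposition, namely that $\lambda^{(x)}_r = \lambda_r - j$ rather than $\lambda_r - (j-1)$. This discrepancy is exactly what forces the strict inequality $\mu_r < \lambda_r$ above row $i$, and is what makes the corner $x(\mu)$ uniquely determined by $\mu$. Once the bijection is in hand, summing $q^{|\mu|}$ over all $\mu \subseteq \lambda$ and reorganizing via the decomposition $\mu \leftrightarrow (x, \nu, \rho)$ yields the claimed recurrence.
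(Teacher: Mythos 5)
Your proof is correct and takes essentially the same approach as the paper: both decompose $\mu \subseteq \lambda$ according to the outside corner $x = (i, \lambda_i+1)$ where $i$ is the smallest row with $\mu_i = \lambda_i$ (the paper phrases this as "the northeasternmost cell $y$ to the left of a corner that $\mu$ contains, if any"), and both split $\mu$ into the full $i \times (j-1)$ rectangle, a copy of a subpartition of $\lambda^{(x)}$, and a copy of a subpartition of $\lambda_{(x)}$. Your write-up is somewhat more explicit than the paper's (which argues from a worked example), but it is the same bijection.
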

\begin{proof}
For $j > 1$, each outside corner cell $x=(i,j)$ has a cell $y=(i,j-1)$ 
inside $\lambda$ and directly to its left.  For example, those neighboring cells are labeled 
$\{y_1,y_2,y_3,y_4\}$ in the shape below.
$$
\tableau{
{}&{}&{}&{}&{}&{}&{}&{}&{y_1}\\
{}&{}&{}&{}&{}&{}&{}&{}&{ }\\
{}&{}&{}&{}&{}&{}&{}&{y_2}\\
{}&{}&{}&{}&{y_3}\\
{}&{}&{}&{}&{}\\
{}&{y_4}\\
{}&{}\\
{}&{}\\
{}&{}}
$$
The recurrence in the theorem comes from classifying a shape $\mu \subset \lambda$ according
to which, if any, is the northeasternmost cell $y_i$ contained in $\mu$.
For example, with $\lambda$ as above, consider the cell $y_3 =(i,j-1)=(4,5)$, immediately to the
left of the outside corner $x_3=(i,j)=(4,6)$.  A partition 
$\mu\subset \lambda$ for which $y_3  \in \mu$ but $y_1,y_2 \not\in\mu$
must contain 
\begin{itemize}
\item {all} of the $i(j-1)=4\cdot 5=20$
cells weakly northwest of $y_3$, labeled $\bullet$ in the figure below, and
\item {none} of the cells at the east ends of rows $1,2,\ldots,i-1 = 4$,
labeled $\times$ in the figure below.
\end{itemize}
$$\tableau{
{\bullet}&{\bullet}&{\bullet}&{\bullet}&{\bullet}&{}&{}&{}&{\times}\\
{\bullet}&{\bullet}&{\bullet}&{\bullet}&{\bullet}&{}&{}&{}&{\times}\\
{\bullet}&{\bullet}&{\bullet}&{\bullet}&{\bullet}&{}&{}&{\times}\\
{\bullet}&{\bullet}&{\bullet}&{\bullet}&{y_3}\\
{}&{}&{}&{}&{}\\
{}&{}\\
{}&{}\\
{}&{}\\
{}&{}}$$
Thus, this $\mu$ is determined by its restrictions to the shapes in the unmarked cells of the figure above. The southwesternmost of these constitute exactly $\lambda_{(x_3)}$, and the northeasternmost form a copy of $\lambda^{(x_3)}$.
\end{proof}

%%%%%%%%%%%%%%%%%%%%%%%%%%%%%%%%%%%%%%%%
\section{An overview of the \CDE property}
\label{section:EDDE}
%%%%%%%%%%%%%%%%%%%%%%%%%%%%%%%%%%%%%%%%

We now make precise the central theme of this paper, broached previously in Section~\ref{section:intro}.

\begin{Definition}\label{defn:cde}
Given a finite poset $(P,\leq)$, define two different probability spaces on the underlying set $P$.
\begin{itemize}
\item Let $\Omegaunif_P$ be the uniform distribution, assigning
$\Prob(p)=1/\#P$  for each $p \in P$.
\item Let $\Omegachain_P$ assign $\Prob(p)$ to be proportional to the number of maximal chains $c$ in $P$ containing $p$.  That is, if $\MaxChains(P)$ is the set of maximal chains in $P$, then
$$\Prob(p) = \frac{\#\{c \in \MaxChains(P): p \in c \}}
                        {\#\{(c,q) \in \MaxChains(P) \times P: q \in c \}}
                        = \frac{\#\{c \in \MaxChains(P): p \in c \}}{\sum\limits_{c \in \MaxChains(P)} \#c}.
$$
\end{itemize}
Define random variables $\Xdd:=\Xdd_P$ on $\Omegaunif_P$ and
 $\Ydd:=\Ydd_P$ on $\Omegachain_P$ via the same formula:  
$$
\Xdd(p) = \Ydd(p)=\#\{ q \in P: q \lessdot p \}.
$$
A poset $P$ has 
\emph{coincidental down-degree expectations} (equivalently, $P$ \emph{is} \CDE) 
if $\EE(\Xdd) = \EE(\Ydd)$.
\end{Definition}

Whenever the poset $P$ is graded, there is a natural way to
interpolate between $X$ and $Y$, pointed out to the authors by
S.~Hopkins, and suggested by the work in \cite{Chan2}. We describe this interpolation using the language of multichains.

\begin{Definition}
Given a finite poset $P$ and a positive integer $m$,
define a probability space $\Omega^{{\scriptscriptstyle (}m{\scriptscriptstyle)}}_P$ on the
underlying set $P$, with $\Prob(p)$ proportional to the
number of $m$-element \emph{multichains} 
$p_1 \leq p_2 \leq \cdots \leq p_m$ in $P$ that pass through $p$.
On this probability space $\Omega^{{\scriptscriptstyle (}m{\scriptscriptstyle)}}_P$, define the
random variable $X^{{\scriptscriptstyle (}m{\scriptscriptstyle)}} := X^{{\scriptscriptstyle (}m{\scriptscriptstyle)}}_P$ as before, where $X^{{\scriptscriptstyle (}m{\scriptscriptstyle)}}(p)=\#\{q \in P: q \lessdot p\}$ records
the down-degree of the element $p$.  
\end{Definition}

Two extreme cases are of particular interest.
When $m=1$, we have 
$(\Omega^{{\scriptscriptstyle (}1{\scriptscriptstyle)}}_P,X^{{\scriptscriptstyle (}1{\scriptscriptstyle)}})=(\Omegaunif_P,\Xdd)$.  On the other hand,
if $P$ is {graded of rank $r$}, that is, if 
all of its (inclusion-)maximal chains have exactly $r+1$ elements,
then it is not hard to see (cf. \cite[Proposition 2.9]{Chan2})
that the pair $(\Omega^{{\scriptscriptstyle (}m{\scriptscriptstyle)}}_P,X^{{\scriptscriptstyle (}m{\scriptscriptstyle)}})$ approaches
$(\Omegachain_P,\Ydd)$ in the limit as $m \rightarrow \infty$. Indeed,
one can easily check (cf.~\cite[\S 3.12]{StanleyEC1}) 
that in this graded setting, 
the number of $m$-element multichains passing through $p$ is a polynomial
in $m$ of degree $r$, and that the leading coefficient of this polynomial is $(1/r!) \cdot \#\{c \in \MaxChains(P): p \in c\}$.

\begin{Definition}
\label{mCDE-definition}
A finite poset $P$ is \emph{multichain-\CDE} (written {\mCDE}) 
if $\EE(X^{{\scriptscriptstyle (}m{\scriptscriptstyle)}})$ is constant and independent of $m$, for $m \geq 1$.
\end{Definition}

In particular, observe that if $P$ is both graded and \mCDE, then $P$ is also \CDE; indeed, in that case we would have
$\EE (\Ydd) =\lim_{m \rightarrow \infty} \EE (X^{{\scriptscriptstyle (}m{\scriptscriptstyle)}}) =\EE (X^{{\scriptscriptstyle (}1{\scriptscriptstyle)}}) = \EE (\Xdd)$.

It will be helpful to know that for the distributive
lattice $J(P)$ of order ideals $I$ in
$P$, the probability distribution $\Omega^{{\scriptscriptstyle (}m{\scriptscriptstyle)}}_{J(P)}$ is \emph{toggle-symmetric}, a concept defined in \cite{Chan2}
and which we explain now.  

\begin{Definition}{\rm (\cite[Definition 2.2]{Chan2})}
\label{toggle-symmetry-definition}
Let $P$ be a finite poset, and let $I$ denote an order ideal in $P$. 
For a subset $A \subseteq P$, let $\max(A)$ (respectively, $\min(A)$) 
denote the subset of $P$-maximal (respectively, $P$-minimal elements) in $A$. 
A probability distribution 
on the finite distributive lattice $J(P)$
is {\it toggle-symmetric} if, for every $p \in P$,
the distribution assigns the same probability 
to the event that $\max(I)$ contains $p$ as it assigns 
to the event that $\min(P \setminus I)$ contains $p$.
\end{Definition}

\begin{Proposition}
\label{multichain-distribution-is-toggle-symmetric}
For finite posets $P$, the distribution 
$\Omega^{{\scriptscriptstyle (}m{\scriptscriptstyle)}}_{J(P)}$ on $J(P)$ is toggle-symmetric.
\end{Proposition}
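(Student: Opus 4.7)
The plan is to prove the equivalent identity, for each $p \in P$,
\[
\sum_{I \,:\, p \in \max(I)} c_m(I) = \sum_{I \,:\, p \in \min(P \setminus I)} c_m(I),
\]
where $c_m(I)$ counts $m$-multichains in $J(P)$ that pass through $I$. The first move is to re-index by the set of distinct values in a multichain: every $m$-multichain is determined uniquely by a chain $C \subseteq J(P)$ (its set of distinct values) together with a composition of $m$ into $|C|$ positive parts (the multiplicities), so $c_m(I) = \sum_{C \,\ni\, I} \binom{m-1}{|C|-1}$. Swapping summation order and collecting by chain length reduces the desired equality to the per-length identity $\sum_{|C| = s+1} a(C) = \sum_{|C| = s+1} b(C)$ for each $s \geq 0$, where $a(C) = \#\{I \in C : p \in \max(I)\}$ and $b(C) = \#\{I \in C : p \in \min(P \setminus I)\}$.

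Next I would decompose each chain $C = (J_0 < J_1 < \cdots < J_s)$ according to the local behavior at $p$ into four consecutive \emph{zones}: zone $L$ (some $q < p$ still missing from $J_i$), zone $M$ (all $q < p$ present but $p \notin J_i$), zone $N$ ($p \in J_i$ and no $q > p$ in $J_i$), and zone $H$ (some $q > p$ in $J_i$). By monotonicity the zones occur in this order along $C$, their lengths $l_1, l_2, l_3, l_4$ sum to $s+1$, and one checks immediately that $a(C) = l_3$ and $b(C) = l_2$. The zone-$(M \cup N)$ portion of $C$ is then parameterized by: a strict chain $R^{(0)} < R^{(1)} < \cdots < R^{(k')}$ of order ideals of the subposet of elements of $P$ incomparable to $p$ (obtained by stripping $\{q < p\}$ and possibly $\{p\}$ from each $J_i$); a Case flag telling whether the $M$-to-$N$ transition adds new incomparable elements along with $p$ (Case A, giving $l_2+l_3 = k'+1$) or adds only $p$ (Case B, a stutter in the $R$-sequence, giving $l_2+l_3 = k'+2$); and a \emph{switch position} recording the index $i$ at which the indicator $p \in J_i$ first becomes true.

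Finally, define the involution $\tau$ on chains of length $s+1$ by reflecting the switch position while keeping $C_L$, $C_H$, the $R$-chain, and the Case flag fixed. Reflection sends $(l_2, l_3) \mapsto (l_3, l_2)$, so $a(\tau(C)) = b(C)$, and summing over chains of the given length yields the identity. The main obstacle to verify is that $\tau$ always produces a valid chain: namely, that the compatibility of $C_M \cup C_N$ with $C_L$ and $C_H$ at the two boundaries does not depend on whether the first or last element of $C_M \cup C_N$ lies in zone $M$ versus zone $N$. This follows from the observation that the last element of $C_L$ is in zone $L$ and hence does not contain $p$, so the constraint at that boundary involves only $R^{(0)}$ and not the bit; a symmetric argument handles the top boundary with $C_H$.
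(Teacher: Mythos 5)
Your proof is correct, and the core combinatorial idea is the same as the paper's: along any (multi)chain in $J(P)$, the positions where $p$ is a maximal element of the ideal and the positions where $p$ is a minimal element of the complement form two consecutive blocks, and the bijection swaps the lengths of these two blocks while leaving everything else fixed. The difference is in where you run the swap. The paper works directly at the multichain level: it defines a bijection $(I,c)\mapsto(I',c')$ on pairs (ideal, $m$-multichain through it), exchanging the interval lengths $a\leftrightarrow b$ and relocating the pointer $I$ within the combined window; this is a single short construction with no case analysis. You instead first re-index by the underlying \emph{strict} chain of distinct values (introducing the $\binom{m-1}{|C|-1}$ weight), and then build an involution on strict chains via the four-zone decomposition. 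Because you insist on strict chains, you are forced into the Case~A/Case~B bookkeeping to handle whether the bit flip coincides with an $R$-step or occurs as a ``stutter,'' which is precisely the complication the paper sidesteps by staying at the multichain level, where repeats are free. Both routes work; yours trades the paper's pointer-shift for an extra layer of parameterization, and your observation that the $L$-boundary and $H$-boundary constraints depend only on $R^{(0)}$ and $R^{(k')}$ (not on the bit) is exactly the check needed to make the reflection well-defined. One small remark: the paper's displayed interval labels appear to be transposed relative to the natural chronology (the $\min(P\setminus I_i)$ positions must precede the $\max(I_i)$ positions along an increasing multichain), so your zone ordering $M$ before $N$ is in fact the correct one.
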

\begin{proof}
This is equivalent to showing that, for every $p \in P$,
the following two sets have the same cardinality:
\begin{itemize}
\item
the set of all 
pairs $(I,c)$ in which $I$ is an order ideal of $P$ with $p \in \max(I)$,
and $c=(I_1 \subseteq \cdots \subseteq I_m)$ is 
an $m$-element multichain in $J(P)$ that passes through $I$, and
\item
the set of all
pairs $(I',c')$ in which $I'$ is an order ideal of $P$ with 
$p \in \min(P \setminus I')$,
and $c=(I'_1 \subseteq \cdots \subseteq I'_m)$ is 
an $m$-element multichain in $J(P)$ that passes through $I'$. 
\end{itemize}
We provide a bijection between these two sets.  Given $(I,c)$,
define two consecutive intervals of indices 
$$
\begin{aligned}[]
[i_0+1,i_0+a]&:=\{ i: p \in \min(P \setminus I_i) \}, \text{ and}\\
[i_0+a+1,i_0+a+b]&:=\{ i: p \in \max(I_i) \},\\
\end{aligned}
$$
so that $a,b \geq 0$, and 
one must have $I=I_{i_0+a+b_0}$ for some $b_0$ in the range 
$1 \leq b_0 \leq b$.
To define the desired bijection, map $(I,c) \mapsto (I',c')$, where 
$I'_j \setminus \{p\} = I_j \setminus \{p\}$ for all $j$, and 
$I'_j=I_j$ if $j \notin [i_0+1,i_0+a+b]$, 
but
$$
\begin{aligned}[]
[i_0+1,i_0+b]&:=\{ i: p \in \min(P \setminus I'_i) \}, \text{ and}\\
[i_0+1+b,i_0+a+b]&:=\{ i: p \in \max(I'_i) \},\\
\end{aligned}
$$
and $I':=I'_{i_0+b_0}$.  It is not hard to see that this map is a bijection. Its inverse is similarly defined.
\end{proof}

\begin{Remark}
\label{remark:Chan}
We can now further explain the connection between Theorem~\ref{main-theorem}(c), 
and the two results \cite[Corollary 2.15]{Chan1} 
and \cite[Theorem~1.2]{Chan2} that were alluded to in 
Section~\ref{section:intro}. 
The result \cite[Theorem~1.2]{Chan2} gives a vast generalization of \cite[Corollary 2.15]{Chan1}, which applies not only to Young's lattice intervals $[\varnothing,\lambda]$ with $\lambda = b^a$ a rectangle, but also applies to arbitrary intervals $P = [\mu,\lambda]$. 
Such intervals are always finite distributive lattices,
and the authors consider several families of toggle-symmetric probability distributions
on them, including
\begin{itemize}
\item 
the uniform distribution $\Omegaunif$ used in defining $\Xdd$, and 
\item the distribution $\Omegachain$ used in defining $\Ydd$.
\end{itemize}
They show that for any toggle-symmetric distribution on 
the Young's lattice interval $[\mu,\lambda]$, if one defines
$A$ and $B$ to be the number of nonempty rows and columns occupied by
the {skew shape} $\lambda/\mu$, then 
the down-degree random variable $d: P \rightarrow \NN$
has expectation
\begin{itemize}
\item 
given by a formula \cite[Theorem~1.2]{Chan2} showing it to be
{approximately} equal to $\frac{AB}{A+B}$, and
\item
{exactly} equal to $\frac{AB}{A+B}$ when $\lambda/\mu$ satisfies a condition
that they call \emph{balanced} \cite[Corollary~3.8]{Chan2}.  
\end{itemize}
We are lying slightly here, as 
the authors of \cite{Chan2} work not with down-degree, but with what they call \emph{jaggedness}, which is down-degree plus up-degree.  For toggle-symmetric probability distributions, this is equivalent to computing the expectation of down-degree: Definition~\ref{toggle-symmetry-definition}
immediately implies that a toggle-symmetric probability distribution 
assigns down-degree and up-degree the same expectation, 
which must therefore be half the expectation that it assigns to
the jaggedness statistic.

It is not hard to see that when $\mu=\varnothing$
and $\lambda=\delta_d \circ b^a$ is a rectangular staircase, 
then $\lambda/\mu=\lambda$ is balanced.
Since, in this case, $A=(d-1)a$ and $B=(d-1)b$, 
their result not only predicts our formula
from Theorem~\ref{main-theorem}(c), but also shows that
$[\varnothing, \lambda]$ is \mCDE, with
$$
\EE (X) = \EE (X^{{\scriptscriptstyle (}m{\scriptscriptstyle)}}) = \EE (Y) =\frac{AB}{A+B} = (d-1)\frac{ab}{a+b}.
$$
\end{Remark}

\begin{Remark}
After seeing this, one might wonder
whether some of the weak order intervals that our Theorem~\ref{main-theorem}
asserts are \CDE have the stronger \mCDE property.  
However, this can fail even for the intervals $[e,w]$ where $w$ is dominant
of rectangular staircase shape $\lambda=\lambdanab{d}{a}{b}$ when $d \geq 3$.  
For example, if $d=3$, $a=1$, and $b=2$, so that $\lambda=\lambdanab{3}{1}{2} = (4,2)$, 
and $w=53124 \in \symm_5$ is dominant of shape $\lambda$,
then the weak order interval $[e,w]$ has 
$$
\EE (X^{{\scriptscriptstyle (}m{\scriptscriptstyle)}})=
\frac{2(14m^3 + 111m^2 + 199m + 76)}{21m^3 + 168m^2 + 299m + 112}
$$ 
according to computations in SAGE.\footnote{{\tt SAGE} code for calculating $\EE (X^{{\scriptscriptstyle (}m{\scriptscriptstyle)}})$ as a rational function in $m$ is available from the first author.}   As predicted by Theorem~\ref{main-theorem}, this rational function has the correct value $(d-1)ab/(a+b)=4/3$ at $m=1$, and also in the limit as 
$m \rightarrow \infty$, but is not $4/3$ for integers $m\geq 2$.
\end{Remark}

\subsection{Examples of \CDE posets}

We begin with some simple instances of \CDE and \mCDE posets.

\begin{Example}
Finite disjoint unions of {chains} (that is, totally ordered sets) are \CDE because each of their elements lie on exactly one maximal chain, and thus $(\Omegaunif_P, \Xdd) = (\Omegachain_P, \Ydd)$.  If all of the chains have the same size, so that the poset is graded, then their union is also \mCDE. This is because, similarly,
$(\Omegaunif_P, \Xdd) = (\Omega^{{\scriptscriptstyle (}m{\scriptscriptstyle)}}_P,X^{{\scriptscriptstyle (}m{\scriptscriptstyle)}})=(\Omegachain_P, \Ydd)$.
On the other hand, one can check that when the chains have different sizes, the poset is \CDE but might not be \mCDE.
\end{Example}

The following poset family is similarly straightforward,
and will be used in the proof of Theorem~\ref{minuscule-theorem}.

\begin{Example}
\label{four-parameter-family}
Consider the following poset $P_{a,b,c,d}$, parametrized by four positive integers $a,b,c,d$.
$$\begin{tikzpicture}[xscale=1,yscale=.6]
\draw (0,-.5) -- (0,1);
\draw[dotted] (0,1) -- (0,1.5);
\draw (0,1.5) -- (0,2) -- (1,3) -- (1,4.5);
\draw (0,2) -- (-1,3) -- (-1,4.5);
\draw[dotted] (1,4.5) -- (1,5);
\draw[dotted] (-1,4.5) -- (-1,5);
\draw (1,5) -- (1,5.5) -- (0,6.5) -- (0,8);
\draw (-1,5) -- (-1,5.5) -- (0,6.5);
\draw[dotted] (0,8) -- (0,8.5);
\draw (0,8.5) -- (0,9);
\foreach \x in {(0,-.5), (0,.5), (0,2), (-1,3), (-1,4), (-1,5.5), (1,3), (1,4), (1,5.5), (0,6.5), (0,7.5), (0,9)} {\fill[white] \x++(-.25,-.25) rectangle ++(.5,.5);}
\draw (0,-.5) node {$w_1$};
\draw (0,.5) node {$w_2$};
\draw (0,2) node {$w_a$};
\draw (-1,3) node {$x_1$};
\draw (-1,4) node {$x_2$};
\draw (-1,5.5) node {$x_b$};
\draw (1,3) node {$y_1$};
\draw (1,4) node {$y_2$};
\draw (1,5.5) node {$y_c$};
\draw (0,6.5) node {$z_1$};
\draw (0,7.5) node {$z_2$};
\draw (0,9) node {$z_d$};
\end{tikzpicture}$$
Fix $m \geq 1$, and denote by $f(p)$
the number of $m$-element multichains through $p$.
One then computes
$$
\begin{aligned}
f(w_i)&=f(z_j) \text{ is constant for all }i=1,2,\ldots,a,\text{ and }j=1,2,\ldots,d,\\
f(x_i)&\text{ is constant for }i=1,2,\ldots,b, \text{ and }\\
f(y_i)&\text{ is constant for }i=1,2,\ldots,c.
\end{aligned}
$$
Thus
\begin{align*}
\EE(X^{{\scriptscriptstyle (}m{\scriptscriptstyle)}}) &=\frac{(a-1) f(w_i) 
          + b f(x_i)
          +c f(y_i)
          +(d-1)f(z_i)+2 f(z_i)}
           {a f(w_i) 
          + b f(x_i)
          + c f(y_i)
          + d f(z_i)}\\
&= \frac{(a+d) f(w_i) 
          + b f(x_i)
          +c f(y_i)}
           {(a+d) f(w_i) 
          + b f(x_i)
          + c f(y_i)}\\
&= 1,
\end{align*}
and
$$\EE(\Ydd)
 =\displaystyle\frac{2 \cdot 0 + 2 (a-1)
	  + 1 \cdot b
          + 1 \cdot c
                     + 2 \cdot 2 + 2(d-1) 
          }{2(a + d) + b + c} 
          = 1,$$
so every poset $P_{a,b,c,d}$ is both \mCDE and \CDE, 
whether it is graded (that is, whether $b=c$) or not.
\end{Example}

The list of \CDE posets in Section~\ref{section:intro} mentioned another important family:
the \emph{minuscule posets}, which arise in the representation
theory of Lie algebras, and have many amazing enumerative properties (see, for example, \cite[Chapter 11]{Green} and \cite{Proctor}).
Up to poset isomorphism, the connected minuscule posets 
can be classified into three infinite families and two exceptional cases:
\begin{enumerate}\renewcommand{\labelenumi}{(\alph{enumi})}
\item the Cartesian product of two chains,
\item the interval $[\varnothing, b^2]$ in Young's lattice,
\item the special case $P_{a,1,1,a}$ of the posets $P_{a,b,c,d}$
from Example~\ref{four-parameter-family}, and
\item the posets $P(E_6)$ and $P(E_7)$ shown in Figure~\ref{fig:minuscule posets},
with each element $p$ labeled by $\#\{c \in \MaxChains(P): p \in c\}$.
\end{enumerate}

\begin{figure}[htbp] 
\begin{tikzpicture}[xscale=.8,yscale=.6]
\draw[white] (0,-7.5) -- (0,8.5);
\draw (0,0) -- (4,-4);
\draw (1,1) -- (3,-1);
\draw (1,3) -- (3,1);
\draw (0,6) -- (4,2);
\draw (2,-2) -- (3,-1);
\draw (1,-1) -- (4,2);
\draw (0,0) -- (3,3);
\draw (1,3) -- (2,4);
\foreach \x in {(0,0),(4,2)} {\fill[white] \x++(-.25,-.25) rectangle ++(.5,.5); \draw \x node {$2$};}
\foreach \x in {(3,-1),(1,3)} {\fill[white] \x++(-.25,-.25) rectangle ++(.5,.5); \draw \x node {$5$};}
\foreach \x in {(1,-1), (3,3)} {\fill[white] \x++(-.25,-.25) rectangle ++(.5,.5); \draw \x node {$7$};}
\foreach \x in {(1,1), (3,1)} {\fill[white] \x++(-.25,-.25) rectangle ++(.5,.5); \draw \x node {$6$};}
\foreach \x in {(2,0), (2,2)} {\fill[white] \x++(-.25,-.25) rectangle ++(.5,.5); \draw \x node {$10$};}
\foreach \x in {(0,6),(1,5),(2,-2),(2,4),(3,-3),(4,-4)} {\fill[white] \x++(-.25,-.25) rectangle ++(.5,.5); \draw \x node {$12$};}
\draw (-1.5,1) node {$P(E_6) = $};
\end{tikzpicture}
\hspace{.5in}
\begin{tikzpicture}[xscale=.8,yscale=.6]
\draw (0,0) -- (5,-5);
\draw (1,1) -- (3,-1);
\draw (1,3) -- (3,1);
\draw (0,6) -- (4,2);
\draw (1,7) -- (5,3);
\draw (2,8) -- (3,7);
\draw (2,-2) -- (3,-1);
\draw (1,-1) -- (5,3);
\draw (0,0) -- (4,4);
\draw (1,3) -- (3,5);
\draw (1,5) -- (3,7);
\draw (0,6) -- (5,11);
\foreach \x in {(0,0), (0,6)} {\fill[white] \x++(-.25,-.25) rectangle ++(.5,.5); \draw \x node {$12$};}
\foreach \x in {(1,-1), (1,7)} {\fill[white] \x++(-.25,-.25) rectangle ++(.5,.5); \draw \x node {$45$};}
\foreach \x in {(1,1), (1,5)} {\fill[white] \x++(-.25,-.25) rectangle ++(.5,.5); \draw \x node {$36$};}
\foreach \x in {(1,3)} {\fill[white] \x++(-.25,-.25) rectangle ++(.5,.5); \draw \x node {$25$};}
\foreach \x in {(2,-2), (3,-3), (4,-4), (5,-5), (2,8), (3,9), (4,10), (5,11)} {\fill[white] \x++(-.25,-.25) rectangle ++(.5,.5); \draw \x node {$78$};}
\foreach \x in {(2,0), (2,6)} {\fill[white] \x++(-.25,-.25) rectangle ++(.5,.5); \draw \x node {$66$};}
\foreach \x in {(2,2), (2,4)} {\fill[white] \x++(-.25,-.25) rectangle ++(.5,.5); \draw \x node {$60$};}
\foreach \x in {(3,-1), (3,7)} {\fill[white] \x++(-.25,-.25) rectangle ++(.5,.5); \draw \x node {$33$};}
\foreach \x in {(3,1), (3,5)} {\fill[white] \x++(-.25,-.25) rectangle ++(.5,.5); \draw \x node {$42$};}
\foreach \x in {(3,3)} {\fill[white] \x++(-.25,-.25) rectangle ++(.5,.5); \draw \x node {$49$};}
\foreach \x in {(4,2), (4,4)} {\fill[white] \x++(-.25,-.25) rectangle ++(.5,.5); \draw \x node {$18$};}
\foreach \x in {(5,3)} {\fill[white] \x++(-.25,-.25) rectangle ++(.5,.5); \draw \x node {$4$};}
\draw (-1,3) node {$P(E_7) = $};
\end{tikzpicture}
\caption{The minuscule posets $P(E_6)$ and $P(E_7)$, with elements labeled by 
the number of maximal chains passing through them.}\label{fig:minuscule posets}
\end{figure}

\begin{Theorem}
\label{minuscule-theorem} 
Connected minuscule posets are \mCDE, and, because they are graded, also \CDE.
\end{Theorem}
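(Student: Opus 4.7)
The plan is to verify the mCDE property separately for each of the four families (a)--(d) in the classification; since these posets are graded, CDE then follows automatically from the observation just after Definition~\ref{mCDE-definition}.

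Family (c), the double-tailed diamond $P_{a,1,1,a}$, was already handled in Example~\ref{four-parameter-family}, which established $\EE(X^{(m)}) = 1$ for every $m \geq 1$. Family (d), the two exceptional finite posets $P(E_6)$ and $P(E_7)$, reduces to a direct finite computation: for each element one counts the number of $m$-multichains through it as a polynomial in $m$, forms the expected down-degree, and verifies that the resulting rational function of $m$ is constant; the self-duality of these posets halves the work. Family (b), the interval $[\varnothing, b^2]$, is a Young's lattice interval whose underlying shape $b^2$ is a rectangle, hence balanced in the sense of \cite{Chan2} as noted in Remark~\ref{remark:Chan}; combining Proposition~\ref{multichain-distribution-is-toggle-symmetric} with \cite[Corollary~3.8]{Chan2} shows that every toggle-symmetric distribution on $[\varnothing, b^2]$, in particular each $\Omega^{(m)}$, yields the same expected down-degree, so $[\varnothing, b^2]$ is mCDE.

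Family (a), the Cartesian product $[a] \times [b]$ of two chains, is the most delicate case. Viewed as a Young's lattice interval $[\mu,\lambda]$, it corresponds to a disconnected skew shape consisting of two disjoint columns, outside the hypothesis of \cite[Theorem~1.2]{Chan2}, so one must argue directly. The plan proceeds in two steps. First, a finite chain is trivially mCDE: a stars-and-bars count shows that each element lies on the same number of $m$-multichains, so $\Omega^{(m)}$ is the uniform distribution on the chain for every $m$. Second, for the product, the key claim is that the marginal of $\Omega^{(m)}_{[a] \times [b]}$ onto each coordinate chain remains uniform. Granted this, the additive decomposition $d(i,j) = d_1(i) + d_2(j)$ of the down-degree yields $\EE(X^{(m)}_{[a] \times [b]}) = \EE(X^{(m)}_{[a]}) + \EE(X^{(m)}_{[b]})$, which is independent of $m$, proving mCDE.

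The main obstacle is the marginal-uniformity claim in family (a). A proof proceeds by writing an $m$-multichain in $[a] \times [b]$ through $(i,j)$ as a pair $(P,Q)$ of $m$-multichains in $[a]$ and $[b]$ whose joint image contains $(i,j)$, and then summing over $j$. The key observation is that in a uniformly random weakly increasing length-$m$ sequence in a finite chain, the probability of a strict increase at a given position is the same regardless of that position, so the expected number of distinct values taken on a contiguous range of positions depends only on the range's length, not its location. From this, the sum over $j$ is seen to be independent of $i$, establishing the desired marginal uniformity and completing the proof.
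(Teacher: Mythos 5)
Your proposal is correct and follows the same overall structure as the paper's proof: invoke the classification of connected minuscule posets into the four families (a)--(d), and verify \mCDE case by case. Families (b), (c), and (d) are handled exactly as in the paper, via Proposition~\ref{multichain-distribution-is-toggle-symmetric} plus \cite[Corollary~3.8]{Chan2}, Example~\ref{four-parameter-family}, and a direct finite computation, respectively. The one place you deviate is family (a). The paper defers this to Proposition~\ref{chain-product-proposition}, which proves $\EE(Z;m)=\sum_k\EE(X_k;m)$ for arbitrary additive statistics on products of arbitrarily many chains, by constructing an explicit bijection $G(j)\to G(j+1)$ between sets of multichains via an order-reversing involution on a subinterval. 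You instead argue directly in the two-chain case via an exchangeability/symmetry argument: the distribution of increments of a random weakly increasing sequence in $\bm{b}$ is exchangeable, so the expected number of distinct values on a window of positions depends only on the window's length; combined with the (implicit, but true) fact that the distribution of the length of the level set $\{k:P_k=i\}$ is independent of $i$ (a symmetry of compositions of $m$ into $a$ nonnegative parts), this gives marginal uniformity and hence the additive decomposition of $\EE(X^{(m)})$. Both arguments are valid. The paper's is more general and constructive; yours is more elementary and self-contained, and usefully records why family (a) cannot be dispatched by \cite[Theorem~1.2]{Chan2} (the corresponding skew shape, two disjoint columns, is disconnected). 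To tighten the write-up you should make the level-set symmetry for $P$ explicit, since as written only the $Q$-side exchangeability fact is stated.
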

\begin{proof}
The above classification lets one verify this case-by-case.
\begin{enumerate}\renewcommand{\labelenumi}{(\alph{enumi})}
\item Products of two chains will be shown to be 
\mCDE in Corollary~\ref{chain-product-CDE-corollary}.
\item Intervals $[\varnothing,b^2]$ in Young's lattice are
\mCDE by Proposition~\ref{multichain-distribution-is-toggle-symmetric}
and \cite[Corollary~3.8]{Chan2}.
\item The posets $P_{a,1,1,a}$ of the family $P_{a,b,c,d}$
are \mCDE by Example~\ref{four-parameter-family}.
\item For $P(E_6)$ and $P(E_7)$, 
calculations
in {\tt SAGE} showed that 
$$
\begin{aligned}
&\EE(\Xdd_{P(E_6)}) 
=\EE(X^{{\scriptscriptstyle (}m{\scriptscriptstyle)}}_{P(E_6)})
 = \frac{5}{4}
  =\EE(\Ydd_{P(E_6)}), \text{ and} \\
\ \hspace{1.8in}  \raisebox{0in}[.3in][0in]{}
&\EE(\Xdd_{P(E_7)})
=\EE(X^{{\scriptscriptstyle (}m{\scriptscriptstyle)}}_{P(E_7)}) 
 = \frac{4}{3}
  =\EE(\Ydd_{P(E_7)}). \hspace{1.8in} \qedhere
\end{aligned}
$$
\end{enumerate}
\end{proof}

Accompanying Theorem~\ref{minuscule-theorem} is Theorem~\ref{Gaussian-poset-CDE-theorem}, concerning the distributive
lattice of order ideals $J(P)$ when $P$ is a minuscule poset.  We can characterize this lattice $J(P)$ 
in terms of the root system $\Phi$, the Weyl group $W$, 
and the minuscule dominant weight
$\omega$ or simple root $\alpha$ corresponding to $P$ (see \cite{Proctor}). In particular, one way to specify $P$ is to pick a minuscule simple root $\alpha$ and take the restriction of the poset of positive roots $\Phi^+$ to the positive roots lying weakly above $\alpha$. Then $J(P)$ has the following two reinterpretations.
\begin{itemize}
\item  $J(P)$ is the restriction of the 
(strong) Bruhat order to the set of minimum
length coset representatives for $W/W_\omega$, where $W_\omega$ is
the maximal parabolic subgroup fixing $\omega$.
\item $J(P)$ is the weight poset on the $W$-orbit of $\omega$,
which indexes the weight spaces (all having multiplicity one)
in the associated minuscule representation
of the Lie algebra.
\end{itemize}

\begin{Theorem}
\label{Gaussian-poset-CDE-theorem}
For (not necessarily connected) minuscule posets $P$, 
the distributive lattice $J(P)$ is \CDE.
\end{Theorem}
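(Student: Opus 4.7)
The plan is to mirror the case-by-case structure used in Theorem~\ref{minuscule-theorem}. First I would reduce to the connected case: if $P = P_1 \sqcup P_2$ is a disjoint union of minuscule posets, then order ideals decompose as $J(P) \cong J(P_1) \times J(P_2)$, and since each $J(P_i)$ is graded (being a distributive lattice), Proposition~\ref{products-preserve-EEE} reduces CDE for $J(P)$ to CDE for each $J(P_i)$.

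For connected minuscule $P$, I would apply the four-family classification recalled before Theorem~\ref{minuscule-theorem}. In case (a), where $P = [a] \times [b]$, the lattice $J(P)$ is the rectangular Young's lattice interval $[\varnothing, b^a]$; the rectangle is a balanced skew shape in the terminology of \cite{Chan2}, so Proposition~\ref{multichain-distribution-is-toggle-symmetric} combined with \cite[Corollary~3.8]{Chan2} gives $\EE(\Xdd) = \EE(\Ydd) = ab/(a+b)$. In case (d), $J(P(E_6))$ and $J(P(E_7))$ are finite lattices of moderate size, so a direct {\tt SAGE} computation of $\EE(\Xdd)$ and $\EE(\Ydd)$ should verify CDE, in the same spirit as the proof of Theorem~\ref{minuscule-theorem}(d).

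The main work, and the main obstacle, lies in cases (b) and (c): for $P = [\varnothing, b^2]$ and $P = P_{a,1,1,a}$, the distributive lattice $J(P)$ is not a Young's lattice rectangle in any obvious way. My first attempt would be to identify $J(P)$ concretely with a Young's lattice interval $[\mu, \lambda]$ whose skew shape $\lambda/\mu$ is balanced; any such identification would immediately let the toggle-symmetry of Proposition~\ref{multichain-distribution-is-toggle-symmetric} again trigger \cite[Corollary~3.8]{Chan2}. Failing a clean identification, I would compute $\EE(\Xdd)$ and $\EE(\Ydd)$ directly: for $\EE(\Xdd)$ via a rank-generating function recurrence analogous to Proposition~\ref{q-Pascal-Catalan-generalization}, and for $\EE(\Ydd)$ via an enumeration of maximal chains in $J(P)$ drawing on the hook-length-style product formulas known to govern Gaussian posets, and then check that the two expressions agree as functions of the parameters $a, b$.
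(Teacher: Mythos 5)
Your reduction to the connected case, your treatment of family (a), and your plan for the exceptional family (d) all match the paper's proof; for (d) the paper is slightly more economical, observing that $J(P(E_6)) \cong P(E_7)$, so only $J(P(E_7))$ needs a fresh computation, but the direct-verification idea is the same.

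The gap is in families (b) and (c), and your ``plan A'' of identifying $J(P)$ with a balanced ordinary Young's lattice interval $[\mu,\lambda]$ will not go through. For (c), the paper uses the fact that $J(P_{a,1,1,a}) \cong P_{a+1,1,1,a+1}$, so the lattice of order ideals reproduces the same four-parameter family, and one then invokes Example~\ref{four-parameter-family} directly; $P_{a+1,1,1,a+1}$ is not an interval in Young's lattice (e.g.\ it has two incomparable elements covering a minimum and covered by a maximum, all sitting between two long chains), so there is no balanced skew shape to find. For (b), $J([\varnothing, b^2])$ is isomorphic to a lower interval in the \emph{shifted} version of Young's lattice, not the ordinary one, so the toggle-symmetry machinery of \cite{Chan2} as stated for skew shapes $\lambda/\mu$ does not apply; the paper appeals to unpublished work of S.~Hopkins adapting those methods to shifted shapes. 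Your fallback of a direct computation of $\EE(\Xdd)$ and $\EE(\Ydd)$ is not impossible in principle, but the analogues of Proposition~\ref{q-Pascal-Catalan-generalization} and of the maximal-chain product formulas you would need are not written down in the paper, and you would essentially have to rediscover the two isomorphisms above before any enumeration could get off the ground.
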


\begin{proof}
Because disjoint unions affect the lattice of order ideals in a convenient way, namely
$$J(P_1 \sqcup \cdots \sqcup P_k) \cong J(P_1) \times \cdots \times J(P_k),$$
and finite distributive lattices $J(P)$ are always graded,
we can apply Proposition~\ref{products-preserve-EEE} (below) to reduce to the case where $P$ is connected. Now we again rely upon the classification of connected minuscule posets $P$ preceding Theorem~\ref{minuscule-theorem}. 

For the family (a), where $P=\bm{a} \times \bm{b}$ is the Cartesian product of two chains $\bm{a}$ and $\bm{b}$
having $a$ and $b$ elements, respectively, we have that $J(P) \cong [\varnothing,b^a]$ is \CDE by Theorem~\ref{main-theorem}
(and, in fact, \mCDE by 
Proposition~\ref{multichain-distribution-is-toggle-symmetric}
and \cite[Corollary~3.8]{Chan2}).

For the family (b), where $P=[\varnothing, b^2]$ in Young's lattice, 
we have $J(P) \cong [\varnothing,\delta_b]_{\shifted}$ for the strict partition 
$\delta_{b+2}=(b+1,b,\ldots,3,2,1)$. S.~Hopkins \cite[Thm.~4.2]{Hopkins}
has shown using the methods of \cite{Chan2} that $J(P)$ is not only
\CDE, but actually \mCDE.

For the family (c) (that is, the special case $P=P_{a,1,1,a}$ among 
the posets $P_{a,b,c,d}$ from Example~\ref{four-parameter-family}), 
one finds that $J(P)  \cong P_{a+1,1,1,a+1}$, and hence it is also \CDE
(and, in fact, \mCDE).

For the family (d) of Figure~\ref{fig:minuscule posets},
one finds that $J(P(E_6)) \cong P(E_7)$, 
which we checked is \CDE (and, in fact, \mCDE) as part of
Theorem~\ref{minuscule-theorem}. We have checked 
separately (both by hand and by computer) that $J(P(E_7))$ is \CDE.
\end{proof}

In work that appeared since this article first circulated as a preprint, D.~Rush has presented a uniform proof of Theorem~\ref{Gaussian-poset-CDE-theorem} \cite[Theorem 1.5]{rush}, and even the following strengthening that we had conjectured.

\begin{Theorem}{\rm (\cite[Theorem~1.5]{rush})}
\label{Gaussian-poset-mCDE-conjecture}
For any minuscule poset $P$, 
the distributive lattice $J(P)$ is \mCDE, and hence \CDE.
\end{Theorem}

\noindent
In fact, Rush's result is stated slightly differently, in two ways:
\begin{itemize}
\item Strictly speaking, his result shows that $J(P)$ is \mCDE when $P$ is {\it connected}
and minuscule.  However, Proposition~\ref{mCDE-products-questions} below, due to S. Hopkins, then implies that $J(P)$ is 
\mCDE also for disconnected minuscule posets $P$.
\item Rush's result actually shows that $J(P)$ for a connected minuscule poset $P$ has
the stronger property of being $\tCDE$, as introduced by Hopkins \cite[Defn.~2.5]{Hopkins}.
The fact that $\tCDE$ implies $\mCDE$ is essentially our Lemma~\ref{multichain-distribution-is-toggle-symmetric}; see \cite[Lem.~2.3]{Hopkins}.
\end{itemize}

%%%%%%%%%%
\subsection{\CDE and poset operations}

Most poset operations do not consistently respect \CDE. For example, disjoint union does not preserve the \CDE property (Example~\ref{ex:disjoint union doesn't preserve cde}), nor does ordinal sum (Example~\ref{ex:ordinal sum doesn't preserve cde}). The Cartesian product of graded posets, however, is an exception. 

\begin{Proposition}
\label{products-preserve-EEE}
If two graded posets $P$ and $Q$ are \CDE, then their Cartesian
product $P \times Q$ is also \CDE.
\end{Proposition}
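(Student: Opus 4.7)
The plan is to verify both expectations additively decompose over the factors. The first observation is that the covering relations of $P\times Q$ come in two disjoint flavors---those covering in the $P$-coordinate and those covering in the $Q$-coordinate---so the down-degree splits as $\Xdd_{P\times Q}(p,q)=\Xdd_P(p)+\Xdd_Q(q)$. For the uniform side this immediately gives $\EE(\Xdd_{P\times Q})=\EE(\Xdd_P)+\EE(\Xdd_Q)$, because $\Omegaunif_{P\times Q}$ is the product of $\Omegaunif_P$ and $\Omegaunif_Q$. So the whole game is to prove the analogous additivity $\EE(\Ydd_{P\times Q})=\EE(\Ydd_P)+\EE(\Ydd_Q)$, after which the \CDE hypothesis on $P$ and $Q$ immediately closes the argument.

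To do that, let $r=\rank(P)$ and $s=\rank(Q)$, and let $m_P(p)$ (resp.\ $m_Q(q)$) denote the number of maximal chains through $p$ (resp.\ $q$). A maximal chain of $P\times Q$ passing through $(p,q)$, where $p$ has rank $i$ and $q$ has rank $j$, is specified by a maximal chain in $P$ through $p$, a maximal chain in $Q$ through $q$, and two independent shuffles: one interleaving the first $i$ $P$-steps with the first $j$ $Q$-steps, and another interleaving the last $r-i$ $P$-steps with the last $s-j$ $Q$-steps. This yields the factorization
$$
m_{P\times Q}(p,q)=m_P(p)\,m_Q(q)\,\binom{i+j}{i}\binom{(r-i)+(s-j)}{r-i}.
$$

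Next I would plug this into $\EE(\Ydd_{P\times Q})$ and separate the numerator into a $\Xdd_P$-piece and a $\Xdd_Q$-piece. Grouping by rank and writing $S_P(i)=\sum_{\rank(p)=i}\Xdd_P(p)\,m_P(p)$, $M_P=\sum_{\rank(p)=i}m_P(p)$ (independent of $i$), and similarly for $Q$, the key simplification is the Vandermonde--Chu identity
$$
\sum_{j=0}^{s}\binom{i+j}{i}\binom{(r-i)+(s-j)}{r-i}=\binom{r+s+1}{s},
$$
which is independent of $i$, together with the symmetric identity summing in $i$. Combined with the elementary collapse $(r+1)\binom{r+s+1}{s}=(r+s+1)\binom{r+s}{r}$, the numerator becomes $M_P M_Q (r+s+1)\binom{r+s}{r}\,[\EE(\Ydd_P)+\EE(\Ydd_Q)]$, while the denominator (the total $\#\{(c,x):x\in c\}$ in $P\times Q$) is exactly $M_P M_Q(r+s+1)\binom{r+s}{r}$. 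Thus $\EE(\Ydd_{P\times Q})=\EE(\Ydd_P)+\EE(\Ydd_Q)$, and the \CDE property on each factor completes the proof.

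The main obstacle is not conceptual but bookkeeping: identifying the correct shuffle count $m_{P\times Q}(p,q)$, then recognizing that the $i$-dependence (resp.\ $j$-dependence) of the shuffle sums disappears after a Vandermonde summation, which is precisely what makes $\EE(\Ydd)$ additive over graded Cartesian products. Everything else is routine.
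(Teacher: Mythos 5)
Your proof is correct and follows essentially the same route as the paper: both decompose the down-degree additively, observe that maximal chains in $P\times Q$ are shuffles of maximal chains in $P$ and $Q$, and reduce the claim to the fact that the number of shuffle-completions through a fixed element is rank-independent. The paper establishes that rank-independence via an explicit bijection ``inserting an extra step'' into the shuffle (arriving at $\binom{r_P+r_Q+1}{r_P+1}$, which equals your $\binom{r+s+1}{s}$), whereas you derive it by the Chu--Vandermonde identity applied to the explicit product formula for $m_{P\times Q}(p,q)$; this is a cosmetic rather than substantive difference.
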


As Example~\ref{graded-hypothesis-necessary-example} will demonstrate, the ``graded'' assumption in Proposition~\ref{products-preserve-EEE} is essential, and thus the collection of all \CDE posets is not closed under Cartesian product.

Before embarking on the proof of Proposition~\ref{products-preserve-EEE},
we make an observation about graded posets.
Recall that a finite poset $P$ is graded with $\rank(P)=r$ 
if all maximal chains $c \in \MaxChains(P)$ contain
$r+1$ elements; that is, each $c$ has the form $\{p_0 \lessdot p_1 \lessdot \cdots \lessdot p_{r-1} \lessdot p_r\}$.
Here are some straightforward reformulations of 
$\EE(\Xdd)$ and $\EE(\Ydd)$, the
first of which was mentioned in Section~\ref{section:intro}:
\begin{align}
\label{EX-is-edge-density}
\EE(\Xdd) 
  &=\frac{ \#\{(q,p) \in P \times P: q \lessdot p\}}{\#P},\\
\nonumber\EE(\Ydd)
  &=\frac{ \#\{(c,q,p) \in \MaxChains(P) \times P \times P: 
                p \in c \text{ and }q \lessdot p\}}
         { \#\{(c,p) \in \MaxChains(P) \times P: p \in c \}},
\end{align}
and, in the case that $P$ is graded, $\EE(\Ydd)$ can be rephrased as
\begin{equation}\label{EY-graded-reformulation}
\frac{ \#\{(c,q,p) \in \MaxChains(P) \times P \times P: 
                p \in c \text{ and }q \lessdot p\}}
         { (\rank(P)+1) \cdot \#\MaxChains(P) } .
\end{equation}

\begin{proof}[Proof of Proposition~\ref{products-preserve-EEE}]
The down-degree function, $d_P: P \rightarrow \NN$, satisfies 
$
d_{P\times Q}(p,q)=d_P(p) + d_Q(q).
$
Thus, 
$$\begin{aligned}
\EE \left(X_{P\times Q} \right)
 &= \frac{1}{\#P \cdot \#Q} \sum_{(p,q) \in P \times Q} d_{P\times Q}(p,q)\\
 &= \frac{1}{\#P \cdot \#Q} \sum_{p \in P} \sum_{q \in Q} 
                        \left( d_{P}(p)+d_{Q}(q) \right) \\
 &= \frac{1}{\#P \cdot \#Q} \left( \#Q\sum_{p \in P} d_{P}(p)+
                         \#P \sum_{q \in Q} d_{Q}(q)\right)\\
 &= \frac{1}{\#P} \sum_{p \in P} d_{P}(p) 
         +\frac{1}{\#Q}\sum_{q \in Q} d_{Q}(q)\\
 &= \EE (X_P) + \EE (X_Q).
\end{aligned}
$$
It therefore only remains to show that when $P$ and $Q$ are graded, one has
$$\EE (Y_{P \times Q})=\EE (Y_P) + \EE (Y_Q).$$
If the rank of $P$ is $r_P$, then one can rephrase Expression~\eqref{EY-graded-reformulation} as 
$$
\EE (Y_P) =\frac{1}{(r_P+1)\#\MaxChains(P)} 
           \sum_{c_P \in \MaxChains(P)} \sum_{p \in c_P} d_P(p). 
$$
Thus,  regarding $d_P(p)$ as a variable, its coefficient
in $\EE (Y_P)$ (and also in $\EE (Y_P) + \EE (Y_Q)$) is 
\begin{equation}
\label{coefficient-in-one-expectation}
\frac{\#\{ c_P \in \MaxChains(P): p \in c_P\}}{(r_P+1)\#\MaxChains(P)}.
\end{equation}

We now argue that $d_P(p)$ has the same coefficient in
$\EE (Y_{P \times Q})$.  Note that maximal chains in $P \times Q$ are chains
that lie within the set $c_P \shuffle c_Q$ of all
shuffles of some pair $(c_P,c_Q)$ in $\MaxChains(P) \times \MaxChains(Q)$.
Therefore
$$
\EE \left(Y_{P \times Q} \right) 
 =\frac{1}{N} \quad \sum_{(c_P,c_Q)} \quad \sum_{(p,q)} \quad
           \sum_{\substack{c \in  c_P \shuffle c_Q:\\ (p,q) \in c}}
              \left( d_P(p) + d_Q(q) \right),
$$
where $(c_P,c_Q)$ runs over $\MaxChains(P) \times \MaxChains(Q)$ 
in the outer sum, $(p,q)$ runs over $c_P \times c_Q$ in the inner sum,
and
\begin{equation}
\label{size-of-product-omega}
N:= \#\MaxChains(P) \cdot \#\MaxChains(Q) \binom{r_P+r_Q}{r_P}(r_P+r_Q+1).
\end{equation}
The coefficient of $d_P(p)$ in $\EE (Y_{P \times Q})$ is therefore
\begin{equation}
\label{coefficient-in-product-expectation}
N^{-1} \cdot
 \#\big\{(c_P,c_Q,q,c): (c_P,c_Q) \in \MaxChains(P) \times \MaxChains(Q), 
    \text{ and }(p,q) \in c \in c_P \shuffle c_Q \big\}.
\end{equation}
A priori, because the posets are graded, the number of pairs $(q,c)$ completing a 
quadruple $(c_P,c_Q,q,c)$ as above should not depend on the 
chain $c_P$ or $c_Q$, as long as $p$ lies in $c_P$.  
Thus, one might as well replace $c_P$ and $c_Q$ by fixed chains $[0,r_P]$ and $[0,r_Q]$ 
of the appropriate ranks, and fix $i:=\rank_P(p)$ in the chain
$[0,r_P]$, while letting $q$ vary over all values $j$
in the chain $[0,r_Q]$.
Then Expression~\eqref{coefficient-in-product-expectation} may be rewritten as
\begin{equation}
\label{rewritten-product-coefficient}
N^{-1} \cdot 
 \#\big\{c_P \in \MaxChains(P):p \in c_P\big\} \cdot \#\MaxChains(Q) \cdot
 \#\big\{(j,c): (i,j) \in c \in [0,r_P] \shuffle [0,r_Q] \big\}.
\end{equation}
The cardinality of the set of pairs $(j,c)$ in this set is
$\binom{r_P + r_Q+1}{r_P+1}$, via the bijection
$$
\begin{array}{ccc}
\big\{ (j,c): (i,j) \in c \in [0,r_P] \shuffle [0,r_Q] \big\}
&\longrightarrow & [0,r_P+1] \shuffle [0,r_Q] \\
(j,c) & \longmapsto & c'
\end{array}
$$
which forms $c'$ from $(j,c)$ by adding an extra step to $c$ of the form $(i,j) \rightarrow (i+1,j)$, just after $c$ passes through $(i,j)$.  The reverse bijection ``contracts out'' of $c'$ its unique step of the form $(i,j) \rightarrow (i+1,j)$ for some $j$, producing $c$ in the pair $(j,c)$.

Plugging this and Equation~\eqref{size-of-product-omega} into Expression~\eqref{rewritten-product-coefficient}, yields
the coefficient of $d_P(p)$ in $\EE (Y_{P \times Q})$:
$$
\frac{\#\{ c_P \in \MaxChains(P): p \in C_P\} \cdot 
          \#\MaxChains(Q) \binom{r_P + r_Q+1}{r_P}}
     {\#\MaxChains(P) \cdot \#\MaxChains(Q) \binom{r_P+r_Q}{r_P}(r_P+r_Q+1)}  
=
\frac{\#\{ c_P \in \MaxChains(P): p \in C_P\}}{(r_P+1)\#\MaxChains(P)}.
$$
This is the same as its coefficient in $\EE (Y_P) + \EE (Y_Q)$, given in Expression~\eqref{coefficient-in-one-expectation}, completing the proof. 
\end{proof}

\begin{Example}
\label{graded-hypothesis-necessary-example}
Both $P$ and $Q$ must be graded in Proposition~\ref{products-preserve-EEE},
as illustrated by the following non-\CDE product of two \CDE posets.
$$\begin{tikzpicture}
\draw (-.5,.5) node {$\Bigg($};
\foreach \x in {(0,0), (0,1), (.5,.5)} {\fill \x circle (2pt);}
\draw (0,0) -- (0,1);
\draw (1,.5) node {$\Bigg)$};
\draw (2,.5) node {$\times$};
\foreach \x in {(3,0), (3,1)} {\fill \x circle (2pt);}
\draw (3,0) -- (3,1);
\draw (4,.5) node {$=$};
\foreach \x in {(6,-.5), (6,1.5), (7,.5), (5,.5), (7.5,0), (7.5,1)} {\fill \x circle (2pt);}
\draw (6,-.5) -- (5,.5) -- (6,1.5) -- (7,.5) -- (6,-.5);
\draw (7.5,0) -- (7.5,1);
\end{tikzpicture}
$$
\end{Example}

Note that Proposition~\ref{products-preserve-EEE} immediately implies that
all finite products of chains are \CDE, and, in particular, that finite Boolean algebras are \CDE.
Moreover, such products enjoy the stronger \mCDE property, as discussed in
Corollary~\ref{chain-product-CDE-corollary} below.

Example~\ref{graded-hypothesis-necessary-example}
also shows that disjoint unions $P_1 \sqcup P_2$ of
\CDE posets need not be \CDE.

\begin{Example}\label{ex:disjoint union doesn't preserve cde}
The poset product in Example~\ref{graded-hypothesis-necessary-example}, is isomorphic to $P \sqcup Q$
for two Boolean algebras $P$ and $Q$. Boolean algebras are \CDE, by 
Proposition~\ref{products-preserve-EEE}, 
but the disjoint union $P \sqcup Q$ depicted above is not.  In fact, this can fail even when the two posets in the disjoint union are both graded and of the same rank. For example, both
$$\begin{tikzpicture}
\foreach \x in {(0,0), (0,2), (1,1), (-1,1)} {\fill \x circle (2pt);}
\draw (0,0) -- (-1,1) -- (0,2) -- (1,1) -- (0,0);
\end{tikzpicture}
\hspace{.5in}
\begin{tikzpicture}
\draw (0,0) node {and};
\draw[white] (0,-1) circle (2pt);
\end{tikzpicture}
\hspace{.5in}
\begin{tikzpicture}
\foreach \x in {(0,0), (0,1), (0,2)} {\fill \x circle (2pt);}
\draw (0,0) -- (0,2);
\end{tikzpicture}
$$
are \CDE, but their disjoint union is not.
\end{Example}

The next example shows that ordinal sum, like disjoint union,
does not always preserve the \CDE property.

\begin{Example}\label{ex:ordinal sum doesn't preserve cde}
Let $P$ be a $1$-element antichain and $Q$ a $2$-element antichain. Both of these posets are \CDE because $\EE (X) = \EE (Y) = 0$ in each case. However, their ordinal sum
$$
\begin{tikzpicture}
\draw (-1.5,.5) node {$P \oplus Q =$};
\draw (-.5,1) -- (0,0) -- (.5,1);
\foreach \x in {(-.5,1),(0,0),(.5,1)} {\fill \x circle (2pt);}
\end{tikzpicture}
$$
is not \CDE because 
$$\EE (X_{P \oplus Q}) = 2/3 \text{\ while $\EE (Y_{P\oplus Q}) = 1/2$.}$$
\end{Example}

We noted earlier that
$
\EE(\Xdd_P)=\EE(\Xdd_{P^*})
$
for any finite poset $P$.
However, there exist posets for which
$\EE(\Ydd_P) \neq \EE(\Ydd_{P^*})$. Moreover, poset duality does not preserve the \CDE property.

\begin{Example}
\label{duality-does-not-preserve-CDE}
Consider the following pair of dual posets $P$ and $P^\opp$, with each element
labeled by the number of maximal chains passing through it.
$$\begin{tikzpicture}
\draw (0,0) -- (1,-1) -- (3,1) -- (4,0);
\draw (0,0) -- (1,1) -- (2,0);
\foreach \x in {(0,0), (1,1), (1,-1), (2,0), (3,1), (4,0)} {\fill[white] \x++(-.25,-.25) rectangle ++(.5,.5);}
\draw (-1,0) node {$P \ =$};
\draw (0,0) node {$1$};
\draw (1,-1) node {$3$};
\draw (1,1) node {$2$};
\draw (2,0) node {$2$};
\draw (3,1) node {$2$};
\draw (4,0) node {$1$};
\end{tikzpicture}
\hspace{.5in}
\begin{tikzpicture}
\draw (0,0) -- (1,1) -- (3,-1) -- (4,0);
\draw (0,0) -- (1,-1) -- (2,0);
\foreach \x in {(0,0), (1,1), (1,-1), (2,0), (3,-1), (4,0)} {\fill[white] \x++(-.25,-.25) rectangle ++(.5,.5);}
\draw (-1,0) node {$P^* \ =$};
\draw (0,0) node {$1$};
\draw (1,1) node {$3$};
\draw (1,-1) node {$2$};
\draw (2,0) node {$2$};
\draw (3,-1) node {$2$};
\draw (4,0) node {$1$};
\end{tikzpicture}
$$
It is straightforward to calculate
$$
\begin{aligned}
\EE(\Xdd_P) = \EE(\Xdd_{P^\opp}) &=\frac{0+1+1+0+2+2}{6}=1 \text{ and}\\
\EE(\Ydd_{P}) 
 &=\frac{3 \cdot 0 
        +1 \cdot 1
        +2 \cdot 1
        +1 \cdot 0
        +2 \cdot 2
        +2 \cdot 2}{11}=1, \text{ while}\\
\EE(\Ydd_{P^\opp}) 
 &=\frac{2 \cdot 0 
        +2 \cdot 0
        +1 \cdot 1
        +2 \cdot 2
        +1 \cdot 1
        +3 \cdot 2}{11}=\frac{12}{11}.
        \end{aligned}
$$
Thus $P$ is \CDE, and $P^*$ is not\footnote{One might wonder, in light of this failure, why we have not {\it forced} 
the equivalence of the \CDE property for $P$ and $P^*$ by 
re-defining the random variables $X, Y$ to be
the {\it average of the down-degree and up-degree} of $p$ in the Hasse diagram $P$ (not just the down-degree).  We feel that this would be less interesting, by
artificially hiding the distinction between $P$ and $P^*$, 
and make some of our results, such as Theorem~\ref{main-theorem}(c), 
less striking.}.
\end{Example}

We return to the discussion after 
Example~\ref{graded-hypothesis-necessary-example}, where it was
claimed that finite products of chains are not only \CDE, but also \mCDE. 
Recall that Proposition~\ref{products-preserve-EEE} showed that the 
Cartesian product of two graded \CDE posets will also be \CDE,
but Example~\ref{graded-hypothesis-necessary-example} showed that
this can fail without the gradedness hypothesis. This raises a 
question about the \mCDE property, posed 
in an earlier version of this article, and later answered
affirmatively in the following proposition of S. Hopkins
while the article was under review.

\begin{Proposition}{\rm (\cite[Proposition~1.10]{Hopkins})}
\label{mCDE-products-questions}
The Cartesian product $P_1 \times P_2$ of two \mCDE posets $P_1$ and $P_2$, whether they are graded or not, is always \mCDE.
\end{Proposition}

\begin{Corollary}
\label{chain-product-CDE-corollary}
A product of chains $\bm{a_1} \times \cdots \times \bm{a_n}$
is \mCDE and \CDE, with 
$$\EE (\Xdd) = \EE (X^{{\scriptscriptstyle (}m{\scriptscriptstyle)}}) = \EE (\Ydd) = \sum_{k=1}^n \frac{a_k-1}{a_k}.$$ 
By setting $a_1 = \cdots = a_n = 2$, we see that Boolean algebras $\bm{2}^n=\bm{2} \times \cdots \times \bm{2}$ of
rank $n$ are  \mCDE and \CDE, with $\EE (\Xdd) = \EE (X^{{\scriptscriptstyle (}m{\scriptscriptstyle)}}) = \EE (\Ydd) = n/2$.
\end{Corollary}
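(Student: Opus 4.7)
The plan is to invoke Proposition~\ref{chain-product-proposition} with $X_k := \Xdd_{\bm{a_k}}$, the down-degree random variable on the $k$-th chain factor. Since the down-degree on a Cartesian product decomposes as $d_{\bm{a_1} \times \cdots \times \bm{a_n}}(i_1,\ldots,i_n) = \sum_k d_{\bm{a_k}}(i_k)$, the random variable $Z$ of Proposition~\ref{chain-product-proposition} is exactly $\Xdd_P^{{\scriptscriptstyle (}m{\scriptscriptstyle)}}$. The proposition immediately yields
$$
\EE(X^{{\scriptscriptstyle (}m{\scriptscriptstyle)}}_P) = \sum_{k=1}^n \EE(\Xdd_{\bm{a_k}}; m)
$$
for every $m \geq 1$, so the whole question reduces to computing $\EE(\Xdd_{\bm{a}}; m)$ on a single chain $\bm{a}$.

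The main content is to show that, for every $a \geq 1$ and $m \geq 1$, the distribution $\Omega^{{\scriptscriptstyle (}m{\scriptscriptstyle)}}_{\bm{a}}$ is the uniform distribution on $\bm{a}$. My approach would be a direct bijection: for each $j \in \bm{a}$, the map which sends an $m$-element multichain $p_1 \leq p_2 \leq \cdots \leq p_m$ in $\bm{a}$ passing through $j$ to the $(m-1)$-element multichain obtained by deleting the \emph{first} occurrence of $j$ is a bijection onto the set of all $(m-1)$-element multichains in $\bm{a}$, with inverse given by inserting a $j$ into its unique order-preserving slot. Hence exactly $\binom{a+m-2}{m-1}$ multichains pass through each element, making $\Omega^{{\scriptscriptstyle (}m{\scriptscriptstyle)}}_{\bm{a}}$ uniform. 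Since the down-degree on $\bm{a}$ is $0$ at the minimum and $1$ elsewhere, this gives $\EE(\Xdd_{\bm{a}}; m) = (a-1)/a$, independent of $m$.

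Combining the two steps, $\EE(X^{{\scriptscriptstyle (}m{\scriptscriptstyle)}}_P) = \sum_{k=1}^n (a_k-1)/a_k$ for every $m\ge 1$, so $P$ is \mCDE. Because $P$ is graded, the observation following Definition~\ref{mCDE-definition} forces $\EE(\Xdd_P) = \EE(X^{{\scriptscriptstyle (}m{\scriptscriptstyle)}}_P) = \EE(\Ydd_P)$, so $P$ is \CDE with the stated common value. The Boolean algebra assertion follows by specializing $a_1=\cdots=a_n=2$.

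The only step with any real substance is the uniformity of $\Omega^{{\scriptscriptstyle (}m{\scriptscriptstyle)}}$ on a chain; everything else is a formal combination of Proposition~\ref{chain-product-proposition} with the observations about the $m=1$ and $m\to\infty$ limits of $\Omega^{{\scriptscriptstyle (}m{\scriptscriptstyle)}}$ already recorded in Section~\ref{section:EDDE}. I would therefore expect the entire argument to fit in a short paragraph.
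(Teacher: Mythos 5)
Your proof is correct and takes essentially the same route the paper intends: apply Proposition~\ref{chain-product-proposition} to the per-factor down-degree random variables and then invoke the observation following Definition~\ref{mCDE-definition} that a graded \mCDE poset is automatically \CDE. The one place you supply detail the paper elides is the justification that $\Omega^{{\scriptscriptstyle (}m{\scriptscriptstyle)}}_{\bm{a}}$ is uniform on a chain $\bm{a}$ (the proof of Proposition~\ref{chain-product-proposition} simply asserts this coincidence), and your ``delete the first occurrence of $j$'' bijection onto $(m-1)$-element multichains cleanly fills that in.
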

\begin{proof}
The chain poset $P=\bm{a}$ having $a$ elements is $\mCDE$ and $\CDE$ because
in this case, the distribution
$\Omegachain_P$ and the distributions $\Omega_P^{{\scriptscriptstyle (}m{\scriptscriptstyle)}}$ used to defined the $\mCDE$ property all coincide with
the uniform distribution $\Omegachain_P$.
Therefore the product $\bm{a_1} \times \cdots \times \bm{a_n}$
is $\mCDE$ by Proposition~\ref{mCDE-products-questions}.  Since
the product has $a_1 a_2 \cdots a_n$ elements, and exactly 
$a_1 a_2 \cdots a_{k-1} (a_k-1) a_{k+1} \cdots a_n$ Hasse diagram edges
of the form $\{ (x_1,\ldots,x_k,\ldots,x_n), 
(x_1,\ldots,x_k+1,\ldots,x_n) \}$ for each $k$,
it has edge density
$$
\EE X \left( = \EE \Ydd = \EE (X^{{\scriptscriptstyle (}m{\scriptscriptstyle)}}) \right)
= \frac{1}{a_1 a_2 \cdots a_n}\sum_{i=1}^n a_1 a_2 \cdots a_{i-1} (a_i-1) a_{i+1} \cdots a_n
= \sum_{k=1}^n \frac{a_k-1}{a_k}. \qedhere
$$
\end{proof}

% An affirmative answer to this question would be useful in resolving
% Conjecture~\ref{Gaussian-poset-mCDE-conjecture}.

We now expand upon another topic related to poset operations, namely, duality. Despite Example~\ref{duality-does-not-preserve-CDE}, {self-duality} is relevant for the \CDE property.  The authors thank both S.~Fishel and T.~McConville for (independently) pointing out the
\CDE assertion in Proposition~\ref{Fishel-McConville-proposition} below.
Recall that a poset $P$ is \emph{self-dual} if one has a poset isomorphism $P \cong P^*$, and we will say that a poset $P$ is \emph{regular of valence $\Delta$} if every element $p$ in $P$ has the same vertex degree $\Delta$ in the Hasse diagram.

\begin{Proposition}
\label{Fishel-McConville-proposition}
A finite, self-dual poset $P$ that is regular of valence $\Delta$ is always
\mCDE and \CDE, with
$\EE (X) = \EE (Y) = \Delta/2$.
\end{Proposition}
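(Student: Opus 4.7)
The plan is to exploit the self-dual isomorphism $\phi : P \to P^*$ to show that, with respect to any of the probability distributions we care about (uniform, $m$-multichain-weighted, or maximal-chain-weighted), the expected down-degree $\EE(d_{\text{down}})$ equals the expected up-degree $\EE(d_{\text{up}})$; since $P$ is regular of valence $\Delta$, their sum is identically $\Delta$, which pins each expectation to $\Delta/2$.

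First I would observe that for every $p \in P$, regularity gives
$$
d_{\text{down}}(p) + d_{\text{up}}(p) = \Delta,
$$
where $d_{\text{up}}(p) := \#\{q \in P : p \lessdot q\}$. Next, because $\phi$ is an order-reversing bijection, it sends covering relations in $P$ to reversed covering relations, so
$$
d_{\text{down}}(\phi(p)) = d_{\text{up}}(p).
$$
Moreover $\phi$ maps an $m$-element multichain $p_1 \leq \cdots \leq p_m$ through $p$ bijectively to the reversed $m$-element multichain $\phi(p_m) \leq \cdots \leq \phi(p_1)$ through $\phi(p)$, and similarly identifies maximal chains through $p$ with maximal chains through $\phi(p)$. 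Consequently, if $f_m(p)$ denotes the number of $m$-element multichains passing through $p$ and $c(p)$ the number of maximal chains passing through $p$, then
$$
f_m(p) = f_m(\phi(p)) \qquad \text{and} \qquad c(p) = c(\phi(p)).
$$

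For the \mCDE assertion, fix $m \geq 1$ and compute
$$
\EE(X^{{\scriptscriptstyle (}m{\scriptscriptstyle)}}) = \frac{\sum_{p \in P} f_m(p)\, d_{\text{down}}(p)}{\sum_{p \in P} f_m(p)}.
$$
Re-indexing the numerator via $q = \phi(p)$ and using the two identities above,
$$
\sum_{p \in P} f_m(p)\, d_{\text{down}}(p) = \sum_{p \in P} f_m(\phi(p))\, d_{\text{up}}(\phi(p)) = \sum_{q \in P} f_m(q)\, d_{\text{up}}(q).
$$
Adding these equal quantities and invoking $d_{\text{down}} + d_{\text{up}} = \Delta$ gives
$$
2 \sum_{p \in P} f_m(p)\, d_{\text{down}}(p) = \Delta \sum_{p \in P} f_m(p),
$$
so $\EE(X^{{\scriptscriptstyle (}m{\scriptscriptstyle)}}) = \Delta/2$ for every $m \geq 1$, proving \mCDE. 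The specialization $m=1$ yields $\EE(\Xdd) = \Delta/2$, and the identical argument with $c(p)$ in place of $f_m(p)$ (using the maximal-chain preservation of $\phi$) yields $\EE(\Ydd) = \Delta/2$, establishing \CDE.

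There is no real obstacle: once the self-dual isomorphism is in hand, the only thing to verify carefully is that $\phi$ simultaneously swaps $d_{\text{down}} \leftrightarrow d_{\text{up}}$ and preserves the weights ($f_m$ or $c$) that define each probability space. The regularity hypothesis then converts the resulting equality of expectations into the numerical value $\Delta/2$.
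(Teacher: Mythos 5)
Your proof is correct and follows essentially the same route as the paper's: re-index by the self-dual isomorphism to turn the weighted sum of down-degrees into the weighted sum of up-degrees, then use regularity so the two add to $\Delta$. The paper phrases this marginally more abstractly by proving the claim for any probability distribution invariant under the dualizing isomorphism, but the key observations (down-degree of $\phi(p)$ equals up-degree of $p$; multichain and maximal-chain weights are preserved by $\phi$) and the averaging computation are identical.
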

\begin{proof}
Given a poset isomorphism $\alpha: P \rightarrow P^*$, we will
show a stronger assertion:  for any probability distribution on the underlying set $P=P^*$ that is {$\alpha$-invariant} in the sense that 
$\Prob(\alpha(p))=\Prob(p)$ for all $p \in P$, the expected value of the down-degree random variable $d: P \rightarrow \NN$ is $\Delta/2$.  To see this, one calculates the expected value of $d$ as follows:
$$
\begin{aligned}
\sum_{p \in P} \Prob(p) \cdot d(p) 
&=
\frac{1}{2} \left( \sum_{p \in P} \Prob(p) \cdot d(p) +
\sum_{p \in P} \Prob(\alpha(p))\cdot  d(\alpha(p)) \right) \\
&=
\frac{1}{2} \left( \sum_{p \in P} \Prob(p) \cdot \left( d(p) +  d(\alpha(p)) \right) \right) \\
&=
\frac{1}{2} \sum_{p \in P} \Prob(p) \cdot  \Delta\\
&=\frac{\,\Delta\,}{2}.
\end{aligned}
$$
The penultimate equality used the fact that the down-degree $d(\alpha(p))$ of $\alpha(p)$ in $P$
 is the same as the up-degree of $p$ in $P$, meaning that $d(p)+d(\alpha(p))$ is
the sum of the up- and down-degrees of $p$, which is $\Delta$.

Note that this then implies 
$\EE (X) = \EE (X^{{\scriptscriptstyle (}m{\scriptscriptstyle)}}) = \EE (Y)=\Delta/2$, 
because 
\begin{itemize}
\item the uniform distribution 
$\Omegaunif_P$ on $P=P^*$ used for $X$
is obviously $\alpha$-invariant, while 
\item the distributions $\Omega^{{\scriptscriptstyle (}m{\scriptscriptstyle)}}_P$ and $\Omegachain_P$ on $P=P^*$ 
used for $X^{{\scriptscriptstyle (}m{\scriptscriptstyle)}}$ and $Y$ are 
$\alpha$-invariant because $\alpha$
bijects the chains (respectively, $m$-element multichains) through $p$ in $P$ 
with the same chains (respectively, multichains) through $\alpha(p)$ in $P^*$. \hfill $\qedhere$
\end{itemize}
\end{proof}

Proposition~\ref{Fishel-McConville-proposition}
yields several interesting families of \mCDE and \CDE posets,
many of them non-graded, which we briefly discuss here.

\subsubsection{Simplicial arrangements and oriented matroids}

The first are the {weak orders} on the {chambers} of a {(central, essential) hyperplane arrangement} in $\RR^r$ (or, more generally, the {topes} of an {oriented matroid} of rank $r$). We will stick to the language of chambers and arrangements rather than the more general oriented matroid language here. Definitions and historical references can be found in \cite[\S4.2]{OMbook}.

All such weak orders have the same underlying graph for their Hasse diagram,
having vertices given by the chambers $C$ (the maximal cones into which the
arrangement dissects the space), and an edge $\{C, C'\}$ whenever 
two chambers $C$ and $C'$ are separated by exactly one hyperplane. 
When this graph is regular of valence $r$, 
the arrangement is called \emph{simplicial}.   
In particular, this occurs for the arrangements of reflecting hyperplanes in a finite reflection group $W$ of rank $r$, where chambers correspond to the
group elements, and the weak orders are all isomorphic 
to what is called the \emph{weak Bruhat order} on $W$. 
One defines one of the \emph{weak orders} on the set of chambers generally
by picking a base chamber $C_0$, and decreeing that $C \leq C'$ 
if every hyperplane separating $C_0$ from $C$ also separates $C_0$ from $C'$.  
The map $C \mapsto -C$ is a poset anti-automorphism, showing that all weak orders are self-dual. 
Proposition~\ref{Fishel-McConville-proposition} then immediately implies the following.

\begin{Corollary}
\label{simplicial-arrangement-corollary}
For a (central, essential) simplicial hyperplane arrangement in $\RR^r$, or 
simplicial oriented matroid of rank $r$, any of its weak orders on chambers is both \mCDE and \CDE, with $$\EE (X) = \EE (X^{{\scriptscriptstyle (}m{\scriptscriptstyle)}}) = \EE (Y) = \frac{\,r\,}{2}.$$
In particular, this is true for the weak Bruhat order 
on any finite reflection group $W$.
\end{Corollary}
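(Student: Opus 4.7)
The plan is to deduce the corollary as a direct application of Proposition~\ref{Fishel-McConville-proposition}. That proposition reduces the claim to two geometric verifications: (i) the Hasse diagram of the weak order on chambers is regular of valence exactly $r$, and (ii) the weak order is self-dual as a poset. Once both are in hand, Proposition~\ref{Fishel-McConville-proposition} supplies \mCDE, \CDE, and the explicit value $\Delta/2 = r/2$, so there is no separate expectation computation to carry out.

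For regularity, I would invoke the definition of simpliciality: in a central, essential simplicial arrangement in $\RR^r$, every chamber is an $r$-dimensional simplicial cone bounded by exactly $r$ walls. The chambers adjacent to $C$ in the tope graph are precisely those obtained by crossing a single wall of $C$, of which there are exactly $r$. Hence the underlying graph of every weak order is $r$-regular. (In the oriented matroid setting, this is built into the usual definition of a simplicial oriented matroid of rank $r$.)

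For self-duality, fix a base chamber $C_0$ and encode each chamber $C$ by its \emph{separation set} $S(C_0,C)$, consisting of those hyperplanes that separate $C_0$ from $C$. Then the weak order based at $C_0$ is the inclusion order $C \le C' \iff S(C_0,C) \subseteq S(C_0,C')$. The antipodal map $C \mapsto -C$ satisfies $S(C_0,-C) = \HHH \setminus S(C_0,C)$, where $\HHH$ is the full set of hyperplanes; since set complementation reverses inclusion, $C \mapsto -C$ is a poset anti-automorphism, establishing self-duality.

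With both hypotheses confirmed, Proposition~\ref{Fishel-McConville-proposition} yields
$$
\EE(X) \;=\; \EE(X^{{\scriptscriptstyle (}m{\scriptscriptstyle)}}) \;=\; \EE(Y) \;=\; \frac{r}{2}.
$$
The specialization to finite reflection groups $W$ follows because the reflection arrangement of $W$ is central, essential, and simplicial of rank equal to the rank $r$ of $W$. There is no genuine obstacle here; the only thing to watch is ensuring that the anti-automorphism argument is phrased without reference to a longest element $w_0$, so that it applies uniformly to simplicial arrangements that need not come from reflection groups and to oriented matroids where no group acts.
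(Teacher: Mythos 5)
Your proposal is correct and follows exactly the paper's own route: establish $r$-regularity from simpliciality, establish self-duality via the antipodal map $C \mapsto -C$ on separation sets, and then invoke Proposition~\ref{Fishel-McConville-proposition}. There is nothing to add.
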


\subsubsection{Tamari orders and some generalizations}
\label{Tamari-subsection}

The set of all triangulations of an $n$-sided polygon carries a well-known partial
order known as the \emph{Tamari order} \cite{TamariFestschrift}.  
The underlying graph for its Hasse diagram has an edge $\{T, T'\}$ if the triangulations $T$ and $T'$ differ only by a 
single \emph{diagonal flip}, that is, from one diagonal to the other 
inside a quadrangle triangulated by both $T$ and $T'$. After labeling 
the polygon vertices cyclically as $1,2,\ldots,n$, one 
has $T \lessdot T'$ if the diagonal flip exchanges the diagonal $\{i,k\}$  
for the diagonal $\{j,\ell\}$ within a quadrangle $ijk\ell$ that
has $(1 \leq) i < j < k <\ell (\leq n)$.
As an example, the special case of the CDE family $P_{1,1,2,1}$ 
from Example~\ref{four-parameter-family} is the \emph{Tamari lattice} \cite{TamariFestschrift}
on triangulations of a pentagon.  

The Tamari order on triangulations of an $n$-gon is regular of valence $n-3$, because each
triangulation has $n-3$ internal diagonals that one can flip;  in fact,  
it is also the $1$-skeleton of a {simple} $(n-3)$-dimensional polytope, called the \emph{associahedron} \cite[Example 9.11]{Ziegler}.
The Tamari order is self-dual, because the
map on the vertices swapping $i \leftrightarrow n+1-i$ reverses the order.

Proposition~\ref{Fishel-McConville-proposition} then immediately implies the following.

\begin{Corollary}
\label{Tamari-corollary}
Tamari order on triangulations of an $n$-gon is \mCDE and \CDE, 
with 
$$\EE (X) = \EE (X^{{\scriptscriptstyle (}m{\scriptscriptstyle)}}) = \EE (Y) = \frac{n-3}{2}.$$
\end{Corollary}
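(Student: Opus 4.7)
The plan is to obtain this corollary as an immediate consequence of Proposition~\ref{Fishel-McConville-proposition}, once the two hypotheses of that proposition—regularity and self-duality—have been verified for the Tamari order. These two facts are already asserted in the text preceding the corollary, so my ``proof'' is really just a careful bookkeeping of what has already been said.

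First I would record that the Tamari order on triangulations of an $n$-gon is regular of valence $\Delta = n-3$ in its Hasse diagram. Each triangulation of the $n$-gon contains exactly $n-3$ internal diagonals, and every internal diagonal $d$ lies in exactly one quadrilateral whose two triangles belong to the triangulation. Flipping $d$ to the other diagonal of that quadrilateral produces a neighbor in the Hasse diagram. Thus each triangulation has exactly $n-3$ Hasse-neighbors, of which some lie above and some below it in the Tamari order, but the \emph{total} degree (up-degree plus down-degree) is always $n-3$.

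Next I would verify self-duality. Define $\alpha: T \mapsto T^{\mathrm{rev}}$, where $T^{\mathrm{rev}}$ is the image of $T$ under the vertex relabeling $i \mapsto n+1-i$. This is clearly an involution on the set of triangulations. To see that it reverses the Tamari order, suppose $T \lessdot T'$ via a flip that exchanges $\{i,k\}$ for $\{j,\ell\}$ inside a quadrilateral $ijk\ell$ with $i<j<k<\ell$. After the relabeling, the quadrilateral has vertices $n+1-\ell < n+1-k < n+1-j < n+1-i$; the diagonal $\{i,k\}$ of $T$ becomes $\{n+1-k,n+1-i\}$ (connecting the 2nd and 4th vertices), while $\{j,\ell\}$ of $T'$ becomes $\{n+1-\ell,n+1-j\}$ (connecting the 1st and 3rd). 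Thus $\alpha(T) \gtrdot \alpha(T')$, i.e., $\alpha$ is an order-reversing bijection and hence a poset isomorphism $P \to P^*$.

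Finally, with both hypotheses in hand, I would apply Proposition~\ref{Fishel-McConville-proposition} with $\Delta = n-3$ to conclude that Tamari order is both \mCDE and \CDE with
$$
\EE(X) = \EE(X^{{\scriptscriptstyle (}m{\scriptscriptstyle)}}) = \EE(Y) = \frac{n-3}{2}.
$$
There is no serious obstacle here: the two structural inputs (regularity of the flip graph and self-duality via vertex reversal) are standard folklore for the associahedron, and the expectation identity is then automatic from Proposition~\ref{Fishel-McConville-proposition}. If anywhere mild care is needed, it is in the diagonal-flip bookkeeping above, to confirm that $\alpha$ really swaps the ``lower'' and ``upper'' diagonals of each flip quadrilateral and hence inverts cover relations rather than merely permuting them.
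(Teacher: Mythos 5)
Your proof is correct and takes essentially the same approach as the paper: the paper establishes regularity of valence $n-3$ (via the $n-3$ flippable internal diagonals) and self-duality (via $i \leftrightarrow n+1-i$) in the discussion immediately preceding the corollary, and then invokes Proposition~\ref{Fishel-McConville-proposition}. Your writeup merely spells out the order-reversal check for the flip relation in more detail, which is a useful sanity check but not a different argument.
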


The authors thank T. McConville for also pointing out the following generalizations of
Tamari orders that are all valence-regular, and, in some cases, self-dual.  Valence-regularity
stems from the fact that, in each case, the object can be described as a partial order whose underlying
Hasse diagram is the graph of all maximal simplices in a pure $(\Delta-1)$-dimensional
simplicial complex with the {pseudomanifold} property (that is,
every $(\Delta-2)$-dimensional simplex lies in exactly two maximal simplices).

\begin{itemize}
\item
N. Reading \cite{Reading} defined a \emph{Cambrian lattice} $P$ associated
to each orientation of the Coxeter diagram of a finite Coxeter group $(W,S)$.
This $P$ is always regular of valence $|S|$.  It will be self-dual (and 
hence both \mCDE and \CDE by Proposition~\ref{Fishel-McConville-proposition})
whenever the opposite orientation corresponds to a 
diagram automorphism of $(W,S)$;  see \cite[Theorem~3.5]{Reading}.
The Tamari order is the special case when the Coxeter system $(W,S)$ is of
type $A$, and its Coxeter diagram is a path that is {equioriented} (that is, the arrows all point in the same direction along the path).
 
\item
Derksen, Weyman, and Zelevinsky \cite{DWZ} introduced the notion
of a \emph{quiver with potential} $(Q,W)$, and its associated
\emph{(complete) Jacobian algebra} $A:=\hat{J}(Q,W)$ over a field $k$.
The operation of {mutation} on $(Q,W)$ gives rise to its
{exchange graph}, which is regular of valence $|Q_0|$, the number of
nodes in the quiver $Q$.  When the $k$-algebra $A$ has {finite representation type} (that is, only finitely many indecomposable modules up to isomorphism), 
this exchange graph is finite.  Under this same 
representation-finite hypothesis,
the exchange graph also carries an 
orientation that is acyclic 
and whose transitive closure is a poset $P$
that coincides with both the poset $P$ of
{support-tilting modules} for $A$ 
and the poset of {torsion-free classes} for $A$;
see \cite[\S 2, \S 3, and Theorem~3.6]{BrustleYang}
and \cite{GarverMcConville}.
Additionally, the Hasse diagram of $P$ is equal to the
exchange graph; that is, none of the directed edges of the oriented
exchange graph are implied transitively by others.

If, furthermore, there is an algebra isomorphism $A^\opp \cong A$, then the poset
$P$ will be self-dual (and hence both \mCDE and \CDE by Proposition~\ref{Fishel-McConville-proposition}) \cite[Proposition~1.3]{IRTT}.  
This occurs, for example, whenever the potential $W=0$ and $Q$ is
a representation-finite quiver whose opposite orientation 
can be achieved by applying a graph automorphism.
The Tamari order again corresponds to the special case when the quiver 
is an {equioriented} path of type $A$.

\item
Santos, Stump, and Welker \cite{SSW} introduced the 
\emph{Grassman-Tamari orders} $GT_{k,n}$
on the set of all {maximal noncrossing} families of
$k$ element subsets of $\{1,2,\ldots,n\}$. The 
Tamari order is the special case $GT_{2,n}$.  McConville \cite{McConville} generalized this further in his \emph{grid orders} $GT(\lambda)$ 
where $\lambda$ is any {shape}, meaning any finite induced subgraph of the
$\ZZ \times \ZZ$ rectangular grid.  
When $\lambda$ is a $k \times (n-k)$ rectangle, one has $GT(\lambda)=GT_{k,n}$.

Let $\lambda^*$ be the result of rotating $\lambda$ by $180^{\circ}$, and let 
$\lambda^t$ denote the shape obtained from $\lambda$ by 
transposing rows and columns.  One can check that
that $GT(\lambda^*) \cong GT(\lambda)^\opp \cong GT(\lambda^t)$ \cite[Proposition~2.19]{SSW}.  Therefore the poset $P=GT(\lambda)$ is self-dual 
(and hence both \mCDE and \CDE by Proposition~\ref{Fishel-McConville-proposition})
whenever $\lambda$ is invariant under either $180^{\circ}$ rotation as in
the case of $GT_{k,n}$, or under transposition of rows and columns.

\item
Pilaud \cite{Pilaud} introduced the poset of \emph{$(k,n)$-twists}
on the set of all {$k$-triangulations} of a convex $(n + 2k)$-gon;
the case $k=1$ recovers the Tamari poset.
One can check that this poset is always self-dual 
(and hence both \mCDE and 
\CDE by Proposition~\ref{Fishel-McConville-proposition}) 
using its description as a quotient of the weak Bruhat 
order on $W=\symm_n$ by a congruence
that is preserved under the involutive anti-automorphism
$w \mapsto w_0 w$ \cite[Definition~26]{Pilaud}.

\end{itemize}

%%%%%%%%%%%%%%%%%%
\subsection{Further \CDE conjectures and questions}
%%%%%%%%%%%%%%%%%%

\subsubsection{Intervals in the shifted version of Young's lattice}

For a \emph{strict partition} $\lambda=(\lambda_1 > \lambda_2 > \cdots > \lambda_\ell)$,
the \emph{shifted Ferrers diagram} for $\lambda$ is drawn with each successive row
indented one position further than its predecessor. Some examples are shown below.
There is a shifted version of Young's lattice, which is simply its induced partial order on
the subset of all strict partitions.
In light of Theorem~\ref{main-theorem}, one might ask if there exist some
strict partitions $\lambda$ whose interval $[\varnothing, \lambda]_{\shifted}$ in
the shifted version of Young's lattice is \CDE.  The original version of our
article offered two conjectural families of such partitions, the first of which is the next theorem, proven in a more general form by S. Hopkins while this article was under review.

\begin{Theorem}(a special case of Hopkins \cite[Thm.~4.2]{Hopkins})
\label{shifted-falling-by-twos-conjecture}
For integers $\ell \geq 1$ and $0 \leq k < \ell/2$, the shifted Young's lattice
interval $[\varnothing, \lambda]_{\shifted}$ below
$\lambda=(\ell,\ell-2,\ell-4,\ldots,\ell-2k)$ is \CDE, with expectations $\EE (X) = \EE (Y) =|\lambda|/(\ell+1)$.
\end{Theorem}

Note that this result is independent of the parity of $\ell$.  For example, it 
applies to both of the shifted shapes $(8,6,4)$ and $(9,7,5,3)$ depicted here.
$$
\begin{tikzpicture}[scale=.4]
\draw (0,0) -- (8,0);
\draw (0,-1) -- (8,-1);
\draw (1,-2) -- (7,-2);
\draw (2,-3) -- (6,-3);
\foreach \x in {0,8} {\draw (\x,0) -- (\x,-1);}
\foreach \x in {1,7} {\draw (\x,0) -- (\x,-2);}
\foreach \x in {2,3,4,5,6} {\draw (\x,0) -- (\x,-3);}
\draw[white] (3,-4) -- (5,-4);
\end{tikzpicture}
\hspace{.5in}
\begin{tikzpicture}[scale=.4]
\draw (0,0) -- (9,0);
\draw (0,-1) -- (9,-1);
\draw (1,-2) -- (8,-2);
\draw (2,-3) -- (7,-3);
\draw (3,-4) -- (6,-4);
\foreach \x in {0,9} {\draw (\x,0) -- (\x,-1);}
\foreach \x in {1,8} {\draw (\x,0) -- (\x,-2);}
\foreach \x in {2,7} {\draw (\x,0) -- (\x,-3);}
\foreach \x in {3,4,5,6} {\draw (\x,0) -- (\x,-4);}
\end{tikzpicture}
$$

The following conjecture, however, remains open.
\begin{Conjecture}
\label{shifted-square-staircase-conjecture}
For integers $a,d,e \ge 1$ with $d > a(e-1)+1$,  
the shifted Young's lattice interval $[\varnothing, \lambda]_{\shifted}$ below
$
\lambda=\delta_d + \delta_e \circ{a^a} 
$
is \CDE, with $\EE (X) =\EE (Y)=(d+a(e-1))/4$. 
\end{Conjecture}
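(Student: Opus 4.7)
The plan is to view $[\varnothing,\lambda]_\shifted$ as the distributive lattice $J(P)$, where $P$ is the poset of cells of the shifted Ferrers diagram of $\lambda$ ordered coordinate-wise: its order ideals are precisely the strict sub-partitions $\mu \subseteq \lambda$, and the down-degree of $\mu$ in the shifted Young's lattice equals the number of outer corners of $\mu$, equivalently the number of $P$-maximal elements of the corresponding ideal. Proposition~\ref{multichain-distribution-is-toggle-symmetric} shows that $\Omegachain_{J(P)}$ is toggle-symmetric; the uniform distribution $\Omegaunif_{J(P)}$ is likewise toggle-symmetric via the standard toggle involution on $J(P)$, which swaps the events $\{p \in \max I\}$ and $\{p \in \min(P \setminus I)\}$. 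Consequently both $\EE(\Xdd)$ and $\EE(\Ydd)$ compute the same quantity, namely the expected number of $P$-maximal elements of a random order ideal under a toggle-symmetric distribution on $J(P)$. It therefore suffices to show that this common value equals $(d+a(e-1))/4$ for \emph{every} toggle-symmetric distribution, which would moreover upgrade the conjectured \CDE property to \mCDE.

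Next, I would adapt the local ``balanced-shape'' argument of \cite[\S3]{Chan2} to the shifted setting, following the approach used by Hopkins (cited in the proof of Theorem~\ref{Gaussian-poset-CDE-theorem}(b)) to establish the \mCDE property of $[\varnothing,\delta_b]_\shifted$. Writing the expected number of $P$-maximal elements as $\sum_x \Prob(x \in \max I)$ summed over cells $x$ of the shifted diagram, toggle-symmetry pairs each summand with its companion $\Prob(x \in \min(P \setminus I))$, and grouping these identities along shifted rows, columns, and the main diagonal produces substantial telescoping. The subtlety particular to the shifted case is that each main-diagonal cell $(i,i)$ sits at the northwest end of both its row and its column, so it must be weighted carefully (effectively half-and-half in the two directions) to avoid double-counting. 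After the cancellations, the sum collapses to a boundary expression localized at the outer corners of $\lambda$.

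The main obstacle is to formulate the correct shifted analog of the \emph{balanced} hypothesis of \cite[Cor.~3.8]{Chan2} and to verify that the numerical condition $d > a(e-1)+1$ is precisely what makes $\lambda = \delta_d + \delta_e \circ a^a$ satisfy this shifted-balanced condition. Geometrically, this inequality forces the pure-staircase summand $\delta_d$ to protrude at least one full row past the rectangular-staircase summand $\delta_e \circ a^a$, giving the shifted diagram of $\lambda$ a clean monotone southwest boundary meeting the main diagonal in a single cell. Under this hypothesis I expect the telescoping to collapse to $(\lambda_1+1)/4 = (d+a(e-1))/4$; the factor of $4$ in the denominator, as opposed to the $A+B$ in the straight-shape formula $AB/(A+B)$, reflects the shifted reflection symmetry which effectively identifies rows with columns across the main diagonal so that $A = B$.

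As a sanity check, when $e=1$ the shape reduces to $\lambda = \delta_d$ and the prediction is $\EE(\Xdd) = d/4$, which one can verify directly in small cases: the four strict sub-partitions of $\delta_3 = (2,1)$ have total down-degree $3$, yielding $\EE(\Xdd) = 3/4$, while the eight strict sub-partitions of $\delta_4 = (3,2,1)$ have total down-degree $8$, yielding $\EE(\Xdd) = 1$. An alternative route would be the diagonal-doubling bijection identifying strict partitions with self-conjugate ordinary partitions, hoping to invoke Theorem~\ref{main-theorem} on the doubled straight shape; however, self-conjugacy is not preserved by the individual toggles underlying \cite{Chan2}, so this reduction would require substantial additional work and seems less direct than the shifted generalization sketched above.
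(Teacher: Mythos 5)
This statement is a \emph{conjecture} in the paper, not a theorem: the authors do not supply a proof, so there is no paper argument to compare yours against. Your task was therefore to produce the first proof, and what you have written is a plausible plan of attack rather than a completed argument.

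The framing is sound. Viewing $[\varnothing,\lambda]_\shifted$ as $J(P)$ for the cell poset $P$ of the shifted diagram, invoking Proposition~\ref{multichain-distribution-is-toggle-symmetric} for $\Omegachain_{J(P)}$, and noting that $\Omegaunif_{J(P)}$ is toggle-symmetric because each toggle is an involution of $J(P)$, all reduce the problem to showing that \emph{every} toggle-symmetric distribution yields the same down-degree expectation $(d+a(e-1))/4$; this is exactly how the paper handles the $\lambda=b^2$ case and how Remark~\ref{remark:Chan} suggests approaching the straight-shape rectangular-staircase case. Your two sanity checks at $e=1$ are also correct. However, the substance of the proof is precisely the step you flag as ``the main obstacle'': you have not formulated the shifted analogue of the balanced hypothesis of \cite[Corollary~3.8]{Chan2}, you have not carried out the row/column/diagonal telescoping (and the handling of main-diagonal cells is genuinely delicate, since a cell $(i,i)$ is the head of both its row and its column in the shifted diagram, so it is not clear that a half-and-half weighting can be justified in a way compatible with the toggle identities), and you have not verified that $d > a(e-1)+1$ is equivalent to the shifted-balanced condition you would need. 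Until that lemma is stated and proved, this remains a program, not a proof --- which is consistent with the fact that the authors left the statement as an open conjecture. One minor caution: the paper records that Hopkins verified $J([\varnothing,b^2])\cong[\varnothing,\delta_{b+2}]_\shifted$ is \mCDE by these methods, so the shifted-balanced machinery does exist in at least one nontrivial instance, but the paper also notes that closely related shifted and root-poset examples (e.g.\ $J(\Phi_{D_4}^+)$, and several small shifted shapes falling outside the two conjectured families) are \emph{not} CDE; so the shifted-balanced criterion, whatever it is, must be quite restrictive, and identifying it is the real content your proposal leaves open.
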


We depict here the shifted shape $\delta_8 + \delta_3 \circ{2^2}$, with the cells of $\delta_3 \circ{2^2}$ shaded. 
$$
\begin{tikzpicture}[scale=.4]
\fill[black!20] (7,0) -- (11,0) -- (11,-2) -- (9,-2) -- (9,-4) -- (7,-4) -- (7,0);
\foreach \y in {0,-1} {\draw (0,\y) -- (11,\y);}
\draw (1,-2) -- (11,-2);
\draw (2,-3) -- (9,-3);
\draw (3,-4) -- (9,-4);
\draw (4,-5) -- (7,-5);
\draw (5,-6) -- (7,-6);
\draw (6,-7) -- (7,-7);
\draw (0,0) -- (0,-1);
\foreach \x in {1,10,11} {\draw (\x,0) -- (\x,-2);}
\foreach \x in {2} {\draw (\x,0) -- (\x,-3);}
\foreach \x in {3,9,8} {\draw (\x,0) -- (\x,-4);}
\foreach \x in {4} {\draw (\x,0) -- (\x,-5);}
\foreach \x in {5} {\draw (\x,0) -- (\x,-6);}
\foreach \x in {6,7} {\draw (\x,0) -- (\x,-7);}
\end{tikzpicture}
$$

\begin{Remark}
\label{positive-root-poset-remark}
The last theorem and conjecture overlap.  That is, 
Theorem~\ref{shifted-falling-by-twos-conjecture} with $(\ell,k)=(2N-1,N-1)$
and Conjecture~\ref{shifted-square-staircase-conjecture} 
with $(d,e,a)=(N+1,N,1)$ 
both assert that, for 
$\lambda=(2N-1,2N-3,\cdots,5,3,1)$, the interval
$[\varnothing,\lambda]_{\shifted}$ is \CDE with $\EE (X)=\EE (Y)=N/2$.  

Interestingly, this particular  interval $[\varnothing,\lambda]_{\shifted}$ is isomorphic to
the distributive lattice of $J(\Phi_W^+)$ of order ideals in the usual \emph{poset of 
positive roots $\Phi_W^+$} for the root systems of types $W=B_N$ or $C_N$.
It should be noted that for the root system of type $W=A_{d-1}$, 
the same lattice $J(\Phi_W^+)$ is isomorphic to the usual Young's lattice interval 
$[\varnothing, \delta_d]$, 
and hence is shown to be \CDE as part of Theorem~\ref{main-theorem}.  
Unfortunately, for the root system of type $D_4$, it 
was checked that $J(\Phi_{D_4}^+)$ is not \CDE.
\end{Remark}

\subsubsection{A few negative examples}

\begin{itemize}
\item Recall from Section~\ref{section:intro} that weak Bruhat order on a finite Coxeter group is \CDE (see Corollary~\ref{simplicial-arrangement-corollary}). One might ask whether \emph{strong Bruhat order} has the same property, 
but this fails already for the strong Bruhat order on the symmetric group $\symm_3$, shown here, because $\EE (X) = 4/3$ and $\EE (Y) = 5/4$.
$$
\begin{tikzpicture}
\foreach \x in {(0,0), (-1,1), (1,1), (-1,2), (1,2), (0,3)} {\fill \x circle (2pt);}
\draw (0,0) -- (-1,1) -- (-1,2) -- (0,3) -- (1,2) -- (1,1) -- (0,0);
\draw (-1,1) -- (1,2);
\draw ( 1,1) -- (-1,2);
\end{tikzpicture}
$$

\item
In light of Theorem~\ref{Gaussian-poset-CDE-theorem},
one might wonder whether to expect, more generally, that the distributive lattices
$J(P \times \bm{k})$, with $P$ minuscule, will always be \CDE.
However, this fails already for the first minuscule family,
because $J(\bm{a} \times \bm{b} \times \bm{k})$ is not \CDE
for $a=b=k=2$.

\item
In light of Remark~\ref{positive-root-poset-remark}, one might ask whether the posets $P=\Phi_W^+$
of positive roots for $W$ of types $A$ or $B/C$ might themselves be \CDE.  However, small examples
show that this is not the case.

\item
One can easily check that the \CDE property fails for the five-element modular, non-distributive lattice depicted below, which happens to be both the lattice of partitions of the set $\{1,2,3\}$ and the $n=q=2$ instance of the lattice of subspaces of $(\mathbb{F}_q)^n$.
$$
\begin{tikzpicture}
\foreach \x in {(0,0), (0,2), (1,1), (-1,1), (0,1)} {\fill \x circle (2pt);}
\draw (0,0) -- (-1,1) -- (0,2) -- (1,1) -- (0,0);
\draw (0,0) -- (0,2);
\end{tikzpicture}
$$

\item
Corollaries~\ref{Tamari-corollary} and \ref{simplicial-arrangement-corollary} 
might make one might wonder whether any of the following poset families
\begin{itemize}
\item Bergeron and Pr\'eville-Ratelle's \emph{$m$-Tamari} lattices \cite{Bergeron},
\item Kapranov and Voevodsky's \emph{higher Stasheff-Tamari} posets \cite{Kapranov},
\item Manin and Schechtman's \emph{higher Bruhat orders} \cite{Manin}, or
\item Law and Reading's lattice of \emph{diagonal rectangulations} \cite{LawReading},
\end{itemize}
all of which are related to Tamari and weak Bruhat orders, might be \CDE. However, in each case, we found small counterexamples.

\end{itemize}

%%%%%%%%%%%%%%%%%%%%%%%%%%%%%%%%%%%%%%%%
\section{Young's lattice and tableaux}
\label{tableaux-section}
%%%%%%%%%%%%%%%%%%%%%%%%%%%%%%%%%%%%%%%%

Computing $\EE(\Xdd)$ and $\EE(\Ydd)$
for Young's lattice intervals $[\varnothing,\lambda]$
and their duals involves various known flavors of tableaux.  
In this section, we review their definitions,
provide formulas to count them generally,
and then specialize to rectangular staircase shapes.

\begin{Definition}
A \emph{(set-valued) filling} $T$ of \emph{shape} $\lambda$ is an assignment of 
a finite subset $T(x) \subset \{1,2,\ldots\}$ to each cell $x$ in the Ferrers diagram of $\lambda$. Define the monomial
\begin{equation}\label{eqn:tableau monomial}
\xxx^T := \prod_{j \in T(y)} x_j
\end{equation}
as $y$ runs through the cells
of $\lambda$.
\end{Definition}

Several classes of fillings are of particular relevance to this work.

\begin{Definition}\label{defn:set-valued tableaux}
A \emph{column-strict set-valued tableau} $T$ of
shape $\lambda$ is a filling $T$ in which
\begin{itemize}
\item $\max \ T(x) \leq \min \ T(x')$ when 
$x$ is to the left of $x'$ in the same row of $\lambda$, and
\item $\max \ T(x) < \min \ T(x')$ when 
$x$ is above $x'$ in the same column of $\lambda$.
\end{itemize}
This $T$ is a \emph{standard set-valued tableau} if $\xxx^T = x_1x_2x_3\cdots x_N$ for some integer $N$. A column-strict set-valued tableau $T$ is a \emph{(column-strict) tableau} if $\#T(x)=1$
for every cell $x \in \lambda$, while $T$ is \emph{barely set-valued} if $\#T(x)=1$ for
all $x \in \lambda$ with the exception of a unique $x_0 \in \lambda$ for which $\#T(x_0)=2$. 
\end{Definition}

Also useful are tableaux with row-by-row bounds on their values.

\begin{Definition}\label{defn:flags}
A column-strict set-valued tableau $T$ of shape $\lambda$
is \emph{flagged} by a sequence of positive integers $\varphi = (\varphi_1,\varphi_2,\ldots)$ if every cell $x$ in row $i$ 
of $\lambda$ satisfies $\max \ T(x) \leq \varphi_i$. The
sequence $\varphi$ is called a \emph{flag}.
\end{Definition}

\begin{Example}
$$
\begin{array}{cccccc}
\ \tableau{
\scriptstyle{123} & {3} & \scriptstyle{35}\\
\scriptstyle{46} & \scriptstyle{67} \\  
{7}} 
\ & \
\tableau{
\scriptstyle{12} & {3} & \scriptstyle{56}\\
{4} & \scriptstyle{79} \\
{8}}
\ & \
\tableau{
1 & 3 & 5\\
2 & 6 \\
4}
\ & \
\tableau{
1 & 1 & 3\\
2 & 3 \\
4} 
\ & \
\tableau{
1 & \scriptstyle{13} & 3\\
3 & 4 \\ 
4}
\ & \
\tableau{
1 & \scriptstyle{12} & 2\\
2 & 3  \\
3 &
}\
\\
\text{column-strict}
&\text{standard}
&\text{standard}
&\text{column-strict}
&\text{barely}
&\text{flagged by}\\
\text{set-valued}
&\text{set-valued}
&\text{tableau}
&\text{tableau}
&\text{set-valued}
&\varphi=(2,3,4)
\end{array}
$$
\end{Example}

\begin{Proposition}
\label{tableau-bijections}
Fix $\lambda=(\lambda_1,\ldots,\lambda_\ell)$ and
set $\varphi:=(2,3,4,\ldots)$.  There are bijections between
\begin{enumerate}\renewcommand{\labelenumi}{(\alph{enumi})}
\item standard tableaux of shape $\lambda$,
and maximal chains in $[\varnothing,\lambda]$, 
or in its dual $[\varnothing,\lambda]^*$;
\item standard barely set-valued tableaux of shape $\lambda$,
and triples $(c,\mu,\nu)$, where $c$ is a maximal chain in $[\varnothing,\lambda]$, 
$\mu \in c$, and $\nu \lessdot \mu$ but $\nu$ is not necessarily in $c$; 
\item standard barely set-valued tableaux of shape $\lambda$,
and triples $(c,\mu,\nu)$, where $c$ is a maximal chain in $[\varnothing,\lambda]^*$, 
$\mu \in c$, and $\nu \lessdot \mu$ but $\nu$ is not necessarily in $c$;
\item column-strict tableaux of shape $\lambda$ 
flagged by $\varphi$, and elements of $[\varnothing,\lambda]$; and
\item barely set-valued 
column-strict tableaux of shape $\lambda$ flagged by $\varphi$, 
and covering relations $\nu \lessdot \mu$ 
in $[\varnothing,\lambda]$.
\end{enumerate}
\end{Proposition}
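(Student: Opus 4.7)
The plan is to exhibit explicit bijections for each of the five parts, all built on the common principle that a tableau records the \emph{time} at which each cell is added to a growing sub-shape. Part (a) is the classical correspondence: a standard tableau $T$ of shape $\lambda$ encodes the chain $\mu_0 \lessdot \mu_1 \lessdot \cdots \lessdot \mu_{|\lambda|}$, where $\mu_i$ is the set of cells of $T$ labeled at most $i$; since a maximal chain in $[\varnothing,\lambda]^*$ is the same set of shapes as one in $[\varnothing,\lambda]$ traversed in reverse, the same tableau encodes either.

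For part (d), the first observation is that a column-strict tableau of shape $\lambda$ flagged by $\varphi = (2,3,4,\ldots)$ has its row-$i$ entries pinned to $\{i,i+1\}$: column-strictness forces a row-$i$ entry to be at least $i$, while the flag forces it to be at most $i+1$. The bijection then sends $T$ to the sub-shape $\mu := \{(i,j) : T(i,j) = i\}$; a short verification confirms $\mu$ is a Ferrers sub-shape of $\lambda$, and the inverse sends $\mu$ to the tableau with value $i$ on $\mu$ and $i+1$ on $\lambda \setminus \mu$. Part (e) is a mild extension of (d): in a barely set-valued flagged tableau, the unique two-entry cell $x_0$ in row $i_0$ must, by the same pinching argument, have entry set exactly $\{i_0, i_0+1\}$. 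The map sends $T$ to the covering relation $\nu \lessdot \mu$ where $\nu := \{(i,j) : T(i,j) = \{i\}\}$ and $\mu := \nu \cup \{x_0\}$; the inverse assigns the two-element entry $\{i_0,i_0+1\}$ to the single cell $x_0 = \mu \setminus \nu$, and otherwise proceeds as in (d).

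Parts (b) and (c) share a common construction. Given a standard barely set-valued tableau $S$, let $x_0$ be its unique two-entry cell with entries $\{a,b\}$, $a < b$. Erase $b$ from $x_0$ and shift every label strictly greater than $b$ down by one to produce a standard tableau $T$ of shape $\lambda$, with associated maximal chain $c$ from (a). Distinctness of entries in a standard set-valued tableau, combined with the row and column inequalities around $x_0$, forces every cell above or to the left of $x_0$ in $S$ to have its (single) entry strictly less than $a$, and every cell below or to the right of $x_0$ to have its entry strictly greater than $b$. Consequently $x_0$ is a removable corner of $\mu_{b-1}$ and an addable corner of $\mu_{a-1}$, so the bijection for (b) sends $S$ to the triple $(c,\mu_{b-1},\mu_{b-1}\setminus\{x_0\})$ while the bijection for (c) sends $S$ to $(c,\mu_{a-1},\mu_{a-1}\cup\{x_0\})$. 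The inverses recover $x_0$ as the unique cell of $\mu \setminus \nu$ in (b) or of $\nu \setminus \mu$ in (c), read off the index $i$ with $\mu = \mu_i$, shift all labels of $T$ strictly greater than $i$ upward by one, and adjoin $i+1$ to cell $x_0$.

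The main obstacle is pure bookkeeping: each direction of each of the five bijections requires verifying the defining tableau conditions (Ferrers sub-shape, column-strict, set-valued inequalities, flag bounds) and then confirming that the two directions invert one another. The most delicate single point is the corner condition for $x_0$ in parts (b) and (c), which rests entirely on distinctness of labels in a standard set-valued tableau combined with the strict inequalities in its definition.
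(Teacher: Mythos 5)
Parts (a), (d), and (e) are correct and follow the same route as the paper. Part (b) is also correct: the auxiliary standard tableau $T$ you form by deleting the larger label $b$ and shifting labels $>b$ down by one produces, via (a), exactly the chain $c=(\mu^{(0)},\ldots,\mu^{(n)})$ the paper describes, and setting $\mu = \mu_{b-1}$, $\nu = \mu \setminus \{x_0\}$ matches the paper, with $b$ recoverable as $|\mu|+1$.

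Part (c), however, contains a genuine error. You reuse the \emph{same} auxiliary tableau $T$ and the same chain $c$ that you built for (b) by removing the larger label $b$. In that tableau the cell $x_0$ carries the single label $a$ (since $a<b$ is unaffected by the shift). Consequently
$$\nu \;=\; \mu_{a-1}\cup\{x_0\} \;=\; \mu_a$$
is \emph{always} the next element of the chain $c$. Your proposed map therefore never produces a triple $(c,\mu,\nu)$ with $\nu\notin c$, even though such triples exist and must be hit: for $\lambda=(3,1)$ there are $17$ triples on the right-hand side of (c) but only $12$ have $\nu\in c$. The map also fails to be injective: the label $b$ is not recoverable from $(c,\mu,\nu)$, since $(c,\mu,\nu)$ determines only $T$, $x_0$, and $a=|\mu|+1$. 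Concretely, with $T=\begin{smallmatrix}1&2&4\\3\end{smallmatrix}$ and $x_0=(1,2)$, both $S=\begin{smallmatrix}1&\{2,3\}&5\\4\end{smallmatrix}$ and $S'=\begin{smallmatrix}1&\{2,4\}&5\\3\end{smallmatrix}$ map to the same $(c,(1),(2))$. Your stated inverse (``shift all labels of $T$ strictly greater than $i$ upward by one and adjoin $i+1$'') silently forces $b=a+1$ and only recovers the $b=a+1$ case. The fix — which is what the paper actually does for (c) — is to build a \emph{different} auxiliary standard tableau for (c), by deleting the \emph{smaller} label $a$ and shifting labels $>a$ down by one. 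In that tableau $x_0$ carries the label $b-1$, so $\nu=\mu\cup\{x_0\}$ is decoupled from the chain, and $a$ is recovered as $|\mu|+1$ while the chain records everything else. The resulting chain in $[\varnothing,\lambda]^*$ is genuinely different from the one used in (b), which is the point your proposal misses.
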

\begin{proof}
For (a), the bijection sends $T$ to the maximal
chain whose $i$th step is the partition $\mu^{(i)}$
occupied by the values $\{1,2,\ldots,i\}$ in $T$.
For example, 
$$
\tableau{
1 & 3 & 5\\
2 & 6 \\
4}
\quad \longmapsto 
\left(
\varnothing \ , \ 
\ttableau{{}} \ , \
\ttableau{{}\\ {}} \ , \
\ttableau{{}&{}\\ {}} \ , \
\ttableau{{}&{}\\ {}\\ {}} \ , \
\ttableau{{}&{}&{}\\ {}\\ {}} \ , \
\ttableau{{}&{}&{}\\ {}&{}\\ {}}
\right).
$$
Maximal chains in the dual $[\varnothing,\lambda]^*$
are in bijection with those in $[\varnothing,\lambda]$, by reading the chain
backwards.

For (b), assume we are given a barely set-valued
standard tableau $T$ of shape $\lambda$, with $n:=|\lambda|$,
and let $T(x_0)=\{a_0<b_0\}$ for a unique cell $x_0$.  
Then the bijection sends $T \mapsto (c,\mu,\nu)$
in which the chain $c=(\mu^{(0)},\mu^{(1)},\ldots,\mu^{(n)})$
has $\mu^{(i)}$ the shape occupied by the
$i$th smallest values among 
$\{1,2,\ldots,n+1\} \setminus \{b_0\}$,
with $\mu:=\mu^{(b_0-1)}$, and $\nu:=\mu \setminus \{x_0\}$.  
For example, $T$ shown below has 
$\{1,2,\ldots,n+1\} \setminus \{b_0\}=\{1,2,3,4,6,7\}$, and
$$
\tableau{
1 & \scriptstyle{25} & 6\\
3 & 7 \\
4}
\quad \longmapsto (c,\mu,\nu),
\text{ with } 
c=\left(
\varnothing, 
\ttableau{{}} \ , \ 
\ttableau{{}&{}} \ , \ 
\ttableau{{}&{}\\{}} \ , \ 
\ttableau{{}&{}\\{}\\{}} \ , \ 
\ttableau{{}&{}&{}\\{}\\{}} \ , \ 
\ttableau{{}&{}&{}\\{}&{}\\{}}
\right) \text{ and }
\nu=\ttableau{{}\\{}\\{}}
\lessdot
\ttableau{{}&{}\\{}\\{}}=\mu=\mu^{(5-1)}.
$$

For (c), the bijection is similar to (b),
except that now $T \mapsto (c,\mu,\nu)$
where $c=(\mu^{(0)},\mu^{(1)},\ldots,\mu^{(n)})$ 
has $\mu^{(i)}$ as the shape occupied by the
$(n-i)$th smallest values among 
$\{1,2,\ldots,n+1\} \setminus \{a_0\}$,
with $\mu:=\mu^{(n-(a_0-1))}$ and $\nu:=\mu \cup \{x_0\}$.  
For example, $T$ shown below has 
$\{1,2,\ldots,n+1\} \setminus \{a_0\}=\{1,3,4,5,6,7\}$, and
$$
\tableau{
1 & \scriptstyle{25} & 6\\
3 & 7 \\
4}
\quad \longmapsto (c,\mu,\nu),
\text{ with } 
c=\left(
\ttableau{{}&{}&{}\\{}&{}\\{}} \ , \ 
\ttableau{{}&{}&{}\\{}\\{}} \ , \ 
\ttableau{{}&{}\\{}\\{}} \ , \ 
\ttableau{{}\\{}\\{}} \ , \ 
\ttableau{{}\\{}} \ , \ 
\ttableau{{}} \ , \ 
\varnothing
\right) \text{ and }
\nu=\ttableau{{}&{}}
\gtrdot
\ttableau{{}}=\mu=\mu^{(7-(2-1))}.
$$

For (d), the bijection sends the column-strict tableau, $T$, of shape $\lambda$ and flagged by $\varphi$, to the partition
$\mu$ describing the cells $x$ in $T$ which are filled with their 
row index $i$, rather than with the flag upper bound $i+1=\varphi_i$. This is a bijection because the column-strict and flagged requirements mean that all cells in row $i$ of $T$ must be filled either with $i$ or $i+1$, and the collection of cells whose filling matches the row index must form a subshape $\mu$ within $\lambda$; that is, $\mu \in [\varnothing,\lambda]$. To get from $\mu$ back to $T$, consider $\mu$ inside of $\lambda$, label all cells of that subshape by their row indices, and label all other cells of $\lambda$ by their row indices plus one. For example, in the following map from a column-strict tableau $T$ to its corresponding subshape $\mu$, the cells of $T$ that are filled by their row indices have been written in boldface:
$$
\tableau{
\mathbf{1} & 2 & 2\\
\mathbf{2} & 3 \\
4}
\quad \longmapsto \quad
\tableau{{}{}{}\\{}}\ .
$$

For (e), the bijection sends the tableau $T$ to the covering
relation $\nu \lessdot \mu$, where 
\begin{itemize}
\item $\nu$ gives the cells $x$ in $T$ filled by their row index,
\item the set difference (or \emph{skew shape}) $\lambda /\mu$
gives the cells $x$ filled by one more than their row index, and
\item the unique cell $x_0$ of $\mu/\nu$ is filled by 
$T(x_0)=\{i,i+1\}$, where $i$ is its row index.
\end{itemize}
For example, 
$$
\tableau{
\mathbf{1} & \mathbf{1} & 2 & 2\\
\mathbf{2} & \scriptstyle{23}& 3 \\
4}
\quad \longmapsto 
\nu=\ttableau{{}&{}\\{}}
\lessdot
\ttableau{{}&{}\\{}&{}}=\mu
$$

\end{proof}

Recall that $f^\lambda$ is the number of standard tableau of shape $\lambda$. Let us now name the number of tableaux of each kind appearing in Proposition~\ref{tableau-bijections}.

\begin{Definition}
\label{tableaux-cardinality-definitions}
Fix a partition $\lambda=(\lambda_1,\ldots,\lambda_\ell)$.
\begin{enumerate}\renewcommand{\labelenumi}{(\alph{enumi})}
\item Let $f^\lambda(+1)$ be the number of standard barely set-valued tableaux
of shape $\lambda$.
\item Let $R(\lambda)$ be the number
of column-strict tableaux of shape 
$\lambda$ flagged by $\varphi:=(2,3,4,\ldots)$.
\item Let $R^{(+1)}(\lambda)$ be the number
of column-strict barely set-valued tableaux
of shape $\lambda$ flagged by $\varphi$.
\end{enumerate}
\end{Definition}

Combining Proposition~\ref{tableau-bijections} 
with Equations~\eqref{EX-is-edge-density} and~\eqref{EY-graded-reformulation} implies the following.
\begin{Corollary}
For any partition $\lambda$,
\begin{align}
\label{Young-X-expectation-reformulated}
\EE(\Xdd_{[\varnothing,\lambda]}) 
  &=\frac{R^{(+1)}(\lambda) }{R(\lambda)}
  =\EE(\Xdd_{[\varnothing,\lambda]^*}), \text{ and}\\
\label{Young-Y-expectation-reformulated}
\EE(\Ydd_{[\varnothing,\lambda]})
  &=\frac{f^\lambda(+1)}{(|\lambda|+1)f^\lambda}
  =\EE(\Ydd_{[\varnothing,\lambda]^*}).
\end{align}
\end{Corollary}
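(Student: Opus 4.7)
The plan is to interpret each quantity in the reformulations~\eqref{EX-is-edge-density} and~\eqref{EY-graded-reformulation} via the tableau bijections established in Proposition~\ref{tableau-bijections}, and read off each equality by counting.

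First I would handle the $\Xdd$ equation. Since $[\varnothing,\lambda]$ is finite, \eqref{EX-is-edge-density} gives $\EE(\Xdd_{[\varnothing,\lambda]})$ as the ratio of the number of covering pairs $\nu \lessdot \mu$ to the number of elements. Proposition~\ref{tableau-bijections}(d) identifies the elements of $[\varnothing,\lambda]$ with column-strict tableaux of shape $\lambda$ flagged by $\varphi = (2,3,4,\ldots)$, whose count is $R(\lambda)$ by Definition~\ref{tableaux-cardinality-definitions}(b). Proposition~\ref{tableau-bijections}(e) identifies covering relations in $[\varnothing,\lambda]$ with column-strict barely set-valued tableaux of the same shape and flag, counted by $R^{(+1)}(\lambda)$. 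The equality $\EE(\Xdd_{[\varnothing,\lambda]}) = \EE(\Xdd_{[\varnothing,\lambda]^*})$ is immediate because $P$ and $P^*$ share the same Hasse diagram, hence the same edge density.

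Next I would address the $\Ydd$ equation. The interval $[\varnothing,\lambda]$ is graded of rank $|\lambda|$, so \eqref{EY-graded-reformulation} applies and expresses $\EE(\Ydd_{[\varnothing,\lambda]})$ as the ratio of the number of triples $(c,\mu,\nu)$ with $c \in \MaxChains([\varnothing,\lambda])$, $\mu \in c$, and $\nu \lessdot \mu$, to $(|\lambda|+1)\,\#\MaxChains([\varnothing,\lambda])$. Proposition~\ref{tableau-bijections}(a) counts the maximal chains as $f^\lambda$, and Proposition~\ref{tableau-bijections}(b) counts the triples as $f^\lambda(+1)$, yielding the claimed ratio. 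For the dual, the poset $[\varnothing,\lambda]^*$ is also graded of rank $|\lambda|$, with the same $f^\lambda$ maximal chains (now read in reverse); the covering relations in $[\varnothing,\lambda]^*$ correspond to covering relations in $[\varnothing,\lambda]$. I would then invoke Proposition~\ref{tableau-bijections}(c), whose purpose is precisely to count triples $(c,\mu,\nu)$ with $c$ a maximal chain in $[\varnothing,\lambda]^*$ and $\nu \lessdot \mu$ in that dual, again via $f^\lambda(+1)$.

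There is no real obstacle here: every ingredient has already been assembled, and the argument is a direct dictionary translation. The only step requiring any care is recognizing that parts (b) and (c) of Proposition~\ref{tableau-bijections} were stated as two separate bijections precisely so that the $\Ydd$ numerator can be evaluated for both $[\varnothing,\lambda]$ and $[\varnothing,\lambda]^*$ without having to verify independently that $\Omegachain$-expectations are preserved by duality (which in general they are not, as illustrated in Example~\ref{duality-does-not-preserve-CDE}).
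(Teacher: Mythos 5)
Your proof is correct and takes exactly the approach the paper intends: the paper presents this corollary as an immediate consequence of combining Proposition~\ref{tableau-bijections} with Equations~\eqref{EX-is-edge-density} and~\eqref{EY-graded-reformulation}, and your write-up correctly unpacks each piece, including the role of parts (b) versus (c) of Proposition~\ref{tableau-bijections} in giving the $\Ydd$-count for $[\varnothing,\lambda]$ and $[\varnothing,\lambda]^*$ separately.
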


To count barely set-valued tableaux, our
strategy is to convert them to tableaux with extra data. The following definition is a special case of 
the map sketched to prove \cite[Theorem~6.11]{Buch02}.

\begin{Definition}
\label{uncrowding-definition}
Given a column-strict barely set-valued tableau $T$, its \emph{uncrowding} is 
the column-strict tableau $T^+$ obtained as follows:
if $T(x_0)=\{a_0<b_0\}$ for the unique cell $x_0$, then remove
$b_0$ from $x_0$ and  
use \emph{Robinson-Schensted-Knuth (RSK) row-insertion} 
(see, for example, \cite[\S 7.11]{StanleyEC2})
to bump $b_0$ into the rows of $T$ strictly below the row of $x_0$.
For example, 
$$
\tableau{
1&1&2&2&4\\
2&3&\scriptstyle{34}&4\\
4&5&5&7\\
5&6&6\\
6
}^{\ +} = \ 
\tableau{
1&1&2&2&4\\
2&3&3&4\\
4&\mathbf{4}&5&7\\
5&\mathbf{5}&6\\
6&\mathbf{6}
} \ ,
$$
where the bumped entries are in boldface.
\end{Definition}

\begin{Proposition}
\label{crowding-proposition}
The uncrowding operation $T \mapsto T^+$
gives a bijection between 
\begin{itemize}
\item column-strict barely set-valued
tableaux of shape $\lambda$, and
\item triples $(T^+,x,i_0)$ where 
 \begin{itemize}
  \item $T^+$ is a column-strict tableau,
  \item $x$ is one of its (inner) corner cells, and 
  \item $i_0$ is in the
range $1,2,\ldots,i-1$, where $i$ is the row-index of $x$.
 \end{itemize}
\end{itemize}
Under this bijection, the shapes $\lambda$ and $\lambda^+$ of
$T$ and $T^+$ are related by $\lambda \lessdot \lambda^+$
and $\lambda^+/\lambda=\{x\}$.  Furthermore, $i_0$ is the row-index
of the unique cell $x_0$ for which $\#T(x_0)=2$.
\end{Proposition}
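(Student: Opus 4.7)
Proof proposal. The plan is to invert the uncrowding map via reverse RSK row-insertion. First I would verify that the forward map is well-defined: given a column-strict barely set-valued tableau $T$ of shape $\lambda$ with $T(x_0)=\{a_0<b_0\}$ at the unique cell $x_0$ in row $i_0$, removing $b_0$ from $x_0$ leaves a valid column-strict tableau (the inequalities at $x_0$ persist because $a_0<b_0$), and RSK-inserting $b_0$ into row $i_0+1$ produces a column-strict tableau $T^+$ whose shape $\lambda^+$ exceeds $\lambda$ by exactly one cell $x$. This new cell $x$ is a corner of $\lambda^+$ lying in some row $i>i_0$ (since the bumping path is initiated in row $i_0+1$), and hence $i_0\in\{1,2,\ldots,i-1\}$.

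To construct the inverse, suppose one is given a triple $(T^+, x, i_0)$ with $x$ a corner of $T^+$ in row $i$ and $i_0<i$. Carry out reverse row-insertion starting at $x$: extract $T^+(x)$, and for each successive row $r=i-1,i-2,\ldots,i_0+1$ replace the rightmost entry strictly less than the carried value with that value, carrying the displaced entry upward. After reverse-bumping out of row $i_0+1$, one holds a value $b_0$ that originally occupied some column $j$ of row $i_0+1$. Define $x_0$ to be the rightmost cell in row $i_0$ whose current entry $a_0$ is strictly less than $b_0$, and augment this cell by setting $T(x_0):=\{a_0,b_0\}$, keeping all other cell contents as in the reverse-bumped $T^+$.

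The main verifications are that (i) the cell $x_0$ exists at some column $j_0\ge j$, which follows because column-strictness of $T^+$ forces the row-$i_0$ entry at column $j$ to be strictly less than $b_0$; (ii) the resulting filling is a valid column-strict barely set-valued tableau, which reduces to checking that the entry directly below $x_0$ in the modified row $i_0+1$ exceeds $b_0$, using that reverse-bumping replaces the column-$j$ entry by some $w>b_0$ while preserving weak increase of the row; and (iii) the two maps are mutually inverse, which boils down to the standard reversibility of RSK row-insertion given its terminal cell. The main obstacle is the column-tracking in (ii): one must show that the leftward drift of the forward bumping path matches the rightward drift of the reverse bumping path so that $x_0$ sits at or to the right of column $j$ throughout the process. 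Once this drift inequality is established, the remaining set-valued inequalities, as well as the assertions $\lambda^+/\lambda=\{x\}$ and $i_0=\mathrm{row}(x_0)$, follow from the construction and from the weak monotonicity of rows in a column-strict tableau.
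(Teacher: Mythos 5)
Your proposal takes essentially the same route as the paper's proof, which is in fact terser: the paper merely says to run reverse RSK row-insertion out of the corner $x$ and, instead of bumping an entry out of row $i_0$, to adjoin the incoming value $b_0$ to the cell $x_0$ it would have displaced. What you call the main obstacle — checking that the entry of the modified row $i_0+1$ directly below $x_0$ exceeds $b_0$ — is not addressed in the paper, but it closes in one line and does not require tracking the full bumping path. Let $v$ be the value that reverse-bumps $b_0$ out of row $i_0+1$ (so $v>b_0$, and $v$ replaces $b_0$ in column $j$). By the reverse-bump rule, $b_0$ at column $j$ was the \emph{rightmost} entry of row $i_0+1$ of $T^+$ strictly less than $v$; hence every entry of row $i_0+1$ in a column $>j$ is $\ge v>b_0$. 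Since $x_0$ sits in some column $j_0\ge j$ (which you already established in step (i) from column-strictness at column $j$), the entry below $x_0$ — whether it is the replaced value $v$ at column $j$ or an untouched entry at column $j_0>j$ — is $>b_0$, or the cell $(i_0+1,j_0)$ is outside $\lambda$ and there is nothing to check. The remaining row and column inequalities at $x_0$, and the fact that rows $i_0+1$ through $i$ stay column-strict, follow as you indicate from standard RSK reversibility and the choice of $x_0$ as the rightmost cell of row $i_0$ with entry $<b_0$.
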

\begin{proof}
Given $(T^+,x,i_0)$, the inverse bijection (``crowding'') starts
by doing \emph{reverse RSK row-insertion} out of the corner cell $x$ in
$T^+$.  However, rather than stopping when it 
reverse-bumps an entry out of row $1$, the procedure stops when an entry $b_0$ from
row $i_0+1$ is about to bump an entry $a_0$ of row $i_0$, say in
a cell $x_0$, and instead adds $b_0$ as an extra set-valued entry 
to make $T(x_0)=\{a_0,b_0\}$.
\end{proof}

\begin{Example}
If one selects the boldface corner cell $x$ in the figure below (row $5$, column $2$),
$$
T^+=\tableau{
1&1&2&2&4\\
2&3&3&4\\
4&4&5&7\\
5&5&6\\
6&\mathbf{6}
}
$$
then crowding the triples 
$(T^+,x,i_0)$ for $i_0=1,2,3,4$ yields these barely set-valued
tableaux, respectively.
$$
\tableau{
1&1&2&2&4\\
2&3&3&4\\
4&4&5&7\\
5&\scriptstyle{56}&6\\
6&
} \quad
\tableau{
1&1&2&2&4\\
2&3&3&4\\
4&\scriptstyle{45}&5&7\\
5&6&6\\
6&
} \quad
\tableau{
1&1&2&2&4\\
2&3&\scriptstyle{34}&4\\
4&5&5&7\\
5&6&6\\
6&
} \quad
\tableau{
1&1&2&\scriptstyle{23}&4\\
2&3&4&4\\
4&5&5&7\\
5&6&6\\
6&
}
$$
\end{Example}

Proposition~\ref{crowding-proposition} yields useful recurrences for counting barely set-valued tableaux.

\begin{Corollary}
\label{barely-set-valued-tableau-recurrence}
For any partition $\lambda$,
$$
f^\lambda(+1)
=\sum_{x=(i,j)} (i-1) f^{\lambda\cup\{x\}},
$$
where $x$ runs through all outside corner cells of $\lambda$.
\end{Corollary}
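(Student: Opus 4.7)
The plan is to derive this recurrence as an immediate consequence of the uncrowding bijection in Proposition~\ref{crowding-proposition}, specialized from the column-strict setting to the standard setting.

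First I would observe that a standard barely set-valued tableau $T$ of shape $\lambda$ is, in particular, a column-strict barely set-valued tableau with content $\xxx^T = x_1 x_2 \cdots x_{|\lambda|+1}$. Applying Proposition~\ref{crowding-proposition} sends $T$ to a triple $(T^+, x, i_0)$. The key observation is that this bijection restricts cleanly to the standard case: since the uncrowding procedure removes the larger entry $b_0$ from the unique double-cell $x_0$ and RSK-row-inserts it into the rows strictly below, the resulting $T^+$ uses each of the integers $\{1,2,\ldots,|\lambda|+1\}$ exactly once, and $T^+$ is therefore a standard tableau of shape $\lambda^+ = \lambda \cup \{x\}$, where $x$ is the outside corner cell of $\lambda$ created by the insertion. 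Conversely, the crowding inverse applied to a triple $(T^+, x, i_0)$ with $T^+$ standard plainly produces a standard barely set-valued tableau of shape $\lambda$, so the restricted map is a bijection.

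With the bijection in hand, I would simply count: the triples $(T^+, x, i_0)$ with $T^+$ standard of shape $\lambda \cup \{x\}$, $x$ an outside corner cell of $\lambda$ sitting in row $i$, and $i_0 \in \{1, 2, \ldots, i-1\}$ are enumerated by
\[
\sum_{x=(i,j)} (i-1)\, f^{\lambda \cup \{x\}},
\]
where the sum runs over all outside corner cells $x = (i,j)$ of $\lambda$. Since the bijection identifies this count with $f^\lambda(+1)$, the claimed recurrence follows.

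There is no real obstacle here: the entire content is the verification that the uncrowding/crowding bijection of Proposition~\ref{crowding-proposition} respects standardness, which is transparent from the definitions, since RSK row-insertion preserves the property of being a standard filling and reverse row-insertion out of a corner cell likewise preserves it. The proof is thus essentially a one-line appeal to Proposition~\ref{crowding-proposition}.
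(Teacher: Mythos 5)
Your proof is correct and follows exactly the same route as the paper's: both restrict the uncrowding bijection of Proposition~\ref{crowding-proposition} to standard barely set-valued tableaux, observe that the image $T^+$ is then a standard tableau, and count the resulting triples. You simply spell out the verification that standardness is preserved in slightly more detail than the paper does.
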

\begin{proof}
This is immediate from Proposition~\ref{crowding-proposition},
restricting the uncrowding bijection to standard barely set-valued tableaux $T$ 
of shape $\lambda$, so that $T^+$ in the triple 
$(T^+,x,i_0)$ is a standard tableau.
\end{proof}

\begin{Remark}
A second proof of 
Corollary~\ref{barely-set-valued-tableau-recurrence}
uses the fact that $f^\lambda$ and $f^{\lambda}(+1)$ are the
coefficients of $x_1 x_2 \cdots x_{|\lambda|}$ and 
$x_1 x_2 \cdots x_{|\lambda|+1}$ in the \emph{Schur function} $s_\lambda$
and the \emph{stable Grothendieck polynomial $G_\lambda$}; 
see Definition~\ref{Schub-Grothendieck-definitions} below.  
A formula of Lenart \cite{Lenart99} gives the expansion
\begin{equation}
\label{Lenarts-formula}
G_\lambda = 
\sum_{\mu \supset \lambda} (-1)^{|\mu/\lambda|} g_{\mu/\lambda} s_{\mu},
\end{equation}
where $g_{\mu/\lambda}$ is the number of \emph{row-strict and column-strict}
tableaux of the skew shape $\mu/\lambda$ with entries in row $i$
in the range $1,2,\ldots,i-1$.  If $\mu = \lambda \cup\{x\}$ 
with $x=(i,j)$, then $g_{\mu/\lambda}=i-1$.  
Thus, Corollary~\ref{barely-set-valued-tableau-recurrence} also follows
by extracting the coefficient of the square-free monomial 
$x_1 x_2 \cdots x_{|\lambda|+1}$ 
in Equation~\eqref{Lenarts-formula}.
\end{Remark}

\begin{Remark}
Note that $f^{\lambda^t}(+1)=f^{\lambda}(+1)$ via the conjugation involution 
on standard set-valued tableaux.  
Hence Corollary~\ref{barely-set-valued-tableau-recurrence}
implies a second identity; namely,
$$
f^{\lambda}(+1) = 
 \sum_{  x=(i,j) }     
    (j-1) f^{\lambda \cup \{x\}},
$$
where $x$ still runs through all outside corner cells of $\lambda$.
Concordance between these two identities requires the difference
between their righthand sides to equal zero; that is, we need
\begin{equation}
\label{corner-cell-content-has-mean-zero}
\sum_{ x=(i,j) }
c(x) f^{\lambda \cup \{x\}} = 0,
\end{equation}
where $c(x)=j-i$ is the \emph{content} of the cell $x=(i,j)$.
Indeed, dividing Equation~\eqref{corner-cell-content-has-mean-zero}
by $n \cdot f^\lambda$ gives a fact that was first shown by Kerov 
(\cite[Equation~(10.6)]{Kerov-FPSAC} and \cite{Kerov}):
the content $c(x)$ has mean $0$ when one considers
it as a random variable on the outside corners $x$ of
a random partition $\lambda$, grown one box at a time
using \emph{Plancherel measure}.
\end{Remark}

We can now prove part of Theorem~\ref{main-theorem}.

\begin{Proposition}
\label{Y-expectation-for-Young}
For the rectangular staircase $\lambda=\lambdanab{d}{a}{b}$, the 
interval $[\varnothing,\lambda]$ has
$
\EE(\Ydd)
=\frac{(d-1)ab}{a+b}. 
$
\end{Proposition}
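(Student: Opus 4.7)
The plan is to reduce, via Equation~\eqref{Young-Y-expectation-reformulated}, to the equivalent claim that
$$
f^\lambda(+1) \;=\; \frac{(d-1)ab}{a+b}(|\lambda|+1)f^\lambda,
$$
and then to combine the two recurrences for $f^\lambda(+1)$ coming from Corollary~\ref{barely-set-valued-tableau-recurrence} and its conjugate with a single classical identity of the form $\sum_{\mu\gtrdot\lambda} f^\mu = (|\lambda|+1)f^\lambda$.

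First I would identify the outside corners of $\lambda=\lambdanab{d}{a}{b}$. The rectangular staircase has row lengths $\lambda_i=(d-1-k)b$ for $ka+1\le i\le(k+1)a$ and each $k=0,1,\ldots,d-2$ (with $\lambda_i=0$ for $i>(d-1)a$), so its outside corners are precisely
$$
x_k \;=\; \bigl(ka+1,\ (d-1-k)b+1\bigr), \qquad k=0,1,\ldots,d-1.
$$
In particular, $i-1=ka$ and $j-1=(d-1-k)b$ at $x_k=(i,j)$. Writing $A_k:=f^{\lambda\cup\{x_k\}}$, Corollary~\ref{barely-set-valued-tableau-recurrence} together with its conjugate from the subsequent Remark give
$$
f^\lambda(+1) \;=\; a\sum_{k=0}^{d-1} k\,A_k \;=\; b\sum_{k=0}^{d-1}(d-1-k)\,A_k,
$$
and equating the two sums (equivalently, Kerov's identity~\eqref{corner-cell-content-has-mean-zero}) yields $(a+b)\sum_k k\,A_k = (d-1)b\sum_k A_k$, so
$$
f^\lambda(+1) \;=\; \frac{(d-1)ab}{a+b}\sum_{k=0}^{d-1} A_k.
$$

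To conclude I would invoke the classical identity $\sum_{\mu\gtrdot\lambda} f^\mu = (|\lambda|+1)f^\lambda$, valid for every partition $\lambda$. It is an immediate consequence of the Pieri rule $s_1\,s_\lambda = \sum_{\mu\gtrdot\lambda} s_\mu$ upon extracting the coefficient of $p_1^{|\lambda|+1}/(|\lambda|+1)!$, or equivalently of the relation $DU-UD=I$ that makes Young's lattice a $1$-differential poset. This supplies $\sum_k A_k=(|\lambda|+1)f^\lambda$ and finishes the proof. The only nontrivial bookkeeping is confirming the list of outside corners $x_k$; this is a routine inspection of the shape, and I do not anticipate a serious obstacle.
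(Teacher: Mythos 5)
Your proof is correct, and it takes a genuinely different (and, to my eye, cleaner) route to the formula than the paper's.

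The paper also begins with Equation~\eqref{Young-Y-expectation-reformulated} and Corollary~\ref{barely-set-valued-tableau-recurrence}, but from there it evaluates each ratio $f^{\lambda\cup\{x_i\}}/\bigl((|\lambda|+1)f^\lambda\bigr)$ explicitly by the hook-length formula, recognizes the resulting sum as a ${}_2F_1$ at $1$, and closes with the Chu--Vandermonde summation. You instead exploit the conjugation symmetry: pairing Corollary~\ref{barely-set-valued-tableau-recurrence} with its conjugate from the ensuing Remark (equivalently Kerov's identity~\eqref{corner-cell-content-has-mean-zero}) immediately gives
$f^\lambda(+1)=\frac{(d-1)ab}{a+b}\sum_k A_k$, and then the differential-poset identity $\sum_{\mu\gtrdot\lambda}f^\mu=(|\lambda|+1)f^\lambda$ finishes the computation with no hypergeometric machinery at all. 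In fact this mirrors the structure of the paper's own proof of the $\Xdd$-analogue (Proposition~\ref{X-expectation-for-Young}), which likewise combines a recurrence with its conjugate and closes with a single identity ($q$-Pascal/Catalan at $q=1$); your argument brings that same symmetry-based strategy over to the $\Ydd$ side, which the paper does not do. One small side observation: your coordinates $x_k=(ka+1,(d-1-k)b+1)$ are the ones consistent with the multiplier $ak=(i-1)$ used in Corollary~\ref{barely-set-valued-tableau-recurrence}; the paper writes the corners as $(1+b(d-1-i),1+ai)$, which transposes row and column and appears to be a typo there.
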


\begin{proof}
Recall that Equation~\eqref{Young-Y-expectation-reformulated} says that
$
\EE(\Ydd)=\frac{ f^{\lambda}(+1) }{(|\lambda|+1) f^{\lambda} }.
$
The outside corners of $\lambda$ are
$$
x_i:= \big(1+b(d-1-i), 1+ai\big)$$ 
for $i=0,1,\ldots,d-1$, and the hook-length formula shows that for the cell $x_i$,
$$
\frac{ f^{\lambda\cup\{x_i\}} }{ (|\lambda|+1) f^{\lambda}}
= 
\frac{ \left(\frac{b}{a+b}\right)_i}{i!} \cdot
\frac{\left(\frac{a}{a+b}\right)_{d-1-i}}{(d-1-i)!}, 
$$
where $(z)_j:=z(z+1)\cdots (z+j-1)$ is the Pochhammer symbol.

Thus, Corollary~\ref{barely-set-valued-tableau-recurrence} implies
\begin{equation*}
\begin{aligned}
\EE(\Ydd)&=
\frac{ f^{\lambda}(+1) }{(|\lambda|+1) f^{\lambda} }\\
 &=\sum_{i=0}^{d-1} ai \cdot \frac{ f^{\lambda\cup\{x_i\}} }{ f^{\lambda}}\\
 &=  \sum_{i=1}^{d-1} ai \cdot 
\frac{ \left(\frac{b}{a+b}\right)_i}{i!} \cdot
\frac{\left(\frac{a}{a+b}\right)_{d-1-i}}{(d-1-i)!}\\  
 &= a \cdot 
   \frac{ \frac{b}{a+b} \left( \frac{a}{a+b} \right)_{d-2} }{(d-2)!} \cdot 
{}_2 F_{1} \left( \left. \begin{matrix} -(d-2) & \frac{b}{a+b}+1 \\ 
                 & -\left( d-3+\frac{a}{a+b} \right) \end{matrix} \right| 1 \right) \\ 
 &= a \cdot 
   \frac{ \frac{b}{a+b} \left( \frac{a}{a+b} \right)_{d-2} }{(d-2)!} \cdot 
      \frac{ \left( -(d-1) \right)_{d-2} } 
        { \left( -\left(d-3 + \frac{a}{a+b}\right) \right)_{d-2}}\\ 
&= \frac{(d-1)ab}{a+b}, 
\end{aligned}
\end{equation*}
where ${}_2 F_{1}$ is hypergeometric series notation, and the penultimate equality uses the Chu-Vandermonde summation 
$
{}_2 F_{1} \left( \left. \begin{matrix} -m & B \\
                 & C \end{matrix} \right| 1 \right)
=\frac{(C-B)_m}{(C)_m}.
$
\end{proof}

Recall from Section~\ref{section:intro} that
$
R(\lambda,q):=\sum_{\mu \subset \lambda} q^{|\mu|}
$
is the rank-generating function for $[\varnothing,\lambda]$,
and that for an outside corner cell $x=(i,j)$ of $\lambda$ in
row $i$ and column $j$, we defined two subshapes:
$$
\begin{aligned}
\lambda_{(x)}&:=(\lambda_{i+1},\lambda_{i+2},\ldots) \text{ and}\\
\lambda^{(x)}&:=(\lambda_1,\lambda_2,\ldots,\lambda_{i-1})-(j,j,\ldots,j).
\end{aligned}
$$
Recall also from Proposition~\ref{tableau-bijections} and Definition~\ref{tableaux-cardinality-definitions} that 
$
R(\lambda)=\#[\varnothing,\lambda]=\left[ R(\lambda,q) \right]_{q=1}
$
is the same as the number of column-strict tableaux of shape
$\lambda$ flagged by $\varphi=(2,3,4,\ldots)$, while $R^{(+1)}(\lambda)$ is
the number of column-strict barely set-valued tableaux of shape $\lambda$
flagged by $\varphi$.

\begin{Corollary}
\label{barely-set-valued-flagged-recurrence}
For any partition $\lambda$,
$$
R^{(+1)}(\lambda)
=\sum_{x=(i,j)} (i-1) \cdot R(\lambda_{(x)}) \cdot R(\lambda^{(x)}),
$$
where $x$ runs through all outside corner cells of $\lambda$.
\end{Corollary}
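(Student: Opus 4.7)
The plan is to apply Proposition~\ref{crowding-proposition} in the flagged setting, paralleling how Corollary~\ref{barely-set-valued-tableau-recurrence} arises by restricting the uncrowding bijection to standard tableaux. The key claim is that $T \mapsto (T^+, x, i_0)$ restricts to a bijection between column-strict barely set-valued tableaux of shape $\lambda$ flagged by $\varphi = (2,3,4,\ldots)$ and triples $(T^+, x, i_0)$ in which $T^+$ is a column-strict tableau of shape $\lambda \cup \{x\}$ flagged by $\varphi$, the value $T^+(x)$ equals the row-index $i$ of the corner cell $x$, and $i_0 \in \{1, \ldots, i-1\}$.

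I would first verify that uncrowding preserves the flag: every intermediate bumped value $u_k$ entering row $r_k$ originates from row $r_k - 1$ of $T$, so $u_k \leq \varphi_{r_k - 1} = r_k$, which lies within the row-$r_k$ flag bound $r_k + 1$. Next I would show that the appended value at $x$ equals $i$: column-strictness of column $j$ in $T$ combined with the flag forces $T(r, j) = r$ for $r = 1, \ldots, i - 1$, so any strictly smaller terminating value would in fact produce an entry in row $i$ of $T$ strictly exceeding it, contradicting termination via appending. Conversely, crowding a triple with $T^+(x) = i$ produces a flagged $T$, since the reverse bumping path visits only cells whose values are $r$ or $r+1$ in row $r$, and every replacement stays within the flag.

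With the bijection in place, I would count the triples for each fixed outside corner $x=(i,j)$. The condition $T^+(x) = i$ combined with column-strictness of column $j$ and the flag forces $T^+(r, j) = r$ for $1 \leq r \leq i$; propagating row-weakly-increasing and column-strict constraints then pins down $T^+(r, c) = r$ on the entire $i \times j$ northwest rectangle. The remaining cells of $T^+$ split into two independent sub-tableaux: cells in rows $r > i$, which after the shift $S(r', c) := T^+(i + r', c) - i$ form a column-strict tableau of shape $\lambda_{(x)}$ flagged by $\varphi$; and cells in rows $r < i$ with columns $c > j$, which form the column-strict tableau $U(r, c) := T^+(r, j+c)$ of shape $\lambda^{(x)}$ flagged by $\varphi$. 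By Proposition~\ref{tableau-bijections}(d), these sub-tableaux are counted by $R(\lambda_{(x)})$ and $R(\lambda^{(x)})$ respectively, and they vary independently. Multiplying by the $(i-1)$ choices for $i_0$ and summing over outside corners $x=(i,j)$ of $\lambda$ yields the claimed formula.

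The hard part will be pinning down the characterization $T^+(x) = i$ on the image of the restricted uncrowding map, since this is what drives the entire structural rigidity argument; once it is established, the decomposition of $T^+$ into the two independent sub-shapes $\lambda_{(x)}$ and $\lambda^{(x)}$ is essentially forced and the rest of the counting is routine via Proposition~\ref{tableau-bijections}(d).
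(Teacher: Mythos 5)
Your overall strategy is the same as the paper's: restrict the uncrowding map of Proposition~\ref{crowding-proposition} to the flagged setting, observe that the resulting corner cell $x=(i,j)$ must carry the value $i$ in $T^+$, deduce that $T^+$ is rigidly determined on the $i\times j$ northwest rectangle and decomposes into independent sub-tableaux on $\lambda^{(x)}$ and on a copy of $\lambda_{(x)}$ shifted by $i$, and then count. The decomposition, the factorization into $R(\lambda_{(x)})\cdot R(\lambda^{(x)})$, and the factor $(i-1)$ from $i_0$ are all correct and match the paper.

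However, the step where you try to establish the key fact $T^+(x)=i$ has a genuine gap. You assert that ``column-strictness of column $j$ in $T$ combined with the flag forces $T(r,j)=r$ for $r=1,\ldots,i-1$,'' and deduce a contradiction for a ``strictly smaller terminating value.'' The asserted claim about $T$ is false. The flag $\varphi=(2,3,\ldots)$ together with column-strictness only forces $T(r,j)\in\{r,r+1\}$, and the larger value can genuinely occur in the original barely set-valued $T$. A concrete instance: take $\lambda=(1,1)$ with $T(1,1)=\{1,2\}$ and $T(2,1)=3$. Uncrowding removes $b_0=2$ and bumps $3$ out of $(2,1)$, terminating at $x=(3,1)$ with $i=3$ and $j=1$, yet $T(2,1)=3\neq 2$. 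Moreover, even granting your claim, your contradiction argument only rules out a terminating value strictly less than $i$ (which is automatic from column-strictness in $T^+$); it never rules out a terminating value equal to $i+1$, which is the nontrivial possibility allowed by the flag bound $\varphi_i=i+1$. The paper's argument closes this cleanly by tracking the insertion itself: the flag forces every row $r$ to contain only the values $r$ and $r+1$, so the bumped value entering row $r$ is always exactly $r$ (it was bumped out of row $r-1$, whose entries are at most $r$), and termination at row $i$ therefore deposits the value $i$. Replacing your reasoning for $T^+(x)=i$ with this observation repairs the proof; the rest of your argument then goes through as written.
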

\begin{proof}
Restrict the domain of the uncrowding bijection to 
column-strict barely set-valued tableaux $T$
of shape $\lambda$ flagged by $\varphi=(2,3,4,\ldots)$.
Then, during the uncrowding of $T$,
the bumpings that occur are always to a value $i$ in row $i-1$ 
trying to bump a value $i+1$ in row $i$.  The bumping stops
only when this value $i$ comes to rest at a corner cell $x=(i,j)$ at
the end of row $i$, because row $i$ only contains the value $i$ (in particular, it has no $i+1$).
In this situation, the resulting column-strict tableau $T^+$
thus breaks into three pieces:  
\begin{itemize}
\item 
an $i \times j$ rectangle that is
weakly to the upper left of $x=(i,j)$, having every cell
filled by its row-index,
\item
a column-strict tableau $T^{(x)}$ strictly north and east of $x$,
filling $\lambda^{(x)}$, which is 
flagged by $\varphi$, and
\item
a column-strict tableau $T_{(x)}$ strictly south and west of $x$,
filling $\lambda_{(x)}$, which, after reducing all of its
entries by $i$, would be flagged by $\varphi$.
\end{itemize}
For example, here is such a $T$ and $T^+$, along with 
$T^{(x)}$ and the version of $T_{(x)}$ with entries reduced by $i=5$.
$$
T=\tableau{
1&1&1&1&1&2\\
2&2&2&2&3\\
3&3&3&\scriptstyle{34}&4\\
4&4&5&5\\
5&5\\
6&7\\
7
}
\quad \longmapsto \quad
T^+=\tableau{
\mathbf{1}&\mathbf{1}&\mathbf{1}&1&1&2\\
\mathbf{2}&\mathbf{2}&\mathbf{2}&2&3\\
\mathbf{3}&\mathbf{3}&\mathbf{3}&3&4\\
\mathbf{4}&\mathbf{4}&\mathbf{4}&5\\
\mathbf{5}&\mathbf{5}&\mathbf{5}\\
6&7\\
7
}
\quad \longmapsto \quad
\tableau{
&&&1&1&2\\
&&&2&3\\
&&&3&4\\
&&&5\\
&&\bullet\\
1&2\\
2
}
$$
The $5 \times 3$ rectangle weakly to the upper left of cell $(5,3)$ in $T^+$ has every cell filled by its row-index, in boldface.
\end{proof}

This lets us prove another part of Theorem~\ref{main-theorem}.

\begin{Proposition}
\label{X-expectation-for-Young}
For the rectangular staircase $\lambda=\lambdanab{d}{a}{b}$, the 
interval $[\varnothing,\lambda]$ has
$
\EE(\Xdd)
=\frac{(d-1)ab}{a+b}. 
$
\end{Proposition}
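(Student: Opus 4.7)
The plan is to compute $\EE(\Xdd_{[\varnothing,\lambda]})$ in two different ways via the recurrences already in hand, and then to exploit the transposition symmetry of the rectangular staircase to pin the answer down without ever computing $R(\lambdanab{d}{a}{b})$ explicitly. First, I would invoke Equation~\eqref{Young-X-expectation-reformulated} to write $\EE(\Xdd) = R^{(+1)}(\lambda)/R(\lambda)$, index the $d$ outside corners of $\lambda=\lambdanab{d}{a}{b}$ as $x_k=(1+ka,\,1+(d-1-k)b)$ for $k=0,\ldots,d-1$, and set $\beta_k:=R(\lambda_{(x_k)})\,R(\lambda^{(x_k)})$. Since the exponent $q^{i(j-1)}$ becomes $1$ at $q=1$, Proposition~\ref{q-Pascal-Catalan-generalization} specializes to $R(\lambda)=\sum_k \beta_k$, while Corollary~\ref{barely-set-valued-flagged-recurrence} gives $R^{(+1)}(\lambda)=a\sum_k k\,\beta_k$; hence
\[
\EE(\Xdd) \;=\; \frac{a\sum_{k=0}^{d-1} k\,\beta_k}{\sum_{k=0}^{d-1} \beta_k}.
\]

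Next, I would invoke conjugation of partitions. A direct check shows $\lambdanab{d}{a}{b}^t=\lambdanab{d}{b}{a}$, so the map $\mu\mapsto\mu^t$ is a poset isomorphism $[\varnothing,\lambdanab{d}{a}{b}]\cong[\varnothing,\lambdanab{d}{b}{a}]$ that bijects inner corners of $\mu$ with inner corners of $\mu^t$, thereby preserving both the uniform distribution and the down-degree statistic. In particular, $\EE(\Xdd)$ is the same for the two posets. Under transposition the corner $x_k$ of $\lambdanab{d}{a}{b}$ is sent to the corner $x'_{d-1-k}$ of $\lambdanab{d}{b}{a}$, and the pair $(\lambda_{(x_k)},\lambda^{(x_k)})$ is carried to $((\lambda^{(x_k)})^t,(\lambda_{(x_k)})^t)$; since $R$ is transposition-invariant, the analogous product $\beta'_{l}$ for $\lambdanab{d}{b}{a}$ satisfies $\beta'_{d-1-k}=\beta_k$. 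Applying the displayed formula above to $\lambdanab{d}{b}{a}$ (with $a,b$ swapped) and then re-indexing via $k\mapsto d-1-k$ yields
\[
\EE(\Xdd) \;=\; \frac{b\sum_{k=0}^{d-1}(d-1-k)\,\beta_k}{\sum_{k=0}^{d-1}\beta_k}.
\]

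Equating these two expressions and simplifying gives $(a+b)\sum_k k\,\beta_k = (d-1)b\sum_k \beta_k$, hence $\sum_k k\,\beta_k/\sum_k \beta_k=(d-1)b/(a+b)$, and substituting back produces $\EE(\Xdd)=(d-1)ab/(a+b)$, as desired. The one delicate step — and the one I would verify most carefully — is the bookkeeping in the conjugation argument: checking that transposition really does reindex the outside corners via $k\leftrightarrow d-1-k$ and swap the roles of $\lambda_{(x_k)}$ and $\lambda^{(x_k)}$, so that $\beta'_{d-1-k}=\beta_k$. Once this symmetry is in place the rest is pure algebra, and crucially the proof never requires a closed form for $R(\lambdanab{d}{a}{b})$ itself.
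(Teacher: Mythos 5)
Your proof is correct and follows essentially the same route as the paper: both compute $\EE(\Xdd)=R^{(+1)}(\lambda)/R(\lambda)$, invoke Corollary~\ref{barely-set-valued-flagged-recurrence} for the numerator and the $q=1$ specialization of Proposition~\ref{q-Pascal-Catalan-generalization} for the denominator, and then exploit the conjugation symmetry $\lambda\leftrightarrow\lambda^t$ to avoid evaluating either quantity in closed form. The only difference is cosmetic: you equate two expressions for $\EE(\Xdd)$, whereas the paper adds two expressions for $R^{(+1)}(\lambda)$ (one divided by $a$, one by $b$) and recognizes the sum as $(d-1)R(\lambda)$; the resulting algebra is identical. (Incidentally, your coordinates $x_k=(1+ka,\,1+(d-1-k)b)$ are the correct ones for an outside corner in row $1+ka$; the paper's display for $x_k$ has the coordinates swapped, a typo, but its subsequent use of the multiplier $ak$ matches your convention.)
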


\begin{proof}
Recall that Equation~\eqref{Young-X-expectation-reformulated} asserts that 
$
\EE(\Xdd)=\frac{R^{(+1)}(\lambda)}{R(\lambda)}.
$
As observed earlier, $\lambda$ has outside corners 
$$
x_k:= \big(1+b(d-1-k), 1+ak\big) 
\text{ for }k=0,1,\ldots,d-1. 
$$ 
Therefore Corollary~\ref{barely-set-valued-flagged-recurrence} implies
$$
R^{(+1)}(\lambda)
=\sum_{k=0}^{d-1} ak \cdot R( \lambda_{(x_k)} ) \cdot R( \lambda^{(x_k)} ). 
$$
After dividing by $a$, this yields
\begin{equation}
\label{recurrence-divided-by-a}
\frac{ R^{(+1)}(\lambda) }{a}
=\sum_{k=0}^{d-1} k \cdot R( \lambda_{(x_k)} ) \cdot R( \lambda^{(x_k)} ). 
\end{equation}
The conjugation involution $\mu \mapsto \mu^t$
gives a bijection between the outside corners of $\lambda$ and of $\lambda^t$, 
giving rise to this counterpart for
Equation~\eqref{recurrence-divided-by-a}:
\begin{equation}
\label{recurrence-divided-by-b}
\frac{R^{(+1)}(\lambda^t)}{b}
=\sum_{k=0}^{d-1} (d-1-k) \cdot R( \lambda_{(x)} ) \cdot R( \lambda^{(x)} ). 
\end{equation}
The conjugation also gives a poset
isomorphism $[\varnothing,\lambda] \cong [\varnothing,\lambda^t]$
showing that $R(\lambda)=R(\lambda^t)$, 
and that these two posets 
have the same expectations $\EE(\Xdd)$,
meaning that $R^{(+1)}(\lambda)=R^{(+1)}(\lambda^t)$.
Therefore, adding Equations~\eqref{recurrence-divided-by-a} and~\eqref{recurrence-divided-by-b} gives 
$$
\frac{R^{(+1)}(\lambda)}{a}
+\frac{R^{(+1)}(\lambda)}{b}
=(d-1)\sum_{k=0}^{d-1} R( \lambda_{(x)} ) \cdot R( \lambda^{(x)} ) 
=(d-1)R(\lambda), 
$$
where the last equality used 
Proposition~\ref{q-Pascal-Catalan-generalization} specialized to $q=1$.
Hence one has
$$
\EE(\Xdd)=\frac{R^{(+1)}(\lambda)}{R(\lambda)}
=(d-1)\cdot \frac{1}{\frac{1}{a}+\frac{1}{b}}=\frac{(d-1)ab}{a+b}. 
$$
\end{proof}

Propositions~\ref{Y-expectation-for-Young} and \ref{X-expectation-for-Young}
imply that $[\varnothing, \lambda]$ is \CDE when $\lambda=\lambdanab{d}{a}{b}$ 
is a rectangular staircase.

%%%%%%%%%%%%%%%%%%%%%%
\section{Coxeter groups and $0$-Hecke monoids}
\label{Coxeter-section}
%%%%%%%%%%%%%%%%%%%%%%

Computing $\EE(\Xdd)$ and $\EE(\Ydd)$
for lower intervals in the weak Bruhat ordering
involves descent sets and factorizations
in Coxeter systems and $0$-Hecke monoids.  We begin this section by reviewing the relevant concepts from Coxeter groups,
then develop several formulas and results, and
finally do the same for $0$-Hecke monoids.

%%%%%%%%%%
\subsection{Coxeter groups}

\begin{Definition}
A \emph{Coxeter matrix} is a finite set $S$ and a choice
of $m_{s,t}=m_{t,s}$ in $\{2,3,4,\ldots\} \cup \{\infty\}$ for $s \neq t$ in $S$.
The corresponding \emph{Coxeter system} $(W,S)$ is the
group $W$ generated by $S$, subject to relations $s^2=e$ for all
$s \in S$, and
$$
\underbrace{stst \cdots}_{m_{s,t}\text{ factors }} 
= \underbrace{tsts \cdots}_{m_{s,t}\text{ factors }} 
\text{ for }s \neq t\text{ in }S.
$$
\end{Definition}

\begin{Definition}
The \emph{length function} $\ell: W \rightarrow \NN$ for $(W,S)$ is defined by
$$
\ell(w):=
 \min\{\ell: w=s_1 s_2 \cdots s_\ell \text{ for }s_i \in S\}.
$$
A word 
$\sss:=(s_1,s_2,\ldots, s_{\ell(w)})$, for which 
$w=s_1 s_2 \cdots s_{\ell(w)}$, is a \emph{reduced word} for $w$. 
Denote by $\Red(w)$ the set of reduced words for $w$.
\end{Definition}

\begin{Definition}
The \emph{(right) descent set} of $w$ is
$$
\Des(w):=\{ s \in S: \ell(ws) < \ell(w)\}.
$$
The set of \emph{reflections} in $W$ is
$T:=\{wsw^{-1}: w \in W, s \in S\}.$ The  \emph{(left) inversion set} of $w$ in $W$ is  
$$
T_L(w):=\{t \in T: \ell(tw) < \ell(w)\},
$$
which has size $\ell(w)$, and is computable from any 
$\sss=(s_1,\ldots,s_{\ell(w)}) \in \Red(w)$ as follows \cite[Corollary~1.4.4]{BjornerBrenti05}: 
$$
T_L(w)=\left\{\begin{matrix} s_1, \\ 
        s_1s_2s_1, \\
        s_1s_2s_3s_2s_1, \\ 
         \vdots \\
        s_1 s_2 \cdots s_{\ell(w)} \cdots s_2s_1 
       \end{matrix} \right\}.
$$
\end{Definition}

\begin{Definition}\label{defn:weak bruhat order}
The \emph{(right) weak Bruhat order} $u \leq_R w$  on $W$ can be defined via 
any of the following equivalent conditions 
(see \cite[Chapter 3]{BjornerBrenti05}):
\begin{enumerate}\renewcommand{\labelenumi}{(\alph{enumi})}
\item
$u \leq_R w$ is the transitive closure of the
covering relation $u \lessdot_R w$, where $u=ws$ and $s \in \Des(w)$;
\item
there exist $u=u_0,u_1,\ldots,u_{\ell-1},u_\ell=w$ in $W$ and $s_i \in S$ such that $u_i s_i=u_{i+1}$ and $\ell(u_{i+1}) > \ell(u_i)$;
\item
there exists $(s_1,s_2,\ldots, s_{\ell(w)}) \in \Red(w)$ having a prefix
$(s_1,s_2,\ldots, s_{\ell(u)}) \in \Red(u)$; 
\item
$\ell(u) + \ell(u^{-1}w) = \ell(w)$; and
\item
$T_L(u) \subseteq T_L(w)$. 
\end{enumerate}
\end{Definition}

We note the following facts about weak Bruhat order intervals, which follow
trivially from Definition~\ref{defn:weak bruhat order}.

\begin{Proposition}
\label{trivial-weak-order-isomorphisms}
In a Coxeter system $(W,S)$,
\begin{enumerate}\renewcommand{\labelenumi}{(\alph{enumi})}
\item the weak Bruhat interval $[u,w]$ is isomorphic to the lower interval $[e,u^{-1}w]$,
via $v \mapsto u^{-1}v$, and
\item the dual poset $[e,w]^*$ to the lower interval $[e,w]$ is isomorphic to $[e,w^{-1}]$, via $u \mapsto w^{-1}u$.
\end{enumerate}
\end{Proposition}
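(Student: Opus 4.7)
The plan is to deduce both isomorphisms directly from the equivalent characterizations of weak Bruhat order given in Definition~\ref{defn:weak bruhat order}, particularly the prefix characterization (c) and the length-additivity characterization (d). Neither claim has a genuine ``hard part'': the only mild subtlety is keeping straight that in (b) the map $u \mapsto w^{-1}u$ needs to interchange chain-directions, and verifying that both maps are honest bijections onto the claimed intervals. Throughout, I would exploit the fact that, for a reduced word $\sss = (s_1,\dots,s_{\ell(w)}) \in \Red(w)$, its reverse $(s_{\ell(w)},\dots,s_1)$ is a reduced word for $w^{-1}$.

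For part (a), I would first check that $\phi:v \mapsto u^{-1}v$ maps $[u,w]$ into $[e,u^{-1}w]$ and is order-preserving. Given $u \leq_R v \leq_R w$, characterization~(c) provides a single reduced word $(s_1,\dots,s_{\ell(w)}) \in \Red(w)$ whose prefix of length $\ell(v)$ lies in $\Red(v)$ and whose prefix of length $\ell(u)$ lies in $\Red(u)$. Stripping off the common $u$-prefix then yields a reduced word $(s_{\ell(u)+1},\dots,s_{\ell(w)})$ for $u^{-1}w$ whose prefix of length $\ell(v)-\ell(u)$ is a reduced word for $u^{-1}v$; applying (c) again gives $e \leq_R u^{-1}v \leq_R u^{-1}w$. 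Order-preservation along any cover follows from the same argument applied to a single extra generator. The inverse map $v' \mapsto uv'$ is handled symmetrically (using length-additivity (d) to verify that $u \cdot v' \leq_R w$ whenever $v' \leq_R u^{-1}w$), so $\phi$ is a poset isomorphism.

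For part (b), the map $\psi:u \mapsto w^{-1}u$ sends the top element $w$ of $[e,w]$ to $e$ and the bottom element $e$ to $w^{-1}$, so it is order-reversing, which is precisely what one needs for an isomorphism $[e,w]^* \to [e,w^{-1}]$. The verification parallels (a): given $u \leq_R w$, characterization~(c) yields $(s_1,\dots,s_{\ell(w)}) \in \Red(w)$ whose prefix of length $\ell(u)$ is in $\Red(u)$. Reversing produces $(s_{\ell(w)},\dots,s_1) \in \Red(w^{-1})$, and its prefix of length $\ell(w)-\ell(u)$ is a reduced word for $w^{-1}u$, so $w^{-1}u \leq_R w^{-1}$. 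If $u_1 \leq_R u_2 \leq_R w$, then applying (c) to the triple and reversing produces a reduced word for $w^{-1}$ whose prefix of length $\ell(w)-\ell(u_2)$ reads $w^{-1}u_2$ and whose extension of length $\ell(w)-\ell(u_1)$ reads $w^{-1}u_1$, yielding $w^{-1}u_2 \leq_R w^{-1}u_1$, as required for an order-reversing map. The inverse map $v \mapsto wv$ works identically, finishing both statements.
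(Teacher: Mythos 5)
The paper gives no proof of this proposition --- it merely remarks that both parts ``follow trivially from Definition~\ref{defn:weak bruhat order}'' --- so your write-up is supplying the routine verification the authors chose to omit, and it is correct. Your route via characterization~(c) works, though you quietly use that $u\leq_R v\leq_R w$ can be witnessed by a \emph{single} reduced word for $w$ with nested prefixes for $u$ and $v$ (a one-line consequence of combining~(c) with~(d)); spelling that out would tighten the argument. For what it is worth, the slickest route is to use characterization~(d) alone: for part~(a), $u\leq_R v\leq_R w$ unwinds to $\ell(u)+\ell(u^{-1}v)+\ell(v^{-1}w)=\ell(w)$, and this same identity is exactly what $e\leq_R u^{-1}v\leq_R u^{-1}w$ unwinds to, once one knows $\ell(u^{-1}w)=\ell(w)-\ell(u)$; for part~(b), $u_1\leq_R u_2$ unwinds to $\ell(u_1)+\ell(u_1^{-1}u_2)=\ell(u_2)$, which, after substituting $\ell(w^{-1}u_i)=\ell(w)-\ell(u_i)$ and $\ell(x)=\ell(x^{-1})$, is literally the condition $w^{-1}u_2\leq_R w^{-1}u_1$. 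That avoids tracking explicit reduced words, but your prefix/reversal argument is equally valid and closer to the combinatorial spirit of the rest of the paper.
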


The definition of $\leq_R$ allows us to reformulate Equations~\eqref{EX-is-edge-density} and~\eqref{EY-graded-reformulation}.

\begin{Corollary}
For any Coxeter system $(W,S)$ and $w \in W$,
\begin{align}
\label{weak-interval-X-expectation-reformulated}
\EE(\Xdd_{[e,w]}) 
  &=\frac{1}{\#[e,w]} \sum_{u \leq_R w} \#\Des(u) \text{ \ \ and} \\
\label{weak-interval-Y-expectation-reformulated}
\EE(\Ydd_{[e,w]}) 
  &=\frac{1}{ (\ell(w)+1) \cdot \#\Red(w) } 
           \sum_{\substack{\sss=(s_1,s_2,\ldots ,s_{\ell(w))}\\ \in \Red(w)}} \ \sum_{i=0}^{\ell(w)} 
               \#\Des(s_1 s_ 2 \cdots s_i).
\end{align}
\end{Corollary}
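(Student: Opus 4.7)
The plan is to obtain both identities as direct specializations of the general reformulations of $\EE(\Xdd)$ and $\EE(\Ydd)$ provided by Equations~\eqref{EX-is-edge-density} and~\eqref{EY-graded-reformulation}, applied to the poset $P = [e,w]$ under the right weak Bruhat order. The two main inputs from Coxeter theory are (i) the characterization of covers $v \lessdot_R u$ as right-multiplications by right descents, from Definition~\ref{defn:weak bruhat order}(a), and (ii) the bijection between $\Red(w)$ and $\MaxChains([e,w])$ extracted from Definition~\ref{defn:weak bruhat order}(c).

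For Equation~\eqref{weak-interval-X-expectation-reformulated}, I would first check that for any $u \in [e,w]$, the down-degree of $u$ in the subposet $[e,w]$ coincides with its down-degree in the ambient weak order on $W$. This follows from transitivity: any $W$-cover $v \lessdot_R u$ with $u \leq_R w$ automatically lies in $[e,w]$ because $v \leq_R u \leq_R w$. Since that full down-degree equals $\#\Des(u)$ by Definition~\ref{defn:weak bruhat order}(a), substituting into Equation~\eqref{EX-is-edge-density} yields the claimed formula after summing over $u \leq_R w$.

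For Equation~\eqref{weak-interval-Y-expectation-reformulated}, I would first note that $[e,w]$ is graded of rank $\ell(w)$, by Definition~\ref{defn:weak bruhat order}(d), so Equation~\eqref{EY-graded-reformulation} applies with $\rank(P)+1 = \ell(w)+1$. Definition~\ref{defn:weak bruhat order}(c) provides a bijection from $\Red(w)$ to $\MaxChains([e,w])$ sending $\sss = (s_1,\ldots,s_{\ell(w)})$ to the chain whose $i$-th element is $s_1 s_2 \cdots s_i$, so $\#\MaxChains([e,w]) = \#\Red(w)$, producing the denominator. For the triple count in the numerator, I fix a chain (equivalently, a reduced word $\sss$) together with an element $p = s_1 \cdots s_i$ on it; the number of $q \lessdot_R p$ contributing is again $\#\Des(s_1 \cdots s_i)$, by the same descent identification and transitivity observation as above. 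Summing produces the claimed double sum.

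There is no real obstacle in this argument: the corollary is essentially a dictionary translation between the poset-theoretic definitions of $\EE(\Xdd)$ and $\EE(\Ydd)$ and the standard Coxeter-theoretic quantities $\#\Des(u)$, $\#[e,w]$, and $\#\Red(w)$. The only mildly subtle point is that covers in the induced subposet $[e,w]$ agree with covers in the full weak order on $W$, and this is a one-line consequence of transitivity.
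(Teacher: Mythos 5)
Your argument is correct and is exactly what the paper has in mind: the paper states this corollary immediately after the remark that the definition of $\leq_R$ allows Equations~\eqref{EX-is-edge-density} and~\eqref{EY-graded-reformulation} to be reformulated, and gives no further detail. You supply precisely the needed dictionary (covers below $u$ stay in $[e,w]$ by transitivity, down-degree equals $\#\Des(u)$, $[e,w]$ is graded of rank $\ell(w)$, and $\MaxChains([e,w]) \leftrightarrow \Red(w)$).
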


It is helpful to reformulate Equation~\eqref{weak-interval-X-expectation-reformulated}
for later use.

\begin{Proposition}
\label{weak-interval-X-expectation-complementary-formulation}
For any Coxeter system $(W,S)$ and any $w \in W$,
$$
\EE(\Xdd_{[e,w]}) 
  =\frac{1}{2}\left(\#S-
          \frac{1}{\#[e,w]} \sum_{u \leq_R w} \#\{s \in S: u \lessdot_R us \not\leq_R w\}
        \right).
$$
\end{Proposition}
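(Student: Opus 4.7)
The plan is to rewrite the sum of down-degrees appearing in Equation~\eqref{weak-interval-X-expectation-reformulated} by exploiting the elementary identity that, in \emph{any} finite poset, the total down-degree equals the total up-degree (both count the covering relations). Applied to the poset $[e,w]$, this will let me trade down-degrees for up-degrees, and then use the trichotomy furnished by the simple generators $S$.

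First I would check that the down-degree of $u$ \emph{inside} $[e,w]$ coincides with $\#\Des(u)$: by Definition~\ref{defn:weak bruhat order}(a), every cover $v \lessdot_R u$ in $W$ has the form $v = us$ for some $s \in \Des(u)$, and the inequality $us \leq_R u \leq_R w$ ensures that this cover stays inside the interval $[e,w]$. Next, let
$$
\alpha(u) := \#\{s \in S : u \lessdot_R us,\ us \leq_R w\}
$$
denote the up-degree of $u$ in $[e,w]$, and write $\beta(u)$ for the quantity $\#\{s\in S : u\lessdot_R us \not\leq_R w\}$ appearing in the statement. For each fixed $u \leq_R w$ and each $s \in S$, exactly one of three things occurs: either $s \in \Des(u)$, or $u \lessdot_R us$ with $us \leq_R w$, or $u \lessdot_R us$ with $us \not\leq_R w$. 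This trichotomy yields the pointwise identity
$$
\#\Des(u) + \alpha(u) + \beta(u) = \#S.
$$

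Summing over $u \leq_R w$ and invoking the edge-counting identity $\sum_{u \leq_R w} \#\Des(u) = \sum_{u \leq_R w} \alpha(u)$ in $[e,w]$ eliminates $\alpha$, giving
$$
2 \sum_{u \leq_R w} \#\Des(u) = \#S \cdot \#[e,w] - \sum_{u \leq_R w} \beta(u).
$$
Dividing by $2 \cdot \#[e,w]$ and applying Equation~\eqref{weak-interval-X-expectation-reformulated} to the left-hand side produces exactly the asserted formula.

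There is no real obstacle here; the argument is a bookkeeping exercise, and the only point requiring care is the verification that covers in the convex subposet $[e,w]$ are inherited from $W$, which is immediate because $[e,w]$ is a lower order ideal in weak order. The content of the proposition is really just the observation that down-degrees and up-degrees average to $\#S/2$ except for a correction coming from the ``boundary'' covers $u \lessdot_R us$ of $[e,w]$ that escape the interval.
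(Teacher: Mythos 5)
Your proof is correct and is essentially the paper's own argument: the paper expresses the same idea by writing $\EE(\Xdd_P) = \tfrac{1}{2}\EE(\Xdd_P + \Xdd_{P^*})$ and noting $\Xdd_{P^*}$ records up-degree, which amounts to the edge-counting identity you invoke, and then both proofs use the same trichotomy on $s \in S$ to show down-degree plus up-degree equals $\#S - \#\{s : u \lessdot_R us \not\leq_R w\}$.
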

\begin{proof}
Because posets $P$ and $P^*$ have the same expectation
for the variable $\Xdd$, one always has
$$
\EE(\Xdd_P)
 = \frac{ \EE(\Xdd_P)+\EE(\Xdd_{P^*}) }{2}
 =\frac{ \EE( \Xdd_P+\Xdd_{P^*} )}{2}.
$$
For each $p$, the statistic $\Xdd_P(p)$ reports the down-degree of $p$ in $P$. From the perspective of $P^*$, the statistic $\Xdd_{P^*}(p)$ computes the \emph{up-degree} of $p$ in $P$.

Specializing to $P=[e,w]$, the down-degree of an element $u \leq_R w$ is $\#\Des(u)=\#\{s \in S: us \lessdot_R u\}$ as before,
and its up-degree is $\#\{s \in S: u \lessdot us \leq_R w\}$.
Therefore the sum of an element's down- and up-degrees is
$$
\#\{ s \in S: us \lessdot_R u \text{ or }u \lessdot us \leq_R w\}
=\#S-\#\{ s\in S: u \lessdot us \not\leq_R w \}.
$$
Halving the expectation of this random variable on $[e,w]$ yields the formula in the proposition.
\end{proof}

%%%%%%%%%%%
\subsection{$0$-Hecke monoids}

We wish also to reformulate Equation~\eqref{weak-interval-Y-expectation-reformulated},
this time via words in the $0$-Hecke monoid.

\begin{Definition}
Given a Coxeter matrix $(m_{s,t})$ and Coxeter system $(W,S)$,
the associated \emph{$0$-Hecke monoid} $\HHH_W(0)$ is the monoid
generated by $\{T_s\}_{s \in S}$, subject to the quadratic relation $T_s^2=T_s$
for $s \in S$ and
$$
\underbrace{T_sT_tT_sT_t \cdots}_{m_{s,t}\text{ factors }} 
= \underbrace{T_tT_sT_tT_s \cdots}_{m_{s,t}\text{ factors }} 
\text{ for }s \neq t\text{ in }S.
$$
\end{Definition}

It turns out (see, for example, \cite{Norton79}) that
any choice of reduced word $\sss=(s_1,s_2,\ldots,s_{\ell(w)}) \in \Red(w)$ 
defines the same element 
$
T_w:=T_{s_1} \cdots T_{s_{\ell(w)}}
$ 
in the monoid $\HHH_W(0)$.  Furthermore,
$T_w=T_{w'}$ in $\HHH_W(0)$ if and only if $w=w'$ in $W$, and
thus, as a set, one has
$$
\HHH_W(0)=\{T_w: w \in W\}.
$$
This means that one can speak of a \emph{$0$-Hecke word} 
$(s_1,s_2,\ldots,s_L)$ for $w$, meaning that 
$T_w=T_{s_1} T_{s_2} \cdots T_{s_L}$ in $\HHH_W(0)$.
It also implies that one has the following relations in $\HHH_W(0)$:
\begin{equation}
\label{0-Hecke-regular-action}
T_w T_s =
\begin{cases}
T_{ws} & \text{ if } \ell(ws) > \ell(w), \text{ and}\\
T_w & \text{ if } \ell(ws) < \ell(w).
\end{cases}
\end{equation}

Factorizations $T_w=T_u T_v$ in $\HHH_W(0)$
give yet another characterization of the weak Bruhat order on $W$.

\begin{Proposition}
\label{left-factor-right-weak-corollary}
One has $u \leq_R w$ in $W$ if only if
there exists $v \in W$ with $T_u T_v = T_w$.
\end{Proposition}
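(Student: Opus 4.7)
The plan is to prove both directions essentially directly from the definitions, using the fact that reduced words for $w$ give the same element $T_w$ in $\HHH_W(0)$, together with the multiplication rule \eqref{0-Hecke-regular-action}.

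For the forward direction, I would assume $u \leq_R w$ and invoke characterization (c) of Definition~\ref{defn:weak bruhat order}: there exists a reduced word $(s_1,s_2,\ldots,s_{\ell(w)}) \in \Red(w)$ whose prefix $(s_1,\ldots,s_{\ell(u)})$ lies in $\Red(u)$. Setting $v := u^{-1}w$, characterization (d) gives $\ell(v) = \ell(w) - \ell(u)$, so the suffix $(s_{\ell(u)+1},\ldots,s_{\ell(w)})$ is a reduced word for $v$. Then
$$
T_u T_v = (T_{s_1} \cdots T_{s_{\ell(u)}})(T_{s_{\ell(u)+1}} \cdots T_{s_{\ell(w)}}) = T_{s_1} \cdots T_{s_{\ell(w)}} = T_w,
$$
because the concatenation is itself a reduced word for $w$.

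For the reverse direction, suppose $T_u T_v = T_w$. Choose any reduced word $(s'_1,\ldots,s'_{\ell(v)}) \in \Red(v)$, so that $T_v = T_{s'_1} \cdots T_{s'_{\ell(v)}}$. Define a sequence $u_0, u_1, \ldots, u_{\ell(v)}$ in $W$ by $u_0 := u$ and, for $0 \le i < \ell(v)$,
$$
u_{i+1} :=
\begin{cases} u_i s'_{i+1} & \text{if } \ell(u_i s'_{i+1}) > \ell(u_i), \\ u_i & \text{otherwise.} \end{cases}
$$
By repeated application of \eqref{0-Hecke-regular-action}, one has $T_u T_{s'_1} \cdots T_{s'_i} = T_{u_i}$ for each $i$; in particular $T_{u_{\ell(v)}} = T_u T_v = T_w$, and hence $u_{\ell(v)} = w$ since distinct Coxeter elements give distinct elements of $\HHH_W(0)$. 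Discarding the indices where $u_{i+1}=u_i$ leaves a sequence witnessing characterization (b) of Definition~\ref{defn:weak bruhat order}, so $u \leq_R w$.

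Neither direction presents a real obstacle; the only subtlety is to remember, in the reverse direction, that \eqref{0-Hecke-regular-action} forces each multiplication by $T_{s'_{i+1}}$ either to advance $u_i$ by a right weak cover or to leave it fixed, never to decrease length, which is exactly what is needed to build the chain from $u$ up to $w$ in weak order.
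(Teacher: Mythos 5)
Your proposal is correct and matches the paper's argument in all essentials: the forward direction uses $v=u^{-1}w$ and concatenation of reduced words, and the reverse direction uses the iterated application of Equation~\eqref{0-Hecke-regular-action} to peel letters of a reduced word for $v$ onto $u$, keeping only those that increase length. The paper phrases the conclusion via characterization~(c) of Definition~\ref{defn:weak bruhat order} (prefix of a reduced word) while you phrase it via~(b) (a chain of covers), but these are equivalent and the underlying construction is identical.
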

\begin{proof}
If $u \leq_R w$ then $v=u^{-1}w$ will satisfy $T_u T_v=T_w$.
Conversely, if $T_u T_v= T_w$, 
then pick reduced words 
$$
\sss = (s_1,\ldots, s_{\ell(u)}) \in \Red(u) \text{ \ \ and \ \ } 
(s'_1,\ldots,s'_{\ell(v)}) \in \Red(v). 
$$
Form a subword $(s'_{i_1},\ldots, s'_{i_k})$ by omitting any $s'_i$ 
for which $\ell(us'_1 \cdots s'_{i-1}s'_i)< \ell(us'_1 \cdots s'_{i-1})$. 
Using the relations in Equation~\eqref{0-Hecke-regular-action}
and the factorization
$$
T_w=T_uT_v= T_u T_{s'_1} \cdots T_{s'_{\ell(v)}},
$$
one deduces that $T_w = T_u T_{s'_{i_1}} \cdots T_{s'_{i_k}}$. 
Moreover, we have $\ell(us'_{i_1} \cdots s'_{i_j})=\ell(u)+j$ for each $j$. 
Thus, $(s_1 ,\ldots, s_{\ell(u)},s'_{i_1}, \cdots, s'_{i_k})$ is a reduced word 
for $w$, and it contains $\sss \in \Red(u)$ as a prefix. 
Hence $u \leq_R w$.
\end{proof}

\begin{Definition}
If $T_w=T_{s_1} T_{s_2} \cdots T_{s_L}$ in $\HHH_W(0)$
with $L=\ell(w)+1$, call $(s_1,s_2,\ldots,s_L)$ 
a \emph{nearly reduced word} for $w$.
Denote by $\Red^{(+1)}(w)$ the set of
nearly reduced words for $w$.
\end{Definition}

\begin{Proposition}
For any Coxeter system $(W,S)$ and $w \in W$, one has a bijection
$$
\begin{array}{ccl}
\left\{
\begin{matrix}
((s_1,s_2, \ldots, s_{\ell(w)}),i,s) \in \Red(w) \times [0,\ell(w)] \times S:\\ 
s \in \Des(s_1 s_2 \cdots s_i) 
\end{matrix} 
\right\} 
&\longrightarrow&
\Red^{(+1)}(w) \\
 & & \\
\big((s_1,s_2,\ldots,s_{\ell(w)}),i,s\big) 
 &\longmapsto& 
  (s_1,s_2,\ldots, s_i,s,s_{i+1},\ldots, s_{\ell(w)}). 
\end{array}
$$
\end{Proposition}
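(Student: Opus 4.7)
The plan is to use the $0$-Hecke relations in Equation~\eqref{0-Hecke-regular-action} to track how the prefix lengths $\ell(t_1\cdots t_j)$ evolve along a $0$-Hecke word, and to exhibit the inverse of the stated insertion map as ``deletion at the unique stationary step.''

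First I would check that the map lands in $\Red^{(+1)}(w)$. Given a triple $((s_1,\dots,s_{\ell(w)}),i,s)$ with $s\in\Des(s_1\cdots s_i)$, set $u=s_1\cdots s_i$, so $\ell(us)<\ell(u)$ and hence $T_uT_s=T_u$ by Equation~\eqref{0-Hecke-regular-action}. Right-multiplying by $T_{s_{i+1}}\cdots T_{s_{\ell(w)}}$ shows that the inserted length-$(\ell(w)+1)$ word still multiplies to $T_w$, and so lies in $\Red^{(+1)}(w)$.

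For the inverse, I would start from a nearly reduced word $(t_1,\dots,t_{\ell(w)+1})$ and set $u_j:=t_1\cdots t_j\in W$, with $u_0=e$. By Equation~\eqref{0-Hecke-regular-action}, each step satisfies either $\ell(u_j)=\ell(u_{j-1})+1$ (when $t_j\notin\Des(u_{j-1})$) or $\ell(u_j)=\ell(u_{j-1})$ (when $t_j\in\Des(u_{j-1})$). Since $\ell(u_0)=0$, $\ell(u_{\ell(w)+1})=\ell(w)$, and there are $\ell(w)+1$ steps in all, exactly one index $k$ is stationary. Define the inverse by
\[
(t_1,\dots,t_{\ell(w)+1}) \longmapsto \bigl((t_1,\dots,\widehat{t_k},\dots,t_{\ell(w)+1}),\; k-1,\; t_k\bigr),
\]
where the hat denotes omission. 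The shortened length-$\ell(w)$ word still multiplies to $T_w$ because $T_{u_{k-1}}T_{t_k}=T_{u_{k-1}}$, so by the characterization that $0$-Hecke words of minimum length $\ell(w)$ for $w$ are reduced, it lies in $\Red(w)$; and $t_k\in\Des(u_{k-1})$ by the very choice of $k$, so the output is a valid triple.

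Verifying that the two maps are mutual inverses is then essentially automatic: inserting $s$ at position $i+1$ in a reduced word creates the unique stationary step at index $k=i+1$ (since $T_{s_1\cdots s_i}T_s = T_{s_1\cdots s_i}$ is the only length-preserving move), and deleting at that step restores the original triple. The one point to argue carefully is the uniqueness of the stationary index $k$, which is forced by the simple arithmetic that $\ell(w)+1$ steps starting from length $0$ and ending at length $\ell(w)$ must contain precisely one non-productive step; everything else is routine bookkeeping with Equation~\eqref{0-Hecke-regular-action}.
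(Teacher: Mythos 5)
Your proof is correct and takes essentially the same approach as the paper: both hinge on the observation that a nearly reduced word has a unique ``non-productive'' step (the smallest prefix that fails to be reduced), and insertion/deletion at that step is the bijection; you package this as a two-sided inverse while the paper checks surjectivity and injectivity separately, but the map is the same. One small point: you should define $u_j$ as the element of $W$ with $T_{u_j} = T_{t_1}\cdots T_{t_j}$ (the $0$-Hecke partial product), not literally as the group product $t_1\cdots t_j \in W$ --- with the group product, $\ell(u_j)$ would drop by one at a stationary step rather than hold constant, and the counting argument that isolates the unique stationary index would not go through.
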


\begin{proof}
The given map is well-defined because
$s \in \Des(s_1 s_2 \cdots s_i)$ implies that
$$
\begin{aligned}
T_{s_1} T_{s_2} \cdots
T_{s_i} \cdot T_s \cdot T_{s_{i+1}} \cdots T_{s_{\ell(w)}} &=T_{s_1} T_{s_2} \cdots
T_{s_i} \cdot T_{s_{i+1}} \cdots T_{s_{\ell(w)}}\\
&=T_w.
\end{aligned}
$$ 
It also surjects:  given a nearly reduced
word $\sss^+=(s_1^+,s_2^+,\ldots, s_{\ell(w)+1}^+)$ for $w$, 
if $i$ is the smallest index with 
$(s_1^+, s_2^+, \cdots, s^+_{i+1})$ not reduced, 
then $\sss:=(s^+_1, s^+_2, \cdots, s^+_i, s^+_{i+2}, s^+_{i+3}, \cdots, s_{\ell(w)+1}^+)$ 
has $(\sss,i,s^+_{i+1}) \longmapsto \sss^+$.
To show injectivity, assume that the elements $(\sss,i,s)$ and $(\ttt,j,t)$, with $i \leq j$,
have the same image under the map.  If one has strict inequality
$i < j$, then the two words must match up as follows.
$$
\begin{array}{rccccccccccl}
s_1  & s_2  & \ldots & s_i  & s       & s_{i+1} & \ldots & s_{j-1} &s_{j}&s_{j+1}    & \ldots & s_{\ell(w)} \\
\Vert & \Vert &    & \Vert & \Vert & \Vert &   & \Vert & \Vert & \Vert &  &\Vert \\
t_1 & t_2 & \ldots & t_i & t_{i+1}& t_{i+2}& \ldots & t_{j}  &t & t_{j+1} &\ldots & t_{\ell(w)}
\end{array}
$$
The word $\ttt$ is reduced,
so the same must be true of its prefix 
$(t_1 ,t_2, \cdots, t_i, t_{i+1})=(s_1, s_2,\ldots, s_i,s)$, 
which contradicts $s \in \Des(s_1 s_2 \cdots s_i)$.
Therefore $i=j$, which then implies that 
$(\sss,i,s)=(\ttt,j,t)$ .
\end{proof}

This result has two interesting corollaries, including a reformulation of Equation~\eqref{weak-interval-Y-expectation-reformulated}.

\begin{Corollary}
\label{Y-expectation-as-word-counts}
For any Coxeter system $(W,S)$ and $w \in W$,
$$
\EE(\Ydd_{[e,w]})=\frac{\# \Red^{(+1)}(w)}{(\ell(w)+1)\cdot \#\Red(w)}.
$$
\end{Corollary}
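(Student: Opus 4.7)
The plan is essentially to recognize the right-hand side of Equation~\eqref{weak-interval-Y-expectation-reformulated} as already counting, in its numerator, precisely the domain of the bijection established in the preceding Proposition. No further combinatorial work is required beyond this identification.

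In more detail, I would start from the reformulation
$$
\EE(\Ydd_{[e,w]})
  =\frac{1}{ (\ell(w)+1) \cdot \#\Red(w) }
           \sum_{\sss=(s_1,\ldots ,s_{\ell(w)}) \in \Red(w)} \ \sum_{i=0}^{\ell(w)}
               \#\Des(s_1 s_ 2 \cdots s_i)
$$
given in Equation~\eqref{weak-interval-Y-expectation-reformulated}. The key observation is that the double sum in the numerator rewrites as
$$
\#\left\{((s_1,\ldots,s_{\ell(w)}),i,s) \in \Red(w) \times [0,\ell(w)] \times S : s \in \Des(s_1 s_2 \cdots s_i)\right\},
$$
simply by interpreting $\#\Des(s_1 \cdots s_i)$ as the number of choices of $s \in S$ that are descents of $s_1 \cdots s_i$. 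This set is exactly the domain of the bijection of the preceding Proposition, whose codomain is $\Red^{(+1)}(w)$, so the double sum equals $\#\Red^{(+1)}(w)$. Substituting this back into the displayed formula yields the claimed identity.

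There is no real obstacle here; the work has already been done in assembling Equation~\eqref{weak-interval-Y-expectation-reformulated} and in constructing the insertion bijection. The only thing worth double-checking is the bookkeeping for the range of the index $i$: the sum in Equation~\eqref{weak-interval-Y-expectation-reformulated} runs over $i=0,1,\ldots,\ell(w)$ (including the prefix $s_1 \cdots s_0 = e$, which has empty descent set and hence contributes nothing), matching exactly the range $[0,\ell(w)]$ in the bijection. With this verified, the corollary follows by a one-line substitution.
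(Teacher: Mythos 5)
Your proposal is correct and matches the paper's intended (implicit) argument: the double sum in Equation~\eqref{weak-interval-Y-expectation-reformulated} is exactly the cardinality of the domain of the insertion bijection, and the bijection identifies it with $\#\Red^{(+1)}(w)$. The bookkeeping on the index range is also handled correctly.
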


In turn, because reversing a word gives bijections
$
\Red(w) \leftrightarrow \Red(w^{-1}) 
$
and
$
\Red^{(+1)}(w) \leftrightarrow \Red^{(+1)}(w^{-1}),
$
Corollary~\ref{Y-expectation-as-word-counts} implies the following fact.

\begin{Corollary}
For any Coxeter system $(W,S)$ and $w \in W$, 
$$
\EE(\Ydd_{[e,w]})=\EE(\Ydd_{[e,w^{-1}]}) =\EE(\Ydd_{[e,w]^*}).
$$
\end{Corollary}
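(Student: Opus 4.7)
The plan is to exploit the formula from Corollary~\ref{Y-expectation-as-word-counts}, namely
$$
\EE(\Ydd_{[e,w]})=\frac{\# \Red^{(+1)}(w)}{(\ell(w)+1)\cdot \#\Red(w)},
$$
and then verify that every ingredient on the righthand side is preserved when $w$ is replaced by $w^{-1}$. First I would observe that for any $(s_1,s_2,\ldots,s_L) \in S^L$, the word $(s_1,\ldots,s_L)$ is a $0$-Hecke word for $w$ if and only if its reversal $(s_L,\ldots,s_1)$ is a $0$-Hecke word for $w^{-1}$, since $T_{s_1}T_{s_2}\cdots T_{s_L}=T_w$ in $\HHH_W(0)$ if and only if $T_{s_L}\cdots T_{s_2}T_{s_1}=T_{w^{-1}}$, using that the anti-automorphism $T_s \mapsto T_s$ extended to the reverse order of multiplication sends $T_w$ to $T_{w^{-1}}$ (this is immediate from the fact that each defining relation for $\HHH_W(0)$ is palindromic).

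This reversal map restricts to a bijection $\Red(w) \leftrightarrow \Red(w^{-1})$ on words of length $\ell(w)$, and to a bijection $\Red^{(+1)}(w) \leftrightarrow \Red^{(+1)}(w^{-1})$ on words of length $\ell(w)+1$. Since $\ell(w)=\ell(w^{-1})$, the formula from Corollary~\ref{Y-expectation-as-word-counts} then yields the first equality
$$
\EE(\Ydd_{[e,w]}) = \EE(\Ydd_{[e,w^{-1}]}).
$$

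For the second equality, I would simply invoke Proposition~\ref{trivial-weak-order-isomorphisms}(b), which establishes the poset isomorphism $[e,w]^* \cong [e,w^{-1}]$ via $u \mapsto w^{-1}u$. Since $\EE(\Ydd)$ is a purely poset-theoretic quantity, this isomorphism gives $\EE(\Ydd_{[e,w^{-1}]}) = \EE(\Ydd_{[e,w]^*})$, completing the chain of equalities.

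There is essentially no obstacle here: the only point worth checking carefully is that word-reversal really is an involution on $0$-Hecke words that exchanges $w$ and $w^{-1}$. This reduces to the symmetry of the defining braid and idempotent relations under reversal, which is manifest. Thus the corollary follows in a couple of lines from the previously established results, with no new calculation required.
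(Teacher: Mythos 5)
Your proof is correct and follows essentially the same route as the paper: word reversal gives a bijection $\Red(w)\leftrightarrow\Red(w^{-1})$ and $\Red^{(+1)}(w)\leftrightarrow\Red^{(+1)}(w^{-1})$, the formula of Corollary~\ref{Y-expectation-as-word-counts} then yields the first equality, and the poset isomorphism $[e,w]^*\cong[e,w^{-1}]$ from Proposition~\ref{trivial-weak-order-isomorphisms}(b) gives the second. You are slightly more explicit than the paper in justifying why reversal is a well-defined anti-automorphism on $0$-Hecke words, but the argument is the same.
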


%%%%%%%%%%%%%%%%%%%%%%
\section{Type $A$ and vexillary permutations}
\label{type-A-section}
%%%%%%%%%%%%%%%%%%%%%%

There are special features to 
the weak Bruhat order intervals $[e,w]$ in the case of Coxeter systems
$(W,S)$ of type $A_{n-1}$.  
In this setting, $W$ is the \emph{symmetric group} $\symm_n$ of all permutations 
on $\{1,2,\ldots,n\}$, 
and $S=\{\sigma_1,\ldots,\sigma_{n-1}\}$ with each $\sigma_i$ being the adjacent transposition that swaps $i$ and $i+1$.  
In this section, we first explain how the theory of Schubert and
Grothendieck polynomials let one recast $\EE(\Ydd_{[e,w]})$ for vexillary permutations.
We then show how to recast  
$\EE(\Xdd_{[e,w]})$ for \emph{all} permutations. Finally, we specialize to
dominant permutations, and then specialize even further to dominant
permutations of rectangular staircase shape.

%%%%%%%%%%%%
\subsection{Recasting $\EE(\Ydd_{[e,w]})$ 
for vexillary permutations}
\label{vexillary-section}

Recall that Section~\ref{section:intro} defined 
vexillary, dominant, Grassmannian, and inverse Grassmannian permutations.

\begin{Theorem}
\label{vexillary-theorem}
Fix a shape $\lambda$. For all vexillary permutation $w$ of shape $\lambda$,
$$
\EE(\Ydd_{[e,w]})
= \frac{\Red^{(+1)}(w)}{(\ell(w)+1)\Red(w)}
= \frac{f^{\lambda}(+1)}{(|\lambda|+1)f^\lambda}
= \EE(\Ydd_{[\varnothing,\lambda]}).
$$
Furthermore, for those permutations that are Grassmannian or inverse Grassmannian, 
$$
\EE(\Xdd_{[e,w]})
=\EE(\Xdd_{[\varnothing,\lambda]}).
$$
\end{Theorem}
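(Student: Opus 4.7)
The plan is to break the theorem into four pieces and handle them in turn. The first equality is simply Corollary~\ref{Y-expectation-as-word-counts}, valid for any Coxeter system, and the last equality is Equation~\eqref{Young-Y-expectation-reformulated} specialized to $\lambda$. So everything boils down to proving the middle equality, which in view of $\ell(w)=|\lambda|$ for vexillary $w$, reduces to the two enumerative identities
\begin{equation*}
\#\Red(w)=f^{\lambda} \quad \text{and} \quad \#\Red^{(+1)}(w)=f^{\lambda}(+1).
\end{equation*}

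To prove these identities, I would invoke the theory of Schubert polynomials $\mathfrak{S}_w$ and Grothendieck polynomials $\mathfrak{G}_w$. For vexillary $w$ of shape $\lambda$, $\mathfrak{S}_w$ equals a flagged Schur polynomial and $\mathfrak{G}_w$ equals the corresponding flagged Grothendieck polynomial, so they have the same expansions in the square-free monomials $x_1x_2\cdots x_N$ as the stable versions $s_\lambda$ and $G_\lambda$. On the permutation side, classical results of Billey--Jockusch--Stanley (for $\mathfrak{S}_w$) and Fomin--Kirillov (for $\mathfrak{G}_w$) identify the coefficient of $x_1\cdots x_L$ in $\mathfrak{G}_w$ with the number of $0$-Hecke words of length $L$ for $w$, which specializes when $L=\ell(w)$ to $\#\Red(w)$ and when $L=\ell(w)+1$ to $\#\Red^{(+1)}(w)$. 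On the tableau side, the coefficient of $x_1\cdots x_{|\lambda|}$ in $s_\lambda$ is $f^\lambda$ and (via standard set-valued tableaux) the coefficient of $x_1\cdots x_{|\lambda|+1}$ in $G_\lambda$ is $f^\lambda(+1)$. Matching coefficients gives both identities.

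For the final assertion about $X$-expectations in the Grassmannian and inverse Grassmannian cases, the plan is purely poset-theoretic: establish an isomorphism between $[e,w]$ and an interval in Young's lattice that preserves down-degrees. When $w$ is Grassmannian of shape $\lambda$ with unique descent at position $k$, the standard bijection sending $u\le_R w$ to the partition whose $i$th part is $u(k+1-i)-(k+1-i)$ (for $1\le i\le k$) is a poset isomorphism $[e,w]\xrightarrow{\sim}[\varnothing,\lambda]$, and covering relations on both sides correspond to adding one box, so $\EE(\Xdd_{[e,w]})=\EE(\Xdd_{[\varnothing,\lambda]})$ is immediate. When $w$ is inverse Grassmannian, $w^{-1}$ is Grassmannian, so that case applies to yield $[e,w^{-1}]\cong[\varnothing,\lambda^t]$ for the conjugate shape (since inverting a Grassmannian permutation transposes its shape). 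By Proposition~\ref{trivial-weak-order-isomorphisms}(b), $[e,w]\cong[e,w^{-1}]^*\cong[\varnothing,\lambda^t]^*$. Finally, using both that conjugation gives a poset isomorphism $[\varnothing,\lambda]\cong[\varnothing,\lambda^t]$ and that $\EE(\Xdd_P)=\EE(\Xdd_{P^*})$ for every finite poset $P$, we obtain $\EE(\Xdd_{[e,w]})=\EE(\Xdd_{[\varnothing,\lambda]})$ as desired.

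The main obstacle, and the part where the heaviest machinery is needed, is the identification $\#\Red^{(+1)}(w)=f^\lambda(+1)$; it relies on the flagged-Grothendieck interpretation of $\mathfrak{G}_w$ for vexillary $w$, together with the set-valued tableau expansion of $G_\lambda$. The Schubert/length-$\ell(w)$ analogue is Stanley's theorem, which I would quote directly; the generalization to nearly reduced words should be handled either by citing Fomin--Kirillov plus the Lenart-type expansion \eqref{Lenarts-formula}, or by a direct RSK/insertion argument analogous to the uncrowding bijection of Proposition~\ref{crowding-proposition}. Everything else is bookkeeping.
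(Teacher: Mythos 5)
Your overall decomposition matches the paper's: reduce the $\Ydd$-statement to the two enumerative identities $\#\Red(w)=f^\lambda$ and $\#\Red^{(+1)}(w)=f^\lambda(+1)$ via Corollary~\ref{Y-expectation-as-word-counts} and Equation~\eqref{Young-Y-expectation-reformulated}, then reduce the $\Xdd$-statement to a poset isomorphism plus duality. One caution on the first half: the identities must be read off from the \emph{stable} functions $F_w=s_\lambda$ and $G_w=G_\lambda$ (the paper's Lemma~\ref{vexillary-lemma}), not directly from the flagged polynomials ${\mathfrak G}^{(0)}_w$, ${\mathfrak G}^{(-1)}_w$. Because of the flag constraint $b_i\le a_i$, the coefficient of $x_1\cdots x_{\ell(w)}$ in ${\mathfrak G}^{(0)}_w$ is generally not $\#\Red(w)$ (e.g.\ ${\mathfrak G}^{(0)}_{321}=x_1^2x_2$ has zero coefficient on $x_1x_2x_3$ while $\#\Red(321)=2$); the constraint only disappears after stabilizing $1^N\times w$, $N\to\infty$.

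The concrete error is in your Grassmannian bijection. You claim $u\mapsto\mu$ with $\mu_i=u(k+1-i)-(k+1-i)$ is a poset isomorphism $[e,w]\xrightarrow{\sim}[\varnothing,\lambda]$, but this formula fails to produce a partition for $u<w$: taking $w=236145$ (Grassmannian of shape $(3,1,1)$, descent at $k=3$) and $u=231645\lessdot_R w$, one gets $\mu_1=u(3)-3=-2$. Moreover, the claimed isomorphism $[e,w]\cong[\varnothing,\lambda]$ is false here: $e$ has up-degree $2$ in $[e,236145]$ (it is covered by $\sigma_1$ and $\sigma_5$), whereas $\varnothing$ has up-degree $1$ in $[\varnothing,(3,1,1)]$. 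The correct statement, which the paper records as Proposition~\ref{Grassmannian-poset-isomorphism}, is $[e,w]\cong[\varnothing,\lambda]^*$, and the map records the \emph{positions} $u^{-1}(w(k)),\ldots,u^{-1}(w(1))$ of the initial increasing run of $w$ inside $u$, not the values $u(k),\ldots,u(1)$. Your final conclusion for $\EE(\Xdd)$ survives, because $\EE(\Xdd_P)=\EE(\Xdd_{P^*})$ — but that duality step is then needed in the Grassmannian case as well, not only the inverse-Grassmannian one as your outline suggests.
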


\begin{Example}\label{ex:grassmannian example}
We illustrate Theorem~\ref{vexillary-theorem} 
for the partition $\lambda=(3,1,1)$.
\begin{enumerate}\renewcommand{\labelenumi}{(\alph{enumi})}
\item Figure~\ref{fig:grassmannian example} depicts the dual interval $[\varnothing,\lambda]^*$
and its isomorphic partner $[e,236145]$, where
$236145$ is Grassmannian with code $(1,1,3)$. 
\begin{figure}[htbp]
\begin{tikzpicture}[xscale=1.3]
\draw (0,5) -- (0,4) -- (2,2) -- (0,0) -- (-2,2) -- (0,4);
\draw (1,1) -- (-1,3);
\draw (-1,1) -- (1,3);
\foreach \x in {(-2,2),(0,2),(2,2),(-1,3),(1,3),(0,4),(0,5)}{\fill[white] \x++(-.25,-.25) rectangle ++(.5,.5);}
\foreach \x in {(0,0),(-1,1),(1,1)}{\fill[white] \x++(-.3,-.3) rectangle ++(.6,.6);}
\draw (0,5) node {$\varnothing$};
\draw (0,4) node {$\ttableau{{ }}$};
\draw (1,3) node {$\ttableau{{ }&{ }}$};
\draw (-1,3) node {$\ttableau{{ }\\ { }}$};
\draw (2,2) node {$\ttableau{{ }&{ }&{ }}$};
\draw (0,2) node {$\ttableau{{ }&{ }\\ { }}$};
\draw (-2,2) node {$\ttableau{{ }\\ { }\\ { }}$};
\draw (1,1) node {$\ttableau{{ }&{ }&{ }\\ { }}$};
\draw (-1,1) node {$\ttableau{{ }&{ }\\ { } \\ { }}$};
\draw (0,0) node {$\ttableau{{ }&{ }&{ }\\ { } \\ { }}$};
\end{tikzpicture}
\hspace{.5in}
\begin{tikzpicture}[xscale=1.3]
\draw[white] (0,-.4) -- (0,4);
\draw (0,5) -- (0,4) -- (2,2) -- (0,0) -- (-2,2) -- (0,4);
\draw (1,1) -- (-1,3);
\draw (-1,1) -- (1,3);
\foreach \x in {(0,0),(-1,1),(1,1),(-2,2),(0,2),(2,2),(-1,3),(1,3),(0,4),(0,5)}{\fill[white] \x++(-.25,-.25) rectangle ++(.5,.5);}
\draw (0,5) node {$\bm{236}145$};
\draw (0,4) node {$\bm{23}1\bm{6}45$};
\draw (1,3) node {$\bm{23}14\bm{6}5$};
\draw (-1,3) node {$\bm{2}1\bm{36}45$};
\draw (2,2) node {$\bm{23}145\bm{6}$};
\draw (0,2) node {$\bm{2}1\bm{3}4\bm{6}5$};
\draw (-2,2) node {$1\bm{236}45$};
\draw (1,1) node {$\bm{2}1\bm{3}45\bm{6}$};
\draw (-1,1) node {$1\bm{23}4\bm{6}5$};
\draw (0,0) node {$1\bm{23}45\bm{6}$};
\end{tikzpicture}
\caption{The interval $[\varnothing,(3,1,1)]^*$ and its isomorphic partner $[e,23614]$ of Example~\ref{ex:grassmannian example}.}\label{fig:grassmannian example}
\end{figure}
Both have $\EE(\Xdd)=13/10$ and $\EE(\Ydd)=23/18$, as predicted by the theorem.
These expectations, for both $\Xdd$ and $\Ydd$, would be shared by the
interval $[e,(236145)^{-1}] \cong [\varnothing,\lambda]$, because
$(236145)^{-1} = 412563$ is inverse Grassmannian.

\item On the other hand, Figure~\ref{fig:grassmannian example part 2} depicts the intervals $[e,4231]$
and $[e,25314]$. The permutation $4231$ is dominant with code $\lambda=(3,1,1)$,
while $25314$ is vexillary with code $(1,3,1)$ but is neither dominant nor Grassmannian nor inverse Grassmannian. 
\begin{figure}[htbp]
\begin{tikzpicture}[xscale=1.3]
\draw (0,0) -- (2,2) -- (2,3) -- (0,5) -- (-2,3) -- (-2,2) -- (0,0);
\draw (1,1) -- (0,2) -- (0,3) -- (1,4);
\draw (-1,1) -- (0,2) -- (0,3) -- (-1,4);
\foreach \x in {(0,0), (2,2), (2,3), (0,5), (-2,3), (-2,2),(1,1), (0,2), (0,3), (1,4),(-1,1),(-1,4)} {\fill[white] \x++(-.25,-.25) rectangle ++(.5,.5);}
\draw (0,0) node {$1234$};
\draw (-1,1) node {$2134$};
\draw (1,1) node {$1243$};
\draw (-2,2) node {$2314$};
\draw (0,2) node {$2143$};
\draw (2,2) node {$1423$};
\draw (-2,3) node {$2341$};
\draw (0,3) node {$2413$};
\draw (2,3) node {$4123$};
\draw (-1,4) node {$2431$};
\draw (1,4) node {$4213$};
\draw (0,5) node {$4231$};
\end{tikzpicture}
\hspace{.5in}
\begin{tikzpicture}[xscale=1.3]
\draw (0,0) -- (2,2) -- (1,3) -- (1,4) -- (0,5) -- (-1,4) -- (-1,3) -- (-2,2) -- (0,0);
\draw (1,1) -- (-1,3);
\draw (-1,1) -- (1,3);
\foreach \x in {(0,0), (2,2), (2,3), (0,5), (-2,3), (-2,2),(1,1), (0,2), (0,3), (1,4),(-1,1),(-1,4),(1,3),(-1,3)} {\fill[white] \x++(-.25,-.25) rectangle ++(.5,.5);}
\draw (0,0) node {$12345$};
\draw (-1,1) node {$21345$};
\draw (1,1) node {$12354$};
\draw (-2,2) node {$23145$};
\draw (0,2) node {$21354$};
\draw (2,2) node {$12534$};
\draw (-1,3) node {$23154$};
\draw (1,3) node {$21534$};
\draw (-1,4) node {$23514$};
\draw (1,4) node {$25134$};
\draw (0,5) node {$25314$};
\end{tikzpicture}
\caption{The intervals $[e,4231]$ and $[e,25314]$ of Example~\ref{ex:grassmannian example}.}\label{fig:grassmannian example part 2}
\end{figure}
\end{enumerate}
These intervals all have $\EE(\Ydd)=23/18$, as predicted by the theorem  
because $236145$, $4231$, and $25314$ are all vexillary of shape $\lambda$.
However, $\EE(\Xdd_{[e,4231]})=5/4$, while
$\EE(\Xdd_{[e,25314]})=14/11$, neither of which
matches $\EE(\Xdd_{[e,236145]})=13/10$.
\end{Example}

The proof of Theorem~\ref{vexillary-theorem} uses the
relation between reduced words and 
$0$-Hecke words in type $A$ 
and the theory of Stanley symmetric functions and 
Lascoux and Sch\"utzenberger's 
theory of Schubert and Grothendieck polynomials (see, for example, 
\cite{Buch02, BKSTY, fomin.greene:noncommutative, LasSch2, Manivel01, Stanley84, Las03}).

\begin{Definition}
\label{Schub-Grothendieck-definitions}
Given a partition $\lambda$, the
\emph{Schur function} $s_\lambda$ and the \emph{stable Grothendieck polynomial} (for partitions)
$G_\lambda$ are computed by 
$$
\begin{aligned}
s_\lambda&=\sum_T \xxx^T \text{ \ \ and} \\
G_\lambda&=\sum_T (-1)^{|T|-|\lambda|} \xxx^T,
\end{aligned}
$$
(see, for example, \cite[Theorem 3.1]{Buch02}) where the first (respectively, second) sum runs over all column-strict tableaux (respectively, column-strict set-valued tableaux) $T$
of shape $\lambda$, 
and $\xxx^T$ is as defined in Equation~\eqref{eqn:tableau monomial}. Given $w\in \symm_n$, the
\emph{stable Schubert polynomial} (or \emph{Stanley symmetric function})
$F_w$ and the \emph{stable Grothendieck polynomial} (for permutations) $G_w$ are defined via
$$
\begin{aligned}
F_w&=\sum_{\substack{(\sigma_{a_1},\ldots,\sigma_{a_{\ell(w)}}),\\ (b_1,\ldots,b_{\ell(w)})}} 
                x_{b_1} \cdots x_{b_{\ell(w)}} \text{ \ \ and} \\
G_w&=\sum_{\substack{(\sigma_{a_1}, \cdots, \sigma_{a_L}),\\ (b_1,\ldots,b_{L})}} 
                (-1)^{L-\ell(w)} x_{b_1} \cdots x_{b_{\ell(w)}} \\
\end{aligned}
$$
(see, for example, \cite[Examples 2.2 and 2.5]{fomin.greene:noncommutative}).
In the first sum, $(\sigma_{a_1}, \cdots, \sigma_{a_{\ell(w)}})$ 
ranges over all reduced words for $w$, while
in the second sum, $(\sigma_{a_1}, \cdots, \sigma_{a_{L}})$ 
ranges over all $0$-Hecke words for $w$. In both cases, $(b_1,b_2,\ldots)$ are weakly increasing
sequences of positive integers satisfying the 
compatibility condition that $b_i < b_{i+1}$ whenever $a_i \leq a_{i+1}$.

Although it is not obvious, the functions $s_\lambda$, $G_\lambda$, $F_w$, and $G_w$ 
are all \emph{symmetric functions} in the infinite variable set $\{x_1,x_2,\ldots\}$.

Finally for any $w\in {\mathfrak S}_n$, the \emph{$(\beta)$-Grothendieck polynomial} 
is defined by
\begin{equation}
\label{beta-Grothendieck-definition}
{\mathfrak G}^{(\beta)}_w=\sum_{\substack{(\sigma_{a_1}, \cdots, \sigma_{a_L}),\\ (b_1,\ldots,b_{L})}} 
                \beta^{L-\ell(w)} x_{b_1} \cdots x_{b_{\ell(w)}}, 
\end{equation}
where the summation is over the same pairs of sequences
as for $G_w$, with the additional condition that $b_i\leq a_i$.
We also mention that the $\beta=0$ and $\beta=-1$ specializations
${\mathfrak G}_w^{(0)}$ and ${\mathfrak G}_w^{(-1)}$ are
called the \emph{Schubert polynomial} and \emph{Grothendieck polynomial}
for $w$, respectively.
\end{Definition}

The relevance of these polynomials comes from their
coefficients on certain squarefree monomials:
\begin{equation}
\label{squarefree-coefficients}
\begin{aligned}
f^\lambda&\text{ is the coefficient of }
   x_1 x_2 \cdots x_{|\lambda|}\text{ in }s_\lambda,\\
\#\Red(w)&\text{ is the coefficient of }
   x_1 x_2 \cdots x_{\ell(w)}\text{ in }F_w,\\
f^\lambda(+1)&\text{ is the coefficient of }
-x_1 x_2 \cdots x_{|\lambda|}x_{|\lambda|+1}\text{ in }G_\lambda,\\
\#\Red^{(+1)}(w)&\text{ is the coefficient of }
   -x_1 x_2 \cdots x_{\ell(w)}x_{\ell(w)+1}\text{ in }G_w.
\end{aligned}
\end{equation}

There are also various known relationships between them.

\begin{itemize}
\item
Note that
$s_{\lambda}$ and $F_w$ are the lowest-degree terms of $G_{\lambda}$
and $G_w$, respectively. 

\item $F_w$ and $G_w$ are
called \emph{stable} Schubert and Grothendieck polynomials because
\[
\begin{aligned}
F_w&=\lim_{N\to\infty} {\mathfrak G}_{1^N\times w}^{(0)}(x_1,\ldots,x_{N+n}) \text{ \ and}\\
G_w&=\lim_{N\to\infty} {\mathfrak G}_{1^N\times w}^{(-1)}(x_1,\ldots,x_{N+n}),
\end{aligned}
\]
where 
$
1^N\times w:=(1,2,\ldots,N,N+w(1),N+w(2),\ldots,N+w(n))
$
lies in $ {\mathfrak S}_{N+n}$.

\item
For  $w$ a Grassmannian permutation of shape $\lambda$, one has
\begin{equation}
\label{eqn:relate2}
\begin{aligned}
F_w&=s_{\lambda} \text{ \ and}\\
G_w&=G_{\lambda}.
\end{aligned}
\end{equation}
\end{itemize}

Our proof of Theorem~\ref{vexillary-theorem} will rest on
the following generalization of the relations in~\eqref{eqn:relate2}.

\begin{Lemma}
\label{vexillary-lemma}
For a vexillary permutation $w$ of shape $\lambda$, one has 
$$
\begin{aligned}
F_w&=s_\lambda,\\
G_w&=G_\lambda.
\end{aligned}
$$
\end{Lemma}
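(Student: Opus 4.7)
The plan is to prove both identities by passing to a flagged tableau formula in finitely many variables, then invoking the stability that defines $F_w$ and $G_w$ from Schubert and Grothendieck polynomials of $1^N\times w$.

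For the Schubert side, I would invoke the classical theorem of Wachs (building on Lascoux--Sch\"utzenberger) stating that, for any vexillary permutation $w$ of shape $\lambda$, the Schubert polynomial $\mathfrak{G}^{(0)}_w$ is a flagged Schur polynomial
\[
\mathfrak{G}^{(0)}_w \;=\; \sum_T \xxx^T,
\]
where the sum is over column-strict tableaux $T$ of shape $\lambda$ whose entries in row $i$ are bounded above by an explicit flag $\varphi_i(w)$ determined by $w$. For the Grothendieck side, I would invoke the $K$-theoretic refinement (due to Knutson--Miller--Yong, with parallel formulations by Hudson and Matsumura) giving
\[
\mathfrak{G}^{(-1)}_w \;=\; \sum_T (-1)^{|T|-|\lambda|}\,\xxx^T,
\]
where $T$ now ranges over column-strict \emph{set-valued} tableaux of shape $\lambda$ flagged by the same $\varphi(w)$.

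The key observation tying these flagged formulas to $F_w$ and $G_w$ is that, under the embedding $w\mapsto 1^N\times w$, the flag shifts uniformly: $\varphi_i(1^N\times w)=\varphi_i(w)+N$. Consequently, as $N\to\infty$ every row-flag constraint eventually becomes vacuous, and the two flagged formulas above become sums over \emph{all} column-strict (respectively, column-strict set-valued) tableaux of shape $\lambda$. By Definition~\ref{Schub-Grothendieck-definitions}, these unconstrained sums are precisely $s_\lambda$ and $G_\lambda$. Combining with the stability identities
\[
F_w \;=\; \lim_{N\to\infty}\mathfrak{G}^{(0)}_{1^N\times w}, \qquad
G_w \;=\; \lim_{N\to\infty}\mathfrak{G}^{(-1)}_{1^N\times w},
\]
yields $F_w = s_\lambda$ and $G_w = G_\lambda$ together.

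The main obstacle is that the flagged set-valued formula for $\mathfrak{G}^{(-1)}_w$ in the vexillary case is substantially deeper than its Schubert counterpart: one must show that the pipe-dream (or Hecke-word) expansion of the Grothendieck polynomial collapses, after an insertion argument akin to Hecke insertion, onto set-valued tableaux of the single shape $\lambda$ with the prescribed flag. Once that collapse is in hand, the $N\to\infty$ stabilization step is essentially automatic and handles both identities uniformly.
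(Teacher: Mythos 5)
Your proposal is correct and follows essentially the same route as the paper: both rest on the Knutson--Miller--Yong flagged set-valued tableau formula for vexillary Grothendieck polynomials, the uniform flag shift $\varphi(1^N\times w)=\varphi(w)+(N,N,\ldots)$, and the stability defining $G_w$ from $\mathfrak{G}^{(-1)}_{1^N\times w}$. The only cosmetic difference is that you establish $F_w=s_\lambda$ independently via the Wachs/Lascoux--Sch\"utzenberger flagged Schur formula, whereas the paper deduces it for free by extracting lowest-degree terms from $G_w=G_\lambda$.
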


In order to prove this lemma, we will employ 
 a tableau formula for ${\mathfrak G}^{(-1)}_w$ from \cite{KMY}
that involves flagged set-valued tableaux.  First
recall the notion of a flag from Definition~\ref{defn:flags}.

Suppose $w$ is a vexillary permutation with shape $\lambda$ having $\ell$ nonzero parts.
One defines the flag $\varphi(w)=(\varphi_1,\varphi_2,\ldots,\varphi_{\ell})$ as follows (see \cite[\S 5.2]{KMY} for more details). 
Recall that the \emph{Rothe diagram} of $w$ is
\[
D(w):=\{(i,j): 1\leq i,j\leq n, w(i)>j, w^{-1}(j)>i\}
\quad \subset \quad 
\{1,2,\ldots,n\}\times \{1,2,\ldots,n\};
\]
(see, for example, \cite[\S 2.2.1]{Manivel01}).
Let $\mu(w)$ be the smallest Ferrers shape (northwest justified within the square shape $n^n$) that contains all the boxes of $D(w)$. Overlay the northwest corner of $\lambda(w)$ on the northwest
corner of the square $n^n$. Let the \emph{diagonal of row $i$} be the diagonal
occupied by the rightmost box of $\lambda(w)$ in row $i$. 
(In fact it is true that $\lambda(w)\subseteq \mu(w)$.)
Then
set $\varphi_i$ to be the row number of the southeastmost box of 
$\mu(w)$ in the diagonal of row $i$.

\begin{Example}
If $w$ is the vexillary permutation $14253 \in \symm_5$, 
then its Rothe diagram $D(w)$ is the set of row and column indices $(i,j)$
corresponding to the circles in
this picture:
$$\begin{tikzpicture}[scale=.35]
\draw (0,0) -- (0,9) -- (9,9);
\draw (2,0) -- (2,5) -- (9,5);
\draw (4,0) -- (4,1) -- (9,1);
\draw (6,0) -- (6,7) -- (9,7);
\draw (8,0) -- (8,3) -- (9,3);
\foreach \x in {(0,9), (2,5), (4,1), (6,7), (8,3)} {\fill[white] \x +(-.5,-.5) rectangle +(.5,.5); \draw \x node {$\times$};}
\foreach \x in {(2,7), (4,7), (4,3)} {\draw \x circle (.5);}
\draw (-2,1) node {$3$};
\draw (-2,3) node {$5$};
\draw (-2,5) node {$2$};
\draw (-2,7) node {$4$};
\draw (-2,9) node {$1$};
\end{tikzpicture}$$
Then $\lambda(w)=(2,1)$ and
$\mu(w)=(3,3,3,3)$. Hence $\varphi(w)=(2,4)$.
\end{Example}

The following case is especially important to this paper.

\begin{Example}
If $w$ is a dominant permutation, then $\lambda(w)=\mu(w)$ and therefore 
$\varphi(w)=(1,2,3,\ldots)$.
\end{Example}

We now can state the following tableau formula,  found in \cite{KMY} (up to minor notational conventions).

\begin{Theorem}[{\cite[Theorem~5.8]{KMY}}]
\label{thm:theKMY}
Let $w$ be vexillary. Then
\begin{equation*}
{\mathfrak G}^{(-1)}_w(x_1,\ldots,x_n)=\sum_{T}(-1)^{|T|-|\lambda|}{\bf x}^T,
\end{equation*}
where the sum is over all set-valued tableaux of shape $\lambda(w)$
flagged by $\varphi(w)$.
\end{Theorem}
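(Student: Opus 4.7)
The plan is to prove this by induction on the length $\ell(w)$, using the divided-difference recursion that characterizes Grothendieck polynomials. Recall that ${\mathfrak G}^{(-1)}_w$ is determined by two properties: a base case for a ``largest'' (in Bruhat order) permutation where the polynomial has an explicit product form, and the recursion $\pi_i {\mathfrak G}^{(-1)}_w = {\mathfrak G}^{(-1)}_{ws_i}$ whenever $i \in \Des(w)$, where $\pi_i$ is the $K$-theoretic isobaric divided difference operator $\pi_i(f) = \frac{(1-x_{i+1})f - (1-x_i)(s_i f)}{x_i - x_{i+1}}$. So the strategy is to show that the flagged set-valued tableau generating function on the right-hand side of the claimed formula satisfies the same base case and the same recursion.

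For the base case, I would first verify the formula when $w$ is dominant, in which case $\lambda(w)=\mu(w)=D(w)$ and $\varphi(w)=(1,2,3,\ldots)$. The flag forces the entry set in each row-$i$ cell to be a subset of $\{1,\ldots,i\}$ that is bounded below by the row-$(i-1)$ entry in the same column; a straightforward cell-by-cell expansion shows that the tableau sum collapses to the known product formula $\prod_{(i,j) \in \lambda(w)}\bigl(x_i \oplus (-x_{?})\bigr)$ for ${\mathfrak G}^{(-1)}_w$ when $w$ is dominant. Combinatorially, the cells of $\lambda(w)$ become independent, and each cell contributes a geometric series in a single variable, which reproduces the dominant product formula.

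For the inductive step, the key point is that a vexillary permutation $w$ can always be obtained from a dominant one by a sequence of right-multiplications by simple transpositions $s_i$ that preserve vexillarity; the corresponding shape $\lambda$ is unchanged along this path, while the flag $\varphi$ gets modified in a single row by $+1$. Thus it suffices to exhibit, on the tableau side, a bijection/sign-reversing involution realizing the identity
\begin{equation*}
\pi_i \Bigl(\sum_{T \in \mathrm{FSVT}(\lambda,\varphi(ws_i))} (-1)^{|T|-|\lambda|} \xxx^T\Bigr)
= \sum_{T \in \mathrm{FSVT}(\lambda,\varphi(w))} (-1)^{|T|-|\lambda|} \xxx^T,
\end{equation*}
whenever $w$ covers $ws_i$ in weak order and both are vexillary. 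Expanding $\pi_i$ applied to a monomial $\xxx^T$ produces a telescoping sum over ways of ``raising an $i$ to an $i{+}1$'' in $T$; partitioning the terms by the content of the affected row in $T$, one packages them into triples (tableau, designated cell, ``increment choice'') and matches them with the tableaux in $\mathrm{FSVT}(\lambda,\varphi(w))$ via a bumping procedure reminiscent of the uncrowding map in Definition~\ref{uncrowding-definition}.

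The main obstacle is the sign-reversing involution on the $K$-theoretic ($\beta=-1$) side. For ordinary Schubert polynomials (the $\beta=0$ truncation), this argument is due to Wachs and is comparatively clean because each cell carries a single entry. In the set-valued setting, a cell may hold several entries, and applying $\pi_i$ generates extra terms whose signs must be made to cancel against tableaux having one more entry in the row under modification. The technical heart of the proof is therefore a carefully designed involution on flagged set-valued tableaux that toggles the presence of the boundary entry $i$ or $i{+}1$ in the relevant row, fixes exactly the tableaux contributing to the target sum, and respects both the compatibility with column-strictness in the adjacent rows and the vexillarity condition that controls $\varphi(w)-\varphi(ws_i)$. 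Once this involution is constructed and its fixed-point set identified, the inductive step (and hence the theorem) follows.
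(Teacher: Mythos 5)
The paper does not prove this theorem: it is imported verbatim as \cite[Theorem~5.8]{KMY} and used as a black box, so there is no in-paper proof to compare against. The argument in [KMY] is in fact geometric/commutative-algebraic (Gr\"obner degeneration of matrix Schubert varieties and multidegree computations), not the combinatorial divided-difference induction you sketch, so even as a reconstruction of the cited source your route is a different one.

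More importantly, your sketch has a concrete internal inconsistency that would prevent it from working as written. You assert that a vexillary $w$ is reachable from a dominant one by right-multiplications by simple transpositions $s_i$ that preserve vexillarity, with ``the corresponding shape $\lambda$ unchanged along this path.'' But right-multiplication by $s_i$ always changes $\ell(w)$ by exactly $\pm 1$, and for vexillary permutations $\ell(w) = |\lambda(w)|$; hence the shape cannot be preserved along any such path. Your inductive step then refers to ``whenever $w$ covers $ws_i$ in weak order and both are vexillary,'' which forces $|\lambda(ws_i)| = |\lambda(w)| - 1$, directly contradicting the shape-preservation claim made two sentences earlier. The operations that do fix the shape while moving the flag (Lascoux transitions, the ladder moves of [KMY]) are not single $s_i$-multiplications, are not weak-order covers, and do not interact with $\pi_i$ in the clean telescoping way your outline assumes. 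So the scaffolding of the induction is not in place. You also explicitly defer the technical heart of the matter, the sign-reversing involution on flagged set-valued tableaux compatible with $\pi_i$; since that is precisely what separates this $\beta=-1$ result from Wachs's $\beta=0$ flagged Schur theorem, leaving it unconstructed is a genuine gap rather than a routine detail.
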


One checks that $w \mapsto \varphi(w)$ commutes as follows with
the operation $w \mapsto 1^N \times w$ on vexillary permutations:
\begin{equation}
\label{flag-stabilization}
\varphi(1^N \times w) = 
\varphi(w)+(N,N,\ldots)=(\varphi_1+N,\varphi_2+N,\ldots)\quad \text{ if }\varphi(w)=(\varphi_1,\varphi_2,\ldots).
\end{equation}

We can now complete the proof of Lemma~\ref{vexillary-lemma}, and 
then of Theorem~\ref{vexillary-theorem}.

\begin{proof}[Proof of Lemma~\ref{vexillary-lemma}]
The equality $F_w=s_\lambda$ for vexillary is well-known \cite[Corollary 4.2]{Stanley84},
but will also follow once we show $G_w=G_\lambda$, since
 $F_w$ and $s_\lambda$ are the lowest-degree terms in $G_w$ and $G_\lambda$,
respectively.

To this end, note that when working in finitely many variables $x_1,x_2,\ldots,x_N$
for any positive integer $N$, 
Definition~\ref{Schub-Grothendieck-definitions} implies that
\[
 G_w(x_1,\ldots,x_N)=  {\mathfrak G}^{(-1)}_{1^N\times w}(x_1,\ldots,x_N,0,0,0,\ldots).\]
On the other hand, by Theorem~\ref{thm:theKMY} and Equation~\eqref{flag-stabilization}, one has
\begin{equation}
\label{lemma:KMYconsequence}
{\mathfrak G}^{(-1)}_{1^N\times w}(x_1,\ldots,x_{N+n})
=\sum_{T} (-1)^{|T|-|\lambda|}{\bf x}^T,
\end{equation}
where the sum is over all column-strict set-valued tableaux of shape $\lambda$ 
flagged by $(\varphi_1+N,\varphi_2+N,\ldots)$.  Hence
\[{
\mathfrak G}^{(-1)}_{1^N\times w}(x_1,\ldots,x_N,0,0,0,\ldots)=\sum_{T}(-1)^{|T|-|\lambda|} {\bf x}^T,
\]
where the sum is over column-strict set-valued tableaux with entries from $1,2,\ldots,N$ (that is, the flagging condition on each row becomes redundant). Therefore, for any positive integer $N$,
\[
G_w(x_1,\ldots,x_N)=G_{\lambda}(x_1,\ldots,x_N).
\] 
Because $G_w$ and $G_\lambda$ are both symmetric functions 
this suffices to show $G_w=G_\lambda$.
\end{proof}

\begin{proof}[Proof of Theorem~\ref{vexillary-theorem}.] 
To prove the first assertion in the theorem,
note that Lemma~\ref{vexillary-lemma} together with 
Equation~\eqref{squarefree-coefficients}
show that when 
$w$ is vexillary of shape $\lambda$, one
has $\Red(w) = f^{\lambda}$ and
$\Red^{(+1)}(w) = f^{\lambda}(+1).$
Together with the fact that $\ell(w)=|\lambda|$, this gives the middle equality here
$$
\EE(Y_{[\varnothing,\lambda]})
= \frac{f^\lambda(+1)}{(|\lambda|+1)f^\lambda} 
= \frac{\#\Red^{(+1)}(w)}{(\ell(w)+1) \cdot \#\Red(w)}
= \EE(Y_{[e,w]}),
$$
while the first equality is 
Equation~\eqref{Young-Y-expectation-reformulated}
and the last equality is
Corollary~\ref{Y-expectation-as-word-counts}.

For the theorem's second assertion, 
use Equation~\eqref{Young-X-expectation-reformulated},
Proposition~\ref{trivial-weak-order-isomorphisms},
and Proposition~\ref{Grassmannian-poset-isomorphism} below.
\end{proof}

\begin{Remark} 
In fact, Lemma~\ref{vexillary-lemma} also shows that
any two $0$-Hecke words for a vexillary permutation are 
\emph{$K$-Knuth equivalent} in the sense defined by 
Buch and Samuel \cite[\S 5]{Buch.Samuel}.
We will not go into the details, but this can be deduced by
combining Lemma~\ref{vexillary-lemma}, along with
properties of the \emph{$K$-theoretic jeu de taquin} 
introduced by Thomas and Yong \cite{Thomas.Yong:Kjdt},
together with results on the \emph{Hecke insertion} introduced in \cite{BKSTY},
\cite[Theorem~4.2]{Thomas.Yong:longest}
and \cite[Theorem~6.2]{Buch.Samuel}.
\end{Remark}

%%%%%%%%%%%%
\subsection{Evaluating $\EE(\Xdd_{[e,w]})$ via 
noninversion posets}

In type $A_{n-1}$, the set of reflections $T$ for the Coxeter system $(W,S)$ is equal to all (not necessarily adjacent) 
\emph{transpositions} $T=\{\tau_{ij}: 1 \leq i < j \leq n\}$,
and we have $S=\{\sigma_1,\ldots,\sigma_{n-1}\}$
where $\sigma_i:=\tau_{i,i+1}$.  Furthermore, the 
(left) inversion set of a permutation $w$ is 
$$
\#T_L(w):=\{ \tau_{ij}: 1 \leq i < j \leq n \text{ and }w^{-1}(i) > w^{-1}(j) \}
$$
and the number of inversions $\#T_L(w)$ is the same as the 
Coxeter group length $\ell(w)$.

\begin{Definition}
For $w \in \symm_n$, the \emph{noninversion poset} $P_{\noninv}(w)$ is
the partial order on $\{1,2,\ldots,n\}$ in which $i <_{P_{\noninv}(w)} j$
if and only if $i<_\ZZ j$ and $(i,j) \not\in T_L(w)$; that is, in which $i < j$ and $w^{-1}(i) < w^{-1}(j)$.
\end{Definition}

\begin{Definition}
For a poset $P$ on $\{1,2,\ldots,n\}$, a \emph{linear extension} of $P$ is a permutation
$w=w(1) \cdots w(n) \in \symm_n$ for which $i <_P j$ implies $w^{-1}(i) < w^{-1}(j)$; that is, $w(1) < w(2) < \cdots < w(n)$ extends $P$ to a linear order. Denote by $\LLL(P)$ the set of all linear extensions of $P$.
\end{Definition}

The following may then be viewed as the rephrasing in type $A$ of the 
characterization of the weak order that asserted $u \leq_R w$ if and only if $T_L(u) \subset T_L(w)$.

\begin{Proposition}
\label{noninversion-poset-proposition}
For any $w \in \symm_n$, one has
$[e,w]=\LLL(P_{\noninv}(w)).$
\end{Proposition}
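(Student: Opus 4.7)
The plan is to unpack both sides into the common language of left inversion sets and observe that they describe the same condition on $u$. On one side, Definition~\ref{defn:weak bruhat order}(e) tells us that $u \leq_R w$ is equivalent to $T_L(u) \subseteq T_L(w)$. So the first step is simply to rewrite $[e,w] = \{u \in \symm_n : T_L(u) \subseteq T_L(w)\}$.

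Next, I would translate the linear extension condition into inversion language. By definition, $u \in \LLL(P_{\noninv}(w))$ means that for all $i,j \in \{1,\ldots,n\}$, if $i <_{P_{\noninv}(w)} j$ then $u^{-1}(i) < u^{-1}(j)$. Since $i <_{P_{\noninv}(w)} j$ is by definition the conjunction of $i <_{\ZZ} j$ and $\tau_{ij} \notin T_L(w)$, and since for $i <_{\ZZ} j$ the inequality $u^{-1}(i) < u^{-1}(j)$ is the same as $\tau_{ij} \notin T_L(u)$, the defining condition becomes: for every pair $i < j$, $\tau_{ij} \notin T_L(w) \Rightarrow \tau_{ij} \notin T_L(u)$. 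Contrapositively, this is $\tau_{ij} \in T_L(u) \Rightarrow \tau_{ij} \in T_L(w)$, i.e. $T_L(u) \subseteq T_L(w)$.

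Combining these two observations gives $u \in [e,w]$ iff $T_L(u) \subseteq T_L(w)$ iff $u \in \LLL(P_{\noninv}(w))$, proving the claimed set equality. There is essentially no obstacle here; the only subtlety is keeping the direction of the implication straight when passing between $P_{\noninv}(w)$ (which records non-inversions) and $T_L(w)$ (which records inversions), and making sure the convention ``$i <_P j$ implies $u^{-1}(i) < u^{-1}(j)$'' for linear extensions matches the ``one-line-notation reading order'' interpretation that is built into $T_L$. Once those conventions are pinned down, the proof is a one-line chain of equivalences.
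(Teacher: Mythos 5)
Your argument is correct and coincides with what the paper intends: the paper gives no proof, simply noting that the proposition is ``the rephrasing in type $A$'' of the characterization $u \leq_R w \iff T_L(u) \subseteq T_L(w)$ from Definition~\ref{defn:weak bruhat order}(e), and your chain of equivalences (unwinding the definition of linear extension, of $P_{\noninv}(w)$, and of $T_L$ via one-line notation, then taking the contrapositive) is exactly the verification the paper leaves to the reader.
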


This reformulation allows us to prove the following.

\begin{Proposition}
\label{Grassmannian-poset-isomorphism}
If $w$ is Grassmannian of shape $\lambda$, then $[e,w] \cong [\varnothing, \lambda]^*$.
\end{Proposition}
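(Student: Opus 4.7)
The plan is to build an explicit order-reversing bijection from the sub-partitions $\mu \subseteq \lambda$ to the permutations $u \leq_R w$. Because $w$ is Grassmannian with its single descent at some position $k$, I would first write its one-line notation as $a_1 \cdots a_k \, b_1 \cdots b_{n-k}$, with $A := \{a_1 < \cdots < a_k\}$ and $B := \{b_1 < \cdots < b_{n-k}\}$ partitioning $\{1,2,\ldots,n\}$. Then $T_L(w)$ consists exactly of the reflections $\tau_{b,a}$ with $a \in A$, $b \in B$, $b < a$, and the parts of $\lambda$ are $\lambda_r = \#\{b \in B : b < a_{k+1-r}\}$ for $r = 1,\ldots,k$. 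Next I would set up a bijection $\phi$ from the cells of $\lambda$ to $T_L(w)$ by sending $(r,c)$ to $\tau_{b,a_{k+1-r}}$, where $b$ is the $c$-th smallest element of $B$ strictly less than $a_{k+1-r}$, and for each $\mu \subseteq \lambda$ define $u_\mu$ to be the (candidate) permutation whose left inversion set equals $\phi(\lambda \setminus \mu)$.

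The crux is a combinatorial lemma: a subset $C$ of the cells of $\lambda$ has $\phi(C)$ biclosed---equivalently, equal to $T_L(u)$ for some $u \in \symm_n$---if and only if $C$ is an order filter of the cell poset $Y_\lambda$ on $\lambda$, meaning closed under moving right in a row and down in a column. Because every element of $T_L(w)$ straddles the bipartition $B \sqcup A$ of $\{1,\ldots,n\}$, the transitivity half of biclosedness is automatic. The ``filling-in'' half, applied to $\tau_{b,a} = \phi(r,c)$ with some $j$ strictly between $b$ and $a$, splits into two cases: $j = a' \in A$ forces the cell $\phi^{-1}(\tau_{b,a'})$ to lie in $C$, while $j = b' \in B$ forces $\phi^{-1}(\tau_{b',a})$ to lie in $C$. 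A short computation---using that whenever $a' > b$, the $c$-th smallest element of $\{b' \in B : b' < a\}$ remains the $c$-th smallest in the subset $\{b' \in B : b' < a'\}$, because every $b'$ below $b$ is automatically below $a'$---identifies the first cell as $(r',c)$ with $r' > r$ (straight down column $c$) and the second as $(r,c'')$ with $c'' > c$ (along row $r$). That is precisely the order filter condition on $Y_\lambda$, and the converse direction uses the same calculation.

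Granting the lemma, the complements $\lambda \setminus \mu$ of sub-partitions are exactly the order filters contained in $Y_\lambda$, so each $u_\mu$ is well-defined and satisfies $T_L(u_\mu) \subseteq T_L(w)$, i.e.\ $u_\mu \leq_R w$. Since $\mu \subseteq \mu'$ in $[\varnothing,\lambda]$ is equivalent to $\lambda \setminus \mu \supseteq \lambda \setminus \mu'$, which under $\phi$ translates to $T_L(u_\mu) \supseteq T_L(u_{\mu'})$ and hence to $u_{\mu'} \leq_R u_\mu$, the map $\mu \mapsto u_\mu$ is an order-reversing bijection $[\varnothing,\lambda] \to [e,w]$, yielding the isomorphism $[e,w] \cong [\varnothing,\lambda]^*$. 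The main obstacle I anticipate is proving the combinatorial lemma cleanly, but as sketched it reduces to a short case analysis exploiting the bipartite structure of $T_L(w)$.
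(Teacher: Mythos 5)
Your argument is correct, and it takes a genuinely different route from the paper's. The paper identifies $[e,w]$ with the linear extensions $\LLL(P_{\noninv}(w))$ of the noninversion poset (Proposition~\ref{noninversion-poset-proposition}); since a Grassmannian $w$ has one-line notation consisting of two increasing runs, every $u \leq_R w$ is a shuffle of these two runs, and recording the positions $u^{-1}(w(1)) < \cdots < u^{-1}(w(\ell))$ of the first run yields the explicit anti-isomorphism $u \mapsto (u^{-1}(w(\ell))-\ell, \ldots, u^{-1}(w(1))-1)$. You instead work directly with inversion sets, building the cell-to-reflection bijection $\phi$ and reducing the claim to the lemma that $\phi(C)$ is biclosed precisely when $C$ is an order filter of the cell poset of $\lambda$; your case analysis is sound (closure is vacuous by the bipartite structure of $T_L(w)$, coclosure forces the cell directly below when $j \in A$ and directly to the right when $j \in B$, and iterating those single steps reaches every weakly-southeast cell), and the converse runs on the same computation. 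One thing to flag: you invoke the standard but uncited fact that a subset of $T$ equals $T_L(u)$ for some $u \in \symm_n$ if and only if it is biclosed, whereas the paper only ever uses the characterization $u \leq_R w \iff T_L(u) \subseteq T_L(w)$ from Definition~\ref{defn:weak bruhat order}(e); you would need a reference for that. The trade-off is that your route produces a transparent cell-by-cell dictionary between $\lambda$ and $T_L(w)$, while the paper's is shorter because it reuses the $P_{\noninv}$ and $\LLL$ machinery already in place for the dominant case.
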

\begin{proof}
If $\lambda=(\lambda_1,\ldots,\lambda_\ell)$ with $\lambda_\ell >0$, 
then the one-line notation for $w$ is a concatenation of two increasing sequences;
namely, 
$$
w(1) = \lambda_\ell+1 < w(2) = \lambda_{\ell-1} + 2 < \cdots < w(\ell - 1) = \lambda_2 + \ell - 1 < w(\ell) = \lambda_1 + \ell
$$
concatenated with the sequence $w(\ell+1) < \cdots < w(n)$. Therefore the noninversion poset $P_{\noninv(w)}$ contains
these two sequences as chains, along with some extra order
relations between them.  Thus, any element $u$ in $[e,w]=\LLL(P_{\noninv(w)})$ is
a shuffle of these two increasing sequences,
and hence is completely determined by the positions 
$u^{-1}(w(1)) < \cdots < u^{-1}(w(\ell))$ occupied by the initial 
increasing sequence in the one-line notation for $w$.  This produces a poset 
isomorphism 
$[e,w] \rightarrow  [\varnothing, \lambda]^*$
defined by 
$$
u \longmapsto \mu=(\mu_1,\ldots,\mu_\ell)
                  =(u^{-1}(w(\ell))-\ell, \ldots, u^{-1}(w(1))-1).
$$
\end{proof}

Proposition~\ref{noninversion-poset-proposition}
lets us reinterpret the denominator $\#[e,w]$ of $\EE(\Xdd_{[e,w]})$.
We next work on the numerator.

\begin{Definition}
Given a covering relation $i \lessdot_P j$ in a poset $P$ on $\{1,2,\ldots,n\}$, define
a quotient poset $P/\{i,j\}$ that ``sets $i$ equal to $j$.''  More formally,
 consider the equivalence relation $\equiv_{ij}$ that
has $n-1$ blocks by merging $i$ and $j$ into a single block, and check that the (reflexive, symmetric) transitive closure of the union of the two
binary relations $\leq_P$  and $\equiv_{ij}$ gives a poset structure on the $n-1$ 
blocks of $\equiv$.
\end{Definition}

\begin{Proposition}
Fix a permutation $w \in \symm_n$ and set $P:=P_{\noninv(w)}$. Then
$$
\sum_{u \leq_R w} \#\{s \in S: u \lessdot_R us \not\leq_R w\}
=\sum_{ i \lessdot_P j }\#\LLL(P/\{i,j\}),
$$
and therefore
\begin{equation}
\label{poset-formulation-of-X-expectation}
\EE(\Xdd_{[e,w]})
=\frac{1}{2}\left( \#S - \sum_{ i \lessdot_P j }\frac{\#\LLL(P/\{i,j\})}{\#\LLL(P)} \right).
\end{equation}
\end{Proposition}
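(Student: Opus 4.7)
The second formula will be an immediate consequence of the first: once we establish the identity for the sum, we substitute into Proposition~\ref{weak-interval-X-expectation-complementary-formulation} and use Proposition~\ref{noninversion-poset-proposition} to replace $\#[e,w]$ by $\#\LLL(P)$. So the entire task is to prove the combinatorial identity
$$
\sum_{u \leq_R w} \#\{s \in S: u \lessdot_R us \not\leq_R w\}
=\sum_{ i \lessdot_P j }\#\LLL(P/\{i,j\}),
$$
and I plan to do this by constructing an explicit bijection between triples $(u,k,(i,j))$ enumerated on the left and pairs (cover relation, linear extension) enumerated on the right.

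First I would rewrite the left side in purely order-theoretic terms. By Proposition~\ref{noninversion-poset-proposition}, the element $u \leq_R w$ is precisely a linear extension of $P$. For $s = \sigma_k \in S$, the condition $u \lessdot_R us$ simply says $u(k) < u(k+1)$. The condition $us \not\leq_R w$ says $T_L(us) \not\subseteq T_L(w)$; since $T_L(us) = T_L(u) \cup \{\tau_{u(k),u(k+1)}\}$ and $T_L(u) \subseteq T_L(w)$, this is equivalent to $\tau_{u(k),u(k+1)} \notin T_L(w)$, i.e.\ to $(u(k), u(k+1))$ being a noninversion of $w$. Setting $i = u(k)$ and $j = u(k+1)$, this is exactly the relation $i <_P j$. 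Thus the left-hand side counts triples $(u, k, (i,j))$ with $u \in \LLL(P)$, $i <_P j$, and $u^{-1}(i) = k$, $u^{-1}(j) = k+1$.

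Next I would upgrade the relation $i <_P j$ to a cover $i \lessdot_P j$: if some $m$ satisfied $i <_P m <_P j$, then as $u$ is a linear extension one would need $k = u^{-1}(i) < u^{-1}(m) < u^{-1}(j) = k+1$, which is impossible. Swapping the order of summation, the left side becomes
$$
\sum_{i \lessdot_P j} \#\bigl\{ u \in \LLL(P) : u^{-1}(j) = u^{-1}(i) + 1 \bigr\}.
$$
It then suffices to exhibit, for each cover $i \lessdot_P j$, a bijection between those linear extensions of $P$ in which $i$ immediately precedes $j$ and the linear extensions of $P/\{i,j\}$. The bijection is the obvious ``contract/expand'' map: given such a $u$, replace the adjacent entries $i,j$ by a single merged symbol $[ij]$ to obtain $u' \in \LLL(P/\{i,j\})$; conversely, replace $[ij]$ in any linear extension of the quotient by the two-letter string $i,j$.

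The main obstacle, and the step that really uses the cover hypothesis, is checking that this map is well-defined and bijective. For well-definedness of the quotient as a genuine poset, the cover condition $i \lessdot_P j$ rules out any $m$ with $i <_P m <_P j$, which would otherwise collapse $[m]$ with $[ij]$. For the forward direction, if $a <_P b$ with $\{a,b\} \neq \{i,j\}$, one verifies $[a] <_{P/\{i,j\}} [b]$ (the only subtle case is when $a \in \{i,j\}$ and $b$ is incomparable to the other element, where the cover hypothesis again prevents any back-relation $[b] \leq [ij]$). For the inverse direction, given $u' \in \LLL(P/\{i,j\})$, expanding $[ij]$ in place produces a sequence whose linear-extension property transfers directly from that of $u'$ because every order relation in $P$ descends to one in $P/\{i,j\}$. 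This completes the bijection and hence the identity, and substituting into Proposition~\ref{weak-interval-X-expectation-complementary-formulation} yields the displayed formula for $\EE(\Xdd_{[e,w]})$.
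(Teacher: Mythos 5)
Your proof is correct and follows essentially the same route as the paper's own (brief) argument: both sides are identified with counts of adjacent-pair data on linear extensions of $P$, the cover condition is extracted from adjacency in the linear extension, and a contract/expand bijection between $\LLL(P)$-with-$i$-immediately-before-$j$ and $\LLL(P/\{i,j\})$ finishes the identity. Your write-up is more detailed — in particular you spell out the inversion-set computation $T_L(us)=T_L(u)\cup\{\tau_{u(k),u(k+1)}\}$ and the well-definedness of the quotient — whereas the paper leaves these checks to the reader, but the underlying bijection is the same.
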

\begin{proof}
Given an element $u \leq_R w$ and $s=\sigma_k=(k,k+1)$ in $S$ for which
$u \lessdot_R us \not\leq_R w$, let $i:=u(k)$ and $j:=u(k+1)$. 
Then $u \lessdot_R us$ implies $i < j$.
Furthermore, $u \in \LLL(P)$ but $us \not\in \LLL(P)$ implies
that  $i <_{P} j$ must be a covering relation in $P$,
and one can regard $u/\{i,j\}$ as an element of $\LLL(P/\{i,j\})$.

Conversely, given a covering relation $i \lessdot_P j$ and
an element $\hat{u}$ of $\LLL(P/\{i,j\})$, say with 
 $\{i,j\}=\hat{u}_k$, one
can recover from it an element $u \leq_R w$ with $u \lessdot u\sigma_k \not\leq_R w$
by replacing the block $\hat{u}_k$ by $(u(k),u(k+1))=(i,j)$.
\end{proof}

%%%%%%%%
\subsection{Dominant permutations and the forest hook-length formula}
\label{forest-section}

Arbitrary permutations $w \in \symm_n$ have no nice product
formula to compute $\#[e,w]=\#\LLL(P_\noninv(w))$, but dominant permutations do.

\begin{Definition}
A finite poset $P$ is a \emph{forest poset} if each element is covered by
at most one other element.
\end{Definition}

\begin{Proposition}[{\cite[Corollaries~5.3 and~ 5.4]{BFLR12}}]
The poset $P_{\noninv(w)}$ is a forest poset if and only if $w$ is dominant.
\end{Proposition}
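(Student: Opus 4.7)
The plan is to prove both directions via the contrapositive, using the characterization that $i \lessdot_P j$ in $P := P_\noninv(w)$ exactly when $i < j$, $w^{-1}(i) < w^{-1}(j)$, and no intermediate value $k \in (i,j)$ satisfies $w^{-1}(i) < w^{-1}(k) < w^{-1}(j)$.

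The easy direction (``$P$ a forest $\Rightarrow$ $w$ dominant'') is almost immediate. If some $i$ had two distinct covers $j_1 < j_2$ in $P$, then for $i \lessdot_P j_2$ to hold the intermediate value $j_1$ must satisfy $w^{-1}(j_1) > w^{-1}(j_2)$; reading $w$ at positions $w^{-1}(i) < w^{-1}(j_2) < w^{-1}(j_1)$ gives values $(i, j_2, j_1)$ in relative pattern $132$, so $w$ is not dominant.

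For the harder direction (``$w$ dominant $\Rightarrow$ $P$ a forest'') I would again argue the contrapositive. Assume $w$ contains a $132$ pattern, and encode each such pattern as a triple of values $(i,j_1,j_2)$ with $i<j_1<j_2$ and $w^{-1}(i)<w^{-1}(j_2)<w^{-1}(j_1)$. Choose such a triple that minimizes the lexicographic pair $(j_2-i,\,j_1)$, and show that $i\lessdot_P j_1$ and $i\lessdot_P j_2$ are both cover relations in $P$, exhibiting the failure of the forest property at $i$.

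The heart of the argument --- and the main obstacle --- is verifying these two covers. If $i\lessdot_P j_2$ fails, the obstructing value $k$ splits into cases $k>j_1$ (yielding a new $132$-triple $(i,j_1,k)$ with strictly smaller $j_2-i$) and $k<j_1$ (yielding $(k,j_1,j_2)$ with strictly smaller $j_2-i$), each contradicting the primary minimality. If instead $i\lessdot_P j_1$ fails, the obstructing value $k$ with $i<k<j_1$ and $w^{-1}(i)<w^{-1}(k)<w^{-1}(j_1)$ subdivides according to whether $w^{-1}(k)<w^{-1}(j_2)$ or $w^{-1}(k)>w^{-1}(j_2)$. The first subcase reduces to the previous argument. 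The subtle point is the second subcase: it produces the new triple $(i,k,j_2)$ with the \emph{same} value of $j_2-i$ but strictly smaller middle coordinate $k<j_1$, which is exactly the reason the secondary minimization on $j_1$ is needed in the lexicographic choice. Getting this two-level minimization calibrated correctly is where I would focus the write-up.
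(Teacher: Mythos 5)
The paper does not actually prove this proposition --- it simply cites [BFLR12, Cor.~5.3--5.4] --- so your proposal is filling in a proof rather than matching one. Mathematically your argument is correct, and the lexicographic minimization on $(j_2-i,\,j_1)$ is calibrated exactly right for the case that worried you: when an obstruction to $i\lessdot_P j_1$ has $w^{-1}(k)>w^{-1}(j_2)$, the replacement triple $(i,k,j_2)$ preserves $j_2-i$ but strictly decreases the middle coordinate, and all the other case branches strictly decrease $j_2-i$, so the induction is well-founded.

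One bookkeeping slip should be fixed before a write-up: the two parenthetical direction-labels are swapped relative to the arguments that follow them. Your ``easy'' argument assumes $P$ is not a forest and deduces a $132$-pattern, which is the contrapositive of \emph{$w$ dominant $\Rightarrow P$ is a forest}, not of \emph{$P$ forest $\Rightarrow w$ dominant} as labeled. Symmetrically, your ``harder'' argument assumes a $132$-pattern and constructs an element with two covers, which proves \emph{$P$ forest $\Rightarrow w$ dominant} by contraposition, not the direction you labeled. Just swap the two parenthetical implications (or keep the labels and rephrase each paragraph as a direct rather than contrapositive proof) and the write-up will be consistent. You might also add one sentence in Case~1 observing that the obstructing value $k$ cannot equal $j_1$ (since $w^{-1}(j_1)>w^{-1}(j_2)$ rules it out as an obstruction to $i\lessdot_P j_2$), so the dichotomy $k<j_1$ versus $k>j_1$ genuinely exhausts all possibilities.
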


Forest posets have the following
\emph{hook-length formula} counting their linear extensions, first observed by 
Knuth.

\begin{Proposition}[{\cite[\S 5.1.4 Exercise 20]{Knuth73}}]
Let $P$ be a forest poset, and set $P_{\leq i}:=\{j \in P: j \leq_P i\}$.
Then
\begin{equation}
\label{eqn:hooklength}
\#\LLL(P) = \frac{\#P !}{\prod\limits_{i \in P} \#P_{\leq i}}. 
\end{equation}
\end{Proposition}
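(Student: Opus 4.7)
The plan is to proceed by induction on $n := \#P$, with the trivial base case $n \leq 1$. The recursion comes from deleting a maximal element: any linear extension $w = (w(1),\ldots,w(n))$ of $P$ must have $w(n)$ maximal in $P$, since any $j$ with $w(n) <_P j$ would be forced into a position $> n$. Conversely, appending any maximal $m$ to a linear extension of $P \setminus \{m\}$ produces a linear extension of $P$. This yields
$$
\#\LLL(P) = \sum_{m \text{ maximal in } P} \#\LLL(P \setminus \{m\}).
$$

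Next I would verify the two compatibility facts needed to apply the inductive hypothesis. First, $P \setminus \{m\}$ remains a forest poset, because deleting a maximal element only removes upward covers: each child of $m$ had at most one cover in $P$ (namely $m$) and so now has none. Second, and crucially, for every $j \neq m$ we have $(P \setminus \{m\})_{\leq j} = P_{\leq j}$, since a maximal element of $P$ cannot appear below any other element. Substituting the inductive formula then gives
$$
\#\LLL(P) = \sum_{m \text{ maximal}} \frac{(n-1)!}{\prod_{j \neq m} \#P_{\leq j}}.
$$

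To conclude, I would clear denominators. The desired formula $\#\LLL(P) = n!/\prod_j \#P_{\leq j}$ is equivalent, after multiplying the previous display by $\prod_j \#P_{\leq j}$, to the identity
$$
\sum_{m \text{ maximal in } P} \#P_{\leq m} = n.
$$
This is where the forest hypothesis does its real work: the upward-oriented Hasse diagram gives each element at most one parent, so the procedure ``move up to your unique cover,'' iterated from any $j \in P$, terminates at a unique maximal element. Hence the down-sets $\{P_{\leq m} : m \text{ maximal}\}$ partition $P$, immediately yielding the identity. The only mild obstacle in this plan is spotting the reformulation that collapses the summed inductive expression into this single counting identity; once in hand, the forest hypothesis supplies it at once via the tree-partition of $P$.
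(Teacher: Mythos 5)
Your proof is correct. The paper does not actually supply a proof of this proposition; it simply cites Knuth's exercise. Your argument is the standard inductive one: remove a maximal element, observe that $P \setminus \{m\}$ is still a forest with unchanged down-sets for $j \neq m$, and reduce the hook-length formula to the identity $\sum_{m \text{ maximal}} \#P_{\leq m} = \#P$. Your justification of that identity is the crucial step and is handled correctly: in a forest, $P_{\geq j}$ is a chain for every $j$ (follow unique covers upward), so each $j$ lies below exactly one maximal element, giving the partition $P = \bigsqcup_m P_{\leq m}$. One small point you could tighten: you should note explicitly that removing a maximal element cannot create new cover relations in $P \setminus \{m\}$, which holds because $m$ cannot lie strictly between two other elements; this is what guarantees both that the forest property persists and that $(P\setminus\{m\})_{\leq j}$ is computed with the same covers as in $P$. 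With that remark in place the argument is complete.
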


In computing $\EE(\Xdd_{[e,w]})$ using
Equations~\eqref{poset-formulation-of-X-expectation} and~\eqref{eqn:hooklength},
 the following reduction for forests will be useful.

\begin{Lemma}
\label{lem:canceling hooks}
Fix a covering relation $i \lessdot_P j$ in a forest poset $P$.
Then 
$$
\frac{\#\LLL(P/\{i,j\})}{\#\LLL(P)} 
= \frac{\#P_{\leq i}}{\#P} \cdot \prod_{k >_P i} 
             \frac{\#P_{\leq k}}{\left(\#P_{\leq k} - 1\right)}
= \frac{\#P_{\leq i}}{\#P} \cdot \frac{\prod\limits_{k \in \beta(i)} \#P_{\leq k} }
                      {\prod\limits_{k \in \alpha(i)} \left(\#P_{\leq k} - 1\right)},
$$
where the sets $\alpha(i)$ and $\beta(i)$ are defined by
$$
\begin{aligned}
\alpha(i)&:=\{k \in P: k >_P i \text{ and either }k\gtrdot_P i,\text{ or }
              k\text{ covers more than one element}\} \text{ \ and}\\
\beta(i)&:=\{k \in P: k >_P i \text{ and }k
              \text{ is either maximal, or $k$ is covered by an element of }\alpha(i)\}.
\end{aligned}
$$
\end{Lemma}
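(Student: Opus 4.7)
The plan is to invoke Knuth's forest hook-length formula \eqref{eqn:hooklength} for both $P$ and $P/\{i,j\}$ and read off the ratio by tracking which hooks change. A structural fact drives the computation: in a forest poset, the strict ancestors of $i$, namely $\{k \in P : k >_P i\}$, form a single chain $j = j_0 \lessdot_P j_1 \lessdot_P \cdots \lessdot_P j_r$ ending at a maximal element $j_r$, because each $j_{k}$ is the unique element covering $j_{k-1}$.

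For the first equality, I compare hooks element-by-element. Let $[ij]$ denote the merged class in $P/\{i,j\}$. If $m \in P \setminus \{i,j\}$ is not one of $j_1,\ldots,j_r$, then neither $i$ nor $j$ lies in $P_{\leq m}$, so $(P/\{i,j\})_{\leq m}$ has the same cardinality as $P_{\leq m}$. If $m = j_k$ with $k \geq 1$, then both $i, j \in P_{\leq m}$ get identified, so $\#(P/\{i,j\})_{\leq m} = \#P_{\leq m} - 1$. Finally, $(P/\{i,j\})_{\leq [ij]}$ consists of $[ij]$ together with $P_{<j} \setminus \{i\}$, giving cardinality $\#P_{\leq j}-1$. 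Plugging these into \eqref{eqn:hooklength} and forming the ratio, the factor $(\#P-1)!/\#P! = 1/\#P$ appears and the hooks outside the ancestor chain cancel, yielding
\begin{equation*}
\frac{\#\LLL(P/\{i,j\})}{\#\LLL(P)} = \frac{1}{\#P} \cdot \frac{\#P_{\leq i}\cdot \#P_{\leq j}\cdot \prod_{k >_P j}\#P_{\leq k}}{(\#P_{\leq j}-1)\cdot \prod_{k >_P j}(\#P_{\leq k}-1)}.
\end{equation*}
Using $\{k >_P i\} = \{j\} \cup \{k >_P j\}$, this rearranges into the first expression on the right-hand side of the lemma.

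For the second equality, I telescope along the ancestor chain. Set $n_k := \#P_{\leq j_k}$, and let $b_k$ be the number of elements of $P$ covered by $j_{k+1}$. The key observation is that $n_{k+1} = n_k + 1$ exactly when $b_k = 1$, which happens precisely when $j_{k+1}\notin\alpha(i)$. In that case the denominator $n_{k+1}-1$ of the factor $n_{k+1}/(n_{k+1}-1)$ equals $n_k$, and cancels the numerator of the previous factor. After all such cancellations, the surviving numerators are $n_r$ (with no successor to cancel against) together with each $n_k$ whose successor $j_{k+1}$ lies in $\alpha(i)$; these are precisely indexed by $\beta(i)$. Likewise the surviving denominators are $n_0 - 1$ (with no predecessor) together with each $n_{k+1}-1$ with $b_k>1$; these are precisely indexed by $\alpha(i)$.

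The main bookkeeping point is recognizing that $\alpha(i)$ and $\beta(i)$ are really the matched starts and ends of the maximal ``unary runs'' of the ancestor chain, with $j_0$ and $j_r$ serving as a forced start and forced end respectively; once this pairing is made explicit, the telescoping is immediate and no deeper combinatorial input is needed.
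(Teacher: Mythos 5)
Your proof is correct and takes essentially the same approach as the paper: apply Knuth's forest hook-length formula to both $P$ and $P/\{i,j\}$, observe that hooks change only along the ancestor chain $\{k : k >_P i\}$, and then telescope that chain to produce the $\alpha(i)/\beta(i)$ form. The only cosmetic difference is your labeling of the merged block as $[ij]$ (treated separately) versus the paper's convention of labeling elements of $P/\{i,j\}$ by $k \in P\setminus\{i\}$, which folds the merged block into the case $k = j$ and makes the first equality drop out without the final rewriting step $\{k >_P i\} = \{j\} \cup \{k >_P j\}$.
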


Figure~\ref{fig:alpha and beta sets} shows a schematic for the local structure above a node $i$ in a forest poset $P$, with the nodes in $\alpha(i)$ circled and the nodes in $\beta(i)$ boxed.  Note that the sets $\alpha(i)$ and $\beta(i)$ may intersect.
\begin{figure}[htbp]
\begin{tikzpicture}[scale=.7]
\draw (0,.25) -- (0,9.5);
\draw (0,.75) node[right] {$i$};
\draw (.2,1.5) node[right] {};
\draw (.2,9.5) node[right] {};
\foreach \y in {.75,1.5,2.75,3.5,4.75,5.5,6.75,7.5,8.25,9.5} {\fill[black] (0,\y) circle (3pt);}
\foreach \y in {1.875,2.125,2.375,3.875,4.125,4.375,5.875,6.125,6.375,8.625,8.875,9.125} {\fill[black] (0,\y) circle (1.5pt);}
\foreach \y in {3.5,5.5,7.5,8.25} {\draw (0,\y) -- ++ (-.5,-.8); \draw (0,\y) --
 ++(.5,-.8);}
\foreach \y in {1.5,3.5,5.5,7.5,8.25} {\draw (0,\y) circle (5pt);}
\foreach \y in {2.75,4.75,6.75,7.5,9.5} {\draw (0,\y) ++(-.25,-.25) rectangle ++(.5,.5);}
\end{tikzpicture}
\caption{Local structure above a node $i$ in a forest posets $P$. Elements of $\alpha(i)$ are circled and elements of $\beta(i)$ are boxed.}\label{fig:alpha and beta sets}
\end{figure}

\begin{proof}
The first equality comes from Equation~\eqref{eqn:hooklength} via a calculation 
$$
\frac{\#\LLL(P/\{i,j\})}{\#\LLL(P)} 
=  \frac{ \displaystyle \left(\#P-1\right)! \prod_{k \in P} \#P_{\leq k} }
       {  \displaystyle \#P! \prod_{k \in P/\{i,j\}} \#(P/\{i,j\})_{\leq k}}
= \frac{\#P_{\leq i}}{\#P}
    \prod_{k >_P i} \frac{\#P_{\leq k}}{\left(\#P_{\leq k}-1\right) }, 
$$
because if we label elements of $P/\{i,j\}$ by $k \in P \setminus \{i\}$, then
 $\#(P/\{i,j\})_{\leq k}$ is either $\#P_{\leq k}$  for $k \not >_P i$,
or $\#P_{\leq k}-1$  for $k >_P i$.
The second equality comes from telescoping the factors in the rightmost product:
\begin{itemize}
\item for $k \not\in \alpha(i)$,  the denominator $\#P_{\leq k}-1$ cancels 
with $\#P_{\leq \ell}$ for the unique $\ell \lessdot k$, and
\item for $k \not\in \beta(i)$, 
the numerator $\#P_{\leq k}$ is canceled by 
$\#P_{\leq \ell}-1$ for the unique $\ell \gtrdot k$.
\end{itemize}
\end{proof}

%%%%%%%%%%%
\subsection{Computing $\EE(\Xdd_{[e,w]})$ for dominant permutations of rectangular staircase shape}
\label{clever-section}
%%%%%%%%%%%

We now turn to the computation of $\EE(\Xdd_{[e,w]})$
when $w$ is a dominant permutation of rectangular staircase shape $\lambdanab{d}{a}{b}$. 
As we will show, this has a very nice form.

Our strategy will approach this calculation via induction on $d$. 
To this end, throughout the remainder of this section, fix the rectangle dimensions $a,b \geq 1$, and assume, for convenience, that $a \leq b$.
For each $d \geq 2$, define $w^{(d)}$ to be the dominant permutation 
of shape $\lambdanab{d}{a}{b}$.    
One can check that $w^{(d)}$ lies in  
$\symm_N$ where $N:=a+(d-1)b$, and that the one-line notation for $w^{(d)}$ 
is the following concatenation of contiguous intervals of integers:
$$
w^{(d)} 
= I_{d-1} \, \cdots \, I_2 \, I_1 \, I_0 \, J_1 \, J_2 \, \cdots \, J_{d-1}, 
$$
where $I_m:=[mb+1, mb + a]$ and $J_m:=[(m-1)b + a +1,mb]$. 
The noninversion poset for this permutation,
$$P^{(d)}:=P_{\noninv}(w^{(d)}),$$ 
is a forest poset with the
schematic structure depicted in Figure~\ref{fig:forest structure}, where each $I_i$ and $J_j$ is totally ordered in increasing order. By convention, set $J_0:=\varnothing$ and define the poset
$P^{(1)}:=I_0=[1,a]$ totally ordered in increasing order.

\begin{figure}[htbp]
\begin{tikzpicture}[scale=1]
\draw (1,0) -- (1,5);
\draw (2,0) -- (2,4);
\draw (3,0) -- (3,3);
\draw (4,0) -- (4,2);
\draw (5,0) -- (5,1);
\draw (1,5) -- (5,1);
\foreach \x in {(1,0),(2,0),(3,0),(4,0),(5,0),(1,5),(2,4),(3,3),(4,2),(5,1)} {\fill[white] \x++(-.25,-.25) rectangle ++(.5,.5);}
\node[draw,rectangle,minimum size=.5cm, inner sep=1pt] at  (0,0) {$I_{d-1}$}; 
\node[draw,rectangle,minimum size=.5cm, inner sep=1pt] at  (1,0) {$I_{d-2}$}; 
\foreach \x in {1.8,2,2.2} {\fill (\x,0) circle (.15ex);}
\node[draw,rectangle,minimum size=.5cm, inner sep=1pt] at  (3,0) {$I_2$};
\node[draw,rectangle,minimum size=.5cm, inner sep=1pt] at  (4,0) {$I_1$};
\node[draw,rectangle,minimum size=.5cm, inner sep=1pt] at  (5,0) {$I_0$};
\node[draw,rectangle,minimum size=.5cm, inner sep=1pt] at  (5,1) {$J_1$};
\node[draw,rectangle,minimum size=.5cm, inner sep=1pt] at  (4,2) {$J_2$};
\node[draw,rectangle,minimum size=.5cm, inner sep=1pt] at (3,3) {$J_3$};
\foreach \x in {(1.9,4.1),(2,4),(2.1,3.9)} {\fill \x circle (.15ex);}
\node[draw,rectangle,minimum size=.5cm, inner sep=1pt] at (1,5) {$J_{d-1}$}; 
\end{tikzpicture}
\caption{Structure of the noninversion poset for a dominant permutation of rectangular staircase shape $\lambdanab{d}{a}{b}$.}\label{fig:forest structure} 
\end{figure}

\begin{Example}\label{ex:chunky staircase ex}
Fix $a=3$ and $b=7$.  The posets for $P^{(d)}$ for $d=1,2,3,4$ 
are shown in Figure~\ref{fig:chunky staircase ex}.
\begin{figure}[htbp]
\begin{tikzpicture}[xscale=.75,yscale=.5]
\draw (3,0) -- (3,2);
\foreach \y in {0,1,2,3,4,5,6} {\fill[white] (3,\y)++(-.35,-.35) rectangle ++(.7,.7);}
\draw (3,0) node {$1$};
\draw (3,1) node {$2$};
\draw (3,2) node {$3$};
\end{tikzpicture}
\hspace{.75in}
\begin{tikzpicture}[xscale=.75,yscale=.5]
\draw (2,0) -- (2,2);
\draw (3,0) -- (3,6);
\foreach \y in {0,1,2,7,8,9,10} {\fill[white] (2,\y)++(-.35,-.35) rectangle ++(.7,.7);}
\foreach \y in {0,1,2,3,4,5,6} {\fill[white] (3,\y)++(-.35,-.35) rectangle ++(.7,.7);}
\draw (3,0) node {$1$};
\draw (3,1) node {$2$};
\draw (3,2) node {$3$};
\draw (3,3) node {$4$};
\draw (3,4) node {$5$};
\draw (3,5) node {$6$};
\draw (3,6) node {$7$};
\draw (2,0) node {$8$};
\draw (2,1) node {$9$};
\draw (2,2) node {$10$};
\end{tikzpicture}
\hspace{.75in}
\begin{tikzpicture}[xscale=.75,yscale=.5]
\draw (2,0) -- (2,10);
\draw (3,0) -- (3,6);
\draw (3,6) -- (2,7);
\foreach \y in {0,1,2,11,12,13,14} {\fill[white] (1,\y)++(-.35,-.35) rectangle ++(.7,.7);}
\foreach \y in {0,1,2,7,8,9,10} {\fill[white] (2,\y)++(-.35,-.35) rectangle ++(.7,.7);}
\foreach \y in {0,1,2,3,4,5,6} {\fill[white] (3,\y)++(-.35,-.35) rectangle ++(.7,.7);}
\draw (3,0) node {$1$};
\draw (3,1) node {$2$};
\draw (3,2) node {$3$};
\draw (3,3) node {$4$};
\draw (3,4) node {$5$};
\draw (3,5) node {$6$};
\draw (3,6) node {$7$};
\draw (2,0) node {$8$};
\draw (2,1) node {$9$};
\draw (2,2) node {$10$};
\draw (2,7) node {$11$};
\draw (2,8) node {$12$};
\draw (2,9) node {$13$};
\draw (2,10) node {$14$};
\draw (1,0) node {$15$};
\draw (1,1) node {$16$};
\draw (1,2) node {$17$};
\end{tikzpicture}
\hspace{.75in}
\begin{tikzpicture}[xscale=.75,yscale=.5]
\draw (0,0) -- (0,2);
\draw (1,0) -- (1,14);
\draw (2,0) -- (2,10);
\draw (3,0) -- (3,6);
\draw (3,6) -- (2,7);
\draw (2,10) -- (1,11);
\foreach \y in {0,1,2} {\fill[white] (0,\y)++(-.35,-.35) rectangle ++(.7,.7);}
\foreach \y in {0,1,2,11,12,13,14} {\fill[white] (1,\y)++(-.35,-.35) rectangle ++(.7,.7);}
\foreach \y in {0,1,2,7,8,9,10} {\fill[white] (2,\y)++(-.35,-.35) rectangle ++(.7,.7);}
\foreach \y in {0,1,2,3,4,5,6} {\fill[white] (3,\y)++(-.35,-.35) rectangle ++(.7,.7);}
\draw (3,0) node {$1$};
\draw (3,1) node {$2$};
\draw (3,2) node {$3$};
\draw (3,3) node {$4$};
\draw (3,4) node {$5$};
\draw (3,5) node {$6$};
\draw (3,6) node {$7$};
\draw (2,0) node {$8$};
\draw (2,1) node {$9$};
\draw (2,2) node {$10$};
\draw (2,7) node {$11$};
\draw (2,8) node {$12$};
\draw (2,9) node {$13$};
\draw (2,10) node {$14$};
\draw (1,0) node {$15$};
\draw (1,1) node {$16$};
\draw (1,2) node {$17$};
\draw (1,11) node {$18$};
\draw (1,12)node {$19$};
\draw (1,13) node {$20$};
\draw (1,14) node {$21$};
\draw (0,0) node {$22$};
\draw (0,1) node {$23$};
\draw (0,2) node {$24$};
\end{tikzpicture}
\caption{The noninversion posets $P^{(1)}, P^{(2)}, P^{(3)}, P^{(4)}$ 
for $a = 3$, $b = 7$, as described in Example~\ref{ex:chunky staircase ex}.}\label{fig:chunky staircase ex}
\end{figure}
\end{Example}

Recall that $w^{(d)} \in \symm_N$, where $N=a+(d-1)b$. 
Thus we can rewrite Equation~\eqref{poset-formulation-of-X-expectation} using
Lemma~\ref{lem:canceling hooks} as
\begin{equation}
\label{EX-reformulation-via-thetas}
\EE\left(\Xdd_{[e,w^{(d)}]}\right) 
=\frac{1}{2}\left( \#S - \sum_{ i \lessdot j }
                         \frac{ \#\LLL(P^{(d)}/\{i,j\}) } { \#\LLL(P^{(d)}) } 
                  \right)
=\frac{1}{2}\left( N-1 - 
                    \frac{1}{N}\sum_{\ell=0}^{d-1} \theta_\ell^{(d)} \right), 
\end{equation}
where, for $\ell=0,1,2,\ldots,d-1$, we introduce the sums 
$$
\theta_\ell^{(d)}:= 
  \sum_{ \substack{i \in I_\ell \sqcup J_\ell\\\text{not maximal }\\\text{ in }P^{(d)}} }  
           f^{(d)}(i) 
\quad
\text{ \ with \ }
\quad
f^{(d)}(i) 
:= \#P^{(d)}_{\leq i}\cdot\frac{ \prod\limits_{k \in \beta(i)} \#P^{(d)}_{\leq k}}{ 
       \prod\limits_{k \in \alpha(i)} \left( \#P^{(d)}_{\leq k} - 1 \right)}. 
$$
Note that the sum for $\theta_0^{(d)}$ 
is why we have made the convention $J_0 = \varnothing$. 

For the sake of readability, we introduce the abbreviation
$$
c_j:=\frac{jb}{a+(j-1)b}.
$$

\begin{Lemma}
\label{clever-lemma}
The sums $\theta_\ell^{(d)}$ have these explicit formulas: 
$$
\theta_\ell^{(d)}= 
\begin{cases}
(a^2+\ell b(b-a)) \cdot c_{\ell+1} c_{\ell+2} \cdots c_{d-1} 
  & \text{ if }\ell=0,1,2,\ldots,d-2, \text{ and}\\ 
(a^2+(d-1)b(b-a)) - N  
  & \text{ if }\ell=d-1.\\ 
\end{cases}
$$
\end{Lemma}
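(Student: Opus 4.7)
The plan is a direct computation using the explicit forest structure of $P^{(d)}$ depicted in Figure~\ref{fig:forest structure}, combined with Lemma~\ref{lem:canceling hooks}. That forest has one isolated chain $I_{d-1}$ and one big tree whose backbone is $I_0, J_1, J_2, \ldots, J_{d-1}$; each intermediate $I_m$ (for $1 \leq m \leq d-2$) hangs off the bottom of $J_{m+1}$ as a leaf-branch. First I would record the subtree sizes: if $i$ is the $k$-th element of some $I_m$ then $\#P^{(d)}_{\leq i}=k$, while if $i$ is the $k$-th element of $J_m$ then $\#P^{(d)}_{\leq i}=(m-1)b+a+k$. In particular, the bottom and top of $J_m$ have subtree sizes $(m-1)b+a+1$ and $mb$, respectively.

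Second, for each non-maximal $i \in I_\ell \sqcup J_\ell$, I would identify $\alpha(i)$ and $\beta(i)$. The key observation is that the only elements of $P^{(d)}$ covering more than one child are the bottoms of $J_{\ell+1},\ldots,J_{d-1}$; the bottom of each $J_{m+1}$ covers the top of $J_m$ and the top of $I_m$. Together with the direct successor of $i$, these supply $\alpha(i)$, while $\beta(i)$ collects the overall maximum of the component together with whichever of the aforementioned $J$- and $I$-tops actually lie above $i$. A case-split on whether $i \in I_\ell$ or $J_\ell$, and on whether $i$ sits in the interior, one-below-the-top, or at the top of its block, pins down $\alpha(i)$ and $\beta(i)$ exactly.

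Substituting these data into the formula of Lemma~\ref{lem:canceling hooks}, the successor-factor in the denominator cancels against $\#P^{(d)}_{\leq i}$ in the numerator (with minor telescoping at block boundaries), producing uniform, position-independent values:
\begin{align*}
f^{(d)}(i)&=a\, c_{\ell+1}c_{\ell+2}\cdots c_{d-1}\quad \text{for every } i \in I_\ell,\ 0\le \ell \le d-2,\\
f^{(d)}(i)&=\ell b\, c_{\ell+1}c_{\ell+2}\cdots c_{d-1}\quad \text{for every } i \in J_\ell,\ 1\le \ell \le d-2,\\
f^{(d)}(i)&= a \quad \text{for every non-maximal } i \in I_{d-1}, \text{ and}\\
f^{(d)}(i)&= (d-1)b\quad \text{for every non-maximal } i \in J_{d-1}.
\end{align*}
Since every element of $I_\ell$ or $J_\ell$ with $\ell \leq d-2$ is non-maximal, summing over each block then gives $\theta_\ell^{(d)} = a\cdot a\, c_{\ell+1}\cdots c_{d-1} + (b-a)\cdot \ell b\, c_{\ell+1}\cdots c_{d-1}=(a^2+\ell b(b-a))c_{\ell+1}\cdots c_{d-1}$ for $0 \le \ell \le d-2$. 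For $\ell=d-1$, the chain $I_{d-1}$ contributes $(a-1)a$ (excluding its maximum) and $J_{d-1}$ contributes $(b-a-1)(d-1)b$, which rearrange to $(a^2+(d-1)b(b-a))-N$ using $N=a+(d-1)b$.

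The main obstacle I foresee is Step~2: each non-maximal $i$ falls into one of several sub-cases (interior of a block, one-below-the-top, or at the top), and each produces a slightly different pair $(\alpha(i),\beta(i))$. The pleasant surprise is that the cancellation in Step~3 is robust across every sub-case, delivering a single position-independent value of $f^{(d)}(i)$ per block; this is what allows the block-sums to collapse to the closed forms claimed in the lemma.
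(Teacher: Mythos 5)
Your proposal is correct and follows essentially the same route as the paper: identify the forest structure of $P^{(d)}$, compute the subtree sizes $\#P^{(d)}_{\leq i}$ at the key nodes $\min J_m$, $\max J_m$, $\max I_m$, and apply Lemma~\ref{lem:canceling hooks} to get a block-uniform value of $f^{(d)}(i)$ that collapses into $\theta_\ell^{(d)}$. One thing you do more carefully than the paper's own write-up: you correctly count $\#J_\ell = b-a$ (the paper's proof contains a misstatement that ``$J_\ell$ contains $b$ elements,'' though its algebra implicitly uses $b-a$). A small imprecision in your Step~2: $\min J_1$ covers only $\max I_0$ (since $J_0 = \varnothing$), not two elements, so the global claim that the bottoms of $J_{\ell+1},\ldots,J_{d-1}$ all cover two children is off at $m=0$; this turns out not to matter, because the pair $\#P_{\leq \max I_0}/(\#P_{\leq \min J_1}-1) = a/a = 1$ contributes nothing whether you include or exclude both. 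Finally, both your argument and the paper's silently rely on $a<b$: if $a=b$ then $J_m=\varnothing$ for all $m$, $P^{(d)}$ degenerates to $d$ disjoint chains, $\max I_\ell$ is maximal for every $\ell$, and the stated per-$\ell$ formula $\theta_\ell^{(d)} = a^2\,c_{\ell+1}\cdots c_{d-1}$ overstates the truth $a(a-1)$; the aggregate $\sum_\ell \theta_\ell^{(d)}$ still comes out right, which is why the downstream Corollary~\ref{EX-for-dominant-rectangular-staircase} is unaffected, but strictly speaking Lemma~\ref{clever-lemma} needs $a<b$ (or a remark covering $a=b$ separately).
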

\begin{proof}
For nonmaximal $i$ in $P^{(d)}$, 
the elements of $\alpha(i)$ are $\min J_j$ for
various $j$. Similarly, the elements of $\beta(i)$ are $\max J_j$ or $\max I_j$, for various $j$.  Now observe that
$$
\begin{aligned}
\#P^{(d)}_{\leq \max J_j}&=jb,\\ 
\#P^{(d)}_{\leq \max I_j}&=a, \text{ and}\\ 
\#P^{(d)}_{\leq \min J_j}&=a+(j-1)b + 1.\\ 
\end{aligned}
$$
From this, one can check that for nonmaximal $i$ in $P^{(d)}$,
$$
f^{(d)}(i)= 
c_{\ell+1} c_{\ell+2} \cdots c_{d-1} \cdot 
\left\{
\begin{matrix}
a 
  &\text{ if }i \in I_\ell,\\
\ell b
  &\text{ if }i \in J_\ell
\end{matrix}
\right\}.
$$
For $\ell=0,1,2,\ldots,d-2$, 
the intervals $I_\ell$ and $J_{\ell}$ contain $a$ and $b$ elements, respectively, and all are nonmaximal in $P^{(d)}$.  
On the other hand, all but one element from each of $I_{d-1}$ and $J_{d-1}$ are 
nonmaximal.  Therefore,
$$
\theta_\ell^{(d)}= 
\begin{cases}
c_{\ell+1} c_{\ell+2} \cdots c_{d-1} \cdot a \cdot a + 
 c_{\ell+1} c_{\ell+2} \cdots c_{d-1} \cdot \ell b(b-a)  
  &\text{ if }\ell=0,1,2,\ldots,d-2, \text{ and}\\ 
a\cdot (a-1) + (d-1) b \cdot (b-a-1)  
  &\text{ if } \ell=d-1. 
\end{cases}
$$
This agrees with the formulas given in the statement of the lemma.
\end{proof}

\begin{Corollary}
\label{EX-for-dominant-rectangular-staircase}
For $a,b \geq 1$ and $d \geq 2$, 
and for $w$ the dominant permutation of shape $\lambdanab{d}{a}{b}$, 
$$
\EE(\Xdd_{[e,w]}) 
= \frac{(d-1)ab}{a+b}. 
$$
\end{Corollary}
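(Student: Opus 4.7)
The plan is to substitute the explicit formulas of Lemma~\ref{clever-lemma} into Equation~\eqref{EX-reformulation-via-thetas} and prove by induction on $d \geq 2$ that
\[
\sum_{\ell=0}^{d-1} \theta_\ell^{(d)} \;=\; N(N-1) \;-\; \frac{2N(d-1)ab}{a+b}.
\]
Plugging this into Equation~\eqref{EX-reformulation-via-thetas} immediately yields $\EE(\Xdd_{[e,w]}) = \frac{(d-1)ab}{a+b}$, so everything reduces to the identity above.

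The base case $d=2$ is a direct computation: with $N = a+b$, Lemma~\ref{clever-lemma} gives $\theta_0^{(2)} = a^2 \cdot c_1 = ab$ and $\theta_1^{(2)} = a^2 + b(b-a) - (a+b)$, which sum to $a^2+b^2-a-b = (a+b)(a+b-1) - 2ab$, matching the claim.

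For the inductive step, I would introduce the auxiliary quantity
\[
\Sigma^{(d)} \;:=\; \sum_{\ell=0}^{d-1} \bigl(a^2 + \ell b(b-a)\bigr)\, c_{\ell+1} c_{\ell+2} \cdots c_{d-1},
\]
with the convention that the product is empty (hence $=1$) when $\ell = d-1$. Comparing with Lemma~\ref{clever-lemma} gives $\sum_\ell \theta_\ell^{(d)} = \Sigma^{(d)} - N_d$, so the inductive claim is equivalent to $\Sigma^{(d)} = N_d^2 - \frac{2N_d(d-1)ab}{a+b}$. The key telescoping observation is that the product factor in the $\ell$-th term of $\Sigma^{(d+1)}$ is exactly $c_d$ times the product factor in the $\ell$-th term of $\Sigma^{(d)}$ whenever $\ell \leq d-1$, while the new $\ell = d$ term contributes $a^2 + db(b-a)$ with empty product. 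This yields the clean one-step recurrence
\[
\Sigma^{(d+1)} \;=\; c_d \cdot \Sigma^{(d)} \;+\; \bigl(a^2 + db(b-a)\bigr).
\]

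Substituting the inductive hypothesis along with $c_d = db/N_d$ and $N_{d+1} = N_d + b$, what remains is a polynomial identity in $a,b,d$ that reduces, after clearing the common denominator $a+b$, to the single algebraic identity $2d^2ab^2 - 2d(d-1)ab^2 = 2dab(a+b) - 2dab(a+b)$ (up to regrouping). The main obstacle is not any deep idea but rather the bookkeeping of isolating the right auxiliary sum $\Sigma^{(d)}$ so that the products $c_{\ell+1}\cdots c_{d-1}$ telescope uniformly across all $\ell$; once that formulation is in place, both the recurrence and the inductive verification are elementary.
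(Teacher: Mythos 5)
Your proof is essentially the paper's: both substitute Lemma~\ref{clever-lemma} into Equation~\eqref{EX-reformulation-via-thetas} and induct on $d$, exploiting the fact that the product $c_{\ell+1}\cdots c_{d-1}$ picks up exactly one new factor $c_d$ when passing from $d$ to $d+1$; your $\Sigma^{(d)}$ just repackages the paper's recurrence~\eqref{recursive-clever-lemma} by absorbing the $-N_d$ correction from the $\ell=d-1$ term up front, so that the paper's special correction term $a+(d-2)b$ at $\ell=d-2$ disappears. Two details to fix: you should first reduce to $a\le b$ via Proposition~\ref{trivial-weak-order-isomorphisms}(b) so that $w=w^{(d)}$ and Lemma~\ref{clever-lemma} actually applies; and your closing displayed identity $2d^2ab^2 - 2d(d-1)ab^2 = 2dab(a+b) - 2dab(a+b)$ is miscopied (its right side is identically zero while its left side is $2dab^2$)---the correct identity after clearing the denominator $a+b$ is $-2d(d-1)ab^2 = 2dab(a+b)-2da^2b-2d^2ab^2$, which does hold, so the argument goes through.
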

\begin{proof}
We may assume without loss of generality 
that $a \leq b$ by 
Proposition~\ref{trivial-weak-order-isomorphisms}(b),
and hence $w=w^{(d)}$. 
Set $N:=a+(d-1)b$.  
By Equation~\eqref{EX-reformulation-via-thetas}, it suffices to show that
\begin{equation}
\label{theta-sum-formula}
\frac{1}{N}\sum_{\ell=0}^{d-1} \theta_\ell^{(d)}  
 =  N-1-2(d-1)\frac{ab}{a+b} 
 =  a-1+(d-1)\frac{b(b-a)}{a+b} 
\end{equation}
for $d \ge 1$.  
We show the leftmost and rightmost sides of  
Equation~\eqref{theta-sum-formula} are equal via induction on $d$.   
In the base case $d=1$,  
$$
\frac{1}{N} \sum_{\ell=0}^{d-1} \theta_\ell^{(d)} 
=\frac{1}{a} \theta_0^{(1)}
=\frac{1}{a} (a(a-1)) 
=a-1
=a-1+(d-1)\frac{b(b-a)}{a+b}. 
$$
In the inductive step, we use
the following recursive reformulation of Lemma~\ref{clever-lemma}:
\begin{equation}
\label{recursive-clever-lemma}
\theta_\ell^{(d)}= 
\begin{cases}
  c_{d-1} \theta_\ell^{(d-1)} 
  & \text{ if }\ell=0,1,2,\ldots,d-3,\\ 
 c_{d-1} \left( \theta_\ell^{(d-1)} + a+(d-2)b \right) 
  & \text{ if }\ell=d-2, \text{ and}\\ 
(a^2+(d-1)b(b-a)) - N  
  & \text{ if }\ell=d-1. 
\end{cases}
\end{equation}
Now assume the left and right sides of 
Equation~\eqref{theta-sum-formula} are equal for $d-1$, and use  
Equation~\eqref{recursive-clever-lemma} to compute 
$$
\begin{aligned}
\frac{1}{N}
 \sum_{\ell=0}^{d-1} \theta_\ell^{(d)}  
  &=\frac{c_{d-1}}{N}\left(  
        \sum_{\ell=0}^{d-2} \theta_\ell^{(d-1)} 
        + a+(d-2)b \right) 
       +\frac{1}{N} \left( (a^2+(d-1)b(b-a)) - N \right) \\ 
  &=\frac{c_{d-1}}{N}\left(  
        (a+(d-2)b)\left( a-1+(d-2)\frac{b(b-a)}{a+b} \right) 
        + a+(d-2)b \right)\\  
  &\qquad     +\frac{ (a^2+(d-1)b(b-a)) }{N} - 1 \\ 
  &=a-1+(d-1)\frac{b(b-a)}{a+b}, 
\end{aligned}
$$
via straightforward algebra in the last step.
\end{proof}

Finally we can complete our goal.
\begin{proof}[Proof of Theorem~\ref{main-theorem}.]
Combine 
Propositions~\ref{Y-expectation-for-Young}, \ref{X-expectation-for-Young},
Theorem~\ref{vexillary-theorem}, and
Corollary~\ref{EX-for-dominant-rectangular-staircase}.
\end{proof}

%%%%%%%%%%%%%%%%%%%%%%%%%%%%%%%%%%%%%%%%
\section{Macdonald and Fomin-Kirillov type formulas}
\label{FK-conjecture-section}
%%%%%%%%%%%%%%%%%%%%%%%%%%%%%%%%%%%%%%%%

This section presents a conjecture, Conjecture~\ref{conj:mainone} below,
inspired both by Corollary~\ref{motivating-corollary}
and by an elegant formula of Fomin and Kirillov \cite{Fomin.Kirillov97} which we recall
now.
Let $w_0:=n(n-1) \cdots 21$ be the longest element
of $\symm_n$, which is the dominant permutation
of the staircase shape $\delta_n$,
having $\ell(w_0)=N:=\binom{n}{2}$. 

\begin{Theorem}[{\cite[Theorem~1.1]{Fomin.Kirillov97}}]
\label{Fomin-Kirillov-theorem}
$$
\sum_{(\sigma_{i_1}, \sigma_{i_2}, \ldots ,\sigma_{i_N})} 
(x+i_1)(x+i_2)\cdots(x+i_N)
=N! \prod_{1 \leq i < j \leq n} \frac{2x+i+j-1}{i+j-1},
$$
where the sum runs over all 
$(\sigma_{i_1}, \sigma_{i_2}, \ldots ,\sigma_{i_N})$ in $\Red(w_0)$.  
\end{Theorem}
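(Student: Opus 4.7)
The plan is to follow Fomin and Kirillov's original approach via the nilCoxeter algebra $\mathcal{U}_n$ of $\symm_n$, generated by $u_1,\ldots,u_{n-1}$ modulo $u_i^2=0$, $u_iu_j=u_ju_i$ for $|i-j|\geq 2$, and the braid relation $u_iu_{i+1}u_i=u_{i+1}u_iu_{i+1}$. Setting $u_w:=u_{i_1}\cdots u_{i_{\ell(w)}}$ for any reduced word of $w$ gives a well-defined $\ZZ$-basis $\{u_w\}_{w\in\symm_n}$ of $\mathcal{U}_n$. With the Yang-Baxter factor $h_i(t):=1+tu_i$, the LHS of Theorem~\ref{Fomin-Kirillov-theorem} can be realized as the coefficient of $u_{w_0}$ in an appropriately ordered product of $h_{i_k}(x+i_k)$ factors, each choice of which $u_{i_k}$ to select corresponding to a reduced word of $w_0$.

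The three crucial identities in $\mathcal{U}_n$ are $h_i(s)h_i(t)=h_i(s+t)$ (from $u_i^2=0$), the commutation $h_i(s)h_j(t)=h_j(t)h_i(s)$ for $|i-j|\geq 2$, and the Yang-Baxter braid identity
\[
h_i(s)\,h_{i+1}(s+t)\,h_i(t)=h_{i+1}(t)\,h_i(s+t)\,h_{i+1}(s),
\]
verified by direct expansion together with $u_iu_{i+1}u_i=u_{i+1}u_iu_{i+1}$ and nilpotency. Define the row element $R_k(x):=h_1(x+1)h_2(x+2)\cdots h_{n-k}(x+n-k)$ and the triangular product $\Phi_n(x):=R_1(x)R_2(x+1)\cdots R_{n-1}(x+n-2)$ (or a variant in the same spirit). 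A direct expansion, reading each factor as a choice between $1$ and $(x+i)u_i$, identifies $[u_{w_0}]\Phi_n(x)$ with the LHS $\sum_{(\sigma_{i_1},\ldots,\sigma_{i_N})\in\Red(w_0)}(x+i_1)\cdots(x+i_N)$.

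On the other hand, by iteratively applying the Yang-Baxter identity to commute consecutive $h_i$, $h_{i+1}$, $h_i$ blocks past one another, $\Phi_n(x)$ can be reshuffled into a form whose coefficient of $u_{w_0}$ factors cleanly over the $\binom{n}{2}$ inversions $(i,j)$ of $w_0$, yielding exactly one linear factor $2x+i+j-1$ per inversion. The remaining constants (the prefactor $N!$ and the denominators $\prod_{1\leq i<j\leq n}(i+j-1)$) are pinned down by specializing $x=0$ and invoking the classical Macdonald identity $\sum_{(\sigma_{i_1},\ldots,\sigma_{i_N})\in\Red(w_0)}i_1\cdots i_N=N!$, which makes both sides equal to $N!$ at $x=0$.

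The main obstacle is the Yang-Baxter bookkeeping. Each braid move permutes three consecutive factors and mutates their parameters according to the rule $(s,s+t,t)\mapsto(t,s+t,s)$, and one must exhibit a sequence of such moves carrying $\Phi_n(x)$ into a product factoring over pairs $(i,j)$ with $1\leq i<j\leq n$, with the specific appearance of $2x+i+j-1$ (rather than some other linear combination of $x$, $i$, $j$) accounted for by tracking how the arguments add across moves. A plausible alternative would be induction on $n$: stripping off the outermost $R$-factor and relating $\Phi_n$ to $\Phi_{n-1}$, which reduces the claim to a polynomial identity in $x$ verifiable by comparing degrees and leading coefficients; however, this would lose the conceptual structure encoded by the $\binom{n}{2}$ inversions, which is precisely what makes the RHS product formula transparent in the Yang-Baxter approach.
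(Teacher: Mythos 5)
The paper does not prove this theorem; it is cited directly from Fomin--Kirillov. The nearest the paper comes to a proof is the Remark following Theorem~\ref{thm:anFKformula}, which records the actual route: the $L=N$, $w=w_0$ case of Theorem~\ref{thm:anFKformula} (built on the exponential identity of Lemma~\ref{lemma:exponential}) gives $FK(w_0,N)=N!\cdot\#{\tt SSYT}(\delta_n,(1+x,2+x,\ldots),N)$, and Proctor's product formula for plane partitions of staircase shape with bounded largest part then supplies $\prod_{1\le i<j\le n}\frac{2x+i+j-1}{i+j-1}$. So the route is (i) an exponential/generating-function identity in the nilCoxeter or $0$-Hecke algebra, followed by (ii) a separate enumerative product formula of Proctor.

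Your proposal attempts a direct Yang--Baxter factorization, and there are two genuine gaps. First, the identification of $[u_{w_0}]\Phi_n(x)$ with the left-hand side fails: your $\Phi_n(x)$ is a product of exactly $N=\binom{n}{2}$ factors $h_{i_k}(c_k)$, and since $\ell(w_0)=N$ and $u_i^2=0$, extracting $u_{w_0}$ forces choosing the nontrivial term from \emph{every} factor, so the coefficient is the single monomial $\prod_k c_k$, not a sum over $\Red(w_0)$. For $n=3$ your $\Phi_3(x)=h_1(x+1)h_2(x+2)h_1(x+2)$ gives $(x+1)(x+2)^2$, whereas the LHS is $(x+1)(x+2)(2x+3)$. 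Producing the reduced-word sum from a single algebra element requires the exponential device, i.e.\ working with $e^{t(u_1+2u_2+\cdots+(n-1)u_{n-1})}$ and extracting the coefficient of $t^N/N!$; this is also the only natural source of the factor $N!$ in the theorem, which a bare $N$-fold product cannot produce. Second, the claimed Yang--Baxter reshuffling that makes the $u_{w_0}$ coefficient ``factor cleanly over inversions, yielding one factor $2x+i+j-1$ per inversion'' is not a known argument and is implausible: braid moves preserve the algebra element exactly, so they cannot change the $u_{w_0}$ coefficient from one polynomial to another, and they certainly cannot manufacture the denominators $\prod_{i<j}(i+j-1)$. Both the numerator and denominator of the product are genuinely enumerative content---Proctor's bounded plane-partition count---and do not follow from the relations (I)--(III). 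Normalizing at $x=0$ via Macdonald's $\sum i_1\cdots i_N=N!$ fixes a leading constant but cannot certify that the polynomial is proportional to the claimed product; that proportionality is exactly the nontrivial step you would need Proctor's formula (or an equivalent) to supply.
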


Fomin and Kirillov consider arbitrary shapes as well, and this, too, will inspire the upcoming conjecture.

\begin{Theorem}[{\cite[Theorem~2.1]{Fomin.Kirillov97}}]
\label{Fomin-Kirillov-theorem for all shapes}
Let $\lambda$ be an arbitrary Ferrers shape, and $w$ the unique dominant permutation of shape $\lambda$, with length $\ell := \ell(w)$. Then
$$
\sum_{(\sigma_{i_1}, \sigma_{i_2}, \ldots ,\sigma_{i_{\ell}})} 
(x+i_1)(x+i_2)\cdots(x+i_{\ell})
=\ell! \cdot \#\{\text{plane partitions of shape $\lambda$ with parts $\le x$}\},$$
where the sum runs over all 
$(\sigma_{i_1}, \sigma_{i_2}, \ldots ,\sigma_{i_{\ell}})$ in $\Red(w)$.  
\end{Theorem}

Extracting the coefficient of $x^N$
in Theorem~\ref{Fomin-Kirillov-theorem}
gives Stanley's result \cite{Stanley84} that $\#\Red(w_0)=f^{\delta_n}$,
while setting $x=0$ 
recovers a result of Macdonald \cite[page 91]{Macdonald91}.

To state our conjecture, we define a sum generalizing the left 
side in Theorems~\ref{Fomin-Kirillov-theorem} and~\ref{Fomin-Kirillov-theorem for all shapes}.

\begin{Definition}
For a permutation $w$ and a nonnegative integer $L$,
define a polynomial in $x$ of degree $L$ by
$$
FK(w,L) :=
\sum_{(\sigma_{i_1}, \sigma_{i_2}, \cdots, \sigma_{i_L})}
(x+i_1)(x+i_2)\cdots(x+i_L),
$$
where the sum runs over all $0$-Hecke words 
$(\sigma_{i_1}, \sigma_{i_2}, \ldots, \sigma_{i_L})$ for $w$ of length $L$.
\end{Definition}

In particular, $FK\left(w_0, \binom{n}{2}\right)$ is the sum 
in Theorem~\ref{Fomin-Kirillov-theorem}. 

\begin{Conjecture}
\label{conj:mainone}
Let $w$ be the dominant permutation of rectangular
staircase shape $\lambda=\delta_d(b^a)$.
Then for $\ell:=\ell(w)=|\lambda|=\binom{d}{2}ab$, the
polynomial $FK(w,\ell)$ divides $FK(w,\ell+1)$ in $\QQ[x]$, with quotient

$$
\frac{FK(w,\ell+1)}{FK(w,\ell)}
= \binom{ \ell+1 }{2}\left( \frac{ 4x }{d(a+b)} + 1\right). 
$$
\end{Conjecture}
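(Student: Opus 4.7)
The strategy is to establish product formulas for both $FK(w,\ell)$ and $FK(w,\ell+1)$ when $w$ is the dominant permutation of rectangular staircase shape $\lambda=\lambdanab{d}{a}{b}$, and then divide. The starting point is the extension of Fomin-Kirillov (Theorem~\ref{thm:anFKformula}), proved in Section~7: I expect that this specializes to give an explicit factorization of $FK(w,\ell)$ here, generalizing the case $a=b=1$ in which $w=w_0$ and Theorem~\ref{Fomin-Kirillov-theorem} provides the closed form. For the nearly-reduced side, I would use the insertion bijection described just before Corollary~\ref{Y-expectation-as-word-counts}, which identifies $\Red^{(+1)}(w)$ with triples $(\sss,i,\sigma_j)$ where $\sss = (\sigma_{i_1},\ldots,\sigma_{i_\ell}) \in \Red(w)$ and $\sigma_j\in\Des(\sigma_{i_1} \cdots \sigma_{i_i})$, to rewrite
$$
FK(w,\ell+1) \;=\; \sum_{\sss \in \Red(w)} \prod_{k=1}^{\ell}(x+i_k) \cdot \sum_{i=0}^{\ell} \sum_{\sigma_j \in \Des(\sigma_{i_1} \cdots \sigma_{i_i})} (x+j).
$$
The task then becomes to show that, after summing over reduced words weighted by $\prod_k(x+i_k)$, the inner double sum produces exactly $\binom{\ell+1}{2}(1+4x/(d(a+b)))$ times $FK(w,\ell)$.

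\textbf{Extremal coefficient checks.} Two coefficients can be verified directly. The leading coefficient in $x$ asserts $\#\Red^{(+1)}(w)/\#\Red(w) = 2\binom{\ell+1}{2}/(d(a+b))$. Since $w$ is vexillary of shape $\lambda$, Lemma~\ref{vexillary-lemma} gives $\#\Red(w)=f^\lambda$ and $\#\Red^{(+1)}(w)=f^\lambda(+1)$, so this check follows from Corollary~\ref{motivating-corollary} after the elementary algebraic simplification using $\ell=\binom{d}{2}ab=|\lambda|$. Meanwhile, the constant term at $x=0$ reduces to the Macdonald-type identity
$$
\sum_{\sss^+\in\Red^{(+1)}(w)}\prod_{k=1}^{\ell+1} i^+_k \;=\; \binom{\ell+1}{2}\sum_{\sss\in\Red(w)} \prod_{k=1}^\ell i_k,
$$
whose right-hand side equals $\binom{\ell+1}{2}\ell!\,{\mathfrak S}_w(1,1,\ldots)=\binom{\ell+1}{2}\ell!$ by Macdonald's classical formula together with ${\mathfrak S}_w=x^\lambda$ for dominant $w$; the left-hand side should admit a parallel evaluation through the dominant-case product formula for ${\mathfrak G}^{(-1)}_w$ restricted to its nearly-reduced degree.

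\textbf{Main obstacle.} The hardest step is upgrading these coefficient checks into the full polynomial identity: matching only the constant and leading terms leaves the $\ell$ middle coefficients of the degree-$(\ell+1)$ identity untested. Thus one must prove divisibility of $FK(w,\ell+1)$ by $FK(w,\ell)$ in $\QQ[x]$ structurally; once divisibility is secured, the quotient is forced to be of degree one, and the two extremal coefficients pin it down as the claimed $\binom{\ell+1}{2}(4x/(d(a+b))+1)$. I expect the cleanest route is to extend the braided-algebra approach underlying Section~7 to accommodate a single extra repeated generator, using the uncrowding bijection of Proposition~\ref{crowding-proposition} (suitably $x$-weighted) as the combinatorial device that converts a nearly reduced word into a reduced word together with bounded auxiliary data. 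Handling the broken commutation produced by the doubled letter is where I anticipate most of the work will lie.
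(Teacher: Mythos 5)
The statement you are asked to prove is Conjecture~\ref{conj:mainone}, which the paper explicitly leaves open for general $d$: the paper does not prove it. Its supporting evidence consists of (i) the observation that the conjecture would imply Corollary~\ref{motivating-corollary} by comparing leading coefficients, (ii) a proof that the conjecture is equivalent to the tableau reformulation Conjecture~\ref{conj:mainone}$^{\prime}$, via Theorem~\ref{thm:anFKformula}, and (iii) a verification of the $d=2$ (rectangle) case through an uncrowding argument and the hook-content formula. Your proposal likewise does not constitute a proof, and you candidly say so: the central step --- establishing that $FK(w,\ell)$ divides $FK(w,\ell+1)$ in $\QQ[x]$, equivalently controlling the $\ell$ middle coefficients of the degree-$(\ell+1)$ identity and not just the two extremes --- is deferred to an unspecified extension of the braided-algebra machinery of Section~7. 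That is exactly the gap that keeps the statement a conjecture in the paper, and nothing in your outline closes it. Your rewriting of $FK(w,\ell+1)$ via the insertion bijection preceding Corollary~\ref{Y-expectation-as-word-counts} is correct and is a reasonable combinatorial starting point, consonant with the paper's tableau reformulation, but the ``doubled-letter'' commutation obstruction you flag is genuinely open.

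One arithmetic slip in your extremal check: comparing coefficients of $x^{\ell+1}$ on both sides of the conjectured identity gives
$$\frac{\#\Red^{(+1)}(w)}{\#\Red(w)}=\binom{\ell+1}{2}\cdot\frac{4}{d(a+b)}=\frac{2\ell(\ell+1)}{d(a+b)},$$
not $\frac{2\binom{\ell+1}{2}}{d(a+b)}=\frac{\ell(\ell+1)}{d(a+b)}$ as you wrote; you are off by a factor of $2$. The comparison with Corollary~\ref{motivating-corollary} still succeeds once this is corrected, since $(\ell+1)\frac{(d-1)ab}{a+b}=\frac{2\ell(\ell+1)}{d(a+b)}$ follows from $\ell=\binom{d}{2}ab$. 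The constant-term ($x=0$) check you sketch via Macdonald's formula is also only heuristic as stated: the evaluation of the nearly-reduced side ``should admit a parallel evaluation'' is an expectation, not an argument, and would itself require something like the $d=2$ hook-content computation or a new identity for the dominant case with $L=\ell+1$.
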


\begin{Example}
If $d=3,a=b=1$, 
then $\lambda=(2,1)$ with $\ell=|\lambda|=3$ and $w=321$.
Using the two reduced words and eight nearly reduced words for $w$ computed in 
Example~\ref{ex:long element in s3}, one finds that
$$
\begin{aligned}
FK(w,3)&=(x+1)(x+2)(x+1)+(x+2)(x+1)(x+2)\\
&=(x+1)(x+2)(2x+3),\\
& \\
FK(w,4)&=2(x+1)^3(x+2)+4(x+1)^2(x+2)^2+2(x+1)(x+2)^2\\
&=2(x+1)(x+2)(2x+3)^2,
\end{aligned}
$$
so that $FK(w,3)$ does divide $FK(w,4)$, with quotient
\[
\frac{FK(w,4)}{FK(w,3)}
=2(2x+3)=4x+6.
\] 
This agrees with Conjecture~\ref{conj:mainone}, which predicts
$$
\frac{FK(w,4)}{FK(w,3)}= \binom{4}{2}\left( \frac{4x}{3(1+1)}+1\right)=4x+6.
$$
\end{Example}

The following relation was one of our motivations for Conjecture~\ref{conj:mainone},
and provides some evidence for it.

\begin{Proposition}
Conjecture~\ref{conj:mainone} would imply 
Corollary~\ref{motivating-corollary}.
\end{Proposition}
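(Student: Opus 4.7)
The plan is to compare leading coefficients in $x$ on both sides of the identity in Conjecture~\ref{conj:mainone}, after first translating the statement of Corollary~\ref{motivating-corollary} into a statement about these coefficients.

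First I would identify the leading coefficient of $FK(w,L)$. Since each summand $(x+i_1)\cdots(x+i_L)$ is monic of degree $L$ in $x$, the coefficient $[x^L]\,FK(w,L)$ is just the number of $0$-Hecke words of length $L$ for $w$. In particular,
\[
[x^{\ell}]\,FK(w,\ell) = \#\Red(w), \qquad
[x^{\ell+1}]\,FK(w,\ell+1) = \#\Red^{(+1)}(w).
\]
Because $w$ is dominant and hence vexillary of shape $\lambda$, Lemma~\ref{vexillary-lemma} together with the squarefree coefficient identifications~\eqref{squarefree-coefficients} give $\#\Red(w)=f^\lambda$ and $\#\Red^{(+1)}(w)=f^\lambda(+1)$; by Proposition~\ref{tableau-bijections}(b), this last quantity is the number of barely set-valued tableaux of shape $\lambda$. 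Thus the target formula in Corollary~\ref{motivating-corollary} is equivalent to the numerical identity
\[
[x^{\ell+1}]\,FK(w,\ell+1) = (|\lambda|+1)\,\frac{(d-1)ab}{a+b}\,[x^{\ell}]\,FK(w,\ell).
\]

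Next I would extract the coefficient of $x^{\ell+1}$ from the right-hand side of Conjecture~\ref{conj:mainone}. Only the $x$-term of the linear factor $\binom{\ell+1}{2}\bigl(\tfrac{4x}{d(a+b)}+1\bigr)$ contributes in top degree, so the conjectured identity forces
\[
f^\lambda(+1) \;=\; f^\lambda \cdot \binom{\ell+1}{2}\cdot\frac{4}{d(a+b)}.
\]

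The final step is pure algebra, using $\ell = |\lambda| = \binom{d}{2}ab = \tfrac{d(d-1)ab}{2}$:
\[
\binom{\ell+1}{2}\cdot\frac{4}{d(a+b)}
\;=\; \frac{2\,\ell(\ell+1)}{d(a+b)}
\;=\; \frac{d(d-1)ab\,(\ell+1)}{d(a+b)}
\;=\; (|\lambda|+1)\cdot\frac{(d-1)ab}{a+b},
\]
which produces Corollary~\ref{motivating-corollary} exactly. There is no real obstacle in this implication: the content is entirely the leading-coefficient extraction and the small cancellation above, with Lemma~\ref{vexillary-lemma} doing the heavy lifting of converting counts of $0$-Hecke words for $w$ into counts of (barely set-valued) tableaux of shape $\lambda$. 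The hard part of the story remains Conjecture~\ref{conj:mainone} itself; assuming it, Corollary~\ref{motivating-corollary} is free.
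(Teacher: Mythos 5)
Your argument is essentially identical to the paper's: both extract the leading $x$-coefficient on each side of Conjecture~\ref{conj:mainone}, identify it via Lemma~\ref{vexillary-lemma} with $f^\lambda$ and $f^\lambda(+1)$, and reduce to the algebraic simplification using $\ell=\binom{d}{2}ab$. The only difference is that you spell out the last cancellation, which the paper leaves to the reader.
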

\begin{proof}
For any permutation $w$ and any $L$,
the (leading) coefficient $c_L$ on $x^L$ in $FK(w,L)$
counts the number of $0$-Hecke words for $w$ of length $L$.
Therefore whenever $w$ is vexillary of shape $\lambda$ and $\ell=|\lambda|$, 
Lemma~\ref{vexillary-lemma} implies 
$c_{\ell}=f^{\lambda}$ and $c_{\ell+1}=f^{\lambda}(+1)$.

On the other hand, for $w$ dominant
of rectangular staircase shape $\lambda=\delta_d(b^a)$
and $\ell=|\lambda|$,
Conjecture~\ref{conj:mainone} would imply that 
$FK(w,\ell+1)/FK(w,\ell)$ is a linear polynomial $rx+s$
whose leading coefficient $r$ equals 
\[
\binom{\ell+1}{2}\frac{4}{d(a+b)}=\frac{c_{\ell+1}}{c_\ell}=\frac{f^{\lambda}(+1)}{f^{\lambda}}.
\]
This is equivalent to the assertion of Corollary~\ref{motivating-corollary}, using $\ell=\binom{d}{2}ab$.
\end{proof}

As further evidence in support of Conjecture~\ref{conj:mainone},
we will eventually verify it in the case $d=2$; that is for rectangular shapes $\lambda=b^a$.
In fact, it will turn out to be more convenient to work with an equivalent tableau version of the conjecture, whose statement requires some additional notation.

\begin{Definition}
For a  flag $\varphi=(\varphi_1,\varphi_2,\ldots)$, denote by ${\tt CST}(\lambda,\varphi, j)$ the collection of all
column-strict set-valued tableaux of shape $\lambda$ that are flagged by
$\varphi$, and whose total number of entries 
is $j$ (in other words, $j=\sum_{y} \# T(y)$ where $y$ runs through the cells of $\lambda$).
In particular, ${\tt CST}(\lambda,\varphi, j)$ is empty unless $j \ge |\lambda|$.
\end{Definition}

The equivalent tableau version of Conjecture~\ref{conj:mainone} is the following.

\vskip.1in
\noindent
{\bf Conjecture~\ref{conj:mainone}$^{\prime}$.}
\emph{
Let $w$ be the dominant permutation of rectangular
staircase shape $\lambda=\delta_d(b^a)$.
Then for $\ell:=\ell(w)=|\lambda|=\binom{d}{2}ab$, and for any positive integer $x$, the flag $\varphi=(1,2,3,\ldots)+(x,x,x,\ldots)$ produces
\begin{equation*}
\frac{\#{\tt CST}(\lambda,\varphi, \ell+1)}
{\#{\tt CST}(\lambda,\varphi,\ell)}=\frac{2\ell \cdot x}{d(a+b)}. 
\end{equation*}
}
\vskip.1in
 
The equivalence of Conjectures \ref{conj:mainone} and \ref{conj:mainone}$^{\prime}$ will follow 
from the next theorem, proven in Section~\ref{FKformula-proof-section},
combining ideas of \cite{Fomin.Stanley, Fomin.Kirillov, Fomin.Kirillov:YB, Fomin.Kirillov97} 
with Equation~\eqref{lemma:KMYconsequence}. 
To state it, recall that the \emph{Stirling number of the second
kind}  $S(L,j)$ counts partitions of $\{1,2,\ldots,L\}$ into $j$ blocks.

\begin{Theorem}
\label{thm:anFKformula}
For vexillary $w$ of shape $\lambda$ and positive integer $x$,
the flag $\varphi:=\varphi(w)+(x,x,x,\ldots)$ satisfies
\begin{equation}
\label{eqn:anFKformula}
FK(w,L)=\sum_{j=|\lambda|}^L \#{\tt CST}(\lambda,\varphi,j) \cdot j! S(L,j).
\end{equation}
\end{Theorem}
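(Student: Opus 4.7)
The plan is to establish Theorem~\ref{thm:anFKformula} by comparing two expressions for the $\beta$-Grothendieck polynomial of the stabilized permutation $v := 1^x \times w$, and then translating between the resulting $\beta$-expansion and the $L$-expansion via Stirling numbers.

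First, I would apply Theorem~\ref{thm:theKMY} to $v = 1^x \times w$. Using Equation~\eqref{flag-stabilization}, its flag is $\varphi(v) = \varphi(w) + (x, x, \ldots) = \varphi$, and a standard rescaling $x_i \mapsto -\beta x_i$ of the compatible-sequences Definition~\ref{Schub-Grothendieck-definitions} converts the signed $\beta = -1$ formula into the unsigned general-$\beta$ version
$${\mathfrak G}^{(\beta)}_v(x_1, x_2, \ldots) = \sum_T \beta^{|T| - \ell(w)}\,\xxx^T,$$
summed over set-valued tableaux $T$ of shape $\lambda$ flagged by $\varphi$. Setting all $x_i = 1$ gives
$${\mathfrak G}^{(\beta)}_v(1, 1, \ldots) = \sum_{j \geq |\lambda|} \beta^{j - \ell(w)} \cdot \#{\tt SSYT}(\lambda, \varphi, j). \qquad (\star)$$

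Next, the natural shift $(\sigma_{i_1}, \ldots, \sigma_{i_L}) \mapsto (\sigma_{i_1+x}, \ldots, \sigma_{i_L+x})$ gives a bijection between 0-Hecke words for $w$ and 0-Hecke words for $v$ (a short argument via Proposition~\ref{left-factor-right-weak-corollary} shows that 0-Hecke words for $v$ cannot use generators outside $\{T_{x+1}, \ldots, T_{x+n-1}\}$), so $FK(w, L) = \sum \prod a_k$, summed over 0-Hecke words $(a_k)$ of length $L$ for $v$. I would then aim to prove the exponential-generating-function identity
$$\sum_{L \geq 0} \frac{u^L}{L!}\,FK(w, L) = (e^u - 1)^{\ell(w)} \cdot {\mathfrak G}^{(e^u - 1)}_v(1, 1, \ldots), \qquad (\dagger)$$
which, combined with $(\star)$ specialized at $\beta = e^u - 1$ and expanded via the Stirling identity $(e^u - 1)^j = j! \sum_{L \geq j} S(L, j)\,u^L/L!$, reduces to Theorem~\ref{thm:anFKformula} upon matching coefficients of $u^L/L!$.

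The main obstacle is proving $(\dagger)$. I plan to work inside the $0$-Hecke algebra $\HHH(0)$ with the formal element $\omega(x) := \sum_i (x + i) T_i$: its formal exponential $\exp(u\omega(x)) = \sum_L (u^L / L!)\,\omega(x)^L$ has $T_w$-coefficient equal to $\sum_L (u^L/L!)\,FK(w, L)$, while the right-hand side of $(\dagger)$ should be computed via a Yang-Baxter reorganization of an ordered product of factors $h_i(t) := 1 + t T_i$, exploiting the multiplicative law $h_i(s)\,h_i(t) = h_i(s + t + st)$ together with the cubic braid relation used in \cite{Fomin.Kirillov97, Fomin.Kirillov:YB}. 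The commutative-variable heuristic $\exp(u \sum_i (x + i) T_i) = \prod_i (1 + (e^{u(x+i)} - 1) T_i)$, valid formally because $T_i^2 = T_i$, indicates the correct parameter substitution. The technical crux is then organizing this noncommutative product in the diagonal reading order associated to $\lambda$ so that, after iterated Yang-Baxter reordering, its $T_w$-coefficient telescopes into the compatible-sequences expansion of $(e^u - 1)^{\ell(w)}\,{\mathfrak G}^{(e^u - 1)}_v(1, 1, \ldots)$.
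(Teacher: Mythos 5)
Your proposal follows essentially the same route as the paper. The identity $(\dagger)$, after the elementary scaling relation $\mathfrak{G}^{(1)}_v(\beta,\ldots,\beta)=\beta^{\ell(w)}\mathfrak{G}^{(\beta)}_v(1,\ldots,1)$, is precisely Equation~\eqref{general-FK-expression}, which the paper derives from Lemma~\ref{lemma:exponential}; the remaining steps (Theorem~\ref{thm:theKMY} applied to $1^x\times w$ via Equation~\eqref{flag-stabilization}, followed by the Stirling-number exponential generating function) are exactly what the paper does. The one place to be careful is your sketch of $(\dagger)$: the Fomin--Stanley argument does not directly ``telescope'' the exponential into the compatible-sequences sum in a $\lambda$-dependent reading order (the ordered product $A_1(t)\cdots A_{n-1}(t)$ is a universal, $\lambda$-independent element of $R\otimes_\ZZ\QQ[[t]]$); rather, one proves the convolution law $\mathfrak{G}(t,\ldots,t)\,\mathfrak{G}(s,\ldots,s)=\mathfrak{G}(t+s,\ldots,t+s)$ via the chain of Yang--Baxter lemmas and then differentiates at $t=0$ to identify $\mathfrak{G}(t,\ldots,t)$ with $e^{tF}$ for $F=T_1+2T_2+\cdots+(n-1)T_{n-1}$. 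Your phrase ``valid formally because $T_i^2=T_i$'' also deserves caution: idempotence only converts $\exp(ucT_i)$ into $1+(e^{uc}-1)T_i$ one generator at a time, and it is exactly the noncommutativity among distinct generators, tamed by Yang--Baxter, that makes Lemma~\ref{lemma:exponential} a theorem rather than an observation.
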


\begin{Remark}
Theorem~\ref{thm:anFKformula} is an extension of Theorem~\ref{Fomin-Kirillov-theorem} in the following sense.
If $w=w_0$ then $\lambda=\delta_n$. Setting $L=|\lambda|={n\choose 2}=N$ in Equation~\eqref{eqn:anFKformula} gives
\[FK(w_0,N)=\#{\tt CST}(\delta_n,(1+x,2+x,\ldots))\cdot N!.\]
Theorem~\ref{Fomin-Kirillov-theorem} is then derived from this in \cite{Fomin.Kirillov97} by an application of a formula
of Proctor \cite{Proctor:unpublished}, for plane partitions of staircase shape with largest bounded part, which implies
\[\#{\tt CST}(\delta_n,(1+x,2+x,\ldots)=\prod_{1 \leq i < j \leq n} \frac{2x+i+j-1}{i+j-1}.\]
In contrast, for arbitrary $j$, even when $w=w_0$, we know of no 
such product formulas for $\#{\tt CST}(\lambda,\varphi,j)$.
\end{Remark}

\begin{Remark}
Before proving that Conjectures~\ref{conj:mainone} and~\ref{conj:mainone}$^{\,\prime}$ are equivalent, we must observe that the former does not depend on $x$ being a positive integer, while the latter seems to make this assumption. In fact, the accurate relationship is that Conjecture~\ref{conj:mainone}$^{\,\prime}$ implies that $x \in \mathbb{Z}^+$: first note that replacing the flag $\varphi=(1,2,3,\ldots)+(x,x,x,\ldots)$ by the flag $\varphi'=(1,2,3,\ldots)+(\lfloor x \rfloor,\lfloor x \rfloor,\lfloor x \rfloor,\ldots)$ would not change the collection of permitted column-strict set-valued tableaux, so we may assume that $x \in \mathbb{Z}$; second, if $x < 0$, then $\varphi$ is not a valid flag, and if $x = 0$, then there are no barely set-valued tableaux filings flagged by $\varphi$; therefore, $x \in \mathbb{Z}^+$.
\end{Remark}

Let us assume the validity of Theorem~\ref{thm:anFKformula} for the moment, and check the following.

\begin{Corollary}
Conjectures~~\ref{conj:mainone} and ~\ref{conj:mainone}$^{\,\prime}$ are equivalent.
\end{Corollary}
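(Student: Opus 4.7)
The plan is to derive the equivalence as an immediate consequence of Theorem~\ref{thm:anFKformula} specialized to $L=\ell$ and $L=\ell+1$. For $w$ dominant of shape $\lambda=\delta_d(b^a)$, the flag is $\varphi(w)=(1,2,3,\ldots)$, so that $\varphi=(1+x,2+x,\ldots)$. Since $\#{\tt SSYT}(\lambda,\varphi,j)$ vanishes for $j<|\lambda|=\ell$, and since the Stirling numbers satisfy $S(\ell,\ell)=1$, $S(\ell+1,\ell+1)=1$, $S(\ell+1,\ell)=\binom{\ell+1}{2}$, Theorem~\ref{thm:anFKformula} collapses to
$$
FK(w,\ell)=\ell!\cdot\#{\tt SSYT}(\lambda,\varphi,\ell),
$$
$$
FK(w,\ell+1)=\binom{\ell+1}{2}\ell!\cdot\#{\tt SSYT}(\lambda,\varphi,\ell)+(\ell+1)!\cdot\#{\tt SSYT}(\lambda,\varphi,\ell+1).
$$

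Dividing and using $\binom{\ell+1}{2}=\ell(\ell+1)/2$, I would rewrite the quotient as
$$
\frac{FK(w,\ell+1)}{FK(w,\ell)}=\binom{\ell+1}{2}+(\ell+1)\cdot\frac{\#{\tt SSYT}(\lambda,\varphi,\ell+1)}{\#{\tt SSYT}(\lambda,\varphi,\ell)}.
$$
Conjecture~\ref{conj:mainone} says this equals $\binom{\ell+1}{2}\bigl(1+\tfrac{4x}{d(a+b)}\bigr)$; subtracting $\binom{\ell+1}{2}$ from both sides and dividing by $\ell+1$ turns it precisely into the tableau ratio identity
$$
\frac{\#{\tt SSYT}(\lambda,\varphi,\ell+1)}{\#{\tt SSYT}(\lambda,\varphi,\ell)}=\frac{2\ell x}{d(a+b)},
$$
which is exactly Conjecture~\ref{conj:mainone}$^{\,\prime}$.

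The only point requiring extra care is that Conjecture~\ref{conj:mainone} is an equality in $\QQ[x]$ (including a divisibility assertion), whereas Conjecture~\ref{conj:mainone}$^{\,\prime}$ is stated pointwise for positive integers $x$. For the forward direction, specializing the polynomial identity at each positive integer $x$ yields the numerical identity directly. For the reverse direction, both $FK(w,L)$ and $\#{\tt SSYT}(\lambda,\varphi_x,j)$ are polynomials in $x$ (the latter by a standard argument: for $x$ large enough, flagged set-valued tableaux of shape $\lambda$ with $j$ total entries are counted by a polynomial in the flag entries, hence in $x$), so agreement at infinitely many integer values upgrades to a polynomial identity, and the asserted divisibility then reduces to the observation that the stated quotient is an honest element of $\QQ[x]$. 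There is no substantive obstacle here; the argument is a bookkeeping exercise in Stirling-number identities, and the real content lies entirely in Theorem~\ref{thm:anFKformula}.
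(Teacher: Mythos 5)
Your proposal matches the paper's proof in its essential structure: both specialize Theorem~\ref{thm:anFKformula} to $L=\ell$ and $L=\ell+1$, use the same Stirling numbers $S(\ell,\ell)=S(\ell+1,\ell+1)=1$ and $S(\ell+1,\ell)=\binom{\ell+1}{2}$, and then rearrange the quotient $FK(w,\ell+1)/FK(w,\ell)=\binom{\ell+1}{2}+(\ell+1)\rho$ to see that Conjecture~\ref{conj:mainone} and Conjecture~\ref{conj:mainone}$^{\prime}$ impose the same condition on $\rho$. The one extra step you supply — explicitly upgrading the pointwise-at-positive-integers identity to a polynomial identity in $\QQ[x]$ (using that both $FK(w,L)$ and the flagged set-valued tableau counts are polynomial in $x$) — is a reasonable bit of care that the paper leaves implicit, but it does not change the route.
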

\begin{proof}
Let $w$ be a dominant permutation of shape $\lambda$, and set $\ell:=|\lambda|=\ell(w)$.
Dominant permutations $w$ always have flag $\varphi(w)=(1,2,3,\ldots)$, which explains
the choice $\varphi:=(1,2,3,\ldots)+(x,x,x,\ldots)$.  Now applying
Theorem~\ref{thm:anFKformula} twice, with $L=\ell,$ and $L=\ell+1$, and using the facts
that $S(\ell,\ell)=1=S(\ell+1,\ell+1)$ and $S(\ell+1,\ell)=\binom{\ell+1}{2}$, one obtains
\[
\frac{FK(w,\ell+1)}{FK(w,\ell)}
 =\frac{\ell! \cdot {\ell+1\choose 2}\cdot \#{\tt CST}(\lambda,\varphi,\ell)
      \quad + \quad 
     (\ell+1)! \cdot \#{\tt CST}(\lambda,\varphi, \ell+1) }
  {\ell! \cdot \#{\tt CST}(\lambda,\varphi,\ell) }
 =\binom{\ell+1}{2} + (\ell+1) \cdot \rho 
\]
where $\rho:=\#{\tt CST}(\lambda,\varphi, \ell+1) /{ \#{\tt CST}(\lambda,\varphi, \ell) }$ is an abbreviation for the ratio that appears in Conjecture~\ref{conj:mainone}$^{\prime}$.
Thus, Conjecture~\ref{conj:mainone}  
holds if and only if
$$
\frac{FK(w,\ell+1)}{FK(w,\ell)}
=\binom{\ell+1}{2} + (\ell+1) \rho
=\binom{\ell+1}{2}\left( \frac{4x}{d(a+b)} +1 \right).
$$
Upon division by $\binom{\ell+1}{2}$, this assertion
is equivalent to
$$
1+\frac{2}{\ell} \rho =\frac{4x}{d(a+b)}+1,
 \quad \text{ and also }  \quad
\rho  = \frac{2\ell x}{d(a+b)},
$$
which is exactly Conjecture~\ref{conj:mainone}$^\prime$.
\end{proof}

\begin{Corollary}
Conjectures~\ref{conj:mainone} and ~\ref{conj:mainone}$^{\,\prime}$
hold when $d=2$; that is, for dominant $w$ of rectangle shape $\lambda=b^a$.
\end{Corollary}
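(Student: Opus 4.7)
The plan is to work inside the tableau reformulation Conjecture~\ref{conj:mainone}$^{\prime}$, which for $d=2$ asks us to show
\[
\frac{\#{\tt SSYT}(b^a,\varphi,ab+1)}{\#{\tt SSYT}(b^a,\varphi,ab)}=\frac{abx}{a+b}
\]
for the flag $\varphi=(1+x,2+x,\ldots,a+x)$. The first observation is that, because $\lambda=b^a$ is rectangular, column strictness alone forces every entry to satisfy $\max T(i,j)\leq \max T(a,j)-(a-i)\leq (a+x)-(a-i)=i+x$, so the flag condition on any (set-valued) column-strict filling is equivalent to the single condition that all entries lie in $[1,a+x]$. Consequently, $\#{\tt SSYT}(b^a,\varphi,ab)=s_{b^a}(1^{a+x})$, the number of column-strict tableaux of shape $b^a$ with entries from $\{1,\ldots,a+x\}$.

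Next I would compute $\#{\tt SSYT}(b^a,\varphi,ab+1)$ via the uncrowding bijection of Proposition~\ref{crowding-proposition}. The outside corners of $b^a$ are $(1,b+1)$ and $(a+1,1)$. The first contributes nothing because it requires $i_0\in\{1,\ldots,0\}=\varnothing$, whereas the second supplies the multiplicative factor $a$ from $i_0\in\{1,\ldots,a\}$. Setting $\mu:=b^a\cup\{(a+1,1)\}$, the bijection thus identifies $\#{\tt SSYT}(b^a,\varphi,ab+1)$ with $a$ times the number of column-strict tableaux $T^+$ of shape $\mu$ satisfying the appropriate flag.

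The main technical point---which I expect to be the primary obstacle---is to verify that the bijection correctly matches the flag $\varphi$ on $T$ with the entry bound $a+x$ on $T^+$. In the forward direction the bumped value starts at $b_0\leq i_0+x$, and as the bump cascades downward each bumped value $u_k$ was a previous entry of row $k$ and hence satisfies $u_k\leq k+x$, forcing the final bumped value placed in $(a+1,1)$ to be $\leq a+x$. In the reverse direction, beginning from $v_{a+1}=T^+(a+1,1)\leq a+x$, column strictness of $T^+$ produces a strictly decreasing chain $v_{a+1}>v_a>\cdots>v_{i_0+1}$ with the inductive bound $v_k\leq (k-1)+x$, ensuring that the extra entry deposited in row $i_0$ respects $\varphi_{i_0}=i_0+x$. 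Combined with the rectangular column-strictness argument from the first step, these checks show that the relevant $T^+$'s are exactly the column-strict tableaux of shape $\mu$ with entries in $[1,a+x]$, giving
\[
\#{\tt SSYT}(b^a,\varphi,ab+1)=a\cdot s_\mu(1^{a+x}).
\]

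Finally I would compute $s_\mu(1^{a+x})/s_{b^a}(1^{a+x})$ via the hook-content formula. Comparing $\mu$ to $b^a$: the new cell $(a+1,1)$ contributes a content factor of $(a+x)+1-(a+1)=x$ and a hook factor of $1$; hook lengths in columns $2,\ldots,b$ are unchanged; and each column-$1$ hook length $a+b-i$ in $b^a$ becomes $a+b+1-i$ in $\mu$. The product telescopes to
\[
\frac{s_\mu(1^{a+x})}{s_{b^a}(1^{a+x})}=x\cdot\prod_{i=1}^{a}\frac{a+b-i}{a+b+1-i}=\frac{bx}{a+b},
\]
and multiplying by the factor $a$ from the uncrowding step produces the desired ratio $abx/(a+b)$, completing the $d=2$ case of Conjecture~\ref{conj:mainone}$^{\prime}$ and therefore of Conjecture~\ref{conj:mainone}.
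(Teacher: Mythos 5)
Your proposal is correct and follows essentially the same route as the paper: reduce the flag on a rectangle to a single upper bound using column-strictness, apply the uncrowding bijection to identify $\#{\tt SSYT}(b^a,\varphi,ab+1)$ with $a\cdot\#\CST{(b^a,1)}{x+a}$, and finish with a telescoping hook-content calculation. The only difference is that you spell out the verification that uncrowding preserves the flag, which the paper leaves implicit.
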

\begin{proof}
It will be most convenient to work with Conjecture~\ref{conj:mainone}$^{\prime}$.
Our strategy will once again use the uncrowding map  in Definition~\ref{uncrowding-definition}
to convert barely set-valued tableaux to ordinary tableaux.

 For a partition $\lambda$, let
 $\CST{\lambda}{t}$ denote the set of column-strict (ordinary) tableaux of shape $\lambda$ with {all}
entries in the range $\{1,2,\ldots,t\}$. Note that column-strictness implies for rectangular shapes $b^a$ that
\[
{\tt CST}(b^a,\varphi, ab)=\CST{b^a}{x+a},
\]
where $\varphi=(1,2,3,\ldots,a)+(x,x,x,\ldots,x)$.

On the other hand, consider the  restriction of the uncrowding map to the domain
${\tt CST}((b^a),\varphi, ab+1)$. Since the rectangle $b^a$ has only outer corner cell in a row below $1$,
namely in row $a+1$,
the result always has shape $(b^a,1)$.  Thus, one obtains a bijection
$$
\begin{array}{rcl}
{\tt CST}(b^a,\varphi, ab+1) &\longrightarrow& \CST{(b^a,1)}{x+a} \times \{1,2,\ldots,a\},\\
T& \longmapsto &(T^+,i_0)
\end{array}
$$
which shows $\#{\tt CST}(b^a,\varphi, ab+1) = a \cdot \# \CST{(b^a,1)}{x+a}$, and hence
\begin{equation}
\label{ratio-of-CST-counts}
\frac{\#{\tt CST}(b^a,\varphi, \ell+1)}
      {\#{\tt CST}(b^a,\varphi,\ell)}
=\frac{ a \cdot \# \CST{(b^a,1)}{x+a} }
         {  \# \CST{b^a}{x+a} }.
\end{equation}
The numerator and denominator here are calculable via
the \emph{hook-content formula} (\cite[Theorem 15.3]{Stanley71})
$$
\#\CST{\lambda}{t}
=\prod_{y}\frac{t+c(y)}{h(y)}
$$
where the product runs over the cells $y=(i,j)$ of $\lambda$, with
$c(y)=j-i$ its \emph{content}, and 
$h(y)=\lambda_i +\lambda^{t}_{j}-(i+j)+1$
its \emph{hook-length}.
Here are the relevant values of $t+c(y)=(x+a)+c(y)$ and
$h(y)$ for cells $y$ of $(b^a,1)$:
\[
\begin{array}{|c|c|c|c|c|}
\hline
x+a & x+a+1 & x+a+2 & \cdots & x+a+b-1\\
\hline
\!\!x+a-1\!\! & x+a & x+a+1 & \cdots & x+a+b-2\\
\hline
\vdots & \vdots & \vdots &\ddots & \vdots\\
\hline
x+1 & x+2 & x+3 & \cdots & x+b-1\\
\hline
\boldsymbol{x}\\
\cline{1-1}
\end{array}
\ \ \ 
\begin{array}{|c|c|c|c|c|}
\hline
{a+b} & a+b-1 & a+b-2 & \cdots & a\\
\hline
\!\!a+b-1\!\! & \!\!a+b-2\!\! & \!\!a+b-3\!\! & \cdots & a-1\\
\hline
\vdots & \vdots & \vdots &\ddots & \vdots\\
\hline
{1+b} & b-1 & b-2 & \cdots & 1\\
\hline
{1}\\
\cline{1-1}
\end{array}
\]
On the other hand, for cells $y$ of $b^a$,
the relevant values of  $t+c(y)=(x+a)+c(y)$ are precisely the same
except that the boldfaced value $\boldsymbol{x}$ does not arise,
and the values of $h(y)$ are the same except for the first column, which are 
$a+b-1,a+b-2,\ldots,b+1,b$ in that setting. Therefore, 
\begin{equation*}
\frac{\#\CST{(b^a,1)}{x+a}}
{\#\CST{b^a}{x+a}}
=\frac{xb}{a+b}.
\end{equation*}
Comparing this with Equation~\eqref{ratio-of-CST-counts}
proves the $d=2$ case of Conjecture~\ref{conj:mainone}$^{\prime}$,
because then $\ell=|\lambda|=ab$.
\end{proof}

%%%%%%%%%%%%%%%%%%%%%%%%%%%%%
\section{Proof of Theorem~\ref{thm:anFKformula}.}
\label{FKformula-proof-section}
%%%%%%%%%%%%%%%%%%%%%%%%%%%%%

As mentioned earlier, our proof combines ideas of \cite{Fomin.Stanley, Fomin.Kirillov:YB, Fomin.Kirillov, Fomin.Kirillov97} with Equation~\eqref{lemma:KMYconsequence}.

Let $R$ denote the \emph{$0$-Hecke algebra} of type $A_{n-1}$.  That is, $R$ is the
monoid algebra for the $0$-Hecke monoid ${\mathcal H}_{W}(0)$, where 
$(W,S)$ is the Coxeter system $W={\mathfrak S}_n$ with
the adjacent transpositions $S=\{\sigma_1,\ldots,\sigma_{n-1}\}$ as
Coxeter generators.  Thus, $R$ has $\ZZ$-basis given by $\{T_w\}_{w \in \symm_n}$
and multiplication extended $\ZZ$-linearly from the $0$-Hecke monoid ${\mathcal H}_{W}(0)$.  Abbreviate here $T_i:=T_{\sigma_i}$
for $i=1,2,\ldots,n-1$, as was done in the Introduction.

We will also consider various rings obtained from $R$ by extension of scalars, such as
$R \otimes_\ZZ \QQ[[t]]$ or $R \otimes_\ZZ \QQ[[t_1,\ldots,t_n]]$.
Within these larger rings, we will also use without further mention a common exponential change-of-variables
in which $x:=e^t-1$ in $\QQ[[t]]$, and analogously, $x_i:=e^{t_i}-1$ in $\QQ[[t_1,\ldots,t_n]]$.

The following lemma is implicit
in \cite{Fomin.Kirillov} (cf.~\cite{Fomin.Stanley}), which explicitly states a consequence of it (see Remark~\ref{remark:FK} below).  Indeed, the result is known to the authors of \cite{Fomin.Kirillov}; see Lemma~5.6 of 
{\tt hep-th/9306005}. However, for convenience we include a proof.
Recall that for $w \in \symm_n$, the
$\beta$-Grothendieck polynomial ${\mathfrak G}^{(1)}={\mathfrak G}(x_1,\ldots,x_{n-1})$
defined in Equation~\eqref{beta-Grothendieck-definition}
is a polynomial in $n-1$ variables.

\begin{Lemma}[Fomin-Kirillov]
\label{lemma:exponential}
In the ring $R \otimes_\QQ \QQ[[t]]$,
\[
e^{t(T_{1}+2T_{2}+\cdots (n-1)T_{{n-1}})}
=\sum_{w\in {\mathfrak S}_n} {\mathfrak G}^{(1)}_w(e^{t}-1,\ldots, e^{t}-1)\cdot T_w
=\sum_{w\in {\mathfrak S}_n} {\mathfrak G}^{(1)}_w(x,\ldots,x) \cdot T_w.
\] 
\end{Lemma}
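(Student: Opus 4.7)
The plan is to recognize both sides of the claimed identity as specializations of a Grothendieck generating function inside the $0$-Hecke algebra $R$.

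First I will exploit the idempotency $T_i^2 = T_i$, which forces $T_i^k = T_i$ for all $k \geq 1$, so that in $R \otimes_{\QQ} \QQ[[t]]$,
\begin{equation*}
e^{tT_i} \;=\; 1 + T_i \sum_{k \geq 1} \frac{t^k}{k!} \;=\; 1 + (e^t - 1) T_i.
\end{equation*}
Writing $h_i(x) := 1 + x T_i$, this says that the substitution $x = e^t - 1$ turns each elementary factor $h_i(x)$ into the honest exponential $e^{t T_i}$, and hence the claimed identity can be viewed as the case $x=e^t-1$ of an identity in polynomial factors $h_i(x)$.

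Next I will invoke the Fomin--Kirillov factorization inside $R$, the $0$-Hecke analog of the Fomin--Stanley factorization for Schubert polynomials in the nilCoxeter algebra:
\begin{equation*}
\sum_{w \in \symm_n} \mathfrak{G}^{(1)}_w(x_1, \ldots, x_{n-1}) \, T_w \;=\; A_1(x_1)\, A_2(x_2) \cdots A_{n-1}(x_{n-1}),
\end{equation*}
where $A_i(x) := h_{n-1}(x) h_{n-2}(x) \cdots h_i(x)$. This is proved by a Yang--Baxter-type argument from the braid relation obeyed by the $h_i(x)$ as a consequence of $T_iT_{i+1}T_i = T_{i+1}T_iT_{i+1}$. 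Upon setting all $x_i = x = e^t - 1$ and applying the first step, the right-hand side becomes a specific non-commuting product of exponentials in which $e^{tT_j}$ appears exactly $j$ times, matching the coefficient of $T_j$ in $X := T_1 + 2 T_2 + \cdots + (n-1) T_{n-1}$.

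The final step, which I expect to be the main obstacle, is to collapse this non-commuting product of exponentials into the single exponential $e^{tX}$. Since $[T_i, T_{i+1}] \neq 0$, the exponents cannot be combined directly. My plan is to verify the equality by checking that both sides satisfy the same initial-value ODE $F'(t) = X F(t)$ with $F(0) = 1$. For $e^{tX}$ this is tautological. For the product one differentiates factor by factor via the Leibniz rule, uses $dx/dt = 1 + x$ together with the identity $(1 + x)\, T_j\, h_j(x)^{-1} = T_j$ (immediate from $T_j^2 = T_j$) to extract a single $T_j$ from each differentiated factor, and then regroups the resulting terms so that each $T_j$ accumulates a total coefficient equal to its number of appearances in the product, namely $j$, thereby reconstructing $X$. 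The delicate bookkeeping for this regrouping, mediated by the braid relations and the commutations $T_iT_j = T_jT_i$ for $|i - j| \geq 2$, is where I expect the real work to lie.
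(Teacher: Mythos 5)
Your setup is exactly the paper's: the idempotency $T_i^2 = T_i$ gives $e^{tT_i} = 1 + (e^t-1)T_i = h_i(x)$, and the Fomin--Kirillov expansion $\sum_w \mathfrak{G}^{(1)}_w(x_1,\ldots,x_{n-1})T_w = A_1(x_1)\cdots A_{n-1}(x_{n-1})$ reduces the problem to showing
$A_1(t)\cdots A_{n-1}(t) = e^{t(T_1+2T_2+\cdots+(n-1)T_{n-1})}$.
The ODE/uniqueness idea for the last step is also the right one. But your plan for carrying out that last step has a genuine gap, and it is precisely the gap the paper's machinery is built to fill.

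Differentiating the long product via Leibniz produces one term per occurrence of each $h_j$, with $h_j'(t) = T_j\,h_j(t)$ leaving a $T_j$ stranded \emph{in the interior} of the product. To verify $F'(t) = XF(t)$ you must transport each such interior $T_j$ all the way to the left, and that transport is not a matter of ``bookkeeping'' with braid moves and commutations: conjugating $T_j$ past the preceding $h_i(t)$ factors scrambles it in a way that is not obviously controllable. Indeed, the paper does \emph{not} attempt this regrouping. Instead it first establishes the group-like identity $\mathfrak{G}(t,\ldots,t)\,\mathfrak{G}(s,\ldots,s) = \mathfrak{G}(t+s,\ldots,t+s)$ (Lemma~\ref{lemma:5.1}), proved through a chain of Yang--Baxter-style commutation lemmas (Lemmas~\ref{lemma:commute}--\ref{lemma:4.3}). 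Once the group-like property is in hand, the ODE $\mathfrak{G}'(t) = \mathfrak{G}(t)\cdot F$ with $F := \mathfrak{G}'(0)$ is automatic, and the computation of $F$ via Leibniz is painless because it happens at $t=0$, where every $h_i(0) = 1$; no interior $T_j$ ever needs to be moved. So the step you honestly flag as ``where the real work lies'' is in fact the entire content of Lemmas~\ref{lemma:commute}--\ref{lemma:5.1}, and without the group-like identity (or some equivalent structural input) the direct regrouping you propose does not go through.
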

\begin{proof}
As in \cite{Fomin.Kirillov:YB}, define $h_i(t):=e^{tT_{i}}$.
Then following \cite{Fomin.Kirillov}, since $T_{i}^2=T_{i}$, one has
\begin{equation*}
h_i(t)=e^{tT_{i}}=\sum_{k=0}^{\infty} \frac{(tT_{i})^k}{k!}= 1+tT_{i}+t^2T_{i}/2!+\cdots
=1+(e^t-1)T_{i}=1+x T_{i}.
\end{equation*}
It is a main result of \cite{Fomin.Kirillov}~that, with notations
\begin{equation}
\label{Grothendieck-element-definition}
\begin{aligned}
A_i(t)&:=h_{n-1}(t)h_{n-2}(t)\cdots h_i(t) \text{ \ and}\\
{\mathfrak G}(t_1,t_2,\ldots,t_{n-1})
&:=A_1(t_1)A_2(t_2)\cdots A_{n-1}(t_{n-1}),
\end{aligned}
\end{equation}
the ``Grothendieck element'' ${\mathfrak G}(t_1,t_2,\ldots,t_{n-1})$ expands as follows 
in $R \otimes_\ZZ \QQ[[t_1,\ldots,t_{n-1}]]$:
\[
{\mathfrak G}(t_1,t_2,\ldots,t_{n-1})
=\sum_{w\in {\mathfrak S}_n} {\mathfrak G}^{(1)}_w(e^{t_1}-1,\ldots, e^{t_{n-1}}-1) \cdot T_w
=\sum_{w\in {\mathfrak S}_n} {\mathfrak G}^{(1)}_w(x_1,\ldots, x_{n-1}) \cdot T_w
\]
In fact, this is equivalent to the definition of ${\mathfrak G}_w^{(1)}$ from Equation~\eqref{beta-Grothendieck-definition}: working in the variables
$x_1,\ldots,x_{n-1}$, when one expands
${\mathfrak G}(t_1,t_2,\ldots,t_{n-1})$ as defined in Equation
\eqref{Grothendieck-element-definition}, its
coefficient of $T_w$ is exactly the sum 
over pairs $((\sigma_{a_1},\ldots,\sigma_{a_L}),(b_1,\ldots,b_L))$ on
the right side of Equation~\eqref{beta-Grothendieck-definition}. 

Thus, it remains to prove that specializing the variables $t_1=t_2=\cdots=t_{n-1}$ to
the single variable $t$ gives
\begin{equation}
\label{eqn:quickly}
{\mathfrak G}(t,t,\ldots,t)=A_1(t)A_2(t)\cdots A_{n-1}(t)
=e^{t(T_{1}+2T_{2}+\cdots +(n-1)T_{{n-1}})}.
\end{equation}
To this end, 
we employ a  \emph{mutatis mutandis}
modification of an argument of  \cite{Fomin.Stanley}. For brevity, we refer the reader to \cite{Fomin.Stanley}
for those details that remain unchanged.

It is easy to check that the collection $\{h_i\}$ satisfies the relations 
\begin{itemize}
\item[(I)] $h_i(s)h_j(t)=h_j(t)h_i(s)$ if $|i-j|\geq 2$,
\item[(II)] $h_i(s)h_i(t)=h_i(s+t)$, $h_i(0)=1$ (and therefore $h_i(s)h_i(-s)=1$),
\end{itemize} 
as well as the \emph{Yang-Baxter equation} \cite{Fomin.Kirillov:YB}
\begin{itemize}
\item[(III)] $h_i(s)h_{i+1}(s+t)h_i(t)=h_{i+1}(t)h_{i}(s+t)h_{i+1}(s)$.
\end{itemize}

The following lemma is the analogue of \cite[Lemma~2.1]{Fomin.Stanley}. Its proof is exactly the same as that result's, because it only depends on the relations (I)--(III).

\begin{Lemma}
\label{lemma:commute}
$A_i(s)$ and $A_i(t)$ commute.
\end{Lemma}

Define \cite[\S 4]{Fomin.Stanley}
\[{\widetilde A}_i(t)=h_i(t)h_{i+1}(t)\cdots h_{n-1}(t)\] 
and let
\[{\widetilde {\mathfrak G}}(t_1,\ldots, t_{n-1}):={\widetilde A}_{n-1}(t_{n-1}){\widetilde A}_{n-2}(t_{n-2})\cdots {\widetilde A}_1(t_1).\]

\begin{Lemma}
\label{lemma:4.1}
$A_i(s)$ and ${\widetilde A}_i(t)$ commute.
\end{Lemma}
\begin{proof}
The proof is the same as in \cite[Lemma~4.1]{Fomin.Stanley}, except we use
(II) and Lemma~\ref{lemma:commute} (where the original uses the
exact analogues, \cite[Lemmas~3.1(ii) and 2.1]{Fomin.Stanley}).
\end{proof}

\begin{Lemma}
\label{lemma:4.2}
${\widetilde A}_{n-1}(t_{n-1})\cdots {\widetilde A}_i(t_i)A_{i}(s)
=h_{n-1}(t_{n-1}+s)\cdots h_i(t_{i}+s){\widetilde A}_{n-1}(t_{n-2})
\cdots {\widetilde A}_{i+1}(t_i)$. 
\end{Lemma}
\begin{proof}
One follows \cite[Lemma~4.2]{Fomin.Stanley} except to
use Lemma~\ref{lemma:4.1} rather than their \cite[Lemma~4.1]{Fomin.Stanley}.
\end{proof}

\begin{Lemma}
\label{lemma:4.3}
\[{\widetilde {\mathfrak G}}(t_1,\ldots,t_{n-1}){\mathfrak G}(s_1,\ldots, s_{n-1})=\prod_{c=2-n}^{n-2} \ \ \prod_{i-j=c, i+j\leq n} h_{i+j-1}(s_i+t_j).\] 
Here the multiplication of the factors associated to $c=2-n,3-n,\ldots,n-2$ 
is done from left to right. The factors in the second product commute.
\end{Lemma}
\begin{proof}
As in \cite[Lemma~4.3]{Fomin.Stanley}, this follows from
repeated application of Lemma~\ref{lemma:4.2} combined with rearrangement of factors. To see that the factors in the second product commute, note that if
$(i,j),(i',j')\in {\mathbb N}\times {\mathbb N}$ satisfy $i-j=c=i'-j'$ and $i+j=i'+j'-1$, we would have
$2i-2i'=-1$, which is impossible.
\end{proof}

The following is the analogue of \cite[Lemma~5.1]{Fomin.Stanley}.

\begin{Lemma}
\label{lemma:5.1}
${\mathfrak G}(t,t,\ldots,t){\mathfrak G}(s,s,\ldots,s)={\mathfrak G}(t+s,\ldots, t+s)$.
\end{Lemma}
\begin{proof}
Same as that of \cite[Lemma~5.1]{Fomin.Stanley}, using
Lemma~\ref{lemma:4.3} above in place of their \cite[Lemma~4.3]{Fomin.Stanley}.
\end{proof}

We are now ready to prove Equation~\eqref{eqn:quickly} as in 
the proof of \cite[Lemma~2.3]{Fomin.Stanley}.  
Using Lemma~\ref{lemma:5.1}, and the fact that $G(0,0,\ldots,0)=1$, one finds that
\begin{align*}
\frac{d}{dt}{\mathfrak G}(t,t,\ldots,t) 
 &=\lim_{h \rightarrow 0}\frac{ {\mathfrak G}(t+h,t+h,\ldots,t+h) -{\mathfrak G}(t,t,\ldots,t) }{ h }\\
 &={\mathfrak G}(t,t,\ldots,t) \cdot 
         \lim_{h \rightarrow 0}\frac{ {\mathfrak G}(h,h,\ldots,h) - 1 }{ h } 
 = {\mathfrak G}(t,t,\ldots,t) \cdot F
\end{align*}
where 
$
F:=\left[ \frac{d}{dt}{\mathfrak G}(t,t,\ldots,t) \right]_{t=0},
$
an element of $R$. 
From this one concludes that, in $R \otimes_\ZZ \QQ[[t]]$, one has 
\begin{equation}
\label{G-as-an-exponential}
{\mathfrak G}(t,t,\ldots,t)=e^{tF}.
\end{equation} 
On the other hand, since
${\mathfrak G}(t,\ldots,t):=A_1(t)\cdots A_{n-1}(t)$, 
one can use the Leibniz rule repeatedly to compute
$$
\left[ \frac{d}{dt} h_i(t) \right]_{t=0}=T_{i} \qquad \text{and} \qquad
\left[ \frac{d}{dt} A_j(t) \right]_{t=0}=T_{{n-1}}+T_{{n_2}}+\cdots+T_{j}.
$$
Therefore
\begin{align*}
F=\left[ \frac{d}{dt} A_1(t)\cdots A_{n-1}(t)\right]_{t=0}
 &=\sum_{j=1}^{n-1} \left[ 
                                A_1(t) A_2(t) \cdots A_{j-1}(t)
                                 \cdot \frac{d}{dt} A_j(t) \cdot A_{j+1}(t) \cdots A_{n-1}(t)
                               \right]_{t=0} \\
&=\sum_{j=1}^{n-1} \left( T_{{n-1}}+T_{{n_2}}+\cdots+T_{j} \right)
=T_{1} + 2T_{2} + \cdots + (n-1) T_{{n-1}}.
\end{align*}
Plugging this into Equation~\eqref{G-as-an-exponential} proves
Equation~\eqref{eqn:quickly}, and completes the proof of Lemma~\ref{lemma:exponential}.
\end{proof}

\begin{proof}[Proof of Theorem~\ref{thm:anFKformula}]
By inspection, one has
\[
e^{t(T_{1}+2T_{2}+\cdots (n-1)T_{{n-1}})}=\sum_{w\in {\mathfrak S}_n} \left(\sum_{L} \frac{t^L}{L!} \sum_{(\sigma_{a_1},\ldots, \sigma_{a_L})}
a_1\cdots a_L \right) \cdot T_w
,\]
where the innermost sum is over $0$-Hecke words for $w$ of length $L$.
In view of Lemma~\ref{lemma:exponential}, this means that
\begin{equation}
\label{eqn:wesee}
{\mathfrak G}^{(1)}_w(e^{t}-1,\ldots,e^t-1)=
\sum_{L} \frac{t^L}{L!} \sum_{(\sigma_{a_1},\ldots, \sigma_{a_L})}
a_1\cdots a_L .
\end{equation}
For positive integers $x$, note that
 $(\sigma_{a_1},\ldots,\sigma_{a_L})$ is a $0$-Hecke word for $w$ if and only if 
$(\sigma_{x+a_1},\ldots, \sigma_{x+a_L})$ is a $0$-Hecke word for $1^x\times w$.
Therefore, one similarly has
\begin{equation*}
{\mathfrak G}^{(1)}_{1^x \times w}(e^{t}-1,\ldots,e^t-1)=
\sum_{L}  \frac{t^L}{L!} \sum_{(\sigma_{a_1},\ldots, \sigma_{a_L})}
(x+a_1)\cdots (x+a_L).
\end{equation*}
Equivalently, using $\left[\frac{t^{L}}{L!}\right] f(t)$ to denote the coefficient of $t^L/L!$ in $f(t)$, one has for any $w \in \symm_n$, 
\begin{equation}
\label{general-FK-expression}
FK(w,L) =\left[\frac{t^{L}}{L!}\right]{\mathfrak G}^{(1)}_{1^x\times w}(e^t-1,\ldots,e^t-1).
\end{equation}

For $w$ vexillary of shape $\lambda$, and $\varphi=\varphi(w)+(N,N,N,\ldots)$, 
replacing $x_i$ by $-x_i$ in Equation~\eqref{lemma:KMYconsequence} gives
$$
{\mathfrak G}^{(1)}_{1^N\times w}(x_1,\ldots,x_{N+n})
=\sum_{T} \xxx^T
$$
where the sum runs over all column-strict set-valued tableaux $T$ of shape $\lambda$ which
are flagged by $\varphi$.  Substituting $N=x$ and $x_i=e^t-1$ for all $i$, this shows that
$$
{\mathfrak G}^{(1)}_{1^x\times w}(e^t-1,\ldots,e^t-1)
=\sum_{j= |\lambda|}^{L} (e^t-1)^j \cdot \#{\tt CST}(\lambda,\varphi,j)
$$
and hence Equation~\eqref{general-FK-expression} becomes
\begin{align*}
FK(w,L) &= 
\sum_{j= |\lambda|}^{L} \left[\frac{t^L}{L!}\right](e^t-1)^j \cdot  \#{\tt CST}(\lambda,\varphi,j).
\end{align*}
Lastly, the well-known exponential generating function \cite[Equation~(1.94b)]{StanleyEC1} for Stirling numbers
$$
\sum_{L \geq j} S(L,j) \frac{t^L}{L!} = \frac{(e^t-1)^j}{j!}
$$
shows that $\left[\frac{t^L}{L!}\right](e^t-1)^j=j! S(L,j)$, completing the proof.
\end{proof}

\begin{Remark}
\label{remark:FK}
Since ${\mathfrak G}^{(1)}_{w_0}=x_1^{n-1}x_2^{n-2}\cdots x_{n-1}$, the case $w=w_0$ of 
Equation~\eqref{eqn:wesee} gives
\[
\sum_{L}\sum_{(a_1,\ldots, a_L)}
a_1\cdots a_L \frac{t^L}{L!}=(e^t-1)^{n\choose 2}.
\]
This formula was stated in \cite[\S 3]{Fomin.Kirillov}.
\end{Remark}

\section*{Acknowledgements}
The authors thank Zach Hamaker for pointing out an earlier argument for
Lemma~\ref{vexillary-lemma} in terms of Lascoux's transition formula for 
Grothendieck polynomials \cite{LascouxTransitions}. We thank 
K\'{a}ri Ragnarsson and Cara Monical for
providing computer code useful in the investigations of Sections~2
and~6, respectively.  We thank Susanna Fishel and Thomas McConville
for pointing out Proposition~\ref{Fishel-McConville-proposition}.  
We thank Thomas McConville for pointing out the examples at the end of Section~\ref{Tamari-subsection}, and thank
Alex Garver for help on oriented exchange graphs. 
We thank Sam Hopkins for helpful discussions on the work in \cite{Chan2},
and for sharing with us his preliminary results on shifted Young diagrams.
We thank Travis Scrimshaw for pointing out the connection between uncrowding and 
\cite[Theorem~6.11]{Buch02} (specifically the proof sketch).
We also thank Bruce Sagan 
for several helpful comments and questions. Finally, we thank the anonymous referees for their suggestions.
VR was partially supported 
by NSF RTG grant DMS-1148634. BT was partially 
supported by a Simons Foundation Collaboration Grant for Mathematicians. AY was partially supported by NSF grant
DMS-1500691.

\end{document}